\DeclareMathOperator{\SL}{SL}
\DeclareMathOperator{\GL}{GL}
\DeclareMathOperator{\Orm}{O}
\DeclareMathOperator{\tr}{tr}
\DeclareMathOperator{\proj}{proj}
\DeclareMathOperator{\rest}{rest}
\DeclareMathOperator{\Hom}{{Hom}}
\DeclareMathOperator{\Ind}{Ind}
\DeclareMathOperator{\diag}{diag}
\let\Re\relax
\DeclareMathOperator{\Re}{Re}
\newcommand{\Ocal}{\mathcal{O}}
\newcommand{\zfrak}{{\mathfrak{z}}}
\newcommand{\Nbar}{\bar{N}}
\newcommand{\Ical}{\mathcal{I}}
\newcommand{\Jcal}{\mathcal{J}}
\newcommand{\R}{\mathbb{R}}
\newcommand{\C}{\mathbb{C}}
\newcommand{\g}{\mathfrak{g}}
\newcommand{\Z}{\mathbb{Z}}
\newcommand{\N}{\mathbb{N}}
\newcommand{\glf}{\mathfrak{gl}}
\newcommand{\Hcal}{\mathcal{H}}
\newcommand{\Pcal}{\mathcal{P}}
\newcommand{\HC}{{\mathrm{HC}}}
\newcommand{\RP}{\R{\rm P}}
\newcommand{\alg}{{\rm alg}}
\renewcommand{\ss}{{{\rm ss}}}
\newcommand{\subz}{{{\rm z}}}
\DeclarePairedDelimiter\abs{\lvert}{\rvert}
\newcommand\diagonal[4]{%
  \multicolumn{1}{p{#2}|}{\hskip-\tabcolsep
  $\vcenter{\begin{tikzpicture}[baseline=0,anchor=south west,inner sep=#1]
  \path[use as bounding box] (0,0) rectangle (#2+2\tabcolsep,\baselineskip);
  \node[minimum width={#2+2\tabcolsep},minimum height=\baselineskip+\extrarowheight] (box) {};
  \draw (box.north west) -- (box.south east);
  \node[anchor=south west] at (box.south west) {#3};
  \node[anchor=north east] at (box.north east) {#4};
 \end{tikzpicture}}$\hskip-\tabcolsep}}
\theoremstyle{plain}
\newtheorem{theorem}{Theorem}[section]
\newtheorem{lemma}[theorem]{Lemma}
\newtheorem{corollary}[theorem]{Corollary}
\newtheorem{prop}[theorem]{Proposition}
\newtheorem{theoremalph}{Theorem}
\theoremstyle{definition}
\newtheorem{remark}[theorem]{Remark}
\numberwithin{equation}{section}
\title{Symmetry breaking operators for line bundles over real projective spaces}
\author{Jan Frahm}
\author{Clemens Weiske}
\thanks{The second author was supported by the DFG project 325558309}
\address{Department Mathematik, FAU Erlangen--N\"{u}rnberg, Cauerstr. 11, 91058 Erlangen, Germany}
\email{frahm@math.fau.de}
\address{Department Mathematik, FAU Erlangen--N\"{u}rnberg, Cauerstr. 11, 91058 Erlangen, Germany}
\email{weiske@math.fau.de}
\begin{document}
\subjclass[2010]{Primary 22E46; Secondary 17B15, 46F12.}

\keywords{Symmetry breaking operators, real projective spaces, general linear group, intertwining operators, Harish-Chandra modules, principal series}
\begin{abstract}
The space of smooth sections of an equivariant line bundle over the real projective space $\RP^n$ forms a natural representation of the group $\GL(n+1,\R)$. We explicitly construct and classify all intertwining operators between such representations of $\GL(n+1,\R)$ and its subgroup $\GL(n,\R)$, intertwining for the subgroup. Intertwining operators of this form are called symmetry breaking operators, and they describe the occurrence of a representation of $\GL(n,\R)$ inside the restriction of a representation of $\GL(n+1,\R)$. In this way, our results contribute to the study of branching problems for the real reductive pair $(\GL(n+1,\R),\GL(n,\R))$.\\
The analogous classification is carried out for intertwining operators between algebraic sections of line bundles, where the Lie group action of $\GL(n,\R)$ is replaced by the action of its Lie algebra $\mathfrak{gl}(n,\R)$, and it turns out that all intertwining operators arise as restrictions of operators between smooth sections.
\end{abstract}
\maketitle

\section*{Introduction}

The real projective space $\RP^n$ is in a natural way a homogeneous spaces for the general linear group $\GL(n+1,\R)$. This induces natural representations of $\GL(n+1,\R)$ on functions, or more generally sections of equivariant vector bundles over $\RP^n$. The equivariant line bundles $\mathcal{L}$ over $\RP^n$ are essentially parametrized by two complex numbers, and the corresponding linear actions of $\GL(n+1,\R)$ on smooth sections $C^\infty(\RP^n,\mathcal{L})$ form a two-parameter family of generically irreducible smooth representations.

One natural question to ask in this context is how two such representations are related, or more precisely: for which equivariant line bundles $\mathcal{L},\mathcal{L}'$ over $\RP^n$ does there exist a non-trivial (continuous) linear operator
$$ A:C^\infty(\RP^n,\mathcal{L})\to C^\infty(\RP^n,\mathcal{L}') $$
which respects the action of $\GL(n+1,\R)$? For $n\geq2$, it turns out that apart from the trivial case $\mathcal{L}\cong\mathcal{L'}$, this only occurs for $\mathcal{L}=\Lambda^n(\RP^n)$ the top form bundle and $\mathcal{L}'=\RP^n\times\C$ the trivial bundle, in which case the integration of differential forms produces a non-trivial linear map with one-dimensional image, the constant functions on $\RP^n$ (see e.g.~\cite{HL99,MS14}).

A much richer analysis arises if one allows line bundles over projective spaces of different dimensions. For this we consider the subgroup $\GL(n,\R)$ of $\GL(n+1,\R)$ stabilizing the last standard basis vector. Given equivariant line bundles $\mathcal{L}$ over $\RP^n$ and $\mathcal{L}'$ over $\RP^{n-1}$ one may ask for (continuous) linear operators
$$ A:C^\infty(\RP^n,\mathcal{L})\to C^\infty(\RP^{n-1},\mathcal{L}') $$
which respect the action of the smaller group $\GL(n,\R)$. In this paper we classify all such operators and give explicit formulas for them in terms of their distribution kernels. We further study the corresponding algebraic question, replacing smooth sections by regular sections in the sense of algebraic geometry, and replacing the group action by the Lie algebra action of $\mathfrak{gl}(n,\R)$.

In the language of representation theory of real reductive groups, the action of $\GL(n+1,\R)$ on $C^\infty(\RP^n,\mathcal{L})$ defines a generalized principal series representation, since it can be constructed as a representation induced from a (maximal) parabolic subgroup. Such families of representations play a major role in representation theory, and they are one of the main ingredients for the classification of irreducible smooth representations. Intertwining operators between principal series of a real reductive group $G$ and a subgroup $G'$ have only recently attracted more attention in the context of branching problems as advocated by Kobayashi~\cite{kobayashi_2015} in his $\mathcal{ABC}$-program. Our results can be viewed as a contribution to this program for the real reductive pair $(\GL(n+1,\R),\GL(n,\R))$, similar to the recent contributions \cite{KKP16,kobayashi_speh_2015,KS17,MO17} for the pair $(G,G')=({\rm O}(n+1,1),{\rm O}(n,1))$ and \cite{KL17} for the pair $(G,G')=({\rm O}(p+1,q+1),{\rm O}(p,q+1))$.

Let us now describe our results in more detail.

\subsection*{Symmetry breaking operators}
Let $n\geq2$. The general linear group $\GL(n+1,\R)$ acts transitively on the real projective space $\RP^n$ by $g\cdot[x]=[gx]$, where we write $[x]=\R x\in\RP^n$ for the line through $x\in\R^{n+1}\setminus\{0\}$. This action induces a family of representations $\pi_\lambda$ of $\GL(n+1,\R)$ on the space $C^\infty(\RP^{{n}})$ of smooth functions on $\RP^{{n}}$, parametrized by $\lambda=(\lambda_1,\lambda_2) \in \C^2$:
\begin{equation}
\pi_\lambda(g) \varphi([x]):=\left(  \frac{\abs{g^{-1}x}}{\abs{x}} \right)^{-\lambda_1-\rho_1} \abs{\det g^{-1}}^{-\lambda_2-\rho_2}  \varphi([g^{-1}x])\label{eq:DefRepPilambda}
\end{equation}
for $\varphi \in C^\infty(\RP^{{n}})$, $x\in \R^{n+1}$ and $g \in \GL({n+1},\R)$. Here $(\rho_1,\rho_2)=(\frac{n+1}{2},-\frac{1}{2})$ is chosen such that $\pi_\lambda$ extends to an irreducible unitary representation on $L^2(\RP^n)$ for $\lambda\in(i\R)^2$.

A more natural construction of the representations $\pi_\lambda$ is in terms of line bundles. There exists a family of equivariant line bundles $\mathcal{L}_\lambda\to\RP^n$, and $\GL(n+1,\R)$ acts naturally on the space $C^\infty(\RP^n,\mathcal{L}_\lambda)$ of smooth sections. Trivializing the line bundle identifies $C^\infty(\RP^n,\mathcal{L}_\lambda)\cong C^\infty(\RP^n)$, and under this identification the action $\pi_\lambda$ on $C^\infty(\RP^n)$ corresponds to the natural action of $\GL(n+1,\R)$ on $C^\infty(\RP^n,\mathcal{L}_\lambda)$

In the same way we define a family of representations $\tau_\nu$ of the group $\GL({n},\R)$ on the space $C^\infty(\RP^{n-1})$, parametrized by $\nu=(\nu_1,\nu_2)\in\C^2$:
\begin{equation*}
\tau_\nu (h) \psi([y]):=\left(  \frac{\abs{h^{-1}y}}{\abs{y}} \right)^{-\nu_1-\rho'_1} \abs{\det h^{-1}}^{-\nu_2-\rho'_2}  \psi([h^{-1}y])
\end{equation*}
for $\psi\in C^\infty(\RP^{n-1})$, $y\in \R^n$ and $h \in \GL({n},\R)$, where $(\rho'_1,\rho'_2)=(\frac{n}{2},-\frac{1}{2})\in \C^2$. We view $\GL(n,\R)$ as the subgroup of $\GL(n+1,\R)$ fixing the last standard basis vector $e_{n+1}\in\R^{n+1}$.

Consider the space
\begin{multline*}
H^\infty(\lambda,\nu):=\{ A:C^\infty(\RP^{{n}}) \to C^\infty(\RP^{{n-1}})\mbox{ continuous and linear}:\\
A\circ \pi_\lambda(g) = \tau_\nu(g)\circ A\;  \forall\, g \in \GL({n},\R) \}
\end{multline*}
of operators respecting the actions $\pi_\lambda$ and $\tau_\nu$. Such operators are also called \textit{symmetry breaking operators}, a term coined by Kobayashi~\cite{kobayashi_2015}, and they describe the occurrence of the representation $\tau_\nu$ inside the restriction of $\pi_\lambda$ to the subgroup $\GL(n,\R)\subseteq\GL(n+1,\R)$. It is therefore natural to ask the following questions:
\begin{enumerate}[label=\textbf{Q\arabic{*}}:, ref=\textbf{Q\arabic{*}}]
\item \label{Q1} 
For which parameters $\lambda,\nu\in\C^2$ is $H^\infty(\lambda,\nu)\neq\{0\}$?
\item \label{Q2} 
If $H^\infty(\lambda,\nu)\neq\{0\}$, what is its dimension?
\item \label{Q3}
What are explicit bases of the non-trivial spaces?
\end{enumerate}

Our first main result gives a complete answer to the first two questions \ref{Q1} and \ref{Q2} (see Theorem~\ref{thm:charts} and Proposition~\ref{prop:classification_distr}):
\begin{theoremalph}[Multiplicities]
\label{theorem:A}
If $\dim H^\infty(\lambda,\nu)\neq 0$ then either
$$ \lambda_2+\rho_2=\nu_2+\rho'_2 \qquad \mbox{or} \qquad \lambda_2-\rho_2-\nu_2-\rho'_2=\nu_1+\rho'_1. $$
For $\lambda_2+\rho_2=\nu_2+\rho'_2$ the dimension is given by
\begin{equation*}
\dim H^\infty(\lambda,\nu)=
\begin{cases}
1 & \text{for }  (\lambda_1,\nu_1)\notin L,\\
2 & \text{for }  (\lambda_1,\nu_1)\in L,\\
\end{cases}
\end{equation*}
where $L=\{ (-\rho_1-2i,-\rho'_1-2j) \in \C^2: i,j \in \Z_{\geq 0}, j\leq i \}$, and for $\lambda_2-\rho_2-\nu_2-\rho'_2=\nu_1+\rho'_1$ it is given by
\begin{equation*}
\dim H^\infty(\lambda,\nu)=
\begin{cases}
1 & \text{for } n=2, \\
1 & \text{for } n\geq 3 \text{ and } \nu_1=-\rho'_1,\\
0 & \text{otherwise.}
\end{cases}
\end{equation*}
\end{theoremalph}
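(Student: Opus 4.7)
The strategy is to identify $H^\infty(\lambda,\nu)$ with a space of $\GL(n,\R)$-equivariant distributions via the Schwartz kernel theorem, and then to analyse these orbit by orbit for the diagonal $\GL(n,\R)$-action on $\RP^n\times\RP^{n-1}$. A symmetry breaking operator $A$ corresponds to a distribution section $K$ of a suitable equivariant line bundle over $\RP^n\times\RP^{n-1}$, and the intertwining condition for $\GL(n,\R)$ becomes a transformation rule for $K$ under the diagonal action.

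First I would work out the finite $\GL(n,\R)$-orbit stratification of $\RP^n\times\RP^{n-1}$: there is one open orbit and a small number of lower-dimensional orbits, distinguished by whether the last coordinate of (a representative of) $[x]$ vanishes and whether $[y]$ is collinear with the projection of $[x]$ to the hyperplane. The two branches in the statement correspond to two different support strata. Passing to a Bruhat/affine chart $\R^n\times\R^{n-1}$, the equivariance of $K$ under the unipotent radical and the Levi of the diagonally embedded minimal parabolic translates into an explicit system of functional equations for the distribution.

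On the open orbit these equations force a single meromorphic family of Riesz-type kernels $K_{\lambda,\nu}^{\mathrm{open}}$, constrained by $\lambda_2+\rho_2=\nu_2+\rho_2'$ and holomorphic off a discrete set of hyperplanes in $\C^2$. Generically this produces a one-dimensional space of equivariant distributions; the jump to dimension $2$ at the set $L$ is accounted for by distributions supported on the codimension-one orbit where $[x]\in\RP^{n-1}$, which are built from normal derivatives of the delta distribution of that orbit. A recursive computation of how the equivariance constraint interacts with these normal derivatives singles out exactly the integer pairs $(i,j)$ with $0\le j\le i$, producing the set $L$. The second branch $\lambda_2-\rho_2-\nu_2-\rho_2'=\nu_1+\rho_1'$ corresponds to kernels supported on the other singular orbit: an analogous transversal-derivative analysis yields a sporadic solution, and the case distinction in $n$ and $\nu_1$ reflects the different dimension and normal geometry of that orbit (for $n=2$ it is zero-dimensional so no extra constraint appears, while for $n\geq 3$ matching the line-bundle weight along the normal direction forces $\nu_1=-\rho_1'$).

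The main obstacle will be the sharp analysis at the singular set $L$: one must show both that the meromorphic family $K_{\lambda,\nu}^{\mathrm{open}}$ remains regular and non-zero there (so that the open-orbit branch survives and does not degenerate into a distribution supported on the closed stratum) and that a genuinely new, linearly independent equivariant distribution supported on the singular locus appears exactly at these parameter values. This will require meromorphic continuation of Riesz-like distributions, a residue computation at the poles, and the matching of normal-derivative data with the representation-theoretic weights coming from $\lambda$ and $\nu$.
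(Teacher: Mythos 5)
Your high-level strategy — Schwartz Kernel Theorem plus orbit-by-orbit analysis — is the right idea and matches the paper's general method in spirit. But there are several concrete errors, and one genuine gap.

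\textbf{You have the orbit--branch correspondence backwards.} You claim the open-orbit family of Riesz kernels imposes $\lambda_2+\rho_2=\nu_2+\rho'_2$. In fact it is the opposite: the constraint $\lambda_2+\rho_2=\nu_2+\rho'_2$ belongs to the family $A_{\lambda,\nu}$ whose distribution kernel is supported on the codimension-$(n-1)$ stratum $\bar{\Ocal}_3\cong\RP^1\subseteq\RP^n$ (in the affine chart this kernel contains a factor $\delta(x')$). The open orbit produces the $B$-family (resp. $C$-family for $n=2$), whose constraint is $\lambda_2-\rho_2-\nu_2-\rho'_2=\nu_1+\rho'_1$. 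So the generically one-dimensional branch with the jump at $L$ is not the open-orbit one. Related to this, the jump at $L$ is not governed by the codimension-one orbit $\{[x]\in\RP^{n-1}\}$: the two families $A^{(1)}_{\lambda,\nu},A^{(2)}_{\lambda,\nu}$ that span the two-dimensional space are supported on the two zero-dimensional $P'$-orbits $\Ocal_1$, $\Ocal_2$ (single points), and the jump occurs precisely because the generic family $A_{\lambda,\nu}$ \emph{vanishes identically} at $(\lambda_1,\nu_1)\in L$ — not, as you suggest, because a new solution appears while the generic one ``remains non-zero.'' The residue structure is the opposite of what your plan anticipates, and this would send you down the wrong path when you try to execute the $L$-analysis.

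\textbf{A single affine chart is not enough, and this gap is substantial.} Reducing to $P'$-invariant distributions on $G/P$ and then passing to the Bruhat cell $\bar{N}b_0$ only works if every $P'$-orbit meets that cell (this is the hypothesis of {\cite[Theorem 3.16]{kobayashi_speh_2015}}). Here the closed orbit $\Ocal_2=\{[0:\ldots:0:1]\}$ does not meet $\bar{N}b_0$, so distributions supported there are invisible in a single chart. This is precisely where the second generic family $A^{(2)}_{\lambda,\nu}$ lives. The paper addresses this with a two-chart gluing statement (Theorem~\ref{thm:charts}): invariant distributions on $G/P$ are identified with pairs of invariant distributions on $\R^n$ satisfying a compatibility condition on the overlap. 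Without this device your open-cell equations would give the wrong dimension count at $L$. Finally, the $n=2$ versus $n\geq3$ dichotomy does not come from a ``zero-dimensional orbit'' for $n=2$; it comes from the fact that $\SL(n-1,\R)$-invariance of the factor $u'$ in the kernel forces $u'$ to be constant (hence $\nu_1=-\rho'_1$) when $n\geq3$, whereas for $n=2$ the group $\SL(n-1,\R)$ is trivial and no such constraint appears.
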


To describe an explicit basis of $H^\infty(\lambda,\nu)$ we define families of operators $C^\infty(\RP^{{n}}) \to C^\infty(\RP^{{n-1}})$ by
\begin{align*}
&A_{\lambda,\nu}\varphi([y])=\int_{S^{1}} \frac{\abs{\omega_1}^{\nu_1+\rho'_1-1}\abs{\omega_2}^{\lambda_1+\rho_1-\nu_1-\rho'_1-1}}{2\Gamma(\frac{\nu_1+\rho'_1}{2})\Gamma(\frac{\lambda_1+\rho_1-\nu_1-\rho'_1}{2})}\varphi\left(\left[\omega_1\frac{y}{\abs{y}}:\omega_2\right]\right)d(\omega_1,\omega_2), \\
&B_{\lambda,\nu}\varphi([y])=\frac{1}{2{\Gamma(\frac{\lambda_1-\rho_1+1}{2})}}\int_{ S^{{n}}}{\abs{\omega_{n+1}}^{\lambda_1-\rho_1}}\varphi([\omega])d\omega, \\
\intertext{and for $n=2$ additionally}
&C_{\lambda,\nu}\varphi([y])=\int_{ S^{{2}}}\frac{\left|\frac{\omega_2y_1}{\abs{y}}-\frac{\omega_1 y_2}{\abs{y}}\right|^{-\nu_1-\rho'_1}\abs{\omega_{3}}^{\lambda_1+\rho_1+\nu_1-\rho'_1-1}}{2\Gamma(\frac{-\nu_1-\rho'_1+1}{2}){\Gamma(\frac{\lambda_1+\rho_1+\nu_1-\rho'_1}{2})}}\varphi([\omega])d\omega,\\
\intertext{where $\varphi\in C^\infty(\RP^n)$ and $[y]\in\RP^{n-1}$. Note that $B_{\lambda,\nu}\varphi$ is always a constant function. The normalizing gamma factors are chosen so that $A_{\lambda,\nu}$, $B_{\lambda,\nu}$ and $C_{\lambda,\nu}$ depend holomorphically on $\lambda,\nu\in\C^2$. It turns out that $B_{\lambda,\nu},C_{\lambda,\nu}\neq0$ for all $\lambda,\nu\in\C^2$, but $A_{\lambda,\nu}=0$ if and only if $(\lambda_1,\nu_1)\in L$. Therefore, we define two additional families of operators. First, for $\nu_1=-\rho'_1-2j$, $j \in \Z_{\geq 0}$, we let}
&A^{(1)}_{\lambda,\nu}\varphi([y])=\abs{y}^{\nu_1+\rho_1'} \frac{d^{2j}}{dt^{2j}}\biggr|_{t=0}\abs{(ty,1)}^{-\lambda_1-\rho_1}\varphi([ty:1]), \\
\intertext{and for $\lambda_1+\rho_1-\nu_1-\rho'_1=-2N$, $N\in\Z_{\geq 0}$, we define}
&A^{(2)}_{\lambda,\nu}\varphi([y])=\abs{y}^{\nu_1+\rho_1'}\frac{d^{2N}}{dt^{2N}}\biggr|_{t=0}\abs{(y,t)}^{-\lambda_1-\rho_1}\varphi([y:t]).
\end{align*}
For instance, if $\lambda_1+\rho_1-\nu_1-\rho_1'=0$, the operator $A_{\lambda,\nu}^{(2)}$ is simply restricting a function on $\RP^n$ to $\RP^{n-1}\subseteq\RP^n$.

In Theorem~\ref{theorem:compact_operators} we give a complete answer to \ref{Q3}:
\begin{theoremalph}[Symmetry breaking operators]
\label{theorem:B}
For $\lambda_2+\rho_2=\nu_2+\rho'_2$ we have
\begin{equation*}
H^\infty(\lambda,\nu)=
\begin{cases}
\C A_{\lambda,\nu} & \text{for } (\lambda_1,\nu_1)\notin L,\\
\C A^{(1)}_{\lambda,\nu} \oplus \C A^{(2)}_{\lambda,\nu} & \text{for } (\lambda_1,\nu_1)\in L.
\end{cases}
\end{equation*}
and for $\lambda_2-\rho_2-\nu_2-\rho'_2=\nu_1+\rho'_1$:
\begin{equation*}
H^\infty(\lambda,\nu) 
\begin{cases}
\C C_{\lambda,\nu} & \text{for } n=2, \\
\C B_{\lambda,\nu} & \text{for } n\geq 3 \text{ and } \nu_1=-\rho'_1.
\end{cases}
\end{equation*}
\end{theoremalph}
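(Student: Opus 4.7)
The plan is to combine the multiplicity upper bound from Theorem~\ref{theorem:A} with an explicit construction of enough linearly independent intertwining operators to realize it. More precisely, I need to verify three things: (i) each listed operator is a continuous linear map that intertwines for $\GL(n,\R)$; (ii) each is non-zero in the parameter range where it is claimed to span (part of) $H^\infty(\lambda,\nu)$; and (iii) in the two-dimensional case, $A^{(1)}_{\lambda,\nu}$ and $A^{(2)}_{\lambda,\nu}$ are linearly independent. Together with Theorem~\ref{theorem:A} these three points force the claimed identifications of $H^\infty(\lambda,\nu)$.

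For the integral families $A_{\lambda,\nu}$, $B_{\lambda,\nu}$ and $C_{\lambda,\nu}$, intertwining would be checked on the level of distribution kernels on $\RP^n\times\RP^{n-1}$. The kernels are products of Riesz-type homogeneous distributions $\abs{\cdot}^s$, normalized by gamma factors so as to extend holomorphically in $(\lambda,\nu)$ to all of $\C^2$. In an open subdomain where the integrals converge absolutely, equivariance under $\GL(n,\R)$ reduces to an elementary change of variables on $S^n$, respectively $S^1\times S^{n-1}$; by holomorphic continuation it then persists everywhere. Nonvanishing of $A_{\lambda,\nu}$ off $L$ follows from the factored form of its kernel together with the classical description of the zeros of the Riesz distribution $\abs{t}^s/\Gamma(\tfrac{s+1}{2})$, while $B_{\lambda,\nu}$ and $C_{\lambda,\nu}$ are visibly nonzero (e.g.\ by evaluating on the constant function).

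For the differential operators $A^{(1)}_{\lambda,\nu}$ and $A^{(2)}_{\lambda,\nu}$, the natural approach is to realize them as residue-type limits of the holomorphic family $A_{\lambda,\nu}$ as the parameter approaches a point of $L$. At such a point exactly one of the two Riesz factors in the kernel degenerates into a normalized derivative of $\delta$ at $0$, thereby producing a differential operator. Approaching a point of $L$ along two independent directions in the parameter plane gives two limit operators $A^{(1)}_{\lambda,\nu}$ and $A^{(2)}_{\lambda,\nu}$, whose displayed formulas come from Taylor-expanding the surviving smooth factor $\abs{(ty,1)}^{-\lambda_1-\rho_1}\varphi([ty:1])$, respectively $\abs{(y,t)}^{-\lambda_1-\rho_1}\varphi([y:t])$, at $t=0$. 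Intertwining passes through the limit by continuity, and linear independence is clear because the two limit kernels have disjoint singular supports, one concentrated on $\{\omega_1=0\}$ and the other on $\{\omega_2=0\}$ in the relevant affine chart.

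The main obstacle will be the careful bookkeeping in this residue/limit procedure: one must track the gamma normalizations precisely to confirm that the limit operators coincide, on the affine chart, with the \emph{exact} differential operators written in the statement (and in particular that they are non-zero, not accidentally killed by another vanishing factor). Once this identification is pinned down, combining (i)--(iii) with the sharp upper bound from Theorem~\ref{theorem:A} yields the two-case basis decomposition of Theorem~\ref{theorem:B}.
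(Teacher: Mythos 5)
Your plan is correct in outline and would yield the theorem, but it organizes the work differently from the paper, and there is a logical point you should make explicit. The paper does not prove Theorem~\ref{theorem:A} as a separate ``upper bound'' and then saturate it; rather, it covers $G/P$ by two affine charts $U_1,U_2$, identifies $(\mathcal{D}'(G/P,\mathcal{V}_{{\bf 1},-\lambda})\otimes W_{{\bf 1},\nu})^{P'}$ with a space $\mathbb{D}(\lambda,\nu)$ of compatible pairs of $(\mathfrak{n}',M'A')$-invariant distributions on $\R^n$ (Theorem~\ref{thm:charts}), and then \emph{classifies} $\mathbb{D}(\lambda,\nu)$ outright (Proposition~\ref{prop:classification_distr}) via Lemmas~\ref{lemma:classification_sigma} and~\ref{lemma:classification_sigma_w_0}. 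Both the dimension count (Theorem~\ref{theorem:A}) and the explicit basis (Theorem~\ref{theorem:B}) fall out of that single classification; in particular, after Theorem~\ref{theorem:SBO-distr} and Theorem~\ref{thm:charts}, the operators in Theorem~\ref{theorem:compact_operators} automatically span $H^\infty(\lambda,\nu)$ because they were obtained by transporting the entire $\mathbb{D}(\lambda,\nu)$ through the bijection, so no separate linear-independence or nonvanishing argument is needed. Your plan inverts this: take the multiplicities as given, then verify intertwining, nonvanishing, and linear independence of the displayed operators. This is perfectly sound provided Theorem~\ref{theorem:A} is available independently, but you should be aware that in this paper the proof of Theorem~\ref{theorem:A} already passes through the same explicit classification, so ``use Theorem~A as upper bound and construct a basis'' does not actually avoid any of the work — it just rearranges it. The payoff of the paper's route is precisely that it eliminates the bookkeeping you identify as the main obstacle: one never has to re-derive the residue/limit identifications of $A^{(1)}_{\lambda,\nu}$, $A^{(2)}_{\lambda,\nu}$ from scratch, because their kernels $(0,\delta^{(2j)}(x_n)\delta(x'))$ and $(\delta^{(2N)}(x_n)\delta(x'),0)$ sit inside the already-classified $\mathbb{D}(\lambda,\nu)$, and linear independence for $(\lambda_1,\nu_1)\in L$ is immediate from the fact that one pair is supported only in the chart $U_2$ and the other only in $U_1$. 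Your ``disjoint singular supports'' argument is essentially this same observation, expressed in the compact picture. The remaining details of your plan (analytic continuation of the gamma-normalized families, the zero set of the Riesz factor to pin down the set $L$, the residue expansions giving the explicit formulas) match Lemma~\ref{lemma:absolute_value_distr_holomorphic}, Corollary~\ref{corollary:residues_distr} and Theorem~\ref{theorem:compact_operators}.
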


Although $A_{\lambda,\nu}=0$ for $(\lambda_1,\nu_1)\in L$, the families $A^{(1)}_{\lambda,\nu}$ and $A^{(2)}_{\lambda,\nu}$ can be obtained as residues of $A_{\lambda,\nu}$ along certain one-dimensional affine complex subspaces through $(\lambda_1,\nu_1)\in\C^2$ (see Corollary~\ref{corollary:residues_distr}):
\begin{theoremalph}[Residue Formulas]
The following residue formulas hold:
\label{theorem:C}
\begin{align*}
A_{\lambda,\nu} &=(-1)^j\frac{ j!}{(2j)!\Gamma(\frac{\lambda_1+\rho_1-\nu_1-\rho'_1}{2})}A^{(1)}_{\lambda,\nu} && \text{for }\nu_1=-\rho'_1-2j, \\
A_{\lambda,\nu} &=(-1)^N\frac{N!}{(2N)!\Gamma(\frac{\nu_1+\rho'_1}{2})}A^{(2)}_{\lambda,\nu} && \text{for }\lambda_1+\rho_1-\nu_1-\rho'_1=2N.\\
\intertext{For $n=2$ we further have the following relations:}
A_{\lambda,\nu} &=\frac{1}{\sqrt{\pi}}C_{\lambda,\nu} && \text{for } \nu_1+\rho'_1=-2 \rho_2, \\
B_{\lambda,\nu} &= \sqrt{\pi}C_{\lambda,\nu}  && \text{for } \nu_1+\rho'_1=0.
\end{align*}
\end{theoremalph}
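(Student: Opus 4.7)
The plan is to reduce all four residue formulas to the single classical distributional identity
\[
\frac{\abs{t}^{s-1}}{\Gamma(s/2)} \xrightarrow{\;s\to -2j\;} \frac{(-1)^j j!}{(2j)!}\,\delta^{(2j)}(t),
\]
which follows from Taylor-expanding a test function against $\abs{t}^{s-1}$ and matching the polar part with the residue $2(-1)^j/j!$ of $\Gamma(s/2)$ at $s=-2j$. The pivotal first step is to rewrite the defining integral of $A_{\lambda,\nu}$ as a $1$D integral over $\R$. Using the projective coordinate $t=\omega_1/\omega_2$ on the half-circle $\{\omega_2>0\}\subset S^1$, the doubling coming from the invariance $\varphi([\omega_1 y/\abs{y}:\omega_2])=\varphi([\omega_1 y/\abs{y}:-\omega_2])$, and the arclength element $d\omega=dt/(1+t^2)$, one arrives at
\[
A_{\lambda,\nu}\varphi([y])=\frac{1}{\Gamma(\frac{\nu_1+\rho'_1}{2})\Gamma(\frac{\lambda_1+\rho_1-\nu_1-\rho'_1}{2})}\int_\R \abs{t}^{\nu_1+\rho'_1-1}(1+t^2)^{-(\lambda_1+\rho_1)/2}\varphi([ty/\abs{y}:1])\,dt.
\]
Everything else follows mechanically from this rewriting.

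With the $1$D presentation in hand, the first residue formula is obtained by specializing $s=\nu_1+\rho'_1\to -2j$ and invoking the displayed identity; the rescaling $u=t\abs{y}$ together with $\abs{y}^{\nu_1+\rho'_1}=\abs{y}^{-2j}$ then matches the output to $A^{(1)}_{\lambda,\nu}\varphi([y])$ on the nose. For the second formula I would apply the involution $t\mapsto 1/t$ to the same $1$D integral, which interchanges the roles of the two exponents and yields
\[
\int_\R \abs{u}^{\lambda_1+\rho_1-\nu_1-\rho'_1-1}(1+u^2)^{-(\lambda_1+\rho_1)/2}\varphi([y/\abs{y}:u])\,du;
\]
the same distributional identity now applies as $\lambda_1+\rho_1-\nu_1-\rho'_1\to -2N$, and after rescaling $t=u/\abs{y}$ the result is identified with $A^{(2)}_{\lambda,\nu}\varphi([y])$, using $\abs{y}^{\nu_1+\rho'_1-2N-(\lambda_1+\rho_1)}=1$ on the residue hyperplane.

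For the two $n=2$ relations with $C_{\lambda,\nu}$ the same mechanism is used, but applied to the factor $\abs{(\omega_2 y_1-\omega_1 y_2)/\abs{y}}^{-\nu_1-\rho'_1}/\Gamma(\frac{-\nu_1-\rho'_1+1}{2})$ in the kernel. At $\nu_1+\rho'_1=-2\rho_2=1$ this factor degenerates to $\delta((\omega_2 y_1-\omega_1 y_2)/\abs{y})$; performing an orthogonal change of variables in $(\omega_1,\omega_2)$ adapted to the direction of $y$ (so that $\omega_2 y_1-\omega_1 y_2$ becomes $\abs{y}$ times one of the new coordinates), the $2$D integral over $S^2$ collapses to an integral over the great circle $\{\omega_2 y_1=\omega_1 y_2\}\cap S^2$, which is visibly $\sqrt{\pi}\,A_{\lambda,\nu}\varphi([y])$ after relabeling the resulting $S^1$-variable. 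At $\nu_1+\rho'_1=0$ the singular factor disappears, $\Gamma(\frac{1}{2})=\sqrt{\pi}$ emerges from the $C$-normalization, and $B_{\lambda,\nu}=\sqrt{\pi}\,C_{\lambda,\nu}$ reduces to a direct matching of exponents (using $\rho_1-2\rho'_1=-\frac{1}{2}$ for $n=2$).

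The main obstacle I anticipate is purely bookkeeping: keeping track of the normalizing Gamma factors as one passes between the two half-circles of $S^1$, applies the orientation-reversed substitution $t\mapsto 1/t$, and specializes the parameters, so that the constants line up exactly. The projective invariance of $\varphi$ resolves all potential sign ambiguities, but it must be invoked cleanly at each step.
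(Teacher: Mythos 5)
Your proposal is correct and rests on the same key ingredient as the paper, namely the one‑dimensional residue $\abs{t}^{s-1}/\Gamma(s/2)\to(-1)^j\tfrac{j!}{(2j)!}\delta^{(2j)}(t)$ as $s\to-2j$ (the paper's Lemma~\ref{lemma:absolute_value_distr_holomorphic}); the paper applies this lemma to the distribution kernels $u^S_{\lambda,\nu}$ on the non‑compact charts (Corollary~\ref{corollary:residues_distr}) and then translates to the compact picture via Theorem~\ref{theorem:compact_operators}, whereas you apply it directly to the compact‑picture integral formulas after reducing the $S^1$‑integral to a $1$D integral (and, for the $n=2$ cases, to the singular factor in the $S^2$‑integral). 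This is a more self‑contained but conceptually equivalent route, and your bookkeeping of the rescalings $u=t\abs{y}$, the involution $t\mapsto 1/t$, the orthogonal rotation on $S^2$, and the exponent identities on the residue hyperplanes all check out.
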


\subsection*{Algebraic symmetry breaking operators}
The problem of constructing and classifying symmetry breaking operators can also be studied in an algebraic framework. The representation $\pi_\lambda$ of $\GL(n+1,\R)$ induces by differentiation an action of the Lie algebra $\glf({n+1},\R)$ on the space of regular functions $\C[\RP^{{n}}]$ on $\RP^n$ which we also denote by $\pi_\lambda$. Similarly $\tau_\nu$ induces an action of $\glf({n},\R)$ on $\C[\RP^{{n-1}}]$ and we consider the space
\begin{equation*}
H^\alg(\lambda,\nu):=\{ T:\C[\RP^{{n}}] \to \C[\RP^{{n-1}}]\mbox{ linear}: T\circ \pi_\lambda(X) = \tau_\nu(X)\circ T\;  \forall\, X \in \glf({n},\R)
\}
\end{equation*}
to which we refer as \textit{algebraic symmetry breaking operators}. Every symmetry breaking operator induces an algebraic symmetry breaking operator by restriction, and since $\C[\RP^n]$ is dense in $C^\infty(\RP^n)$ the restriction map
\begin{equation*}
H^\infty(\lambda,\nu)\hookrightarrow H^\alg(\lambda,\nu), \quad A\mapsto A|_{\C[\RP^n]}
\end{equation*}
is injective. In particular we have the following basic inequality between multiplicities in the smooth and in the algebraic setting:
\begin{equation}
\label{eq:mult_smooth_leq_mult_alg}
\dim H^\infty(\lambda,\nu)\leq \dim H^\alg(\lambda,\nu).
\end{equation}
In Section~\ref{sec:concrete_algebraic} we study algebraic symmetry breaking operators in detail and show in particular that the multiplicities are actually equal, or in other words, algebraic symmetry breaking operators are automatically continuous (see Theorem~\ref{theorem:mult_HC}):
\begin{theoremalph}
\label{theorem:D}
The restriction map $H^\infty(\lambda,\nu)\to H^\alg(\lambda,\nu)$ is a bijection for all $\lambda,\nu\in\C^2$.
\end{theoremalph}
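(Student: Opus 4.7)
The injectivity of the restriction map has already been recorded (via density of $\C[\RP^n]$ in $C^\infty(\RP^n)$), so the substance of the theorem is surjectivity, equivalently the reverse inequality $\dim H^\alg(\lambda,\nu) \leq \dim H^\infty(\lambda,\nu)$ for every $(\lambda,\nu)\in\C^2\times\C^2$. Combined with Theorems~\ref{theorem:A} and \ref{theorem:B}, whose explicit operators restrict to algebraic SBOs realizing the claimed counts, it suffices to establish the upper bound on $\dim H^\alg(\lambda,\nu)$.

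The plan is to compute $H^\alg(\lambda,\nu)$ directly on a polynomial model. Using the open cell $\R^n\hookrightarrow\RP^n$, $x\mapsto[x:1]$, I would identify $\C[\RP^n]$ with $\C[x_1,\ldots,x_n]$ after trivialising $\mathcal{L}_\lambda$, and likewise $\C[\RP^{n-1}]$ with $\C[y_1,\ldots,y_{n-1}]$. In these coordinates the infinitesimal action $d\pi_\lambda|_{\glf(n,\R)}$ becomes an explicit system of first-order differential operators, and analogously for $d\tau_\nu$. An algebraic SBO $T$ is then encoded by its matrix in the monomial basis, and the intertwining identity translates into a system of linear recurrences among these entries.

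To solve the recurrences, exploit the grading $\glf(n,\R)=\glf_{-1}\oplus\glf_0\oplus\glf_{+1}$ coming from the maximal parabolic stabilising $\R e_n$. The $\glf_0$-equivariance forces $T$ to preserve a natural bigrading by homogeneity degree and $\GL(n-1)$-type, cutting the problem down to a sequence of finite-dimensional linear problems on weight spaces. The equivariance with respect to $\glf_{-1}$ and $\glf_{+1}$ then yields raising and lowering recurrences, and an inductive argument should reduce any $T\in H^\alg(\lambda,\nu)$ to finitely many free parameters living on a minimal $\glf_0$-type. Counting these parameters and comparing with Theorem~\ref{theorem:A} will finish the proof.

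The main obstacle is the upper-bound analysis at the exceptional parameters where the dimension jumps, in particular on the set $L$ where Theorem~\ref{theorem:A} predicts dimension~$2$ rather than $1$, and at the interplay between the two branches $\lambda_2+\rho_2=\nu_2+\rho_2'$ and $\lambda_2-\rho_2-\nu_2-\rho_2'=\nu_1+\rho_1'$. For these points one must show that no additional initial datum can be chosen freely beyond those accounted for by the known operators: I expect this to follow from a hidden compatibility relation further along the recurrence that forces any would-be extra parameter to vanish, requiring a careful case analysis (especially to explain why the $n=2$ branch behaves differently from $n\geq 3$). Once these corner cases are settled, the already-injective restriction map $H^\infty(\lambda,\nu)\hookrightarrow H^\alg(\lambda,\nu)$ becomes a map between spaces of equal finite dimension, hence a bijection.
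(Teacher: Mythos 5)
The overall strategy is right (injectivity is already known, so one must show $\dim H^\alg(\lambda,\nu)\le\dim H^\infty(\lambda,\nu)$ by a direct count), and the paper does prove this by counting solutions to a system of recurrences coming from $\g'$-equivariance. But your proposed model has a genuine gap at the very first step. The space $\C[\RP^n]$ as used here is the set of $K=\Orm(n+1)$-finite vectors, which by \eqref{EQ:Kdecomp} is $\bigoplus_\alpha\Hcal^{2\alpha}(\R^{n+1})|_{S^n}$. Restricted to the open affine cell $x\mapsto[x:1]$, a vector in $\Ical(\alpha)$ becomes $P(x,1)/(1+|x|^2)^\alpha$ for a harmonic $P$ of degree $2\alpha$; it is a rational function, not a polynomial, and no choice of trivialisation of $\mathcal{L}_\lambda$ removes the denominator uniformly in $\alpha$ because its power grows with the $K$-type. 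Identifying $\C[\RP^n]$ with $\C[x_1,\ldots,x_n]$ therefore conflates the $K$-finite vectors with the $\bar{\mathfrak{n}}$-finite ones, a different $(\g,\bar N)$-module, and the whole recurrence analysis you sketch afterwards (bigrading by homogeneity degree in $x$, raising/lowering for $\glf_{\pm1}$) is built on this incorrect identification.

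There is a second gap you gloss over: even granting a correct $K'$-graded model, you would need a mechanism to turn the $\g'$-intertwining condition into \emph{scalar} recurrences rather than relations between $K'$-equivariant maps $\Ical(\alpha,\alpha')\to\Jcal(\beta')$. The paper supplies this via the spectrum-generating operator and the cocycle reduction (Theorem~\ref{Thm:cocycle}, following \cite{BOO_1996,MolOrs}), together with the multiplicity-one data $\lambda_{\alpha,\alpha'}^{\beta,\beta'}$, which reduce everything to linear relations among complex numbers $t_{\alpha,\alpha'}$ (Theorem~\ref{Theorem:fromulas}); the upper bound on $\dim H^\alg$ is then Theorem~\ref{theorem:mult_HC}, proved by tracking the propagation of $t_{\alpha,\alpha'}$ across the ``barriers'' at $\lambda_1\in-\rho_1-2\Z_{\ge0}$ and $\nu_1\in-\rho'_1-2\Z_{\ge0}$, including the two-dimensional jump on $L$ and the separate branch for $n=2$. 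Your proposal states that this case analysis should work but does not supply the tool that makes the relations scalar, nor does it address how the two parameter constraints $\lambda_2+\rho_2=\nu_2+\rho'_2$ versus $\lambda_2-\rho_2-\nu_2-\rho'_2=\nu_1+\rho'_1$ arise (in the paper they drop out of the central relation \eqref{EQ:formulaZ}, which has no analogue in your grading because you treated $\glf(n)$ rather than $\glf(n)\hookrightarrow\glf(n+1)$ and discarded the centre). Fixing the first gap means replacing the polynomial model by the $K$-type model; once you do that you are essentially forced into the paper's cocycle approach or an equivalent device.
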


\subsection*{Representation theoretic context}
The representations $\pi_\lambda$ and $\tau_\nu$ belong to the so-called generalized principal series of the reductive Lie groups $G=\GL({n+1},\R)$ and $G'=\GL({n},\R)$. Principal series representations are induced from a parabolic subgroup, which is in this case of $\pi_\lambda$ a maximal parabolic subgroup $P\subseteq G$ such that $G/P\cong\RP^n$. In representation theoretic language, the space of symmetry breaking operators can be written as
\begin{equation*}
H^\infty(\lambda,\nu) = \Hom_{G'}(\pi_\lambda|_{G'},\tau_\nu),
\end{equation*}
the space of intertwining operators between the restriction $\pi_\lambda|_{G'}$ of the representation $\pi_\lambda$ to the subgroup $G'$ and the representation $\tau_\nu$. Its dimension is called multiplicity of $\tau_\nu$ in $\pi_\lambda$, and it describes how often $\tau_\nu$ occurs as a quotient of $\pi_\lambda|_{G'}$. In this way, the study of symmetry breaking operators can also be seen as a contribution to \textit{branching problems}, i.e. the decomposition of restricted representations. In his $\mathcal{ABC}$-program for branching problems, Kobayashi~\cite{kobayashi_2015} initiated the systematic study of symmetry breaking operators, and our natural questions \ref{Q1}, \ref{Q2} and \ref{Q3} can be viewed as a contribution to part $\mathcal{B}$ and $\mathcal{C}$ of his program.

Also algebraic symmetry breaking operators have a representation theoretic interpretation. The orthogonal group $K=\Orm(n+1)\subseteq G$ is a maximal compact subgroup of $G$ and the restriction of $\pi_\lambda$ to $K$ is the natural representation of $\Orm(n+1)$ on $C^\infty(\RP^n)$. In particular, it is independent of $\lambda$ and the regular functions on $\RP^n$ are precisely those which generate a finite-dimensional representation of $K$. Therefore, $\C[\RP^n]$ carries both an action of $K$ and an action of the Lie algebra $\g=\mathfrak{gl}(n+1,\R)$ by differentiation, and these actions are compatible. This defines on $\C[\RP^n]$ the structure of a $(\g,K)$-module, also called the \textit{underlying Harish-Chandra module} of $\pi_\lambda$ and denoted by $(\pi_\lambda)_\HC$.

In the same way, the representation $\tau_\nu$ of $G'=\GL(n,\R)$ on $C^\infty(\RP^{n-1})$ induces the structure of a $(\mathfrak{g}',K')$-module on $\C[\RP^{n-1}]$, where $\g'=\mathfrak{gl}(n,\R)$ and $K'=\Orm(n)\subseteq G'$. Denoting this Harish-Chandra module by $(\tau_\nu)_\HC$, the space of algebraic symmetry breaking operators can be written as
\begin{equation*}
H^\alg(\lambda,\nu)=\Hom_{(\g',K')}((\pi_\lambda)_{\HC}|_{(\g',K')},(\tau_\nu)_{\HC}),
\end{equation*}
the space of intertwining operators for the actions of $\g'$ and $K'$. In this language, Theorem~\ref{theorem:D} confirms for our particular representations a conjecture by Kobayashi, which states that for arbitrary real reductive groups $G$ and $G'$ the restriction
\begin{equation*}
 \Hom_{G'}(\pi|_{G'},\tau) \to \Hom_{(\g',K')}(\pi_{\HC}|_{(\g',K')},\tau_{\HC})
\end{equation*}
is an isomorphism for all admissible smooth representations $\pi$ of $G$ and $\tau$ of $G'$ if the homogeneous space $(G\times G')/\diag(G')$ is real spherical (see \cite[Remark 10.2~(4)]{kobayashi_2014}).

\subsection*{Multiplicities and symmetry breaking operators between irreducible subquotients}
Generically, the representations $\pi_\lambda$ and $\tau_\nu$ are irreducible, but for certain singular parameters $\lambda,\nu\in\C^2$ they have composition series of length $2$ (see Lemma~\ref{Lemma:submodules}). The same questions \ref{Q1}, \ref{Q2} and \ref{Q3} can be asked for the corresponding subrepresentations and quotients of $\pi_\lambda$ and $\tau_\nu$ at reducibility points. In Theorem~\ref{theorem:composition_series_tables} we determine multiplicities and explicit symmetry breaking operators between all subrepresentations and quotients, and show in particular that also in this case every algebraic symmetry breaking operator is automatically continuous.

\subsection*{Structure of this article}
In Section~\ref{sec:general_smooth} we introduce the necessary notation and briefly describe how the main results of this paper are proven. The construction and classification of symmetry breaking operators between the smooth representations $\pi_\lambda$ and $\tau_\nu$ is achieved in Section~\ref{sec:concrete_smooth} where Theorems~\ref{theorem:A}, \ref{theorem:B} and \ref{theorem:C} are shown. Here we also obtain a description of all symmetry breaking operators in the non-compact picture (see Theorem~\ref{thm:SBOsNonCptPicture}) as well as explicit formulas for the action on each $K$-type (see Lemma~\ref{lemma:spectral_functions}). Section~\ref{sec:SBOsHCmodules} is concerned with a general combinatorial characterization of algebraic symmetry breaking operators, generalizing some of the results in~\cite{MolOrs} from semisimple groups to reductive groups, and also treating operators between subquotients of principal series representations. Finally, in Section~\ref{sec:AlgSBOsGL} we apply this combinatorial characterization to the case $(G,G')=(\GL(n+1,\R),\GL(n,\R))$ to prove Theorem~\ref{theorem:D} and compute the multiplicities between all possible subquotients of $\pi_\lambda$ and $\tau_\nu$ (see Theorem~\ref{theorem:composition_series_tables}). In the appendix we collect some elementary facts about homogeneous distributions on $\R^n$, Gegenbauer polynomials and spherical harmonics.

The results in Sections~\ref{sec:SBOsHCmodules} and \ref{sec:AlgSBOsGL} were obtained in the second author's Master thesis~\cite{Weiske}.

\section{Symmetry breaking operators between generalized principal series}
\label{sec:general_smooth}

Let us explain a general setting to study symmetry breaking operators into which the real reductive pair $(G,G')=(\GL(n+1,\R),\GL(n,\R))$ fits.

\subsection{Generalized principal series representations}\label{sec:GenPS}

Let $G$ be a real reductive Lie group and $P\subseteq G$ a parabolic subgroup with Langlands decomposition $P=MAN$. We write $\g$, $\mathfrak{m}$, $\mathfrak{a}$ and $\mathfrak{n}$ for the Lie algebras of $G$, $M$, $A$ and $N$.

For any finite-dimensional representation $(\xi,V_\xi)$ of $M$ and any $\lambda\in\mathfrak{a}_\C^*$, the tensor product $\xi\otimes e^\lambda\otimes1$ is a representation of $P=MAN$ on $V_\xi$ and we form the induced representation (smooth normalized parabolic induction)
$$ \pi_{\xi,\lambda} = \Ind_P^G(\xi\otimes e^\lambda\otimes1) $$
which is called a \textit{generalized principal series representation}. It is given by the left-regular action on the space
\begin{equation*}
I_{\xi,\lambda}(G)= \{ F \in C^\infty (G,V_\xi): F(gman)=a^{-\rho-\lambda}\xi(m)^{-1}F(g)\,\forall\,g \in G, man \in P \},
\end{equation*}
where $\rho=\frac{1}{2}\tr\mathrm{ad}|_{\mathfrak{n}}\in\mathfrak{a}^*$. A more geometric realization of $\pi_{\xi,\lambda}$ is as sections of a certain vector bundle. For this let $V_{\xi,\lambda}$ denote the representation $\xi\otimes e^{\lambda+\rho}\otimes1$ on $V_\xi$ and form the equivariant vector bundle
$$ \mathcal{V}_{\xi,\lambda} = G\times_PV_{\xi,\lambda} \to G/P, $$
then $I_{\xi,\lambda}(G)\cong C^\infty(G/P,\mathcal{V}_{\xi,\lambda})$ as $G$-representations.

\subsubsection{The compact picture}

We choose a Cartan involution $\theta$ that leaves $MA$ invariant, then the fixed point group $K=G^\theta$ of $\theta$ is a maximal compact subgroup of $G$ and $K\cap P=K\cap M$ is a maximal compact subgroup of $M$. On the Lie algebra level, the involution $\theta$ induces a the Cartan decomposition $\g=\mathfrak{k}\oplus\mathfrak{s}$ with $\mathfrak{k}$ the $(+1)$-eigenspace and $\mathfrak{s}$ the $(-1)$-eigenspace of $\theta$. The subspace $\mathfrak{k}$ is precisely the Lie algebra of $K$.

By the  Iwasawa decomposition we have $G=KP$, so that restricting to $K$ defines an isomorphism from $I_{\xi,\lambda}(G)$ to
\begin{equation*}
I_\xi(K) = \{F \in C^\infty (K,V_\xi): F(km)=\xi(m)^{-1}F(k)\,\forall\,k \in K, m \in K\cap M \}.
\end{equation*}
This space is independent of $\lambda$ and by abuse of notation we also denote by $\pi_{\xi,\lambda}$ the representation of $G$ on $I_\xi(K)$ which makes the restriction map $I_{\xi,\lambda}(G)\to I_\xi(K)$ equivariant. The realization $(\pi_{\xi,\lambda},I_\xi(K))$ is called the \textit{compact picture}.

\subsubsection{The non-compact picture}

Let $\Nbar:=\theta N$ be the nilradical of the parabolic subgroup opposite to $P$. Yet another realization of $\pi_{\xi,\lambda}$ is obtained by restricting functions in $I_{\xi,\lambda}(G)$ to $\bar{N}$. Since $\Nbar P$ is an open dense subspace of $G$, restriction to $\bar{N}$ defines an isomorphism $I_{\xi,\lambda}(G)\to I_{\xi,\lambda}(\Nbar)$ onto a subspace $I_{\xi,\lambda}(\Nbar)\subseteq C^\infty(\Nbar)$ which contains the Schwartz space $\mathcal{S}(\Nbar)$. This realization is called the \textit{non-compact picture} of $\pi_{\xi,\lambda}$.

\subsection{Symmetry breaking operators}

Let $G'\subseteq G$ be a reductive subgroup, $P'=M'A'N'\subseteq G'$ a parabolic subgroup and $\g'$, $\mathfrak{m}'$, $\mathfrak{a}'$ and $\mathfrak{n}'$ the corresponding Lie algebras. As above we form generalized principal series representations
$$ \tau_{\eta,\nu} = \Ind_{P'}^{G'}(\eta\otimes e^\nu\otimes1) $$
associated to finite-dimensional representations $(\eta,W_\eta)$ of $M'$ and $\nu\in(\mathfrak{a}')_\C^*$, and realize them in three different ways, on $J_{\eta,\nu}(G')$, $J_{\eta}(K')$ and $J_{\eta,\nu}(\Nbar')$. For the latter two realizations we assume that $G'$ and $M'A'$ are $\theta$-stable, then $K'=K\cap G'$ is a maximal compact subgroup of $G'$ and $\Nbar'=\theta N'$ the nilradical opposite to $N'$. For later purpose we write $\g'=\mathfrak{k}'\oplus\mathfrak{s}'$ for the corresponding Cartan decomposition of $\g'$.

Consider the space
$$ \Hom_{G'}(\pi_{\xi,\lambda}|_{G'},\tau_{\eta,\nu}) $$
of continuous intertwining operators between the restricted representation $\pi_{\xi,\lambda}|_{G'}$ and $\tau_{\eta,\nu}$. Such operators are also called \textit{symmetry breaking operators} by Kobayashi~\cite{kobayashi_2015}, since they describe the occurrence of $\tau_{\eta,\nu}$ as a quotient of $\pi_{\xi,\lambda}|_{G'}$. One way to study symmetry breaking operators is in terms of their distribution kernels. For this it is most convenient to realize both representations as section of equivariant vector bundles.

Let $\rho' = \frac{1}{2}\tr\mathrm{ad}|_{\mathfrak{n}'}$ and denote by $W_{\eta,\nu}$ the representation $\eta\otimes e^{\nu+\rho'}\otimes1$ of $P'$ on $W_\eta$. Then $J_{\eta,\nu}(G')\cong C^\infty(G'/P',\mathcal{W}_{\eta,\nu})$, where $\mathcal{W}_{\eta,\nu}=G'\times_{P'}W_{\eta,\nu}\to G'/P'$. In this realization
$$ \Hom_{G'}(\pi_{\xi,\lambda}|_{G'},\tau_{\eta,\nu}) = \Hom_{G'}(C^\infty(G/P,\mathcal{V}_{\xi,\lambda}), C^\infty(G'/P',\mathcal{W}_{\eta,\nu})). $$
By the Schwartz Kernel Theorem, every continuous linear operator between $C^\infty(G/P,\mathcal{V}_{\xi,\lambda})$ and $C^\infty(G'/P',\mathcal{W}_{\eta,\nu})$ is given by a distribution section of the vector bundle $\mathcal{V}_{\xi^*,-\lambda}\otimes\mathcal{W}_{\eta,\nu}$ over $G/P\times G'/P'$, where $\xi^*$ denotes the representation contragredient to $\xi$. The $G'$-equivariance of the operator then translates to an invariance property for the distribution. Since $G'$ acts transitively on $G'/P'$, the distribution section can be viewed as a section on $G/P$ with certain invariance properties. The precise statement is:

\begin{theorem}[{\cite[Proposition 3.2]{kobayashi_speh_2015}}]
\label{theorem:SBO-distr}
There is a natural bijection
\begin{equation*}
\Hom_{G'}(\pi_{\xi,\lambda}|_{G'},\tau_{\eta,\nu}) \stackrel{\sim}{\longrightarrow} (\mathcal{D}'(G/P,\mathcal{V}_{\xi^*,-\lambda})\otimes W_{\eta,\nu})^{P'}, \quad A\mapsto u_A.
\end{equation*}
\end{theorem}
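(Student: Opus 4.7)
The plan is to invoke the Schwartz Kernel Theorem for vector bundles and then use transitivity of the $G'$-action on $G'/P'$ to collapse one factor. First, by the Schwartz Kernel Theorem applied to the compact manifolds $G/P$ and $G'/P'$ equipped with their smooth $G$- and $G'$-equivariant bundles, every continuous linear map $A: C^\infty(G/P,\mathcal{V}_{\xi,\lambda}) \to C^\infty(G'/P',\mathcal{W}_{\eta,\nu})$ is represented by a unique distribution section $u_A$ of an external tensor product bundle on $G/P \times G'/P'$. The key bookkeeping is to identify the bundle on the first factor: since $A$ pairs with sections of $\mathcal{V}_{\xi,\lambda}$, the kernel lives in the dual bundle tensored with the bundle of densities on $G/P$, and the $\rho$-shift built into the definition of normalized parabolic induction is precisely what is needed to absorb this density twist. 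Consequently the kernel is naturally a distribution section of $\mathcal{V}_{\xi^*,-\lambda}\boxtimes\mathcal{W}_{\eta,\nu}$.

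Next I would translate $G'$-equivariance of $A$ into an invariance condition on $u_A$. The diagonal $G'$-action on $G/P \times G'/P'$ (via the restriction of the $G$-action on the first factor and the natural action on the second) lifts to an action on the bundle $\mathcal{V}_{\xi^*,-\lambda}\boxtimes\mathcal{W}_{\eta,\nu}$, and a direct computation with the integration pairing that defines $u_A$ shows that $A$ intertwines $\pi_{\xi,\lambda}|_{G'}$ with $\tau_{\eta,\nu}$ if and only if $u_A$ is invariant under this diagonal action.

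The final step collapses the second factor. Since $G'$ acts transitively on $G'/P'$ with stabilizer $P'$ at $eP'$, evaluation at $eP'$ yields an isomorphism between the space of $G'$-invariant distribution sections of $\mathcal{V}_{\xi^*,-\lambda}\boxtimes\mathcal{W}_{\eta,\nu}$ on $G/P \times G'/P'$ and the space of $P'$-invariant distribution sections of $\mathcal{V}_{\xi^*,-\lambda}$ on $G/P$ with values in the fiber $W_{\eta,\nu}$ at $eP'$. Here the residual $P'$-action combines the left translation action on $G/P$ coming from $P' \subseteq G' \subseteq G$ with the representation of $P'$ on $W_{\eta,\nu}$, which is exactly $\eta\otimes e^{\nu+\rho'}\otimes 1$, the datum from which $\tau_{\eta,\nu}$ is induced. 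This identifies the target as $(\mathcal{D}'(G/P,\mathcal{V}_{\xi^*,-\lambda})\otimes W_{\eta,\nu})^{P'}$ and produces the asserted bijection $A \mapsto u_A$, with inverse obtained by spreading a $P'$-invariant kernel out to a $G'$-invariant kernel via the $G'$-action.

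The main obstacle is purely one of normalization bookkeeping: verifying that the bundle on the $G/P$-factor in the Schwartz Kernel Theorem is precisely $\mathcal{V}_{\xi^*,-\lambda}$ (that is, dualizing $\xi$ and flipping the sign of $\lambda$ with no further modular twist), and that the induced $P'$-action on the fiber at $eP'$ recovers exactly $W_{\eta,\nu}$. Both points amount to tracking the $\rho$- and $\rho'$-shifts built into normalized induction against the modular function of $P'$ and the density twist inherent in the Schwartz Kernel Theorem; once this is done, the geometric content of the statement is essentially forced.
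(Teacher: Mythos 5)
Your proposal is correct and follows exactly the same route as the one sketched in the paper (and, one presumes, the one in Kobayashi--Speh, from which Theorem~\ref{theorem:SBO-distr} is quoted): Schwartz Kernel Theorem to get a kernel on $G/P\times G'/P'$, translate $G'$-equivariance into diagonal $G'$-invariance of the kernel, then use transitivity of $G'$ on $G'/P'$ to reduce to $P'$-invariant $W_{\eta,\nu}$-valued distributions on $G/P$ alone. Your density bookkeeping is also in order: with the ``generalized functions'' convention the paper adopts, $\mathcal{D}'(G/P,\mathcal{V}_{\xi^*,-\lambda})$ is by definition the continuous dual of $C^\infty(G/P,\mathcal{V}_{\xi,\lambda})$, and the $\rho$-shift in $V_{\xi,\lambda}=\xi\otimes e^{\lambda+\rho}\otimes 1$ together with the $e^{2\rho}$-twist of the density bundle on $G/P$ indeed combine to give $\xi^*\otimes e^{-\lambda+\rho}\otimes 1 = V_{\xi^*,-\lambda}$, so there is no residual modular factor.
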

As in \cite{kobayashi_speh_2015} we use generalized functions instead of distributions, so that $\mathcal{D}'(G/P,\mathcal{V}_{\xi^*,-\lambda})$ is the continuous dual space of $C^\infty(G/P,\mathcal{V}_{\xi,\lambda})$. Then a distribution $u_A \in (\mathcal{D}'(G/P,\mathcal{V}_{\xi^*,-\lambda})\otimes W_{\eta,\nu})^{P'}$ defines a symmetry breaking operator $A:C^\infty(G/P,\mathcal{V}_{\xi,\lambda})\to C^\infty(G'/P',\mathcal{W}_{\eta,\nu})$ by
\begin{equation}
\label{eq:operator-kernel}
A\varphi(h) = \langle u_A,\varphi(h\,\cdot\,)\rangle, \qquad \varphi \in C^\infty(G/P,\mathcal{V}_{\xi,\lambda}).
\end{equation}

In Section~\ref{sec:concrete_smooth} we classify all symmetry breaking operators for the pair $(G,G')=(\GL(n+1,\R),\GL(n,\R))$ and certain generalized principal series in terms of their distribution kernels.

\subsection{Algebraic symmetry breaking operators}

The algebraic counterpart of the smooth representation $\pi_{\xi,\lambda}$ is its underlying $(\g,K)$-module $(\pi_{\xi,\lambda})_\HC$. This is a representation of the Lie algebra $\g$ of $G$ and the maximal compact subgroup $K$ such that both actions are compatible. The representation space of the underlying $(\g,K)$-module is the subspace consisting of \textit{$K$-finite vectors}, i.e. vectors which generate a finite-dimensional representation of $K$.

The structure of the underlying Harish-Chandra module is most easily seen in the compact picture, and we let
$$ \Ical := I_{\xi}(K)_{\text{$K$-finite}} $$
be the subspace of $K$-finite vectors in $I_\xi(K)$. Abusing notation, we denote the $\g$-action on $\Ical$ also by $\pi_{\xi,\lambda}$. The restriction $\pi_{\xi,\lambda}|_K$ is independent of $\lambda$, and the $K$-module $\Ical$ is completely reducible. It decomposes as
\begin{equation*}
\Ical=\bigoplus_{\alpha \in\hat{K}}\Ical(\alpha),
\end{equation*}
where $\Ical(\alpha)$ is the $\alpha$-isotypic component of $\Ical$.

Similarly the underlying $(\g',K')$-module $(\tau_{\eta,\nu})_{\HC}$ of $\tau_{\eta,\nu}$ is realized on the space $\Jcal:=J_{\eta}(K')_{\text{$K'$-finite}}$ of $K'$-finite vectors in $J_\eta(K')$, which decomposes into $K'$-isotypic components as
\begin{equation*}
\Jcal = \bigoplus_{\alpha'\in \hat{K}'}\Jcal(\alpha').
\end{equation*}
The algebraic version of the space of symmetry breaking operators is
$$ \Hom_{(\g',K')}((\pi_{\xi,\lambda})_\HC|_{(\g',K')},(\tau_{\eta,\nu})_\HC), $$
the space of all linear maps $\mathcal{I}\to\mathcal{J}$ which intertwine the Lie algebra actions $\pi_{\xi,\lambda}|_{\g'}$ and $\tau_{\eta,\nu}$ of $\g'$ as well as the group actions $\pi_{\xi,\lambda}|_{K'}$ and $\tau_{\eta,\nu}|_{K'}$ of $K'$.

Let us assume the following three multiplicity-freeness properties:
\begin{equation}
\dim \Hom_K(\alpha,\Ical) \leq 1, \quad \dim \Hom_{K'}(\alpha',\Jcal)\leq 1, \quad \dim \Hom_{K'}(\Ical(\alpha),\Jcal(\alpha'))\leq 1,\label{eq:MultFreeProperties}
\end{equation}
for all $\alpha \in \hat{K}$, $\alpha' \in \hat{K'}$. Then all isotypic components $\Ical(\alpha)$ and $\Jcal(\alpha')$ are either trivial or irreducible, and further every $K$-type $\Ical(\alpha)$ decomposes under the action of $K'$ into a multiplicity-free direct sum
\begin{equation*}
\Ical(\alpha)=\bigoplus_{\alpha' \in \hat{K'}}\Ical(\alpha,\alpha')
\end{equation*}
where $\Ical(\alpha,\alpha')$ is the $\alpha'$-isotypic component of $\Ical(\alpha)$.

Now let $T:\mathcal{I}\to\mathcal{J}$ be a symmetry breaking operator, then $T$ is in particular $K'$-intertwining. By Schur's Lemma, each summand $\Ical(\alpha,\alpha')$ is mapped into $\Jcal(\alpha')$. Since $\Jcal(\alpha')$ is either trivial or irreducible, the restriction
$$ T|_{\Ical(\alpha,\alpha')}:\Ical(\alpha,\alpha')\to\Jcal(\alpha') $$
must either be trivial or a $K'$-equivariant isomorphism, and in the latter case it is unique up to scalar multiples. For all pairs $(\alpha,\alpha')\in\hat{K}\times\hat{K}'$ with $\Ical(\alpha,\alpha'),\Jcal(\alpha')\neq\{0\}$ let us fix a $K'$-isomorphism
\begin{equation}
 R_{\alpha,\alpha'}: \Ical(\alpha,\alpha')\rightarrow \Jcal(\alpha'),\label{eq:FixRalphaalpha'}
\end{equation}
then every $K'$-equivariant linear map $T:\Ical\to\Jcal$ is uniquely determined by a sequence of scalars $(t_{\alpha,\alpha})$ such that
$$ T|_{\Ical(\alpha,\alpha')} = t_{\alpha,\alpha'}\cdot R_{\alpha,\alpha'}. $$

In Section~\ref{sec:SBOsHCmodules} we reformulate the additional property that $T$ is intertwining for the Lie algebra representations $\pi_{\xi,\lambda}|_{\g'}$ and $\tau_{\eta,\nu}$ of $\g'$ in terms of a system of linear relations that the sequence $(t_{\alpha,\alpha'})$ has to satisfy. This was previously done in \cite{MolOrs} for the case of semisimple groups, and we generalized these ideas to reductive groups. Further, we systematically extend this characterization to the case of symmetry breaking operators between submodules and quotients of principal series. These general techniques are applied in Section~\ref{sec:AlgSBOsGL} to the pair of groups $(G,G')=(\GL(n+1,\R),\GL(n,\R))$.

\section{Smooth symmetry breaking for of the general linear group}
\label{sec:concrete_smooth}

In this section we classify all symmetry breaking operators between generalized principal series of $G=\GL(n+1,\R)$ and $G'=\GL(n,\R)$ in terms of their distribution kernels. This proves Theorems~\ref{theorem:A}, \ref{theorem:B} and \ref{theorem:C}.

\subsection{Generalized principal series representations}

From now on let $n\geq2$. We fix the necessary notation for the groups we consider. Let $G=\GL({n+1},\R)$ be the group of invertible $(n+1)\times(n+1)$ matrices with real entries. The group $G$ acts transitively on the real projective space
$$ \RP^n = \{[x]=\R x:x\in\R^{n+1}\setminus\{0\}\} $$
by $g\cdot[x]=[gx]$. The stabilizer $P$ of the line $[e_1]$ through the first standard basis vector $e_1=(1,0,\ldots,0)$ is a maximal parabolic subgroup of $G$. Its Langlands decomposition $P=MAN$ is given by $A= \{\text{diag}(x,x',\ldots ,x') : x,x' \in \R_{>0} \}$, 
$N$ the unipotent group of matrices of the form
\[
\left(
\begin{array}{c|c}
  1 & * \cdots * \\ \hline
  0 & \raisebox{-15pt}{{\large\mbox{{$\mathbf{1}_{{n}}$}}}} \\[-3.7ex]
  \vdots & \\[+0.1ex]
  0 &
\end{array}
\right)
\]
and $M$ isomorphic to $ \{ \pm 1 \} \times \SL^\pm({n},\R)$ realized in $\GL({n+1},\R)$ as matrices of the form
\[
\left(
\begin{array}{c|c}
  \pm 1 & 0 \cdots 0 \\ \hline
  0 & \raisebox{-17pt}{{\Large\mbox{{$h$}}}} \\[-3.5ex]
  \vdots & \\[+0.1ex]
  0 &
\end{array}
\right)
\]
with $h \in \SL^\pm({n},\R)$.
In particular, $MA=\GL(1,\R)\times \GL({n},\R)$.

Let $G' \subseteq G$ be the subgroup of matrices whose last row and column is the last standard basis vector $e_{n+1}$ of $\R^{n+1}$, then $G'\cong\GL({n},\R)$. With this identification we consider from now on $\GL({n},\R)$ as a subgroup of $G$. Then $P'=P\cap G'=M'A'N'$ is maximal parabolic in $G'$ with $M'=M\cap G'$ isomorphic to $ \{ \pm 1 \} \times \SL^\pm({n-1},\R)$, $A'=\{ \mathrm{diag}(y,y',\ldots y',1): y,y'\in \R_{>0} \} \subseteq MA$ and $N'=N\cap G'$. The orbit $G'\cdot[e_1]\cong G'/P'$ inside $\RP^n$ is the real projective space $\RP^{n-1}$, viewed as the subspace $\{[x_1:\ldots:x_n:0]:(x_1,\ldots,x_n)\in\R^n\setminus\{0\}\}$.

Let $K \subseteq G$ and $K' \subseteq G$ denote the maximal compact subgroups $\Orm({n+1})$ and $\Orm({n})$ belonging to the Cartan involution $\theta(X)=-X^\top$ on $\g$. Then the $(-1)$-eigenspace $\mathfrak{s}$ of $\theta$ in $\g$ is the space of symmetric matrices. Further, $G=KP$ and $G'=K'P'$ and therefore $G/P\cong K/(K\cap M)$ and $G'/P'\cong K'/(K'\cap M')$.

Let $\lambda \in \mathfrak{a}_{\C}^*$ and $\nu \in (\mathfrak{a}'_{\C})^*$. 
Let $X=\diag(1,-\frac{1}{{n}},\ldots,-\frac{1}{{n}}), Y=\diag(0,\frac{1}{{n}},\ldots,\frac{1}{{n}})\in \mathfrak{a}$ and $X'=\diag(1,-\frac{1}{{n-1}},\ldots,-\frac{1}{{n-1}},0), Y'=\diag(0,\frac{1}{{n-1}},\ldots,\frac{1}{{n-1}},0)\in \mathfrak{a}'$
We identify $\mathfrak{a}_{\C}^*\cong\C^2$ and $(\mathfrak{a}'_{\C})^*\cong\C^2$ via $\lambda\mapsto(\lambda_1,\lambda_2)=(\lambda(X),\lambda(Y))$ and $\nu\mapsto(\nu_1,\nu_2)=(\nu(X'),\nu(Y'))$.
Then for $l=\diag(a,h) \in MA$, $a \in \R^\times$, $h\in \GL({n-1},\R)$ and $l'=\diag(a',h',1)\in A'$, $a'\in \R^\times$, $h'\in \GL({n-1},\R)$ we have
\begin{align}
\label{eq:character_actions}
l^\lambda=\abs{a}^{\lambda_1}\abs{\det l}^{\lambda_2}, \quad
(l')^\nu=\abs{a'}^{\nu_1}\abs{\det l'}^{\nu_2},
\end{align}
For $\rho\in\mathfrak{a}^*$ and $\rho'\in(\mathfrak{a}')^*$ this means $\rho_1=\frac{{n+1}}{2},\rho'_1=\frac{{n}}{2}, \rho_2=\rho'_2=-\frac{1}{2}$.

Let $\mathbf{1}$ be the trivial $M$ resp. $M'$ representation. We consider the representations 
\begin{equation*}
\pi_\lambda=\Ind_P^G(\mathbf{1}\otimes e^\lambda \otimes \mathbf{1}) \qquad \text{ and } \qquad \tau_\nu=\Ind_{P'}^{G'}(\mathbf{1}\otimes e^\nu \otimes \mathbf{1}),
\end{equation*}
and use the notation $I_\lambda:=I_{\mathbf{1},\lambda}$ reps $J_\nu:=J_{\mathbf{1},\nu}$ for the three different pictures as described in Section~\ref{sec:GenPS}.

\subsection{The $P'$-orbits in $G/P$}

Let $b_0=[1:0:\ldots:0] \in \RP^{{n}}\cong G/P$, then $\bar{N}b_0$ is open and dense in $G/P$.
For $x \in \R^{{n}}$ let 
\[
n_x:=\left(
\begin{array}{c|c}
  1 & x^T \\ \hline
  0 & \raisebox{-15pt}{{\large\mbox{{$\mathbf{1}_{{n}}$}}}} \\[-3.7ex]
  \vdots & \\[+0.1ex]
   0 &
\end{array}
\right) \in N,
\]
and let $\bar{n}_x := n_x^T \in \bar{N}$. Then $\bar{n}_xb_0\ =[1:x]$ and $\Nbar b_0=\{[v_1:\ldots :v_{n+1}] \in \RP^{{n}}: v_1 \neq 0  \}$.
Hence we have an embedding of $\R^{{n}}$ into $G/P$:
\begin{equation*}
\begin{tikzcd}
\psi_1:\R^{{n}} \arrow[r,"\sim"] & \bar{N} \arrow[r,hook,"\cdot b_0"]  & \faktor{G}{P}, \qquad x \mapsto \bar{n}_x \cdot b_0.
\end{tikzcd}
\end{equation*}
In \cite[Theorem 3.16]{kobayashi_speh_2015} the study of $P'$-equivariant distributions on $G/P$ was reduced to the open cell $\bar{N}b_0$ under the condition that $P'\bar{N}P=G$, i.e. every $P'$-orbit in $G/P$ intersects $\bar{N}b_0$. This condition is not satisfied in the present situation as the following description of $P'$-orbits in $G/P$ shows.
\begin{corollary}
\label{corollary:orbits}
The orbits of the left action of $P'$ in $G/P\cong \RP^{{n}}$ are given by
\begin{itemize}
\item $\Ocal_1=\bar{\Ocal}_1=P'.[1:0:\ldots:0]=\{ [1:0:\ldots:0] \}$
\item $\Ocal_2=\bar{\Ocal}_2=P'.[0:\ldots:0:1]=\{ [0:\ldots:0:1] \}$
\item $\Ocal_3=P'.[1:0:\ldots:0:1]=\{ [v_1:0:\ldots:0:1]: v_1 \in \R^\times  \}\cong \RP^1 \setminus(\Ocal_1 \cup \Ocal_2)$
\item $\Ocal_4=P'.[0:1:0:\ldots :0]=
\{ [v_1:v:0]: v_1 \in \R, v \in \R^{{n-1}} \setminus \{0 \} 
\cong \RP^{{n-1}} \setminus 
\Ocal_1$
\item $\Ocal_5=P'.[0:\ldots:0:1:1]=\{ [v_1:v:1]: v_1 \in \R, v \in \R^{{n-1}} \setminus \{0 \}\} 
\cong \RP^{{n}}\setminus (\Ocal_1\cup\Ocal_2\cup\Ocal_3 \cup \Ocal_4)$
\end{itemize}
The closure relations of the orbits are given in the following diagram, where {\small $\mathclap{\begin{matrix}
&A \\ &| \\ & B
\end{matrix}}$} 
{\hspace{0.1cm}\tiny $m$}
means $B$ is a subvariety of $\bar{A}$ of co-dimension $m$.
\begin{center} 
\begin{picture}(75,110)
\thicklines
\put(50,100){${\Ocal}_5$}
\put(51,99){\line(-25,-60){20.5}}
\put(57,99){\line(25,-40){19}}
\put(25,40){${\Ocal}_3$}
\put(75,60){${\Ocal}_4$}
\put(77,59){\line(-25,-60){20.5}}
\put(32,39){\line(25,-40){19}}
\put(50,0){${\Ocal}_1$}
\put(26,39){\line(-25,-60){12}}
\put(8,0){${\Ocal}_2$}
\put(12,25){{\tiny $1$}}
\put(45,25){{\tiny $1$}}
\put(71,34){{\tiny ${n-1}$}}
\put(72,82){{\tiny $1$}}
\put(19,78){{\tiny ${n-1}$}}
\end{picture}
\end{center}
\end{corollary}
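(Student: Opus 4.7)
The approach is to compute the action of $P'$ on $\RP^n\cong G/P$ explicitly and read off the orbits from the resulting formula. First I would record that $P'=P\cap G'$ consists precisely of matrices of the block form
\[
p' = \begin{pmatrix} a & b^\top & 0 \\ 0 & C & 0 \\ 0 & 0 & 1 \end{pmatrix}, \qquad a\in\R^\times,\ b\in\R^{n-1},\ C\in\GL(n-1,\R),
\]
obtained by intersecting the upper block-triangular form of $P$ (stabilizer of $[e_1]$) with the pointwise stabilizer of $e_{n+1}$. Writing $[v]=[v_1:v':v_{n+1}]$ with $v'=(v_2,\ldots,v_n)^\top\in\R^{n-1}$, the action is
\[
p'\cdot[v] = [\,av_1+b^\top v'\ :\ Cv'\ :\ v_{n+1}\,].
\]

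From this formula two $P'$-invariant conditions are immediate, namely ``$v_{n+1}=0$'' (the last coordinate is untouched) and ``$v'=0$'' (since $C$ is invertible). These cut $\RP^n$ into four locally closed pieces, and the piece $\{v'=0,\,v_{n+1}\neq 0\}$ further splits into two orbits $\{v_1=0\}=\Ocal_2$ and $\{v_1\neq 0\}=\Ocal_3$, since on $\{v'=0\}$ the action reduces to $[v_1:0:v_{n+1}]\mapsto[av_1:0:v_{n+1}]$ with $a\in\R^\times$. The remaining three pieces $\Ocal_1,\Ocal_4,\Ocal_5$ are each single orbits: on $\Ocal_4$ and $\Ocal_5$ the matrix $C$ moves any nonzero $v'$ to any nonzero target, after which $b$ adjusts the first coordinate freely, while $\Ocal_1=\{[e_1]\}$ is visibly fixed. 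The identifications with (open subsets of) $\RP^1$ and $\RP^{n-1}$ follow after normalizing $v_{n+1}\in\{0,1\}$.

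The closure relations are then short limit computations in $\RP^n$. The orbit $\Ocal_3$ is the projective line $\{[v_1:0:v_{n+1}]\}\cong\RP^1$ with its two points $\Ocal_1,\Ocal_2$ removed, so $\overline{\Ocal_3}=\Ocal_1\cup\Ocal_2\cup\Ocal_3$ with both boundary pieces of codimension $1$. The orbit $\Ocal_4$ is the hyperplane $\{v_{n+1}=0\}\cong\RP^{n-1}$ with $\Ocal_1$ removed, so $\overline{\Ocal_4}=\Ocal_1\cup\Ocal_4$ of codimension $n-1$; crucially, $[e_{n+1}]\notin\overline{\Ocal_4}$ because $\overline{\Ocal_4}\subset\{v_{n+1}=0\}$. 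Finally, $\overline{\Ocal_5}=\RP^n$ picks up $\Ocal_4$ by rescaling $[tv_1:tv':1]=[v_1:v':1/t]$ with $t\to\infty$ (codimension $1$) and $\Ocal_3$ by letting $v'\to 0$ (codimension $n-1$). This assembles into the Hasse diagram as drawn. No step is hard: the entire argument is a direct finite computation once the block form of $P'$ is written down. The genuine point of the corollary is that the two orbits $\Ocal_2$ and $\Ocal_4$ lie entirely outside the open Bruhat cell $\bar N b_0=\{v_1\neq 0\}$, so the reduction of distribution kernels to $\bar N b_0$ used in \cite[Theorem~3.16]{kobayashi_speh_2015} is not available here and will have to be supplemented by a boundary analysis in the subsequent sections.
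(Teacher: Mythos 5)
The paper states this corollary without proof, treating it as a routine verification; your argument supplies exactly the computation one would write down. Your proof is correct and takes the natural direct route: write the block form of $P'=P\cap G'$, read off the action $p'\cdot[v_1:v':v_{n+1}]=[av_1+b^\top v':Cv':v_{n+1}]$ on homogeneous coordinates, observe that the two conditions $v_{n+1}=0$ and $v'=0$ are $P'$-invariant and split $\RP^n$ into locally closed strata, and then check by elementary linear algebra (invertibility of $C$, freedom in $b$, scaling by $a$) that each stratum is a single orbit except for $\{v'=0,\ v_{n+1}\neq0\}$, which splits into $\Ocal_2\cup\Ocal_3$. The closure relations and codimensions you give all check out, including the key point that $\overline{\Ocal_4}\subseteq\{v_{n+1}=0\}$ so $\Ocal_2\not\subseteq\overline{\Ocal_4}$, which is precisely what forces the covering $U_1\cup U_2$ used in Lemma~\ref{lemma:covering} and Theorem~\ref{thm:charts}.
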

Since the closed orbit $\Ocal_2$ does not intersect the open cell $U_1=\bar{N}b_0$ we need to consider an additional open subset of $G/P$ which intersects $\Ocal_2$. A convenient choice is obtained by translating $U_1$ with a certain Weyl group element. Let
\begin{equation*}
w_0=
\begin{pmatrix}
0 & 0 & 1 \\
0 & \mathbf{1}_{{n-1}} & 0 \\
1 & 0 & 0
\end{pmatrix}\in K.
\end{equation*}
Then the map
\begin{equation*}
\begin{tikzcd}
\psi_2:\R^{{n}} \arrow[r,"\sim"] & w_0\bar{N} \arrow[r,hook,"\cdot b_0"] & \faktor{G}{P}, \qquad x \mapsto w_0 \bar{n}_x \cdot b_0
\end{tikzcd}
\end{equation*}
is a diffeomorphism onto the open subset $U_2=w_0\bar{N}b_0=\{[v_1:\ldots:v_{n+1}]\in\RP^n:v_{n+1}\neq0\}$. Let $\R^{{n}}_{x_{{n}}}:=\{  (x',x_{{n}}) \in \R^{{n}}: x'\in \R^{{n-1}},x_{{n}} \neq 0  \}$. Then the map $\phi: \R^{{n}}_{x_{{n}}} \to \R^{{n}}_{x_{{n}}},\,(x',x_{{n}})\mapsto (x_{n}^{-1}x',x_{{n}}^{-1})$ is a diffeomorphism which makes the following diagram commute:
\begin{equation}
\label{eq:diagram_phi}
\begin{tikzcd}
\R^{{n}}_{x_{{n}}} \arrow[r,"\psi_1"] \arrow[d,"\phi"] & \faktor{G}{P} \\
\R^{{n}}_{x_{{n}}} \arrow[ur,"\psi_2"]
\end{tikzcd}
\end{equation}
In particular, $\psi_1(\R^{{n}}_{x_{{n}}})=\psi_2(\R^{{n}}_{x_{{n}}})=U_1\cap U_2$.
\begin{lemma}
\label{lemma:covering}
We have $G=P'\Nbar P \cup P' w_0\Nbar P$.
\end{lemma}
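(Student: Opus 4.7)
The plan is to reformulate the claim as a statement about $P'$-orbits on $G/P \cong \RP^n$ and then invoke the orbit classification of Corollary~\ref{corollary:orbits}. Since $P$ is the isotropy group of $b_0 = [e_1]$, the preimage in $G$ of $U_1 = \Nbar b_0$ under the quotient $G \to G/P$ is exactly $\Nbar P$, and similarly the preimage of $U_2 = w_0\Nbar b_0$ is $w_0\Nbar P$. Therefore the identity $G = P'\Nbar P \cup P' w_0 \Nbar P$ is equivalent to
\begin{equation*}
\RP^n = P'\cdot U_1 \;\cup\; P'\cdot U_2,
\end{equation*}
i.e.\ to the assertion that every $P'$-orbit in $\RP^n$ meets the open subset $U_1 \cup U_2 = \{[v_1:\cdots:v_{n+1}] : v_1 \neq 0 \text{ or } v_{n+1} \neq 0\}$.

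The complement of $U_1 \cup U_2$ in $\RP^n$ is the projective subspace $V = \{v_1 = v_{n+1} = 0\}$ of codimension $2$, so the content of the lemma is that no $P'$-orbit is contained in $V$. I would simply run through the five $P'$-orbits listed in Corollary~\ref{corollary:orbits}: the fixed points $\Ocal_1 = \{[e_1]\}$ and $\Ocal_2 = \{[e_{n+1}]\}$ lie in $U_1$ and $U_2$ respectively; the orbit $\Ocal_3$ lies by its very description in $U_1 \cap U_2$; and $\Ocal_5 = \{[v_1:v:1] : v_1\in\R,\,v\in\R^{n-1}\setminus\{0\}\}$ lies in $U_2$ since the last homogeneous coordinate is nonzero.

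The only case needing a small direct verification is $\Ocal_4 = \{[v_1:v:0] : v_1\in\R,\,v\in\R^{n-1}\setminus\{0\}\}$. Its distinguished representative $[0:1:0:\cdots:0]$ lies in $V$, but acting on it by the element
\begin{equation*}
\begin{pmatrix} 1 & 1 & 0 & \cdots & 0 & 0 \\ 0 & 1 & 0 & \cdots & 0 & 0 \\ \vdots & & \ddots & & & \vdots \\ 0 & 0 & 0 & \cdots & 1 & 0 \\ 0 & 0 & 0 & \cdots & 0 & 1 \end{pmatrix} \in P',
\end{equation*}
one obtains $[1:1:0:\cdots:0] \in U_1 \cap \Ocal_4$. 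Hence $\Ocal_4$ also meets $U_1 \cup U_2$, and combining all five cases gives the desired covering. No serious obstacle is expected; the real work has already been done in proving Corollary~\ref{corollary:orbits}, and this lemma is essentially a bookkeeping consequence.
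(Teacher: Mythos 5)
Your proof is correct and takes essentially the same approach as the paper: reduce to showing every $P'$-orbit in $G/P$ meets $U_1\cup U_2$ and then check the five orbits from Corollary~\ref{corollary:orbits}. The paper phrases it more tersely (observing that $U_1$ already meets every orbit except $\Ocal_2$, which lies in $U_2$), but the content is identical; your explicit group element for $\Ocal_4$ is unnecessary since the stated orbit description $\{[v_1:v:0]:v_1\in\R,\,v\in\R^{n-1}\setminus\{0\}\}$ visibly contains points with $v_1\neq 0$.
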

\begin{proof}
By Corollary~\ref{corollary:orbits} the set $U_1=\{  [v_1:\ldots:v_{n+1}]\in \RP^{{n}}: v_1 \neq0  \}$ meets all $P'$ orbits in $G/P$ except for $\Ocal_2$, which is contained in $U_2=\{  [v_1:\ldots:v_{n+1}]\in \RP^{{n}}: v_{n+1} \neq0  \}$.
\end{proof}

\subsection{Invariant distribution kernels on open cells}

The open subsets $U_1,U_2\subseteq G/P$ are particularly suitable to study the $P'$-action on $\mathcal{D}'(G/P,\mathcal{V}_{{\bf 1},-\lambda})$ since they are invariant under the action of $M'A'$ as the following lemma shows:
\begin{lemma}
\label{lemma:M'A'_inv}
We have 
\begin{inparaenum}[(i)] \item $M'A'\Nbar P \subseteq \Nbar P$, $M'A'N'w_0\Nbar P \subseteq w_0\Nbar P$ and \item $w_0P'w_0\subseteq P$.
\end{inparaenum}
\end{lemma}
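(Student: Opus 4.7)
The plan is to handle (ii) first, since it feeds into (i). Statement (ii) is a quick stabilizer computation: every $p' \in P' = P \cap G'$ stabilizes $[e_1]$ and fixes the vector $e_{n+1}$, and since $w_0$ interchanges $e_1$ and $e_{n+1}$, one obtains
\[
(w_0 p' w_0)\, e_1 = w_0 p'\, e_{n+1} = w_0\, e_{n+1} = e_1,
\]
so $w_0 p' w_0$ lies in the stabilizer of $e_1$, which is contained in $P$.

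For the first inclusion in (i), I use that $MA$ normalizes $\Nbar$; since $M'A' \subseteq MA$, this gives $M'A'\, \Nbar \subseteq \Nbar\, M'A' \subseteq \Nbar P$, and right-multiplication by $P$ yields $M'A'\, \Nbar P \subseteq \Nbar P$.

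For the second inclusion in (i), the key additional observation is that $w_0 P' w_0$ consists of matrices whose first row equals $e_1^T$, not just whose first column is proportional to $e_1$. Indeed, every $p' \in P' \subseteq G'$ has last row equal to $e_{n+1}^T$, and conjugation by $w_0$ (which swaps the indices $1$ and $n+1$) turns this last row into the first row of $w_0 p' w_0$, yielding $e_1^T$. On the other hand, every $\bar{n} \in \Nbar$ also satisfies $e_1^T \bar{n} = e_1^T$. Combining these two properties one computes
\[
e_1^T\, (w_0 p' w_0)\, \bar{n}\, e_1 \;=\; e_1^T\, \bar{n}\, e_1 \;=\; 1,
\]
so $(w_0 p' w_0) \bar{n}$ maps $[e_1]$ to a point in the open cell $\Nbar P \cdot [e_1] = \{[v_1:\cdots:v_{n+1}] : v_1 \neq 0\}$, which forces $(w_0 p' w_0)\, \bar{n} \in \Nbar P$. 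Hence $(w_0 P' w_0)\, \Nbar P \subseteq \Nbar P \cdot P = \Nbar P$, and translating by $w_0$ yields $P' w_0\, \Nbar P \subseteq w_0\, \Nbar P$, which is exactly the claim for $M'A'N' = P'$.

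The main subtlety is the second part of (i): one does \emph{not} have $P \cdot \Nbar P \subseteq \Nbar P$ in general, so (ii) alone is insufficient. What makes the argument work is that $w_0 P' w_0$ enjoys a stronger left-invariance property (preserving the row covector $e_1^T$ as well) inherited from the $G'$-condition on $P'$, and this meshes perfectly with the first-row structure of $\Nbar$ to keep the products inside the big cell.
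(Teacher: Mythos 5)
Your proof is correct, and it takes a genuinely different route from the paper. The paper argues by explicit matrix computation: for $l=\diag(a,h,1)\in M'A'$ and $\bar{n}_x$ it writes out $l\bar{n}_x=\bar{n}_{(a^{-1}hx',a^{-1}x_n)}l$ and $lw_0\bar{n}_x=w_0\bar{n}_{(hx',ax_n)}\diag(1,h,a)$, and for $n_y\in N'$ it writes $n_yw_0\bar{n}_x=w_0\bar{n}_{(x',x_n+(y')^\top x')}\cdot p$ with $p\in P$ displayed explicitly; then it computes $w_0 l w_0^{-1}$ and $w_0 n_y w_0^{-1}$ directly. You instead proceed structurally: for (ii) you observe that $w_0p'w_0$ fixes $e_1$ because $p'\in G'$ fixes $e_{n+1}$ and $w_0$ swaps them; for the first half of (i) you invoke the standard fact that $MA$ normalizes $\Nbar$; and for the second half you isolate the key point that $w_0P'w_0$ preserves the row covector $e_1^\top$ (because $P'\subseteq G'$ preserves $e_{n+1}^\top$), so that $(w_0p'w_0)\bar{n}$ has $(1,1)$-entry equal to $1$ and hence lies in the big cell $\Nbar P$. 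Your diagnosis at the end --- that (ii) alone would be useless since $P\cdot\Nbar P\not\subseteq\Nbar P$, and that the extra covector condition is what saves the argument --- is exactly the subtlety, and the cell-membership criterion is a clean way to see it. The only thing the paper's more pedestrian computation buys that yours does not is the explicit cocycle formulas \eqref{eq:M'A'_both} and \eqref{eq:N'_action_on_w_0Nbar}, which are reused verbatim in the proof of Lemma~\ref{prop:induced_actions}; your proof establishes the lemma but would not spare that later computation.
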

\begin{proof}
For $l=\diag(a,h,1) \in M'A'$, $a\in \R^\times, h \in \GL({n-1},\R)$ and $\bar{n}_x \in \Nbar$ with $x=(x',x_{{n}})\in \R^{{n-1}}\times \R$:
\begin{equation}
\label{eq:M'A'_both}
l\bar{n}_x=\bar{n}_{(a^{-1}hx',a^{-1}x_{{n}})}l \in \bar{N}P, \quad lw_0\bar{n}_x=w_0\bar{n}_{(hx',ax_{{n}})}\diag(1,h,a) \in w_0\Nbar P.
\end{equation}
Let $y=(y',0) \in \R^{{n-1}}\times\{0\}$ such that $n_y\in N' $. Then
\begin{align}
\label{eq:N'_action_on_w_0Nbar}
n_yw_0\bar{n}_x=w_0\bar{n}_{((x',x_{{n}}+(y')^Tx'))}\begin{pmatrix}
1 & 0 & 0 \\
0 & \mathbf{1}_{{n-1}}& 0 \\
0 & y' & 1
\end{pmatrix}\in w_0\Nbar P.
\end{align}
Further we have $w_0lw_0^{-1}=\diag(1,h,a)\in P$ and
\begin{equation*}
w_0n_yw_0^{-1}=
\begin{pmatrix}
1 & 0 & 0 \\
0 & \mathbf{1}_{{n-1}} & 0 \\
0 & y' & 1
\end{pmatrix} \in P. \qedhere
\end{equation*}
\end{proof}

For every open subset $U\subseteq G/P$ the $G$-action on $\mathcal{D}'(G/P,\mathcal{V}_{\mathbf{1},-\lambda})$ induces an infinitesimal $\g$-action on $\mathcal{D}'(U,\mathcal{V}_{\mathbf{1},-\lambda}|_U)$ by vector fields. Let $i=1,2$, then $U=U_i$ is open and $M'A'$-invariant. By Lemma~\ref{lemma:M'A'_inv} we have an $(\g,M'A')$-action on $\mathcal{D}'(U_i,\mathcal{V}_{\mathbf{1},-\lambda}|_{U_i})$, such that the restriction
\begin{equation}
\label{eq:restriction_Nbar}
\mathcal{D}'(G/P,\mathcal{V}_{\mathbf{1},-\lambda}) \to \mathcal{D}'(U_i,\mathcal{V}_{\mathbf{1},-\lambda}|_{U_i}), \quad u\mapsto u|_{U_i}
\end{equation}
is $(\g,M'A')$-equivariant. Using the diffeomorphism $\psi_i:\R^n\to U_i$ we can identify $U_i$ with $\R^n$ and trivialize $\mathcal{V}_{\mathbf{1},-\lambda}|_{U_i}$ to obtain an isomorphism
\begin{equation}
\label{eq:bundle_triv}
 \psi_i^*:\mathcal{D}'(U_i,\mathcal{V}_{\mathbf{1},-\lambda}|_{U_i}) \to \mathcal{D}'(\R^{{n}}).
\end{equation}
This induces a $(\g,M'A')$-action $\tilde{\sigma}^i_\lambda$ on $\mathcal{D}'(\R^{{n}})$ that makes the isomorphism \eqref{eq:bundle_triv} intertwining. Then the action $\sigma^i_{\lambda,\nu}=\tilde{\sigma}^i_\lambda|_{(\mathfrak{n}',M'A')} \otimes W_{\mathbf{1},\nu}$ makes the isomorphism
\begin{equation}
\label{eq:bundle_triv_P'}
\mathcal{D}'(U_i,\mathcal{V}_{\mathbf{1},-\lambda}|_{U_i}) \otimes W_{\mathbf{1},\nu} \to \mathcal{D}'(\R^{{n}}).
\end{equation}
$(\mathfrak{n}',M'A')$-equivariant.

\begin{lemma}
\label{lemma:missing_factor}
Let $u\in \mathcal{D}'(U_1\cap U_2,\mathcal{V}_{\mathbf{1},-\lambda}|_{U_1\cap U_2})$. Then 
\begin{equation*}
u(\bar{n}_x b_0)=\abs{x_{{n}}}^{\lambda_1-\rho_1}u(w_0\bar{n}_{\phi(x)} b_0) \qquad \forall\,x \in \R^{{n}}_{x_{{n}}}.
\end{equation*} 
\end{lemma}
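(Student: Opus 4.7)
The statement is a transition-function computation for the line bundle $\mathcal{V}_{\mathbf{1},-\lambda}$ between the two trivializations $\psi_1$ and $\psi_2$ on the overlap $U_1\cap U_2$. My plan is to realize $u$ concretely as a (generalized) function $F$ on $G$ satisfying the transformation rule $F(gman)=a^{\lambda-\rho}F(g)$ (this is the defining property of the sections of $\mathcal{V}_{\mathbf{1},-\lambda}$, with $V_{\mathbf{1},-\lambda}=\mathbf{1}\otimes e^{-\lambda+\rho}\otimes\mathbf{1}$), so that $u(\bar n_xb_0)=F(\bar n_x)$ and $u(w_0\bar n_{\phi(x)}b_0)=F(w_0\bar n_{\phi(x)})$. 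The claim then reduces to computing the $MA$-component of the element $p\in P$ for which $\bar n_x=w_0\bar n_{\phi(x)}\cdot p$.

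To find $p$, I would apply both matrices to $e_1$. A short calculation, using the explicit forms of $\bar n_y$ and $w_0$ together with $\phi(x)=(x_n^{-1}x',x_n^{-1})$, gives
\[
\bar n_xe_1=(1,x_1,\ldots,x_n)^T,\qquad w_0\bar n_{\phi(x)}e_1=x_n^{-1}(1,x_1,\ldots,x_n)^T,
\]
so $p\cdot e_1=x_ne_1$. Writing the Langlands factors as $p=man$ with $m=\diag(\epsilon,m')$, $\epsilon=\pm1$ and $a=\diag(\alpha,\alpha',\ldots,\alpha')$, we have $pe_1=\epsilon\alpha\,e_1$, whence $\alpha=|x_n|$ and $\epsilon=\operatorname{sign}(x_n)$.

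The second $A$-parameter follows from determinants: since $\det\bar n_x=\det\bar n_{\phi(x)}=1$ and $|\det w_0|=1$ we have $|\det p|=1$, while $|\det p|=|\det(ma)|=\alpha(\alpha')^n$, giving $(\alpha')^n=|x_n|^{-1}$. Substituting into formula~\eqref{eq:character_actions} then yields
\[
(ma)^{\lambda-\rho}=|\alpha|^{\lambda_1-\rho_1}|\det(ma)|^{\lambda_2-\rho_2}=|x_n|^{\lambda_1-\rho_1}\cdot 1^{\lambda_2-\rho_2}=|x_n|^{\lambda_1-\rho_1}.
\]
Applying the transformation rule for $F$ finishes the proof: $F(\bar n_x)=F(w_0\bar n_{\phi(x)}p)=(ma)^{\lambda-\rho}F(w_0\bar n_{\phi(x)})=|x_n|^{\lambda_1-\rho_1}F(w_0\bar n_{\phi(x)})$.

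The only delicate point is the bookkeeping: getting the sign of $\lambda$ in the exponent right (because we are working with the bundle $\mathcal{V}_{\mathbf{1},-\lambda}$ dual to $\mathcal{V}_{\mathbf{1},\lambda}$), and keeping track of the two $A$-parameters separately so that the $\lambda_2$-contribution cancels. For distributions (as opposed to smooth sections) the argument is identical, as the local trivializations $\psi_i^*$ in \eqref{eq:bundle_triv} intertwine the bundle structure on $\mathcal{V}_{\mathbf{1},-\lambda}|_{U_i}$ with the trivial one on $\R^n$, so the above pointwise identity transcribes directly to a relation between the two distributions $\psi_1^*u$ and $\psi_2^*u$ on $\R^n_{x_n}$.
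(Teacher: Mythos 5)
Your proposal is correct and takes essentially the same approach as the paper: both reduce the claim to identifying the element $p\in P$ with $\bar n_x = w_0\bar n_{\phi(x)}\,p$ and then evaluating its $MA$-character via \eqref{eq:character_actions}. The only difference is a mild computational shortcut — you read off the $A$-data by applying both sides to $e_1$ and by taking determinants, whereas the paper displays $p$ explicitly; the bookkeeping of the sign of $\lambda$ and the cancellation of the $\lambda_2$-contribution is handled correctly in your argument.
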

\begin{proof}
First we note that $x_{{n}} \neq 0$ for all $\bar{n}_x b_0 \in U_1\cap U_2=\Nbar b_0 \cap w_0 \Nbar b_0$. We have
\begin{equation*}
\bar{n}_x=w_0\bar{n}_{\phi(x)}\begin{pmatrix}
x_{{n}}& 0 & 1 \\
0 & \mathbf{1}_{{n-1}} & -x_{{n}}^{-1}x' \\
0 & 0 & -x_{{n}}^{-1}
\end{pmatrix} \in w_0\Nbar P.
\end{equation*}
Then \eqref{eq:character_actions} implies the statement.
\end{proof}

Write
$$ \mathcal{D}'(\R^n)^i_{\lambda,\nu} = \left\{u\in\mathcal{D}'(\R^n):\begin{array}{c}\sigma^i_{\lambda,\nu}(g)u=u\,\forall\,g\in M'A'\\\sigma^i_{\lambda,\nu}(X)u=0\,\forall\,X\in\mathfrak{n}'\end{array}\right\} \qquad (i=1,2) $$
for the subspaces of invariant distributions. The following result is a generalization of \cite[Theorem 3.16]{kobayashi_speh_2015} to the case where the open dense Bruhat cell $U_1=\bar{N}b_0\subseteq G/P$ does not meet every $P'$-orbit.

\begin{theorem}
\label{thm:charts}
The linear map 
$$ (\mathcal{D}'(G/P,\mathcal{V}_{\mathbf{1},-\lambda}) \otimes W_{\mathbf{1},\nu})^{P'}\to \mathcal{D}'(\R^n)\times\mathcal{D}'(\R^n), \quad u\mapsto \left(\psi_1^*(u|_{U_1}),\psi_2^*(u|_{U_2})\right) $$
is a linear isomorphism onto 
\begin{equation*}
\mathbb{D}(\lambda,\nu):= \biggr\{  
(u_1,u_2)\in \mathcal{D}'(\R^{{n}})^1_{\lambda,\nu}\times \mathcal{D}'(\R^{{n}})^2_{\lambda,\nu}:  
u_1|_{\R^{{n}}_{x_{{n}}}}=\abs{x_{{n}}}^{\lambda_1-\rho_1}\phi^*\left(u_2|_{\R^{{n}}_{x_{{n}}}}\right)
\biggr\}.
\end{equation*}
\end{theorem}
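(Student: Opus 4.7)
The plan is to establish the claimed bijection in three standard steps---well-definedness of the restriction map, injectivity, and surjectivity---with the third being the technical core. For well-definedness, I would unpack the $P'$-invariance of $u$ as the combination of $M'A'$-invariance and infinitesimal $\mathfrak{n}'$-invariance (equivalent to $N'$-invariance since $N'$ is connected). By Lemma~\ref{lemma:M'A'_inv}, the open sets $U_i$ are $M'A'$-stable, so restriction preserves $M'A'$-invariance; the $\mathfrak{n}'$-action is given by local vector fields and restricts automatically. Under the trivializations $\psi_i^*$ these become the invariance conditions defining $\mathcal{D}'(\R^n)^i_{\lambda,\nu}$, and the compatibility condition on the overlap $U_1 \cap U_2$ is exactly Lemma~\ref{lemma:missing_factor}.

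For injectivity, if both $u|_{U_1}$ and $u|_{U_2}$ vanish, then $u$ vanishes on $U_1 \cup U_2$, and $P'$-invariance of $u$ extends this to vanishing on the $P'$-saturation $P' \cdot U_1 \cup P' \cdot U_2$. Lemma~\ref{lemma:covering} identifies this saturation with all of $G/P$.

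The core of the argument is surjectivity. Given $(u_1, u_2) \in \mathbb{D}(\lambda,\nu)$, I would transfer back via $(\psi_i^*)^{-1}$ to distributions $\tilde u_i$ on $U_i$. The compatibility condition together with Lemma~\ref{lemma:missing_factor} says exactly that $\tilde u_1$ and $\tilde u_2$ agree on $U_1 \cap U_2$, so by the sheaf property of distributions they glue to a single $(\mathfrak{n}', M'A')$-invariant distribution $\tilde u$ on $U_1 \cup U_2$. The remaining task is to extend $\tilde u$ to a $P'$-invariant distribution $u$ on all of $G/P$. Following the template of \cite[Theorem 3.16]{kobayashi_speh_2015} in the single-chart case, I would cover $G/P$ by the open $P'$-invariant sets $V_i := P' \cdot U_i$ (Lemma~\ref{lemma:covering}) and define $u|_{V_i}$ as the unique $P'$-equivariant extension of $\tilde u|_{U_i}$.

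The main obstacle is this last extension step. Concretely, for $g \in V_i$ one writes $g = p' \cdot x$ with $x \in U_i$ and wants to declare $u$ at $g$ to be the $p'$-translate of $\tilde u|_{U_i}$ near $x$, but this requires independence of the representative $(p', x)$. Since $U_i$ is $M'A'$-stable one has $V_i = N' \cdot U_i$, so the only ambiguity comes from the action of $N'$, which is precisely controlled by the $\mathfrak{n}'$-invariance of $\tilde u|_{U_i}$. Finally, the two extensions on $V_1$ and $V_2$ are both $P'$-invariant and agree on $U_1 \cap U_2$, hence on its $P'$-saturation inside $V_1 \cap V_2$; by the localized version of the injectivity argument above, this forces them to agree on all of $V_1 \cap V_2$, producing the desired global distribution $u$ whose image under the restriction map is the prescribed pair $(u_1, u_2)$.
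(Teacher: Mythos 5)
Your proposal is correct and follows essentially the same strategy as the paper's proof: injectivity via Lemma~\ref{lemma:covering}, the image lying in $\mathbb{D}(\lambda,\nu)$ via Lemma~\ref{lemma:missing_factor}, and surjectivity by glueing $(u_1,u_2)$ to a distribution on $U_1 \cup U_2$ and extending $P'$-equivariantly using $(\mathfrak{n}',M'A')$-invariance. The only cosmetic difference is that you organize the extension through the two $P'$-invariant charts $V_i = P' \cdot U_i$ whereas the paper glues the continuum of translates $p'\cdot(U_1\cup U_2)$; your final consistency step on $V_1 \cap V_2$ implicitly uses that every $P'$-orbit meeting both $U_1$ and $U_2$ already meets $U_1 \cap U_2$, which holds here by the orbit analysis of Corollary~\ref{corollary:orbits} but is worth stating explicitly since it is not automatic.
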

\begin{proof}
The maps \eqref{eq:restriction_Nbar} and \eqref{eq:bundle_triv} yield a map
\begin{align}
\label{eq:map_1}
\nonumber \mathcal{D}'(G/P,\mathcal{V}_{\mathbf{1},-\lambda})&\to \mathcal{D}'(\R^{{n}}) \times \mathcal{D}'(\R^{{n}})\\
u &\mapsto\left(\psi_1^*(u|_{U_1}),\psi_2^*(u|_{U_2})\right).
\end{align}
Then \eqref{eq:map_1} restricted to $P'$-invariant distributions yields by \eqref{eq:bundle_triv_P'}
\begin{equation}
\label{eq:Nbar_map}
(\mathcal{D}'(G/P,\mathcal{V}_{\mathbf{1},-\lambda})\otimes W_{\mathbf{1},\nu})^{P'} \rightarrow \mathcal{D}'(\R^{{n}})^1_{\lambda,\nu}\times\mathcal{D}'(\R^{{n}})^2_{\lambda,\nu}.
\end{equation}
By Lemma~\ref{lemma:covering} this map is injective and by Lemma~\ref{lemma:missing_factor} the image is contained in $\mathbb{D}(\lambda,\nu)$. Let $(u_1,u_2) \in \mathbb{D}(\lambda,\nu)$. Then by Lemma~\ref{lemma:missing_factor} we can define a distribution $u \in \mathcal{D}'(U_1\cup U_2,\mathcal{V}_{\mathbf{1},-\lambda}|_{U_1\cup U_2})$ by $u|_{U_1}=(\psi_1)_*u_1$ and $u|_{U_2}=(\psi_2)_*u_2$. Translating $u$ by $p'\in P'$ yields a family of distributions
$p'\cdot u \in \mathcal{D}'(p'.(U_1\cup U_2),\mathcal{V}_{\mathbf{1},-\lambda}|_{p'.(U_1\cup U_2)})$ which agree on the intersections since $u_1$ and $u_2$ are $(\mathfrak{n}',M'A')$-invariant. By Lemma~\ref{lemma:covering} and the glueing property of the sheaf of distributions this gives rise to a unique continuation of $u$ in $\mathcal{D}'(G/P,\mathcal{V}_{\mathbf{1},-\lambda})\otimes W_{\mathbf{1},\nu}$ that is $P'$-invariant and is mapped to $(u_1,u_2)$ under \eqref{eq:Nbar_map}.
\end{proof}

\subsection{Classification of invariant distribution kernels}

We introduce the following families of distributions on $\R^{{n}}$. For $(\lambda_1,\nu_1) \in \C^2$:
\begin{align*}
&v^{A}_{\lambda,\nu}(x',x_{{n}})=\frac{\delta(x')\abs{x_{{n}}}^{\lambda_1+\rho_1-\nu_1-\rho'_1-1}}{\Gamma(\frac{\lambda_1+\rho_1-\nu_1-\rho'_1}{2})\Gamma(\frac{\nu_1+\rho'_1}{2})}, && w^{A}_{\lambda,\nu}(x',x_{{n}})=\frac{\delta(x')\abs{x_{{n}}}^{\nu_1+\rho'_1-1}}{\Gamma(\frac{\lambda_1+\rho_1-\nu_1-\rho'_1}{2})\Gamma(\frac{\nu_1+\rho'_1}{2})},
\end{align*}
for $\nu_1 = -\rho'_1-2j$, $j \in \Z_{\geq 0}$:
\begin{align*}
&v^{A,1}_{\lambda,\nu}(x',x_{{n}})=0, && w^{A,1}_{\lambda,\nu}(x',x_{{n}})=\delta^{(2j)}(x_{{n}})\delta(x'),
\end{align*}
for $\lambda_1+\rho_1-\nu_1-\rho'_1=-2N$, $N \in \Z_{\geq 0}$:
\begin{align*}
&v^{A,2}_{\lambda,\nu}(x',x_{{n}})=\delta^{(2N)}(x_{{n}})\delta(x'), && w^{A,2}_{\lambda,\nu}(x',x_{{n}})=0,\\
\end{align*}
for $\nu_1=-\rho'_1$:
\begin{align*}
&v^{B}_{\lambda,\nu}(x',x_{{n}})=\frac{\abs{x_n}^{\lambda_1-\rho_1}}{\Gamma(\frac{\lambda_1-\rho_1+1}{2})}, 
&& w^{B}_{\lambda,\nu}(x',x_{{n}})=\frac{1}{\Gamma(\frac{\lambda_1-\rho_1+1}{2})},
\end{align*}
and for $n=2$ and $(\lambda_1,\nu_1)\in \C^2$ additionally:
\begin{align*}
&v^{C}_{\lambda,\nu}(x_1,x_{{2}})=\frac{\abs{x_1}^{-\nu_1-\rho'_1}\abs{x_2}^{\lambda_1+\rho_1+\nu_1-\rho'_1-1}}{\Gamma(\frac{-\nu_1-\rho'_1+1}{2})\Gamma(\frac{\lambda_1+\rho_1+\nu_1-\rho'_1}{2})}, &&w^{C}_{\lambda,\nu}(x_1,x_{{2}})=\frac{\abs{x_1}^{-\nu_1-\rho'_1}}{\Gamma(\frac{-\nu_1-\rho'_1+1}{2})\Gamma(\frac{\lambda_1+\rho_1+\nu_1-\rho'_1}{2})}.
\end{align*}
We write for $S=A, (A,1), (A,2), B, C$
\begin{equation*}
u^{S}_{\lambda,\nu}:=(v^{S}_{\lambda,\nu},w^{S}_{\lambda,\nu}).
\end{equation*}
By Lemma~\ref{lemma:absolute_value_distr_holomorphic} $u^{S}_{\lambda,\nu}$ are holomorphic families of pairs of distributions and the following holds:
\begin{corollary}
\label{corollary:residues_distr}
The pair $u^{A}_{\lambda,\nu}= 0$ if and only if $(\lambda_1,\nu_1)\in L$. Moreover, the following residue formulas hold:
\begin{align*}
&u^{A}_{\lambda,\nu}=(-1)^j\frac{ j!}{(2j)!\Gamma(\frac{\lambda_1+\rho_1-\nu_1-\rho'_1}{2})}u^{A,1}_{\lambda,\nu} &&\text{for } \nu_1=-\rho'_1-2j, j \in \Z_{\geq 0}, \\ &u^{A}_{\lambda,\nu}=(-1)^N\frac{N!}{(2N)!\Gamma(\frac{\nu_1+\rho'_1}{2})}u^{A,2}_{\lambda,\nu} &&\text{for } \lambda_1+\rho_1-\nu_1-\rho'_1 = -2N, N\in \Z_{\geq 0}.
\intertext{Additionally, for $n=2$ the pair $u^C_{\lambda,\nu}$ never vanishes and}
&u^{A}_{\lambda,\nu}=\frac{1}{\sqrt{\pi}}u^C_{\lambda,\nu} &&\text{for } n=2, \nu_1+\rho'_1=-2 \rho_2, \\
&u^B_{\lambda,\nu}=\sqrt{\pi}u^C_{\lambda,\nu} &&\text{for } n=2, \nu_1+\rho'_1=0.
\end{align*}
\end{corollary}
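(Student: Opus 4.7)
The plan is to reduce every claim to the one-variable fact from Lemma~\ref{lemma:absolute_value_distr_holomorphic} that the normalized family $|x|^s/\Gamma(\frac{s+1}{2})$ on $\R$ is entire in $s$, is never the zero distribution, and satisfies
\[
\frac{|x|^s}{\Gamma(\frac{s+1}{2})}\bigg|_{s=-2k-1} = \frac{(-1)^k k!}{(2k)!}\,\delta^{(2k)}(x), \qquad k \in \Z_{\geq 0}.
\]
This identity is obtained by matching the residue $\mathrm{Res}_{s=-2k-1}|x|^s = \frac{2}{(2k)!}\delta^{(2k)}$ with the simple pole of $\Gamma(\frac{s+1}{2})$ at $s = -2k-1$, both of which are elementary.

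For the vanishing criterion, I would factor each of $v^A_{\lambda,\nu}$ and $w^A_{\lambda,\nu}$ as an entire one-variable $|x_n|^s/\Gamma(\frac{s+1}{2})$-factor tensored with $\delta(x')$, multiplied by exactly one extra gamma denominator: for $v^A$ that extra factor is $1/\Gamma(\frac{\nu_1+\rho'_1}{2})$, and for $w^A$ it is $1/\Gamma(\frac{\lambda_1+\rho_1-\nu_1-\rho'_1}{2})$. Since the $|x_n|$-factors never vanish, $v^A = 0$ iff $\nu_1 = -\rho'_1 - 2j$ for some $j \in \Z_{\geq 0}$, and $w^A = 0$ iff $\lambda_1 + \rho_1 - \nu_1 - \rho'_1 = -2N$ for some $N \in \Z_{\geq 0}$. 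Both vanish simultaneously iff $\nu_1 = -\rho'_1 - 2j$ and $\lambda_1 = -\rho_1 - 2(j+N)$ for some $j,N \geq 0$, which, setting $i := j+N$, is exactly the lattice $L$.

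The same factorization yields the two residue formulas immediately. Under $\nu_1 = -\rho'_1 - 2j$ the component $v^A$ vanishes, while the key identity converts the entire factor of $w^A$ into $\frac{(-1)^j j!}{(2j)!}\delta^{(2j)}(x_n)$; comparing with $w^{A,1}=\delta^{(2j)}(x_n)\delta(x')$ while keeping the surviving denominator $1/\Gamma(\frac{\lambda_1+\rho_1-\nu_1-\rho'_1}{2})$ gives the first formula. The dual specialization $\lambda_1+\rho_1-\nu_1-\rho'_1 = -2N$ applied to $v^A$ (now with $w^A = 0$) produces the second by the same argument.

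For the $n=2$ relations, $u^C_{\lambda,\nu}$ is a tensor product of two independent entire one-variable $|x_i|^s/\Gamma(\frac{s+1}{2})$-families in the variables $x_1$ and $x_2$; since each such factor is nonzero at every parameter value, $v^C$ is nonzero and hence $u^C$ never vanishes. At $\nu_1 + \rho'_1 = -2\rho_2 = 1$ the key identity at $s = -1$ collapses the $x_1$-factor to $\delta(x_1)$; the remaining $x_2$-profile and gamma denominator match those of $u^A$ up to the factor $\Gamma(1/2) = \sqrt{\pi}$, giving $u^A = \frac{1}{\sqrt\pi} u^C$. At $\nu_1 + \rho'_1 = 0$ the $x_1$-factor becomes the constant $1/\sqrt\pi$, and a one-line check on the $x_2$-exponent (using $\rho_1 = 3/2$, $\rho'_1 = 1$) identifies the remainder with $u^B$, yielding $u^B = \sqrt{\pi}\, u^C$. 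No substantive obstacle is expected beyond carefully aligning the gamma-factor normalizations across the various families; the entire argument is bookkeeping built from the single one-variable specialization identity displayed above.
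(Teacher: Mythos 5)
Your proposal is correct and takes essentially the same route as the paper: the paper's entire proof is the observation that Lemma~\ref{lemma:absolute_value_distr_holomorphic} applies, and you have simply spelled out the bookkeeping (factoring each of $v^S$, $w^S$ as the entire normalized family $|x_n|^s/\Gamma(\tfrac{s+1}{2})$ times a surviving $\Gamma$-denominator, then specializing) that the paper elides. The one-variable identity you display is exactly the $n=1$ case of that Lemma after simplifying $\Gamma(k+\tfrac12)$, and all the parameter checks (in particular that $\rho'_1=1$, $\rho_1=\tfrac32$ make the $x_2$-exponents of $u^A$, $u^B$, $u^C$ agree at the special values) go through as you indicate.
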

For the statement describing $\mathbb{D}(\lambda,\nu)$ let
\begin{equation}
 L=\{ (-\rho_1-2i,-\rho'_1-2j) \in \C^2: i,j \in \Z_{\geq 0}, j\leq i \}\label{eq:DefL}
\end{equation}
\begin{prop}
\label{prop:classification_distr}
For $n\geq 3$ we have
\begin{equation*}
\mathbb{D}(\lambda,\nu)=
	\begin{cases}
		\C u^{A}_{\lambda,\nu} & \text{if } \lambda_2+\rho_2=\nu_2+\rho'_2,(\lambda_1,\nu_1) \notin L, \\
		\C u^{A,1}_{\lambda,\nu} \oplus \C u^{A,2}_{\lambda,\nu} & \text{if } \lambda_2+\rho_2=\nu_2+\rho'_2, (\lambda_1,\nu_1)\in L, \\
		\C u^{B}_{\lambda,\nu} &  \text{if } \lambda_2-\rho_2-\nu_2-\rho'_2=\nu_1+\rho'_1=0, \\
		\{0\} & \text{otherwise}.
	\end{cases}
\end{equation*}
For $n=2$ we have
\begin{equation*}
\mathbb{D}(\lambda,\nu)=
	\begin{cases}
		\C u^{A}_{\lambda,\nu} & \text{if } \lambda_2+\rho_2=\nu_2+\rho'_2,(\lambda_1,\nu_1) \notin L, \\
		\C u^{A,1}_{\lambda,\nu} \oplus \C u^{A,2}_{\lambda,\nu} & \text{if } \lambda_2+\rho_2=\nu_2+\rho'_2, (\lambda_1,\nu_1)\in L, \\
		\C u^{C}_{\lambda,\nu} &  \text{if } \lambda_2-\rho_2-\nu_2-\rho'_2=\nu_1+\rho'_1, \\
		\{0\} & \text{otherwise}.
	\end{cases}
\end{equation*}
\end{prop}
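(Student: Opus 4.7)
The plan is to apply Theorem~\ref{thm:charts}, which identifies $(\mathcal{D}'(G/P,\mathcal{V}_{\mathbf{1},-\lambda})\otimes W_{\mathbf{1},\nu})^{P'}$ with the space $\mathbb{D}(\lambda,\nu)$ of pairs $(u_1,u_2)$ of $(\mathfrak{n}',M'A')$-invariant distributions on $\R^n$ that agree on the overlap $\R^n_{x_n}$ up to the factor $|x_n|^{\lambda_1-\rho_1}$. The strategy is to first classify each space $\mathcal{D}'(\R^n)^i_{\lambda,\nu}$ separately and then to impose the matching condition on $\R^n_{x_n}$.

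First I would compute the infinitesimal $(\mathfrak{n}',M'A')$-action $\sigma^i_{\lambda,\nu}$ on $\mathcal{D}'(\R^n)$ explicitly. For $i=2$, the identities \eqref{eq:M'A'_both} and \eqref{eq:N'_action_on_w_0Nbar} yield directly that $l=\diag(a,h,1)\in M'A'$ acts on $(x',x_n)\in\R^n$ by $(x',x_n)\mapsto(hx',ax_n)$ with a character twist governed by $|a|^{\nu_1}|\det l|^{\nu_2}$ (after absorbing $\rho'$-shifts), and that $n_y\in N'$ acts by $(x',x_n)\mapsto(x',x_n+(y')^Tx')$. For $i=1$, a short Bruhat decomposition of $l\bar{n}_x$ and $n_y\bar{n}_x$ yields analogous formulas with inverted scalings and an additional $(\lambda,n)$-dependent character factor. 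Differentiation gives concrete first-order differential operators whose joint kernel is $\mathcal{D}'(\R^n)^i_{\lambda,\nu}$.

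Next I would classify $u_i\in\mathcal{D}'(\R^n)^i_{\lambda,\nu}$ case by case. The action of $\SL^\pm(n-1,\R)\subseteq M'$ on the $x'$-variable is decisive: for $n\geq 3$ the only $\SL^\pm(n-1,\R)$-invariant distributions on $\R^{n-1}$ are those supported on $\{x'=0\}$ (giving $\delta(x')$ and its derivatives) or radial functions in $|x'|$, extending as the homogeneous family $|x'|^s$ analytically continued in $s$; for $n=2$ the group reduces to $\{\pm 1\}$ and the richer family $|x_1|^s$ becomes available. In each case the remaining invariances under $A'$-scaling and $N'$-translation together with the character constraints pin down the distributions up to scalars. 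Using the classification of homogeneous distributions recalled in the appendix and Lemma~\ref{lemma:absolute_value_distr_holomorphic}, one recovers precisely the families $v^A_{\lambda,\nu}, v^{A,1}_{\lambda,\nu}, v^{A,2}_{\lambda,\nu}, v^B_{\lambda,\nu}, v^C_{\lambda,\nu}$ and their $w$-counterparts, with their domains of definition dictated by parameter constraints such as $\nu_1=-\rho'_1-2j$ or $\nu_1=-\rho'_1$.

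Finally I would impose the overlap condition $u_1|_{\R^n_{x_n}}=|x_n|^{\lambda_1-\rho_1}\phi^*(u_2|_{\R^n_{x_n}})$ with $\phi(x',x_n)=(x_n^{-1}x',x_n^{-1})$. Matching the candidates on the overlap forces a linear relation among the parameters: the $A$-pairs match exactly when $\lambda_2+\rho_2=\nu_2+\rho'_2$, and the $B$- and $C$-pairs only when $\lambda_2-\rho_2-\nu_2-\rho'_2=\nu_1+\rho'_1$. Corollary~\ref{corollary:residues_distr} ensures that on the locus $L$ (where $u^A_{\lambda,\nu}$ vanishes) the pairs $u^{A,1}_{\lambda,\nu}$ and $u^{A,2}_{\lambda,\nu}$ are linearly independent and produce the two-dimensional piece. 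The $n$-dependence in the second branch arises because for $n\geq 3$ the $\SL^\pm(n-1,\R)$-invariance forces the transversal factor to be constant in $x'$, hence $\nu_1+\rho'_1=0$, whereas for $n=2$ this restriction is vacuous and $\nu_1$ remains free. The main obstacle is the explicit computation of $\sigma^1_{\lambda,\nu}$, which requires a careful Bruhat decomposition with precise tracking of the character factor, together with the patient case-by-case organization of the classification, with the $n=2$ edge case handled separately.
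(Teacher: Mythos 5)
Your plan follows the paper's proof essentially verbatim: apply Theorem~\ref{thm:charts}, compute the actions $\sigma^i_{\lambda,\nu}$ (Lemma~\ref{prop:induced_actions}), classify each $\mathcal{D}'(\R^n)^i_{\lambda,\nu}$ separately (Lemmas~\ref{lemma:classification_sigma} and \ref{lemma:classification_sigma_w_0}), verify the overlap condition (Corollary~\ref{corollary:pullback}), and invoke Corollary~\ref{corollary:residues_distr} on the locus $L$.

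Two small inaccuracies in your sketch would be caught in the execution but are worth flagging. First, for $n\geq3$ the $\SL^\pm(n-1,\R)$-invariant distributions on $\R^{n-1}$ are \emph{not} the radial family $\abs{x'}^{s}$: that family is only $\Orm(n-1)$-invariant. The paper's argument first uses the $A'$-scaling and $\Orm(n-1)\subseteq M'$ to force the tensor shape $u'(x')u_n(x_n)$ with $u'$ in the one-parameter family $\abs{x'}^{s}/\Gamma(\cdots)$, and then the $\mathfrak{n}'$-invariance together with $\SL^\pm(n-1,\R)$-invariance cuts $s$ down to the delta case or the constant case; the delta/constant dichotomy does not fall directly out of $\SL^\pm$-invariance alone. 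Second, the parameter constraints $\lambda_2+\rho_2=\nu_2+\rho'_2$ and $\lambda_2-\rho_2-\nu_2-\rho'_2=\nu_1+\rho'_1$ are produced by the \emph{per-chart} classification (the candidate distributions are $\sigma^i_{\lambda,\nu}$-invariant only under these relations, since $\sigma^i_{\lambda,\nu}$ depends on $\lambda_2,\nu_2$), not by the matching condition on the overlap: the pairs $(v^A_{\lambda,\nu},w^A_{\lambda,\nu})$, for instance, do not involve $\lambda_2,\nu_2$ at all, and Corollary~\ref{corollary:pullback}(i) shows they match for every $(\lambda_1,\nu_1)$. With these two points corrected, your argument is the paper's.
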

Proposition~\ref{prop:classification_distr} together with Theorem~\ref{thm:charts} and Theorem~\ref{theorem:SBO-distr} implies Theorem~\ref{theorem:A}.
We will prove Proposition~\ref{prop:classification_distr} in two steps. First, we separately classify invariant distributions in $\mathcal{D}'(\R^{{n}})^i_{\lambda,\nu}$ ($i=1,2$). Then we compare their pullbacks via $\phi$ resp. $\phi^{-1}$.

For the first step we need to study the actions $\sigma^i_{\lambda,\nu}$ ($i=1,2$) in detail. For $j=1,\ldots,{n}$ let $N_j \in \mathfrak{n}$ be the matrices given by $(N_j)_{k,l}=\delta_{j+1,k}\delta_{1,l}$. Then for $x=(x_1,\ldots,x_{{n}})$ we have $\exp(\sum_{j=1}^{{n}}x_jN_j)=n_x$. Let $E:=\sum_{i=1}^{{n}} x_j\frac{\partial}{\partial x_j}$ denote the Euler-operator on $\R^{{n}}$.
\begin{lemma}
\label{prop:induced_actions}
For $a \in \R^\times, h\in \GL({n-1},\R)$ let $l=\diag(a,h,1) \in M'A'$ and $N_i\in \mathfrak{n}'$, $j\in \{1,\ldots,{n-1}\}$. The $(\mathfrak{n}',M'A')$-actions $\sigma^i_{\lambda,\nu}$ ($i=1,2$) on a generalized function $u \in \mathcal{D}'(\R^{{n}})$ are for $x=(x',x_{{n}}) \in \R^{{n-1}}\times \R$ given by
\begin{enumerate}[label=(\roman{*}), ref=\thetheorem(\roman*)]
\item 
\label{prop:induced_actions:1}
$
\sigma^1_{\lambda,\nu}(l) u(x',x_{{n}})= \abs{a}^{\nu_1+\rho'_1-\lambda_1+\rho_1}\abs{a \det h}^{\nu_2+\rho'_2-\lambda_2+\rho_2}u(ah^{-1}x',ax_{{n}}),
$
\item
\label{prop:induced_actions:2}
$
\sigma^1_{\lambda,\nu}(N_j) u(x) =x_j\left(E-(\lambda_1-\rho_1)\right)u(x),
$
\item
\label{prop:induced_actions:3}
$
\sigma^2_{\lambda,\nu}(l) u(x',x_{{n}}) =
\abs{a}^{\nu_1+\rho'_1}\abs{a\det h}^{\nu_2+\rho'_2-\lambda_2+\rho_2}u(h^{-1}x',a^{-1}x_{{n}}),
$
\item
\label{prop:induced_actions:4}
$
\sigma^2_{\lambda,\nu}(N_j)u(x)=x_j \frac{\partial }{\partial x_{{n}}}u(x).
$
\end{enumerate}
\end{lemma}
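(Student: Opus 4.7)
The plan is to derive all four identities from the product decompositions of Lemma~\ref{lemma:M'A'_inv} combined with the transformation law $F(gp)=p^{\lambda-\rho}F(g)$ for generalized sections of $\mathcal{V}_{\mathbf{1},-\lambda}$ viewed as functions on $G$, where the $A$-character $p\mapsto p^\mu=\abs{a}^{\mu_1}\abs{\det p}^{\mu_2}$ is extended from $A$ to $P$ (trivial on $N$, since $\det N=\{1\}$). Under the trivialization \eqref{eq:bundle_triv} the section corresponds to $\tilde u(x)=F(\psi_i(x))$, and the infinitesimal left-regular $G$-action on $F$ descends to $\tilde\sigma^i_\lambda$ on $\tilde u$. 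The final representation $\sigma^i_{\lambda,\nu}$ differs from $\tilde\sigma^i_\lambda$ by multiplication with the character $l\mapsto\abs{a}^{\nu_1+\rho'_1}\abs{\det l}^{\nu_2+\rho'_2}$ of $W_{\mathbf{1},\nu}$ on $M'A'$, while $N'$ acts trivially on $W_{\mathbf{1},\nu}$; this is the only ingredient distinguishing $\sigma^i_{\lambda,\nu}$ from $\tilde\sigma^i_\lambda$.

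For parts (i) and (iii), take $l=\diag(a,h,1)\in M'A'$ and invert the identities \eqref{eq:M'A'_both} to obtain
\begin{align*}
l^{-1}\bar n_x &= \bar n_{(ah^{-1}x',\,ax_n)}\cdot l^{-1},\\
l^{-1}w_0\bar n_x &= w_0\bar n_{(h^{-1}x',\,a^{-1}x_n)}\cdot\diag(1,h^{-1},a^{-1}).
\end{align*}
Evaluating $F$ at $\psi_i(x)$ and pushing the $P$-factor across with the equivariance, chart~$1$ picks up $(l^{-1})^{\lambda-\rho}=\abs{a}^{-\lambda_1+\rho_1}\abs{a\det h}^{-\lambda_2+\rho_2}$, while on chart~$2$ the first diagonal entry of $\diag(1,h^{-1},a^{-1})$ equals $1$, so the $\lambda_1$-part of the character becomes trivial and only $\abs{a\det h}^{-\lambda_2+\rho_2}$ remains. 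Multiplying by $W_{\mathbf{1},\nu}(l)=\abs{a}^{\nu_1+\rho'_1}\abs{a\det h}^{\nu_2+\rho'_2}$ then reproduces the formulas in (i) and (iii).

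For parts (ii) and (iv), specialize to $\exp(tN_j)=n_{te_j}$ (with $y=(te_j,0)\in\R^{n-1}\times\{0\}$) and differentiate at $t=0$. On chart~$1$, a short direct computation yields the standard identity $n_y\bar n_x=\bar n_{x/(1+y^Tx)}\cdot p$ with $p$ having first diagonal entry $1+y^Tx$ and $\det p=1$, so that $\tilde\sigma^1_\lambda(n_{te_j})\tilde u(x)=\abs{1-tx_j}^{\lambda_1-\rho_1}\tilde u(x/(1-tx_j))$. Applying the product rule at $t=0$ combines the factor derivative $-(\lambda_1-\rho_1)x_j$ with the argument derivative $x_jE\tilde u$ (using $x/(1-tx_j)=x+tx_j\cdot x+O(t^2)$) to produce $x_j(E-(\lambda_1-\rho_1))\tilde u$, which is (ii). On chart~$2$, identity~\eqref{eq:N'_action_on_w_0Nbar} gives $\tilde\sigma^2_\lambda(n_{te_j})\tilde u(x)=\tilde u(x',\,x_n-tx_j)$ with trivial trivialization factor, since the $P$-element in \eqref{eq:N'_action_on_w_0Nbar} has $a=1$ and determinant $1$; differentiating at $t=0$ produces $x_j\partial_{x_n}\tilde u$ (up to the standard sign convention relating the differentiated group action to the fundamental vector field on $G/P$), which is (iv). Since $N'\subseteq N$ acts trivially on $W_{\mathbf{1},\nu}$, no further correction enters.

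The entire argument is a direct computation. The only piece requiring attention is to correctly read off the $A$-character $p^{\lambda-\rho}$ from the block structure of the $P$-part in each of the product identities of Lemma~\ref{lemma:M'A'_inv}; once this bookkeeping is done, all four formulas fall out by a one-line application of the product rule.
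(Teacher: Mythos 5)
Your proof is correct and follows the same route as the paper: invert the factorizations established in the proof of Lemma~\ref{lemma:M'A'_inv}, push the $P$-factor across the $P$-equivariance $F(gp)=p^{\lambda-\rho}F(g)$, read off the $MA$-character from the block structure (so that in chart~$2$ the first diagonal entry $1$ kills the $|a|^{\lambda_1-\rho_1}$ factor), tensor with $W_{\mathbf{1},\nu}$, and differentiate at $t=0$ for the nilpotent part. Your parenthetical remark about signs in (iv) is in fact an accurate observation: carrying the same convention that yields the factor $(1-tx_j)^{\lambda_1-\rho_1}$ in chart~$1$ over to chart~$2$ gives $\tilde u(x',x_n-tx_j)$, whose $t$-derivative is $-x_j\partial_{x_n}\tilde u$, so (iv) as stated carries a sign opposite to that implicit in (ii); since only the vanishing locus $\sigma^2_{\lambda,\nu}(N_j)u=0$ is used in the subsequent classification, this overall sign is immaterial.
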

\begin{proof}
The $G$-action on $\mathcal{D}'(G/P,\mathcal{V}_{\mathbf{1},-\lambda})$ is given by the left-regular action. 
Now \eqref{eq:M'A'_both} implies that pulling back the left-regular action of $l \in M'A'$ on $U_i$ ($i=1,2$) along $\psi_i$ yields the left-regular action of $\diag(ah^{-1},a)$ for $i=1$ and the left-regular action of $\diag(h^{-1},a^{-1})$ for $i=2$. Then \eqref{eq:character_actions} concludes the proof of \ref{prop:induced_actions:1} and \ref{prop:induced_actions:3}.

Ad(ii):
Fix $x=(x',x_{{n}})\in \R^{{n-1}}\times \R$. Let $y=(y',0)\in \R^{{n-1}}\times \{0\}$. For small $y$ we have
\begin{align*}
n_{y}\bar{n}_x&=\left(
\begin{array}{c|c}
  1+y^Tx & y^T \\ \hline
   & \raisebox{-15pt}{{\large\mbox{{$\mathbf{1}_{{n}}$}}}} \\[-3.7ex]
  x & \\[+0.1ex]
    &
\end{array}
\right)
\\
&=\bar{n}_{\frac{x}{1+y^Tx}}\left(
\begin{array}{c|c}
  1+y^Tx & 0  \\ \hline
   & \raisebox{-15pt}{{\large\mbox{{$\mathbf{1}_{{n}}-\frac{xy^T}{1+y^Tx}$}}}} \\[-3.7ex]
  0 & \\[+0.1ex]
     &
\end{array}
\right) n_{\frac{y}{1+y^Tx}} \in \bar{N}MAN,
\end{align*}
 which implies that the action $\pi_\lambda$ of $N'$ on $v \in \mathcal{D}'(U_1,\mathcal{V}_{\mathbf{1},-\lambda}|_{U_1})$ for fixed $x \in \R^{{n}}$ and small $y$ is given by
\begin{align*}
\pi_\lambda(n_y)v(\bar{n}_x b_0)=(1-y^Tx)^{\lambda_1-\rho_1}v\left(\bar{n}_{\frac{x}{1-y^Tx}}b_0\right).
\end{align*}
Putting $y=te_j$ with $j\in \{1,\ldots ,{n-1} \}$ and differentiating at $t=0$ gives
\begin{equation*}
\sigma^1_{\lambda,\nu}(N_j)u(x)=\frac{d}{dt}\biggr|_{t=0}(1-tx_j)^{\lambda_1-\rho_1}u\left({\frac{x}{1-tx_j}}\right) 
=x_j\left(E-(\lambda_1-\rho_1)\right)u(x).
\end{equation*}

Ad(iv): By \eqref{eq:N'_action_on_w_0Nbar} 
the action $\sigma_{w_0}|_{\mathfrak{n}'}$ is the infinitesimal action of the induced $N'$-action $\pi_{-\lambda}$ on $\mathcal{D}'(\R^{{n}})$ given by
\begin{align*}
\pi_{-\lambda}(n_y)u(w_0\bar{n}_xb_0)=u\left(w_0\bar{n}_{(x',x_{{n}}+(y')^Tx')}b_0\right).
\end{align*}
Hence for $j \in \{ 1,\ldots ,{n-1}  \}$
\begin{equation*}
\sigma^2_{\lambda,\nu}(N_j)u(x)=\frac{d}{dt}\biggr|_{t=0}u(x',x_{{n}}+tx_j)
=x_j \frac{\partial }{\partial x_{{n}}}u(x). \qedhere
\end{equation*}
\end{proof}

\begin{lemma}
\label{lemma:classification_sigma}
For $\lambda_2+\rho_2=\nu_2+\rho'_2$ we have
\begin{equation*}
\mathcal{D}'(\R^{{n}})^1_{\lambda,\nu}= 
\begin{cases}
\C \cdot v^{A}_{\lambda,\nu} & \text{if } (\lambda_1,\nu_1)\notin L \\
\C \cdot v^{A,2}_{\lambda,\nu} & \text{if } (\lambda_1,\nu_1) \in L .
\end{cases}
\end{equation*}
For $n>2$ and $\lambda_2-\rho_2=\nu_2+\rho'_2 \text{ and } \nu_1=-\rho'_1,$ we have
\begin{equation*}
\mathcal{D}'(\R^{{n}})^1_{\lambda,\nu}= \C v^{B}_{\lambda,\nu}.
\end{equation*}
For $n=2$ and $\lambda_2-\rho_2 -\nu_2-\rho'_2=\nu_1+\rho'_1$ we have
\begin{equation*}
\mathcal{D}'(\R^{{2}})^1_{\lambda,\nu}= \C v^C_{\lambda,\nu}
\end{equation*}
In all other cases $\mathcal{D}'(\R^{{n}})^1_{\lambda,\nu}=\{0\}$.
\end{lemma}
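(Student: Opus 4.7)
The plan is to translate the $(\mathfrak{n}',M'A')$-invariance of $u\in\mathcal{D}'(\R^n)^1_{\lambda,\nu}$ into explicit differential constraints via Lemma~\ref{prop:induced_actions}, and then read off the classification case by case. Setting $h=\mathbf{1}_{n-1}$ in part~(i) of that lemma and letting $a>0$ vary forces $Eu=-\mu u$ with $\mu:=(\nu_1+\rho_1'-\lambda_1+\rho_1)+(\nu_2+\rho_2'-\lambda_2+\rho_2)$; plugging this into the $\mathfrak{n}'$-equation $x_j(E-(\lambda_1-\rho_1))u=0$ from part~(ii) collapses it to the single relation $(\rho_1-\lambda_1-\mu)\,x_j u=0$ for $j=1,\dots,n-1$. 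This produces the guiding dichotomy: either the scalar $\rho_1-\lambda_1-\mu$ vanishes, which rearranges to $\lambda_2-\rho_2-\nu_2-\rho_2'=\nu_1+\rho_1'$ and imposes no support restriction (Case~B), or $x_j u=0$ for every $j\leq n-1$, so that $u$ is supported on the $x_n$-axis $\{x'=0\}$ (Case~A).

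In Case~A the vanishing $x_j u=0$ for all $j\leq n-1$ forces $u=f(x_n)\delta(x')$ with no higher $x'$-derivatives, since the derivatives $\partial^{\alpha}\delta(x')$ for $|\alpha|\ge 1$ span irreducible $\GL(n-1,\R)$-representations that carry no character line. The residual equivariance under general $h\in\GL(n-1,\R)$ then yields the character identity $\lambda_2+\rho_2=\nu_2+\rho_2'$, the $a$-scaling pins $f$ down to be homogeneous of degree $s=\lambda_1+\rho_1-\nu_1-\rho_1'-1$, and the substitution $a=-1$ forces $f$ to be even. By Lemma~\ref{lemma:absolute_value_distr_holomorphic} the one-dimensional space of even homogeneous distributions of degree $s$ on $\R$ is represented by the entire family $|x_n|^s/\Gamma(\tfrac{s+1}{2})$; comparing normalisations identifies this line with $\C\,v^A_{\lambda,\nu}$ off $L$ and with $\C\,v^{A,2}_{\lambda,\nu}$ on $L$, where the pole of the extra factor $\Gamma(\tfrac{\nu_1+\rho_1'}{2})$ is absorbed by the distributional residue of $|x_n|^s$ as the even derivative $\delta^{(2N)}(x_n)$.

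In Case~B no support is prescribed a~priori, so I would restrict $u$ to the open set $\{x'\neq 0\}$ and exploit the $\GL(n-1,\R)$-action on $\R^{n-1}\setminus\{0\}$, which has a single orbit for $n\geq 3$ and the two orbits $\R_{>0},\R_{<0}$ for $n=2$. For $n\geq 3$, the stabiliser in $\GL(n-1,\R)$ of a nonzero vector surjects onto $\R^\times$ via the determinant, so the required equivariance with character $|\det h|^{\nu_1+\rho_1'}$ forces $\nu_1=-\rho_1'$; once this holds, $u|_{\{x'\neq 0\}}$ must be independent of $x'$, and matching the surviving homogeneity and parity in $x_n$ identifies it with the restriction of $v^B_{\lambda,\nu}$. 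For $n=2$ the additional orbit allows a genuine dependence on $x'$ and the analogous analysis produces $v^C_{\lambda,\nu}$. The principal obstacle is the final step, namely extending $u|_{\{x'\neq 0\}}$ across the hyperplane $\{x'=0\}$ as a distribution on $\R^n$: any ambiguity in the extension must itself be $(\mathfrak{n}',M'A')$-invariant and supported on $\{x'=0\}$, hence fall under Case~A, and one has to verify that the two case conditions on $(\lambda,\nu)$ cannot both be satisfied outside a thin overlap locus on which the residue relations of Corollary~\ref{corollary:residues_distr} already make the Case~A and Case~B families proportional, so that no genuinely new invariants appear.
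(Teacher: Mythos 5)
Your dichotomy, obtained by combining the Euler relation from the $M'A'$-scaling with $\sigma^1_{\lambda,\nu}(N_j)u=x_j(E-(\lambda_1-\rho_1))u$ so that either the scalar $\rho_1-\lambda_1-\mu$ vanishes or $x_ju=0$ for all $j\leq n-1$, is exactly the paper's, and your Case~A argument lands in the right place. One correction there: the remark that $\partial^\alpha\delta(x')$ with $|\alpha|\geq 1$ "span irreducible $\GL(n-1,\R)$-representations that carry no character line" fails for $n=2$, where $\GL(1,\R)$ has only one-dimensional representations. What is actually needed, and works for all $n$, is the elementary structure theorem for distributions supported on $\{x'=0\}$: $x_ju=0$ for all $j\leq n-1$ already forces $u=f(x_n)\delta(x')$ directly.

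The genuine gap is Case~B. The key idea in the paper that your sketch omits is the tensor factorization $u=u'(x')\otimes u_n(x_n)$: acting with $\diag(1,a,\ldots,a,1)$ and $\diag(a,a^{-1/(n-1)},\ldots,a^{-1/(n-1)},1)$ together with $\Orm(n-1)$-invariance and Lemma~\ref{lemma:homogenous_distributions} pins $u'$ down to the line $\C\,\abs{x'}^{(n-1)\mu_2}/\Gamma\bigl(\tfrac{(n-1)(\mu_2+1)}{2}\bigr)$, and then $\SL(n-1,\R)$-invariance of that explicit family forces $\mu_2=0$ for $n\geq 3$. Your orbit/stabilizer argument on $\{x'\neq 0\}$ is aiming at the same conclusion but leaves two steps open that the factorization takes care of. First, for a \emph{distribution} on the open $\GL(n-1,\R)$-orbit, the character condition on the stabilizer is not the naive value-at-a-point argument: one has to track the Jacobian and modular factors, and the assertion that $u|_{\{x'\neq 0\}}$ is "independent of $x'$" is precisely the factorization step you have not justified. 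Second, the extension across $\{x'=0\}$, which you rightly flag, is controlled by Case~A, but your remark that the two case conditions "cannot both be satisfied outside a thin overlap locus" should be sharpened: for $n\geq 3$ the conditions $\lambda_2+\rho_2=\nu_2+\rho'_2$ and $\lambda_2-\rho_2=\nu_2+\rho'_2$ are outright disjoint (since $\rho_2=\rho'_2$), and the overlap occurs only for $n=2$ at $\nu_1+\rho'_1=-2\rho_2$, where Corollary~\ref{corollary:residues_distr} indeed supplies the proportionality of the $A$- and $C$-families. Filling in these two points would complete your argument, but they are exactly what the tensor-factorization approach of the paper avoids.
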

\begin{proof}
Let $0\neq u\in\mathcal{D}'(\R^{n})^1_{\lambda,\nu}$. 
To simplify notation we write $\mu_1=\nu_1+\rho'_1-\lambda_1+\rho_1$ and $\mu_2=\nu_2+\rho'_2-\lambda_2+\rho_2$.
Let $l=\diag(a,1,\ldots,1)$, $a \in \R^\times$. Then by  Lemma~\ref{prop:induced_actions:1}
\begin{equation*}
\sigma(l)u(x',x_n)=\abs{a}^{\mu_1+\mu_2}u(ax',ax_n),
\end{equation*}
so that $u$ has to be homogeneous of degree $-\mu_1-\mu_2$. Similarly the action of $l$ for $l=\diag(1,a,\ldots,a,1)$ and $l=\diag(a,a^{-\frac{1}{n-1}},\ldots,a^{-\frac{1}{n-1}},1)$ implies that
$u(x',x_n)=u'(x')u_n(x_n)$ with $u'\in \mathcal{D}'(\R^{n-1})$ homogeneous of degree $(n-1)\mu_2$ and $u_n\in \mathcal{D}'(\R)$ homogeneous of degree $-\mu_1-n\mu_2$. Moreover $u_n$ has to be even and $u'$ has to be invariant under left-regular action of matrices with determinant equal to $\pm 1$. Hence by Lemma~\ref{lemma:homogenous_distributions} we can assume
\begin{align*}
&u_n(x_n)\in \C \frac{\abs{x_n}^{-\mu_1-n\mu_2}}{\Gamma(\frac{-\mu_1-n\mu_2+1}{2})}, &&u'(x')\in \C \frac{\abs{x'}^{(n-1)\mu_2}}{\Gamma(\frac{(n-1)(\mu_2+1)}{2})}.
\end{align*}
Now Lemma~\ref{prop:induced_actions:2} implies that either $x_ju'(x')=0$ for all $j=1,\ldots,n$ or $-\mu_1-\mu_2=\lambda_1-\rho_1$. In the first case it follows that $\mu_2=-1$ such that $u'(x')$ is a multiple of $\delta(x')$. Hence $\lambda_2-\rho_2=\nu_2+\rho'_2$ and $-\mu_1-n\mu_2=-\mu_1+n=\lambda_1+\rho_1-\nu_1-\rho'_1-1$. For the second case let $-\mu_1-\mu_2=\lambda_1-\rho_1$, i.e. $\mu_2=-\nu_1-\rho'_1$. For $n>2$ the $\SL(n,\R)$-invariance of $u'$ implies $\mu_2=0$ i.e. that $u'$ is constant. Then $\lambda_2-\rho_2=\nu_2+\rho'_2$ and $u\in \C v^B_{\lambda,\nu}$.
For $n=2$ we just obtain $u \in \C v^C_{\lambda,\nu}$.\qedhere
\end{proof}

\begin{lemma}
\label{lemma:classification_sigma_w_0}
For $\lambda_2+\rho_2=\nu_2+\rho'_2$ we have
\begin{equation*}
\mathcal{D}'(\R^{{n}})^2_{\lambda,\nu}=
\begin{cases}
\C  w^{A}_{\lambda,\nu} & \text{if } (\lambda_1,\nu_1) \notin L, \\
\C w^{A,1}_{\lambda,\nu} & \text{if } (\lambda_1,\nu_1) \in L, \\
\end{cases}
\end{equation*}
For $n>2$ and $\lambda_2-\rho_2=\nu_2+\rho'_2 \text{ and } \nu_1=-\rho'_1$ we have
\begin{equation*}
\mathcal{D}'(\R^{{n}})^2_{\lambda,\nu}=\C.
\end{equation*}
For $n=2$ and $\lambda_2-\rho_2-\nu_2-\rho'_2=\nu_1+\rho'_1$ we have
\begin{equation*}
\mathcal{D}'(\R^2)^2_{\lambda,\nu}=\C\frac{\abs{x_1}^{-\nu_1-\rho'_1}}{\Gamma(\frac{-\nu_1-\rho'_1+1}{2})}
\end{equation*}
In all other cases $\mathcal{D}'(\R^{{n}})^2_{\lambda,\nu}=\{0\}$.
\end{lemma}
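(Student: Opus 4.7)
I would prove this by the same strategy as for Lemma~\ref{lemma:classification_sigma}, now working with $\sigma^2$ in place of $\sigma^1$. The key simplification is that $\sigma^2(N_j)u = x_j\partial_{x_n}u$ from Lemma~\ref{prop:induced_actions:4} is a simple first-order operator independent of $\lambda$, so no Euler-type equation has to be solved in the $x_n$-direction.

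Let $0\neq u\in\mathcal{D}'(\R^n)^2_{\lambda,\nu}$. Applying Lemma~\ref{prop:induced_actions:3} with the diagonal elements $\diag(s,I_{n-1},1)$, $\diag(1,sI_{n-1},1)$ and $\diag(1,h,1)$ for $h\in\SL^\pm(n-1,\R)$ shows that $u$ is even and homogeneous of degree $\nu_1+\rho'_1+\nu_2+\rho'_2-\lambda_2+\rho_2$ in $x_n$, homogeneous of degree $(n-1)(\nu_2+\rho'_2-\lambda_2+\rho_2)$ in $x'$, and $\SL^\pm(n-1,\R)$-invariant in $x'$. The condition $x_j\partial_{x_n}u=0$ for $j=1,\ldots,n-1$ coming from Lemma~\ref{prop:induced_actions:4} then forces $\partial_{x_n}u$ to be supported on $\{x'=0\}$.

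I would next split the analysis into the open piece $U=\{x'\neq 0\}$ and the part supported on $\{x'=0\}$. On $U$, vanishing of $\partial_{x_n}u$ gives $u|_U=u_0(x')\otimes 1_{x_n}$. For $n\geq 3$ the transitivity of $\SL(n-1,\R)$ on $\R^{n-1}\setminus\{0\}$ combined with the $\SL$-invariance forces $u_0$ to be a multiple of Lebesgue measure, so its $x'$-degree must be zero, giving $\lambda_2-\rho_2=\nu_2+\rho'_2$, and the $x_n$-degree equation then yields $\nu_1=-\rho'_1$. For $n=2$ the only constraint is evenness, so $u_0\in\C|x_1|^{\nu_2+\rho'_2-\lambda_2+\rho_2}/\Gamma(\cdots)$ by Lemma~\ref{lemma:homogenous_distributions}, and the $x_n$-constancy forces $\lambda_2-\rho_2-\nu_2-\rho'_2=\nu_1+\rho'_1$. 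On $\{x'=0\}$ the distribution takes the form $\sum_\alpha\partial^\alpha_{x'}\delta(x')\otimes v_\alpha(x_n)$, and the identity $x_j\partial^\alpha\delta=\alpha_j\partial^{\alpha-e_j}\delta$ together with $x_j\partial_{x_n}u=0$ forces $v_\alpha$ to be constant in $x_n$ whenever $\alpha\neq 0$. For $n\geq 3$ the $\SL(n-1,\R)$-invariance then annihilates every such summand, since $\mathrm{Sym}^k(\R^{n-1})^*$ carries no $\SL(n-1,\R)$-invariants for $k\geq 1$, leaving only $\delta(x')\otimes v_0(x_n)$.

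Finally the two pieces combine. Their $x'$-degrees are $0$ and $-(n-1)$ respectively, so they cannot simultaneously be nonzero; matching each possibility against the required homogeneity produces exactly the three disjoint cases in the statement. In the case $\lambda_2+\rho_2=\nu_2+\rho'_2$ the remaining $v_0$ is even and homogeneous of degree $\nu_1+\rho'_1-1$, classified by Lemma~\ref{lemma:homogenous_distributions} as $\C|x_n|^{\nu_1+\rho'_1-1}/\Gamma((\nu_1+\rho'_1)/2)$ generically and $\C\delta^{(2j)}(x_n)$ at $\nu_1=-\rho'_1-2j$, matching $w^A_{\lambda,\nu}$ and $w^{A,1}_{\lambda,\nu}$ once one observes that the Gamma normalization makes $w^A_{\lambda,\nu}$ vanish precisely on $L$. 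The main technical obstacle I expect is excluding the higher-order normal derivatives $\partial^\alpha_{x'}\delta(x')$ at $\{x'=0\}$: for $n\geq 3$ this requires the representation-theoretic input on $\SL(n-1,\R)$ acting on symmetric powers, while for $n=2$ the trivial $\SL^\pm(1,\R)$ forces a separate, more delicate treatment whose outcome is ultimately absorbed into the holomorphic family $|x_1|^s/\Gamma((s+1)/2)$.
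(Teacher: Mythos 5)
Your argument is correct and reaches the same conclusion by the same underlying constraints (homogeneity from $M'A'$, the operator $x_j\partial_{x_n}$ from $\mathfrak{n}'$, and Lemma~\ref{lemma:homogenous_distributions}), but the organization differs from the paper's in a way worth noting. The paper asserts, as in Lemma~\ref{lemma:classification_sigma}, that separate homogeneity in $x'$ and $x_n$ together with the $M'$-invariance forces the tensor-product ansatz $u(x',x_n)=u'(x')u_n(x_n)$; the case split is then ``$x_iu'=0$ for all $i$'' versus ``$u_n$ constant,'' which respectively pick out $\mu_2=-1$ (the $\delta(x')$ branch) and $\nu_1+\rho_1'+\mu_2=0$ (the constant-in-$x_n$ branch). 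You instead decompose $u$ geometrically, using that $x_j\partial_{x_n}u=0$ forces $\partial_{x_n}u$ to be supported on $\{x'=0\}$, then handle $u|_{\{x'\neq 0\}}$ and the part supported on $\{x'=0\}$ separately, expanding the latter in transverse delta-derivatives and killing the higher-order terms for $n\geq3$ by $\SL(n-1,\R)$-irreducibility of $\mathrm{Sym}^k$. This is a genuinely more explicit route: it avoids the tensor-product assertion (which the paper leaves as an implicit consequence of simultaneous eigendistribution considerations) and makes the treatment of distributions supported on $\{x'=0\}$ transparent, at the cost of a bit more bookkeeping. The two branches you obtain, and the degree-matching argument showing they cannot both be nonzero (the $x'$-degree $(n-1)\mu_2$ is $0$ in one case and $-(n-1)$ in the other), reproduce exactly the paper's case split. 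Your closing remark that the $n=2$ delta-derivative terms are absorbed into the holomorphic family $|x_1|^{s}/\Gamma((s+1)/2)$ is also correct and consistent with Lemma~\ref{lemma:absolute_value_distr_holomorphic}.
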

\begin{proof}
Let $0\neq u \in \mathcal{D}'(\R^{n})^2_{\lambda,\nu}$. As before we write $\mu_2=\nu_2+\rho'_2-\lambda_2+\rho_2$.
The proof works in the same way as the proof of Lemma~\ref{lemma:classification_sigma}: Acting with $l=\diag(1,a,\ldots,a,1)$, $l=\diag(a,1,\ldots,1)$ and $\diag(a,\ldots,a,1)$ for $a\in \R^\times$ implies by Lemma~\ref{prop:induced_actions:3} that $u(x',x_n)=u'(x')u_n(x_n)$ is homogeneous of degree $\nu_1+\rho'_1+n\mu_2$ with $u'\in \mathcal{D}'(\R^{n-1})$ homogeneous of degree $(n-1)\mu_2$ and $u_n \in \mathcal{D}'(\R)$ homogeneous of degree $\nu_1+\rho'_1+\mu_2$. Since $u_n$ has to be even and $u'$ rotation-invariant by Lemma~\ref{lemma:homogenous_distributions} we can assume
\begin{align*}
&u_n(x_n)=\frac{\abs{x_n}^{\nu_1+\rho'_1+\mu_2}}{\Gamma(\frac{\nu_1+\rho'_1+\mu_2+1}{2})}, &&u'(x')=\frac{\abs{x'}^{(n-1)\mu_2}}{\Gamma(\frac{(n-1)(\mu_2+1)}{2})}.
\end{align*}
Then by Lemma~\ref{prop:induced_actions:4} $u$ is $\mathfrak{n}'$-invariant in two cases. Either $x_iu'(x')$ vanishes for all $i=1,\ldots,n-1$ which implies $\mu_2=-1$, so that $u'$ is a multiple of $\delta(x')$. Hence in this case
\begin{equation*}
u \in \C \frac{\abs{x_n}^{\nu_1+\rho'_1-1}}{\Gamma(\frac{\nu_1+\rho'_1}{2})}\delta(x')
\end{equation*}
and $\lambda_2+\rho_2=\nu_2+\rho'_2$. The second case is $u_n$ is constant i.e. $\nu_1+\rho'_1+\mu_2=0$. For $n>2$ the $\SL(n-1,\R)$-invariance of $u'$ implies also that $u'$ has to be constant, i.e. $\mu_2=0$ such that $\nu_2+\rho'_2-\lambda_2+\rho_2=\nu_1+\rho'_1=0$ and $u$ is constant.
If $n=2$ it just follows $\nu_2+\rho'_2-\lambda_2+\rho_2=\nu_1+\rho'_1$ and
\begin{equation*}
u(x_1,x_2) \in \C \frac{\abs{x_1}^{-\nu_1-\rho'_1}}{\Gamma(\frac{-\nu_1-\rho'_1+1}{2})}.\qedhere
\end{equation*}
\end{proof}
\begin{corollary}
\label{corollary:pullback}
\begin{enumerate}[label=(\roman{*}), ref=\thetheorem(\roman*)]
\item For $(\lambda_1,\nu_1)\in\C^2$
\begin{equation*}
v^{A}_{\lambda,\nu}|_{\R^{{n}}_{x_{{n}}}}=\abs{x_{{n}}}^{\lambda_1-\rho_1}\phi^*\left(w^{A}_{\lambda,\nu}|_{\R^{{n}}_{x_{{n}}}}\right).
\end{equation*}
\item For $\nu_1=-\rho'_1-2j$, $j \in \Z_{\geq 0}$
\begin{equation*}
\phi^*\left(w^{A,1}_{\lambda,\nu}|_{\R^{{n}}_{x_{{n}}}}\right)=0.
\end{equation*}
\item For $\lambda_1+\rho_1-\nu_1-\rho'_1=-2N$, $\N \in \Z_{\geq 0}$
\begin{equation*}
(\phi^{-1})^*\left(v^{A,2}_{\lambda,\nu}|_{\R^{{n}}_{x_{{n}}}}\right)=0.
\end{equation*}
\item For $\nu_1=-\rho_1'$
\begin{equation*}
v^{B}_{\lambda,\nu}|_{\R^{{n}}_{x_{{n}}}}=\abs{x_{{n}}}^{\lambda_1-\rho_1}\phi^*\left(w^{B}_{\lambda,\nu}|_{\R^{{n}}_{x_{{n}}}}\right)
\end{equation*}
\item For $n=2$
\begin{equation*}
v^C_{\lambda,\nu}|_{\R^{{2}}_{x_{{2}}}}=\abs{x_{{2}}}^{\lambda_1-\rho_1}\phi^*\left(w^{C}_{\lambda,\nu}|_{\R^{{2}}_{x_{{2}}}}\right)
\end{equation*}
\end{enumerate}
\end{corollary}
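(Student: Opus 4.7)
The plan is to verify each of the five assertions by a direct computation using the explicit form of the diffeomorphism $\phi(x',x_n) = (x_n^{-1}x', x_n^{-1})$ together with the standard scaling identity $\delta(x_n^{-1}x') = |x_n|^{n-1}\delta(x')$ for the Dirac distribution in $x' \in \R^{n-1}$. Since $\phi$ restricts to a diffeomorphism of $\R^n_{x_n}$ onto itself, pullback of generalized functions along $\phi$ is well-defined on the relevant domains.

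Parts (ii) and (iii) are essentially immediate. The distributions $w^{A,1}_{\lambda,\nu}$ and $v^{A,2}_{\lambda,\nu}$ are proportional to $\delta^{(k)}(x_n)\delta(x')$ for some $k\in\Z_{\geq 0}$, hence supported in the hyperplane $\{x_n=0\}$. This hyperplane does not meet $\R^n_{x_n}=\{x_n\neq 0\}$, so the restrictions already vanish, and therefore so do their pullbacks under $\phi$ (respectively $\phi^{-1}$).

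Parts (i), (iv), (v) reduce to matching exponents after applying $\phi^*$. For (i), one computes
\[
\phi^*w^A_{\lambda,\nu}(x',x_n) = \frac{\delta(x_n^{-1}x')\,|x_n|^{-(\nu_1+\rho'_1-1)}}{\Gamma(\tfrac{\lambda_1+\rho_1-\nu_1-\rho'_1}{2})\Gamma(\tfrac{\nu_1+\rho'_1}{2})} = \frac{|x_n|^{n-\nu_1-\rho'_1}\delta(x')}{\Gamma(\tfrac{\lambda_1+\rho_1-\nu_1-\rho'_1}{2})\Gamma(\tfrac{\nu_1+\rho'_1}{2})},
\]
using the delta scaling. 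Multiplying by $|x_n|^{\lambda_1-\rho_1}$ gives a total exponent of $\lambda_1-\rho_1+n-\nu_1-\rho'_1$ on $|x_n|$, which coincides with $\lambda_1+\rho_1-\nu_1-\rho'_1-1$ by the relation $2\rho_1=n+1$. Part (iv) is simpler still: $w^B_{\lambda,\nu}$ is constant in both variables, so $\phi^*w^B_{\lambda,\nu}=w^B_{\lambda,\nu}$, and multiplication by $|x_n|^{\lambda_1-\rho_1}$ immediately produces $v^B_{\lambda,\nu}$. For (v), where $n=2$ and $w^C_{\lambda,\nu}$ depends only on $x_1$, the pullback gains a factor $|x_2|^{\nu_1+\rho'_1}$; multiplying by $|x_2|^{\lambda_1-\rho_1}$ and using $\rho_1-\rho'_1=\tfrac12$ delivers the exponent $\lambda_1+\rho_1+\nu_1-\rho'_1-1$ of $v^C_{\lambda,\nu}$.

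The only point requiring any care is the bookkeeping of the dimensional factor arising from $\delta(x_n^{-1}x')=|x_n|^{n-1}\delta(x')$ together with the identities $2\rho_1=n+1$ and $2\rho'_1=n$; with these in hand, every statement becomes a one-line verification from the explicit formulas for $v^S_{\lambda,\nu}$ and $w^S_{\lambda,\nu}$.
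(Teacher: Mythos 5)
Your proof is correct and follows essentially the same route as the paper: parts (ii), (iii) use the support-disjointness observation, (iv), (v) follow by inspection, and for (i) your delta-scaling manipulation $\delta(x_n^{-1}x')=\abs{x_n}^{n-1}\delta(x')$ is the same computation as the paper's test-function pairing with $\abs{\det D\phi}=\abs{x_n}^{-n-1}$ followed by the substitution $x_n\mapsto x_n^{-1}$. The only cosmetic difference is that the paper first establishes (i) for $\Re\lambda_1+\rho_1>\Re\nu_1+\rho'_1>0$ and then continues analytically, whereas your formal computation applies uniformly because the restriction to $\{x_n\neq 0\}$ avoids all integrability issues.
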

\begin{proof}
(ii) and (iii) follow immediately from the fact that the support of the distributions in question is disjoint from $\R^{{n}}_{x_{{n}}}$ and (iv) and (v) are true by definition.

Ad (i): We prove this identity for $\Re\lambda_1+\rho_1>\Re\nu_1+\rho_1'>0$, then the general statement follows by analytic continuation. Abbreviate $c=\Gamma(\frac{\lambda_1+\rho_1-\nu_1-\rho_1'}{2})\Gamma(\frac{\nu_1+\rho_1'}{2})$. Since $\phi^{-1}=\phi$ we have
\begin{align*}
\langle \phi ^* w^{A}_{\lambda,\nu}|_{\R^{{n}}_{x_{{n}}}}, \varphi \rangle = c^{-1}\int_{\R^{{n}}_{x_{{n}}}}
\abs{x_{{n}}}^{\nu_1+\rho'_1-1}\delta(x') \varphi(\phi(x)) \abs{\det D\phi(x)}dx,
\end{align*}
where $D\phi$ is a upper triangular matrix with $x_{{n}}^{-1}$ on the first $({n-1})$-diagonal entries and $-x^{-2}_{{n}}$ on the last. Hence
\begin{align*}
\langle \phi ^* w^{A}_{\lambda,\nu}|_{\R^{{n}}_{x_{{n}}}}, \varphi \rangle = c^{-1}\int_{\R^{{n}}_{x_{{n}}}}
\abs{x_{{n}}}^{\nu_1+\rho'_1-{n}-2}\delta(x') \varphi(\phi(x)) dx.
\end{align*}
We can pull back by the map $\R^\times \to \R^\times$ given by $y\mapsto y^{-1}$ with Jacobian $-y^{-2}$ and get
\begin{multline*}
c^{-1}\int_{\R^\times}
\abs{x_{{n}}}^{\nu_1+\rho'_1-{n}-2} \varphi(0,x_{{n}}^{-1}) dx_{{n}} = 
c^{-1}\int_{\R^\times}
\abs{x_{{n}}}^{-\nu_1-\rho'_1+{n}} \varphi(0,x_{{n}}) dx_{{n}} \\
=c^{-1}\int_{\R^\times}
\abs{x_{{n}}}^{(\lambda_1+\rho_1-\nu_1-\rho'_1)-1 -\lambda_1+\rho_1} \varphi(0,x_{{n}}) dx_{{n}}
=\langle \abs{x_{{n}}}^{-\lambda_1+\rho_1}v_{\lambda,\nu}^A|_{\R^{{n}}_{x_{{n}}}},\varphi \rangle.\qedhere
\end{multline*}
\end{proof}
\begin{proof}[Proof of Proposition~\ref{prop:classification_distr}]
We know that $u^{A}_{\lambda,\nu}=0$ if and only if $(\lambda_1,\nu_1)\in L$, and in this case $u^{A,1}_{\lambda_1,\nu_1}$ and $u^{A,2}_{\lambda_1,\nu_1}$ are linearly independent.
If $\nu_1\in-\rho'_1-2\Z_{\geq 0}$ and $(\lambda_1,\nu_1)\notin L$ the distributions $u^{A}_{\lambda_1,\nu_1}$ and $u^{A,1}_{\lambda_1,\nu_1}$ are scalar multiples of each other by Lemma~\ref{lemma:absolute_value_distr_holomorphic}. If $\lambda_1+\rho_1-\nu_1-\rho'_1 \in -2\Z_{\geq 0}$ and $(\lambda_1,\nu_1)\notin L$  the distributions $u^{A}_{\lambda_1,\nu_1}$ and $u^{A,2}_{\lambda_1,\nu_1}$ are scalar multiples of each other and $u^A_{\lambda,\nu}$ and $u^B_{\lambda,\nu}$ can not exist at the same time. If $n=2$ and $u^A_{\lambda,\nu}$ and $u^C_{\lambda,\nu}$ exist, they are scalar multiples of each other as well as $u^B_{\lambda,\nu}$ and $u^C_{\lambda,\nu}$ if both exist. Moreover $u^C_{\lambda,\nu} \neq 0$ for all $(\lambda_1,\nu_1) \in \C^2$. Then Lemma~\ref{lemma:classification_sigma}, Lemma~\ref{lemma:classification_sigma_w_0} and Corollary~\ref{corollary:pullback} imply the proposition.
\end{proof}

\subsection{Symmetry breaking operators in the non-compact picture}
From our classification of the distribution kernels of symmetry breaking operators, we first derive formulas for all operators in the non-compact picture, i.e. as operators $I_\lambda(\Nbar)\to J_\nu(\Nbar')$ from functions on $\Nbar\cong\R^n$ to functions on $\Nbar'\cong\R^{n-1}$.
\begin{theorem}\label{thm:SBOsNonCptPicture}
The operators $\mathcal{A}_{\lambda_,\nu},\mathcal{A}^{(1)}_{\lambda_,\nu}, \mathcal{A}^{(2)}_{\lambda_,\nu}, \mathcal{B}_{\lambda_,\nu}, \mathcal{C}_{\lambda,\nu} \in \Hom_{G'}(I_{\lambda}(\Nbar),J_{\nu}(\Nbar'))$ corresponding to the distributions $u^{A}_{\lambda,\nu}, u^{A,1}_{\lambda,\nu}, u^{A,2}_{\lambda,\nu}, u^{B}_{\lambda,\nu}, u^C_{\lambda,\nu}\in\mathbb{D}(\lambda,\nu)$ are given as follows:
\begin{enumerate}[label=(\roman{*}), ref=\thetheorem(\roman*)]
\item For $(\lambda_1,\nu_1)\in\C^2$
\begin{equation*}\mathcal{A}_{\lambda,\nu} \varphi(y)=\frac{1}{{\Gamma(\frac{\nu_1+\rho'_1}{2})\Gamma(\frac{\lambda_1+\rho_1-\nu_1-\rho'_1}{2})}}\int_{\R}{\abs{x_{{n}}}^{\lambda_1+\rho_1-\nu_1-\rho'_1-1}}\varphi(y,x_{{n}})dx_{{n}}.\end{equation*}
\item For $\nu_1=-\rho'_1-2j ,j\in \Z_{\geq 0}$ \begin{equation*}\mathcal{A}^{(1)}_{\lambda,\nu}\varphi(y)=\lim\limits_{t\to 0}\frac{d^{2j}}{dt^{2j}}\left( \abs{t}^{-\lambda_1-\rho_1}\varphi(y,t^{-1}) \right). \end{equation*}
\item  For $\lambda_1+\rho_1-\nu_1-\rho'_1=-2N, N\in \Z_{\geq 0}$ \begin{equation*}\mathcal{A}^{(2)}_{\lambda,\nu}\varphi(y)=\frac{\partial^{2N}\varphi}{\partial x_{{n}}^{2N}}(y,0).\end{equation*}
\item For $\nu_1=-\rho'_1$
\begin{equation*}\mathcal{B}_{\lambda,\nu}\varphi(y)= \frac{1}{{\Gamma(\frac{\lambda_1-\rho_1+1}{2})\Gamma(\frac{n-1}{2})}} \int_{\R^{{n}}}{\abs{x_{{n}}}^{\lambda_1-\rho_1}}\varphi(x)dx.
\end{equation*}
\item For $n=2$ and $(\lambda_1,\nu_1)\in\C^2$
\begin{equation*}
\mathcal{C}_{\lambda,\nu}\varphi(y)=\frac{1}{{\Gamma(\frac{\lambda_1+\rho_1+\nu_1-\rho'_1}{2})\Gamma(\frac{-\nu_1-\rho'_1+1}{2})}} \int_{\R^{{2}}}{\abs{x_1-y}^{-\nu_1-\rho'_1}\abs{x_{{2}}}^{\lambda_1+\rho_1+\nu_1-\rho'_1-1}}\varphi(x)dx.
\end{equation*}
\end{enumerate}
\end{theorem}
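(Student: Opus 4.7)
The plan is to compute each operator $A$ directly from its distribution kernel $u_A\in\mathbb{D}(\lambda,\nu)$ via formula~\eqref{eq:operator-kernel}: evaluate $A\varphi(h)=\langle u_A,\varphi(h\,\cdot\,)\rangle$ at $h=\bar n_{(y,0)}\in\Nbar'$, and re-express the pairing in terms of the non-compact-picture function $\varphi\in I_\lambda(\Nbar)$. Because $\mathbb{D}(\lambda,\nu)$ is described through the two charts $\psi_1:\R^n\to U_1$ and $\psi_2:\R^n\to U_2$ of Theorem~\ref{thm:charts}, the pairing is performed on whichever chart contains the support of $u_A$: the $v$-component on $U_1$ for $u^A_{\lambda,\nu}$, $u^{A,2}_{\lambda,\nu}$, $u^B_{\lambda,\nu}$ and $u^C_{\lambda,\nu}$, and the $w$-component on $U_2$ for $u^{A,1}_{\lambda,\nu}$ (whose $v$-part vanishes identically and whose support is the closed orbit $\Ocal_2=\{[e_{n+1}]\}$).

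For the first four kernels a one-line block-matrix identity yields
\[
\bar n_{(y,0)}\,\bar n_x\cdot b_0=\bar n_{(y+x',\,x_n)}\cdot b_0\qquad(y\in\R^{n-1},\ x=(x',x_n)\in\R^{n-1}\times\R),
\]
so that in non-compact coordinates
\[
(A\varphi)(y)=\int_{\R^n} v^S_{\lambda,\nu}(x)\,\varphi(y+x',x_n)\,dx.
\]
Substituting the explicit distributions and carrying out the $\delta(x')$-integration (with the additional $\delta^{(2N)}(x_n)$-derivative for $S=(A,2)$ and the harmless translation $x'\mapsto x'-y$ for $S=B$ and $S=C$) immediately produces formulas (i), (iii), (iv) and (v). These manipulations are first carried out on the open set of parameters where $v^S_{\lambda,\nu}$ is locally integrable and extend to all $\lambda,\nu\in\C^2$ by the holomorphic dependence of $u^S_{\lambda,\nu}$ recorded in Corollary~\ref{corollary:residues_distr}.

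For $\mathcal{A}^{(1)}_{\lambda,\nu}$ the pairing must be carried out on the second chart. A second block-matrix computation shows
\[
\bar n_{(y,0)}\,w_0\,\bar n_{(z',z_n)}=w_0\,\bar n_{(z'+z_n y,\,z_n)}\cdot p,
\]
where $p\in P$ has trivial $MA$-component, so no twist factor appears in $F(\bar n_{(y,0)}w_0\bar n_{(z',z_n)})=F(w_0\bar n_{(z'+z_ny,z_n)})$. Using the change-of-trivialization formula worked out in the proof of Lemma~\ref{lemma:missing_factor}, the right-hand side equals
\[
|z_n|^{-\lambda_1-\rho_1}\,\varphi\!\bigl(z_n^{-1}(z'+z_n y),\,z_n^{-1}\bigr).
\]
Pairing this against $w^{A,1}_{\lambda,\nu}(z',z_n)=\delta^{(2j)}(z_n)\delta(z')$ amounts to setting $z'=0$ and then taking $2j$ derivatives in $z_n$ at $0$; after renaming $t=z_n$ this is precisely formula (ii). I expect the only subtle point to be here: the function $|t|^{-\lambda_1-\rho_1}\varphi(y,t^{-1})$ is singular at $t=0$, so $\tfrac{d^{2j}}{dt^{2j}}\bigl|_{t=0}$ must be read through the regularization built into the holomorphic family $|t|^{\bullet}/\Gamma(\tfrac{\bullet+1}{2})$ from Lemma~\ref{lemma:absolute_value_distr_holomorphic}, which is exactly what makes the right-hand side of (ii) well-defined as a holomorphic family in $\lambda$.
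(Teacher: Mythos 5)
Your treatment of the kernels $u^A_{\lambda,\nu}$, $u^{A,2}_{\lambda,\nu}$, $u^B_{\lambda,\nu}$, $u^C_{\lambda,\nu}$ (items (i), (iii), (iv), (v)) matches the paper's proof: in both cases one writes the pairing \eqref{eq:operator-kernel} on the first chart as the convolution $\int_{\R^n} v^S_{\lambda,\nu}(x-(y,0))\varphi(x)\,dx$ (your substitution $x'\mapsto x'+y$ is the same formula), evaluates it for parameters in the region of local integrability, and continues holomorphically.

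For item (ii) your route is genuinely different. The paper avoids the second chart entirely: it uses the residue identity of Corollary~\ref{corollary:residues_distr} to write $\mathcal{A}^{(1)}_{\lambda,\nu}$ as the appropriate scalar multiple of $\mathcal{A}_{\lambda,\nu}$, substitutes $t=x_n^{-1}$ in the one-dimensional integral, and then evaluates the holomorphic family $|t|^{\nu_1+\rho'_1-1}/\Gamma(\tfrac{\nu_1+\rho'_1}{2})$ at the pole $\nu_1+\rho'_1=-2j$ using Lemma~\ref{lemma:absolute_value_distr_holomorphic}. You instead pair directly with the $w$-component $w^{A,1}_{\lambda,\nu}$ on $U_2$, using the block-matrix identity $\bar n_{(y,0)}w_0\bar n_z = w_0\bar n_{(z'+z_ny,\,z_n)}p$ and the change of trivialization implicit in Lemma~\ref{lemma:missing_factor}. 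This is arguably more transparent — it never leaves the chart where $u^{A,1}_{\lambda,\nu}$ is supported and does not need the residue formula — at the price of the extra matrix computation. Both arguments are correct.

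Two points in your write-up deserve correction. First, the element $p$ in your identity is $\diag(1,h)$ with $h=\left(\begin{smallmatrix}I_{n-1}&y\\0&1\end{smallmatrix}\right)\in\SL^{\pm}(n,\R)$; this is a nontrivial element of $M$ (not trivial $MA$-component, and not in $N$). What makes the twist factor disappear is that the $A$-component is trivial and $\xi=\mathbf{1}$, so $\xi(m)^{-1}a^{-\rho-\lambda}=1$. Second, and more substantially, the function $t\mapsto|t|^{-\lambda_1-\rho_1}\varphi(y,t^{-1})$ is \emph{not} singular at $t=0$ and does not need any regularization: it equals $F\bigl(\bar n_{(y,0)}w_0\bar n_{(0,t)}\bigr)$ for the smooth right-$P$-equivariant extension $F\in I_\lambda(G)$ of $\varphi$, hence extends smoothly across $t=0$ — this is exactly what the paper verifies in \eqref{eq:limit_existence}, with $\lim_{t\to0}|t|^{-\lambda_1-\rho_1}\varphi(y,t^{-1})=\gamma_\lambda(\varphi)([0:\cdots:0:1])$. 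Since $w^{A,1}_{\lambda,\nu}=\delta^{(2j)}(z_n)\delta(z')$ is a fixed distribution supported at the origin, the pairing is an honest $2j$-th derivative of a smooth function; the holomorphic family $|t|^{\bullet}/\Gamma(\tfrac{\bullet+1}{2})$ plays no role in your direct-chart argument.
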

\begin{proof}
Assume first that a pair of distributions $u=(v,w)\in\mathbb{D}(\lambda,\nu)$ corresponds via Theorem~\ref{thm:charts} to a kernel on $G/P$ which is either supported on a single $P'$-orbit contained in $U_1$, or which is given by a sufficiently regular function times an equivariant measure on a single $P'$-orbit which intersects $U_1$ non-trivially. Then for $\bar{n}_y \in \Nbar'$, $y \in \R^{{n-1}}$ and $\varphi \in I_{\lambda}(\Nbar)$ the operator $\mathcal{A}\in\Hom_{G'}(I_{\lambda}(\Nbar),J_{\nu}(\Nbar'))$ corresponding to $u=(v,w)$ is given by
\begin{equation*}
\mathcal{A}\varphi(\bar{n}_{y})= \int_{\Nbar}v (\bar{n}_{(y,0)}^{-1}\bar{n}_x)\varphi(\bar{n}_x) d\bar{n}_x = \int_{\R^{{n}}} v(x-(y,0))\varphi(x)dx
\end{equation*}
in the distribution sense. This applies to the distributions $u_{\lambda,\nu}^A$, $u_{\lambda,\nu}^{A,2}$, $u_{\lambda,\nu}^B$ and $u_{\lambda,\nu}^C$ if $\Re\lambda_1\gg\Re\nu_1\gg0$. For instance,
\begin{multline*}
\mathcal{A}_{\lambda,\nu}\varphi(y)=\int_{\R^{{n}}} v^{A}_{\lambda,\nu}(x-(y,0))\varphi(x)dx =\int_{\R^{{n}}}\frac{\abs{x_{{n}}}^{\lambda_1+\rho_1-\nu_1-\rho'_1-1}}{\Gamma(\frac{\nu_1+\rho'_1}{2})\Gamma(\frac{\lambda_1+\rho_1-\nu_1-\rho'_1}{2})}\delta_{x'-y}\varphi(x',x_{{n}})dx \\
=\int_{\R}\frac{\abs{x_{{n}}}^{\lambda_1+\rho_1-\nu_1-\rho'_1-1}}{\Gamma(\frac{\nu_1+\rho'_1}{2})\Gamma(\frac{\lambda_1+\rho_1-\nu_1-\rho'_1}{2})}\varphi((y,x_{{n}}))d{x_{{n}}}.
\end{multline*}
The claimed formulas for $\mathcal{A}_{\lambda,\nu}^{(2)}$, $\mathcal{B}_{\lambda,\nu}$ and $\mathcal{C}_{\lambda,\nu}$ follow in the same way. To obtain the above expression for $\mathcal{A}_{\lambda,\nu}^{(1)}$ we use the residue formula of Corollary~\ref{corollary:residues_distr}. For $\nu_1=-\rho'_1-2j, j \in \Z_{\geq 0}$ we find
\begin{align*}
 \mathcal{A}^{(1)}_{\lambda,\nu}\varphi(y) &= (-1)^j\frac{(2j)!}{j!}\Bigg[\int_{\R} \frac{\abs{x_{{n}}^{-1}}^{\nu_1+\rho'_1-1}}{\Gamma(\frac{\nu_1+\rho'_1}{2})}\abs{x_{{n}}}^{\lambda_1+\rho_1-2} \varphi(y,x_{{n}})dx_{{n}}
\Bigg]_{\nu_1+\rho_1'=-2j}\\
 &= (-1)^j\frac{(2j)!}{j!}\Bigg[\int_{\R}\frac{\abs{t}^{\nu_1+\rho'_1-1}}{\Gamma(\frac{\nu_1+\rho'_1}{2})}\abs{t}^{-\lambda_1-\rho_1} \varphi(y,t^{-1})dt\Bigg]_{\nu_1+\rho_1'=-2j}\\
 &= \lim\limits_{t\to 0}\frac{d^{2j}}{dt^{2j}}\left( \abs{t}^{-\lambda_1-\rho_1}\varphi(y,t^{-1}) \right).
\end{align*}
where we have used Lemma~\ref{lemma:absolute_value_distr_holomorphic} in the last step. Note that this limit exists for every $\varphi\in I_\lambda(\Nbar)$. In fact, using the notation of the next section, for $t \neq 0$:
\begin{align}
\label{eq:limit_existence}
\abs{t}^{-\lambda_1-\rho_1} \varphi(y,t^{-1})=\abs{t}^{-\lambda_1-\rho_1} \varphi(\bar{n}_{(y,t^{-1})})&=\left(\abs{t}\sqrt{1+\abs{y}^2+t^{-2}}\right)^{-\lambda_1-\rho_1}\gamma_{\lambda}(\varphi)([1:y:t^{-1}])\notag \\&=(t^2+t^2\abs{y}^2+1)^{-\frac{\lambda_1+\rho_1}{2}}\gamma_{\lambda}(\varphi)([t:ty:1]),
\end{align}
so that the limit is given by
\begin{equation*}
\lim\limits_{t\to 0}\abs{t}^{-\lambda_1-\rho_1} \varphi(y,t^{-1})=\gamma_{\lambda}(\varphi)([0:\ldots:0:1]).
\end{equation*}
By the same argument the limits of derivatives exist.
\end{proof}

\subsection{Symmetry breaking operators in the compact picture}
We now find expressions for all symmetry breaking operators in the compact picture. For this consider the equivariant isomorphism from the non-compact to the compact picture
\begin{equation*}
\begin{tikzcd}
\gamma_{\lambda}: I_{\lambda}(\Nbar) \arrow[r,"\sim"] & I(K),
\end{tikzcd}
\end{equation*}
where we identify $I(K)$ with $C^\infty(K/(K \cap M))=C^\infty(\RP^{{n}})$.
By the Iwasawa decomposition $G=KP$ we write $\bar{n}_x=kp$ with $k\in K$ and $p\in P$. Then
\begin{equation*}
 k=(1+\abs{x}^2)^{-\frac{1}{2}}\left(
\begin{array}{c|c}
  1 & * \\ \hline 
   &  \raisebox{-15pt}{{\Huge\mbox{{$*$}}}} \\[-2.7ex]
  x & \\[+0.1ex]
    &
\end{array}
\right)\in K, \qquad
 p=(1+\abs{x}^2)^{\frac{1}{2}}\left(
\begin{array}{c|c}
  1 & * \\ \hline 
   &  \raisebox{-15pt}{{\Huge\mbox{{$*$}}}} \\[-2.7ex]
  0 & \\[+0.1ex]
    &
\end{array}
\right) \in P.
\end{equation*}
A function $\varphi \in I_{\lambda}(\bar{N})$ can be extended to a smooth function $\widetilde{\varphi}$ on $G$ which is right-equivariant under the action of $P$. As such it can be restricted to $K\subseteq G$ which gives
\begin{equation*}
\widetilde{\varphi}(\bar{n}_x)=(1+\abs{x}^2)^{-\frac{1}{2}(\lambda_1+\rho_1)}\widetilde{\varphi}(k).
\end{equation*}
This implies for all $v_1 \neq 0$
\begin{equation*}
\gamma_{\lambda}(\varphi)([v_1:\ldots :v_{n+1}])=\left(\frac{\abs{v}}{\abs{v_1}}\right)^{\lambda_1+\rho_1}\varphi\left(\bar{n}_{v_1^{-1}(v_2,\ldots,v_{n+1})}\right).
\end{equation*}
We denote the corresponding map $J_\nu(\Nbar')\to J(K')$ between the non-compact and the compact picture of $\tau_{\nu}$ by $\gamma'_{\nu}$.

\begin{theorem}
\label{theorem:compact_operators}
The operators $A_{\lambda,\nu},A^{(1)}_{\lambda,\nu}, A^{(2)}_{\lambda,\nu}, B_{\lambda,\nu}, C_{\lambda,\nu} \in \Hom_{G'}(I(K),J(K'))$ corresponding to the distributions $u^{A}_{\lambda,\nu},u^{A,1}_{\lambda,\nu}, u^{A,2}_{\lambda,\nu}, u^{B}_{\lambda,\nu}, u^C_{\lambda,\nu}$ are given as follows:
\begin{enumerate}[label=(\roman{*}), ref=\thetheorem(\roman*)]
\item For $(\lambda_1,\nu_1)\in\C^2$
\begin{equation*}
A_{\lambda,\nu}\varphi([y])=\int_{S^{1}} \frac{\abs{\omega_1}^{\nu_1+\rho'_1-1}\abs{\omega_2}^{\lambda_1+\rho_1-\nu_1-\rho'_1-1}}{2\Gamma(\frac{\nu_1+\rho'_1}{2})\Gamma(\frac{\lambda_1+\rho_1-\nu_1-\rho'_1}{2})}\varphi\left(\left[\omega_1\frac{y}{\abs{y}}:\omega_2\right]\right)d(\omega_1,\omega_2).
\end{equation*}
\item For $\nu_1=-\rho'_1-2j ,j\in \Z_{\geq 0}$ \begin{equation*}
A^{(1)}_{\lambda,\nu}\varphi[(y])=\abs{y}^{\nu_1+\rho_1'} \frac{d^{2j}}{dt^{2j}}\biggr|_{t=0}\abs{(ty,1)}^{-\lambda_1-\rho_1}\varphi([ty:1]).
\end{equation*}
\item For $\lambda_1+\rho_1-\nu_1-\rho'_1=-2N\in -2 \Z_{\geq 0}$ \begin{equation*}
A^{(2)}_{\lambda,\nu}\varphi([y])=\abs{y}^{\nu_1+\rho_1'}\frac{d^{2N}}{dt^{2N}}\biggr|_{t=0}\abs{(y,t)}^{-\lambda_1-\rho_1}\varphi([y:t]).
\end{equation*}
\item For $\nu_1=-\rho_1'$
\begin{equation*}
B_{\lambda,\nu}\varphi([y])=\frac{1}{2\Gamma(\frac{\lambda_1-\rho_1+1}{2})}\int_{ S^{{n}}}\abs{\omega_{n+1}}^{\lambda_1-\rho_1}\varphi([\omega])d\omega.
\end{equation*}
\item For $n=2$ and $(\lambda_1,\nu_1)\in \C^2$
\begin{equation*}
C_{\lambda,\nu}\varphi([y])=\int_{ S^{{2}}}\frac{\left|\frac{\omega_2y_1}{\abs{y}}-\frac{\omega_1 y_2}{\abs{y}}\right|^{-\nu_1-\rho'_1}\abs{\omega_{3}}^{\lambda_1+\rho_1+\nu_1-\rho'_1-1}}{2\Gamma(\frac{-\nu_1-\rho'_1+1}{2}){\Gamma(\frac{\lambda_1+\rho_1+\nu_1-\rho'_1}{2})}}\varphi([\omega])d\omega.
\end{equation*}
\end{enumerate}
\end{theorem}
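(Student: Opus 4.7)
The plan is to transport each operator from the non-compact realization (Theorem~\ref{thm:SBOsNonCptPicture}) to the compact realization via the intertwining isomorphisms $\gamma_\lambda:I_\lambda(\Nbar)\to I(K)$ and $\gamma'_\nu:J_\nu(\Nbar')\to J(K')$. The key radial identity
\begin{equation*}
\varphi(\bar{n}_x)=(1+\abs{x}^2)^{-(\lambda_1+\rho_1)/2}\gamma_\lambda(\varphi)([1:x])
\end{equation*}
(and its analogue for $\gamma'_\nu$) shows that evaluating a compact-picture operator at a point $[\widetilde{y}]\in\RP^{n-1}$ of the form $\widetilde{y}=(1,y)$, $y\in\R^{n-1}$, reduces to multiplying the non-compact formula by the conformal factor $(1+\abs{y}^2)^{(\nu_1+\rho_1')/2}=\abs{\widetilde{y}}^{\nu_1+\rho_1'}$ and rewriting $\varphi$ in terms of $\gamma_\lambda(\varphi)$. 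Since such points form an open dense subset of $\RP^{n-1}$ and all proposed formulas are continuous in $[y]$ (for parameters in a suitable half-space of absolute convergence and, afterwards, by meromorphic continuation of the holomorphic families $u^{(\cdot)}_{\lambda,\nu}$), this determines each operator everywhere.

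For the integral operators (i), (iv) and (v) the computation is a spherical change of variables. For $A_{\lambda,\nu}$, substitute $x_n=\sqrt{1+\abs{y}^2}\tan\theta$ and put $(\omega_1,\omega_2)=(\cos\theta,\sin\theta)\in S^1$; using $1+\abs{y}^2+x_n^2=(1+\abs{y}^2)\sec^2\theta$ one checks directly that
\begin{equation*}
(1+\abs{y}^2)^{(\nu_1+\rho_1')/2}\,\frac{\abs{x_n}^{\lambda_1+\rho_1-\nu_1-\rho_1'-1}}{(1+\abs{y}^2+x_n^2)^{(\lambda_1+\rho_1)/2}}\,dx_n=\abs{\omega_1}^{\nu_1+\rho_1'-1}\abs{\omega_2}^{\lambda_1+\rho_1-\nu_1-\rho_1'-1}\,d(\omega_1,\omega_2),
\end{equation*}
while $[1:y:x_n]$ becomes $[\omega_1\widetilde{y}/\abs{\widetilde{y}}:\omega_2]$; invariance of the integrand under $(\omega_1,\omega_2)\mapsto(-\omega_1,-\omega_2)$ extends the integral from the half-circle to $S^1$ with a compensating factor $\tfrac12$. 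For $B_{\lambda,\nu}$ the standard identity $dx=(1+\abs{x}^2)^{(n+1)/2}\,d\omega$ for $\omega=(1,x)/\abs{(1,x)}\in S^n$ applies; since $\rho_1=\frac{n+1}{2}$, all powers of $(1+\abs{x}^2)$ cancel and one is left with $\abs{\omega_{n+1}}^{\lambda_1-\rho_1}\,d\omega$ on $S^n_+$, extended to $S^n$ by $\omega\mapsto-\omega$. For $C_{\lambda,\nu}$ the same spherical change of variables on $\R^2$ is combined with the rewriting $x_1-y=(\omega_2-y\omega_1)/\omega_1$; using $n=2$ (so $2\rho_1'-2=0$) and the $\gamma'_\nu$-factor reproduces the manifestly $K'$-equivariant expression $\bigl|\omega_2 y_1/\abs{y}-\omega_1 y_2/\abs{y}\bigr|^{-\nu_1-\rho_1'}$.

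For the differential operators no integration is needed. In case (iii) the identity $\varphi(y,t)=\abs{(\widetilde{y},t)}^{-(\lambda_1+\rho_1)}\gamma_\lambda(\varphi)([\widetilde{y}:t])$ gives
\begin{equation*}
\partial_{x_n}^{2N}\varphi(y,0)=\frac{d^{2N}}{dt^{2N}}\bigg|_{t=0}\abs{(\widetilde{y},t)}^{-(\lambda_1+\rho_1)}\gamma_\lambda(\varphi)([\widetilde{y}:t]),
\end{equation*}
and multiplication by $\abs{\widetilde{y}}^{\nu_1+\rho_1'}$ yields the claimed formula for $A^{(2)}_{\lambda,\nu}$. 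Case (ii) uses the complementary identity already recorded in \eqref{eq:limit_existence}, which rewrites $\abs{t}^{-\lambda_1-\rho_1}\varphi(y,t^{-1})=\abs{(t\widetilde{y},1)}^{-(\lambda_1+\rho_1)}\gamma_\lambda(\varphi)([t\widetilde{y}:1])$; taking the $2j$-th derivative at $t=0$ and multiplying by $\abs{\widetilde{y}}^{\nu_1+\rho_1'}$ gives the compact formula for $A^{(1)}_{\lambda,\nu}$.

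The main obstacle is bookkeeping rather than analysis: one must carefully distinguish the non-compact affine coordinate $y\in\R^{n-1}$ from the projective representative $y\in\R^n\setminus\{0\}$ used on the right-hand sides of the theorem, so that the normalization $\abs{y}^{\nu_1+\rho_1'}$ there is correctly identified with the conformal factor $(1+\abs{y}^2)^{(\nu_1+\rho_1')/2}$ produced by $\gamma'_\nu$; one should also verify in cases (ii) and (iii) that the combined exponent $\nu_1+\rho_1'+2j$ (respectively $\nu_1+\rho_1'+2N-(\lambda_1+\rho_1-\nu_1-\rho_1'$)) vanishes precisely under the relevant parameter conditions, so that the right-hand side descends to a function on $\RP^{n-1}$. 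Once this correspondence is fixed, each case reduces to a direct computation.
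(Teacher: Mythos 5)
Your proposal follows the same route as the paper: both transport the non-compact operators from Theorem~\ref{thm:SBOsNonCptPicture} to the compact picture via $A=\gamma'_\nu\circ\mathcal{A}\circ\gamma_\lambda^{-1}$ and then perform the relevant change of variables. The only cosmetic difference is that you fix the affine representative $\widetilde{y}=(1,y)$ while the paper works with a general representative $[y_1:y']$, $y_1\neq 0$, and for $A_{\lambda,\nu}$ you parametrize the substitution trigonometrically ($x_n=\sqrt{1+|y|^2}\tan\theta$) whereas the paper uses $r=z/\sqrt{|y|^2+z^2}$ — these are equivalent; in all other respects the arguments, including the gnomonic pullback $S^n_{\omega_1\neq 0}\to\R^n$ for (iv) and (v) and the use of \eqref{eq:limit_existence} for (ii), coincide with the paper's.
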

\begin{proof}
Let $\mathcal{A} \in \Hom_{G'}(I_{\lambda}(\Nbar),J_\nu(\Nbar'))$. Then the corresponding operator $A$ in the compact picture is given in the following way:
\begin{equation*}
A=\gamma'_{\nu}\circ \mathcal{A} \circ \gamma_{\lambda}^{-1}
\end{equation*}
Hence for $y=[y_1:y']$ with $y_1 \neq 0$
\begin{align}
\label{eq:compact_operator_calculation}
A\varphi([y])=\left(\frac{\abs{y_1}}{\abs{y}}\right)^{-\nu_1-\rho'_1}\mathcal{A}\left(\gamma_{\lambda}^{-1}(\varphi)\right)(y_1^{-1}y').
\end{align}
For the first operator \eqref{eq:compact_operator_calculation} is
\begin{align*}
A_{\lambda,\nu}\varphi([y])&=\abs{y_1}{\abs{y}}^{\nu_1+\rho'_1}\int_{\R}\frac{\abs{y_1x}^{(\lambda_1+\rho_1-\nu_1-\rho'_1)-1}\sqrt{\abs{y}^2+y_1^2x^2}^{-\lambda_1-\rho_1}}{\Gamma(\frac{\nu_1+\rho'_1}{2})\Gamma(\frac{\lambda_1+\rho_1-\nu_1-\rho'_1}{2})}\varphi([y:y_1x])d{x} \\
&={\abs{y}}^{\nu_1+\rho'_1}\int_{\R}\frac{\abs{z}^{(\lambda_1+\rho_1-\nu_1-\rho'_1)-1}\sqrt{\abs{y}^2+z^2}^{-\lambda_1-\rho_1}}{\Gamma(\frac{\nu_1+\rho'_1}{2})\Gamma(\frac{\lambda_1+\rho_1-\nu_1-\rho'_1}{2})}\varphi([y:z])d{z} \\
&=\int_{-1}^{1}\frac{\sqrt{1-r^2}^{\nu_1+\rho'_1-2}\abs{r}^{\lambda_1+\rho_1-\nu_1-\rho'_1-1}}{\Gamma(\frac{\nu_1+\rho'_1}{2})\Gamma(\frac{\lambda_1+\rho_1-\nu_1-\rho'_1}{2})}\varphi\left(\left[\frac{\sqrt{1-r^2}}{\abs{y}}y:r\right]\right)d{r} \\
&=\frac{1}{2}\int_{S^1}\frac{\abs{\omega_1}^{\nu_1+\rho'_1-1}\abs{\omega_2}^{\lambda_1+\rho_1-\nu_1-\rho'_1-1}}{\Gamma(\frac{\nu_1+\rho'_1}{2})\Gamma(\frac{\lambda_1+\rho_1-\nu_1-\rho'_1}{2})}\varphi\left(\left[\frac{\omega_1}{\abs{y}}y:\omega_2\right]\right)d(\omega_1,\omega_2).
\end{align*}
(ii) follows from \eqref{eq:limit_existence} and \eqref{eq:compact_operator_calculation} and (iii) follows from \eqref{eq:compact_operator_calculation}. (iv) and (v) follows  from \eqref{eq:compact_operator_calculation} by pulling back the integral with the map $S^{{n}}_{\omega_1 \neq 0} \to \R^{{n}}$, $(\omega_1,\ldots,\omega_{n+1})\mapsto \omega_1^{-1}(\omega_2,\ldots,\omega_{n+1})$.
\end{proof}
Theorem~\ref{theorem:compact_operators} concludes the proof of Theorem~\ref{theorem:B} and together with Corollary~\ref{corollary:residues_distr} it implies Theorem~\ref{theorem:C}. 
\begin{remark}
For $z \in \C, m\in \Z_{\geq 0}$ let $(z)_m$ denote the Pochhammer symbol given by $(z)_m=\frac{\Gamma(z+m)}{\Gamma(z)}$.
Let $\lambda_1+\rho_1-\nu_1-\rho'_1=-2N$, $N\in \Z_{\geq 0}$. By the product rule the operator $A^{(2)}_{\lambda,\nu}$ can be written as
\begin{equation*}
A^{(2)}_{\lambda,\nu}=\rest \circ \sum_{k=0}^{N} (-1)^{k}\frac{(2N)!}{k!(2N-2k)!}\left(\frac{\lambda_1+\rho_1}{2}\right)_k 
\partial_{\rm n}^{2N-2k},
\end{equation*}
where $\partial_{\rm n}$ denotes the normal vector field with respect to the submanifold $\RP^{n-1}\subseteq\RP^n$ and $\rest:C^\infty(\RP^n)\to C^\infty(\RP^{n-1})$ the restriction map. This expression should be compared to Juhl's family of conformally covariant differential operators (see \cite[Chapter 5]{juhl_2009} and also \cite[10.2.]{kobayashi_speh_2015}). In contrast to Juhl's operators, the family $A^{(2)}_{\lambda,\nu}$ does not involve derivatives tangential to the submanifold $\RP^{n-1}\subseteq\RP^n$, but only normal derivatives.
\end{remark}

\begin{remark}
The operator $B_{\lambda,\nu}$ for $\lambda_1\neq \rho_1-2j-1$, $j \in  \Z_{\geq 0}$ defines the only family of regular symmetry breaking operators, in the sense that the support of its distribution kernel contains an open $P'$-orbit. All other symmetry breaking operators are singular. More precisely, the support of the distribution kernel $K^A\in(\mathcal{D}'(G/P,\mathcal{V}_{{\bf 1},\lambda})\otimes W_{{\bf 1},\nu})^{P'}$ of a symmetry breaking operator $A$ is given by
\begin{align*}
&\operatorname{supp} K^{A_{\lambda,\nu}}=\begin{cases}
\bar{\mathcal{O}}_3 & \text{for } (\lambda_1,\nu_1) \notin L, \nu_1\notin -\rho_1'-2\Z_{\geq 0}, \lambda_1+\rho_1-\nu_1-\rho'_1 \notin -2\Z_{\geq 0},\\
\mathcal{O}_2  & \text{for } (\lambda_1,\nu_1) \notin L, \lambda_1+\rho_1-\nu_1-\rho'_1 \in -2\Z_{\geq 0}, \\
\mathcal{O}_1 & \text{for } (\lambda_1,\nu_1) \notin L, \nu_1\in -\rho_1'-2\Z_{\geq 0}, \\
\{0\} & \text{otherwise,}
\end{cases} \\
&\operatorname{supp} K^{A^{(1)}_{\lambda,\nu}}=\mathcal{O}_1, \\
&\operatorname{supp} K^{A^{(2)}_{\lambda,\nu}}=\mathcal{O}_2, \\
&\operatorname{supp} K^{B_{\lambda,\nu}}=\begin{cases}
G/P & \text{for }  \lambda_1 \neq \rho_1-1-2\Z_{\geq 0}, \nu_1=-\rho'_1, \\
\bar{\mathcal{O}}_4 & \text{for } \lambda_1 = \rho_1-1-2\Z_{\geq 0}, \nu_1=-\rho'_1.
\end{cases} \\
\intertext{For $n=2$}
&\operatorname{supp}K^{C_{\lambda,\nu}}=
	\begin{cases}
		G/P & \text{for } -\nu_1-\rho'_1+1 \notin -2\Z_{\geq 0}, \lambda_1+\rho_1+\nu_1-\rho'_1 \notin -2\Z_{\geq 0}, \\
		\bar{\mathcal{O}_4 }& \text{for } -\nu_1-\rho'_1+1 \notin -2\Z_{\geq 0}, \lambda_1+\rho_1+\nu_1-\rho'_1 \in -2\Z_{\geq 0}, \\		
		\bar{\mathcal{O}_3} & \text{for } -\nu_1-\rho'_1+1 \in -2\Z_{\geq 0}, \lambda_1+\rho_1+\nu_1-\rho'_1 \notin -2\Z_{\geq 0}, \\
		\mathcal{O}_1 & \text{for } -\nu_1-\rho'_1+1 \in -2\Z_{\geq 0}, \lambda_1+\rho_1+\nu_1-\rho'_1 \in -2\Z_{\geq 0}.
	\end{cases}
\end{align*}
In \cite[Corollary 3.6]{kobayashi_speh_2015} a necessary condition for the occurrence of regular symmetry breaking operators was given. This condition is in our case satisfied for $n\geq 3$ if and only if $\nu_1=-\rho'_1$ and $\lambda_2-\rho_2=\nu_2+\rho'_2$ and for $n=2$ if and only if $\lambda_2-\rho_2-\nu_2-\rho'_2=\nu_1+\rho'_1$.
\end{remark}

\subsection{$K$-type analysis}

The space $I(K)$ can be identified with the space of smooth functions on the real projective space $\RP^{{n}}$. Let $\Ical:=C^\infty(\RP^{{n}})_K$ be the space of $K$-finite vectors, which decomposes following \cite[Chapter IX]{vilenkine_1968} into irreducible $K$-modules
\begin{equation}
\label{EQ:Kdecomp}
\Ical=\bigoplus_{\mathclap{\alpha \in \Z_{\geq 0}}} \;\Ical(\alpha)
\end{equation}
where $\Ical(\alpha)\cong\Hcal^{2\alpha}(\R^{n+1})$, the space of harmonic, homogeneous polynomials on $\R^{{n+1}}$ of degree $2\alpha$. In particular, we have $\mathcal{I}=\C[\RP^n]$, the space of regular functions on the projective variety $\RP^n$ in the sense of algebraic geometry. Similarly we decompose
\begin{equation}
\label{EQ:Kdecomp2}
\Jcal:=C^\infty(\RP^{{n-1}})_{K'}=\C[\RP^{n-1}]=\bigoplus_{\mathclap{\alpha' \in \Z_{\geq 0}}} \;\Jcal(\alpha'), \qquad \Jcal(\alpha')\cong\Hcal^{2\alpha'}(\R^{{n}}).
\end{equation}
\begin{remark}
\label{REM:SR}
Since homogenous functions on $\R^{n+1}$ are uniquely determined by their values on the unit sphere $S^n$, the restriction from $\R^{n+1}$ to $S^n$ defines an isomorphism $\Hcal^{2\alpha}(\R^{n+1})\to C^\infty(S^n)$ onto a subspace $\Hcal^{2\alpha}(S^{{n}})$ of $C^\infty(S^n)$. The inverse is given by writing for $\phi \in \Hcal^{2\alpha}(S^{{n}})$: $\phi(x)=|x|^{2\alpha}\phi(\frac{x}{\abs{x}}) \in \Hcal^{2\alpha}(\R^{n+1})$. Since $\Orm({n+1})$ leaves $|x|$ invariant, these are isomorphisms of $\Orm({n+1})$-representations.
\end{remark}
Following \eqref{EQ:SphHarm1} we have
\begin{equation}
\Hcal^{2\alpha}(\R^{n+1})|_{K'} \cong \; \bigoplus_{\mathclap{0 \leq k \leq 2\alpha }} \; \Hcal^{k}(\R^{{n}})\text{.}
\end{equation}
Therefore every $K$-type $\Ical(\alpha)$ contains all $K'$-types $\Jcal(\alpha')$ with $0\leq\alpha'\leq\alpha$. The explicit embedding $I_{\alpha' \rightarrow \alpha}: \Hcal^{2\alpha'}(\R^{{n}}) \rightarrow \Hcal^{2\alpha}(\R^{{n+1}})$ is given in \eqref{EQ:SphHarm3} and we put 
\begin{equation}
\label{EQ:K,K'rel}
\Ical(\alpha, \alpha')=I_{\alpha'\rightarrow \alpha}(\Hcal^{2\alpha'}(\R^{{n}}))\subseteq\Ical(\alpha).
\end{equation}
Further let $R_{\alpha,\alpha'}: \Ical(\alpha,\alpha') \rightarrow \Jcal(\alpha')$ be the map restricting functions on $K$ to $K'$ which is an isomorphism of $K'$-modules.
\begin{figure}[h]
\centering
\label{fig:K'types}
\setlength{\unitlength}{6pt}
\begin{picture}(31,31)
\thicklines
\put(0,0){\vector(1,0){30}}
\put(0,0){\vector(0,1){30}}

\multiput(0,0)(5,0){6}{\circle*{1}}
\multiput(5,5)(5,0){5}{\circle*{1}}
\multiput(10,10)(5,0){4}{\circle*{1}}
\multiput(15,15)(5,0){3}{\circle*{1}}
\multiput(20,20)(5,0){2}{\circle*{1}}
\multiput(25,25)(5,0){1}{\circle*{1}}
\put(28,1){$\alpha$}
\put(1,28){$\alpha'$}
\put(-1.5,-2){$0$}
\put(4.6,-2){$1$}
\put(9.6,-2){$2$}
\put(14.6,-2){$3$}
\put(19.6,-2){$4$}
\put(24.6,-2){$5$}
\put(-1.5,4.6){$1$}
\put(-1.5,9.6){$2$}
\put(-1.5,14.6){$3$}
\put(-1.5,19.6){$4$}
\put(-1.5,24.6){$5$}
\end{picture}
\vspace{0.3cm}
\caption{$K'$-types $\Ical(\alpha,\alpha')$ that occur in $\Jcal$.}
\end{figure}
\begin{lemma}
\label{lemma:spectral_functions}
Let $\varphi \in \Ical(\alpha,\alpha')$. Then
\begin{multline}
\label{eq:spectral_function_1}
A_{\lambda,\nu}\varphi=
\frac{(\frac{\nu_1+\rho'_1}{2})_{\alpha'}}{\Gamma(\frac{\lambda_1+\rho_1}{2}+\alpha')} \\ \cdot{_3F_2}\left(\alpha'-\alpha,\frac{\lambda_1+\rho_1-\nu_1-\rho'_1}{2},\frac{{n-1}}{2}+\alpha+\alpha';\frac{\lambda_1+\rho_1}{2}+\alpha',\frac{1}{2};1\right) R_{\alpha,\alpha'}(\varphi)
\end{multline}
and
\begin{align}
\label{eq:spectral_function_4}
B_{\lambda,\nu}\varphi=\delta_{0,\alpha'}\frac{\pi^{\frac{{n}}{2}}(\frac{\rho_1-\lambda_1}{2})_{\alpha}(\frac{{n}}{2})_{\alpha}}{(\frac{1}{2})_{\alpha}\Gamma(\frac{\lambda_1+\rho_1}{2}+\alpha)}R_{\alpha,\alpha'}(\varphi).
\end{align}
For $n=2$ we further have
\begin{multline}
\label{eq:spectral_function_C}
C_{\lambda,\nu}\varphi =\frac{\sqrt{\pi}(\frac{\nu_1+\rho'_1}{2})_{\alpha'}}{\Gamma(\frac{\lambda_1+\rho_1}{2}+\alpha')}
\\ \cdot{_3F_2}\left(\alpha'-\alpha,\frac{\lambda_1+\rho_1+\nu_1-\rho'_1}{2},\frac{{1}}{2}+\alpha+\alpha';\frac{\lambda_1+\rho_1}{2}+\alpha',\frac{1}{2};1\right) R_{\alpha,\alpha'}(\varphi).
\end{multline}
\end{lemma}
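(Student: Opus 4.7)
The plan is to exploit $K'$-equivariance to reduce each identity to a one-variable integral computation, and then to recognize that integral as a terminating ${}_3F_2$ via the explicit series for Gegenbauer polynomials.

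First, observe that each of $A_{\lambda,\nu}$, $B_{\lambda,\nu}$ and $C_{\lambda,\nu}$ is $G'$-equivariant and therefore in particular $K'$-equivariant. Since $\mathcal{I}(\alpha,\alpha')$ is an irreducible $K'$-submodule and $\mathcal{J}(\alpha')$ is either zero or an irreducible $K'$-module, Schur's Lemma combined with the multiplicity-freeness \eqref{eq:MultFreeProperties} forces
$A_{\lambda,\nu}|_{\mathcal{I}(\alpha,\alpha')}=c^A_{\alpha,\alpha'}(\lambda,\nu)\cdot R_{\alpha,\alpha'}$, and similarly for $B_{\lambda,\nu}$ and $C_{\lambda,\nu}$. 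Thus the task reduces to computing these scalars, which I would do by evaluating both sides at one conveniently chosen vector $\varphi\in\mathcal{I}(\alpha,\alpha')$ and one point $[y]\in\RP^{n-1}$.

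The natural choice is $\varphi=I_{\alpha'\to\alpha}(\phi')$ for a nonzero $\phi'\in\Hcal^{2\alpha'}(\R^n)$, since the explicit embedding formula \eqref{EQ:SphHarm3} presents $\varphi$ as
$\varphi(x_1,\dots,x_{n+1})=\abs{x}^{2(\alpha-\alpha')}\,\tilde{C}_{2(\alpha-\alpha')}^{(\alpha'+\frac{n-1}{2})}\!\left(\tfrac{x_{n+1}}{\abs{x}}\right)\phi'(x_1,\dots,x_n)$
up to a normalization, where $\tilde{C}$ is a Gegenbauer polynomial. For a point $(\omega_1,\omega_2)\in S^1$ and $[y]\in\RP^{n-1}$, substituting $x=(\omega_1 y/\abs{y},\omega_2)$ gives $\abs{x}=1$, so $\phi'(\omega_1 y/\abs{y})=\omega_1^{2\alpha'}\phi'(y/\abs{y})$ by homogeneity. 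Hence
\begin{equation*}
\varphi\!\left(\left[\omega_1\tfrac{y}{\abs{y}}:\omega_2\right]\right)=\omega_1^{2\alpha'}\tilde{C}_{2(\alpha-\alpha')}^{(\alpha'+\frac{n-1}{2})}(\omega_2)\,\phi'(y/\abs{y}),
\end{equation*}
and $R_{\alpha,\alpha'}(\varphi)([y])$ equals (after the same normalization) $\phi'(y/\abs{y})$. Substituting into the integral defining $A_{\lambda,\nu}$ factors $\phi'(y/\abs{y})$ out, leaving the scalar $c^A_{\alpha,\alpha'}(\lambda,\nu)$ as a one-dimensional integral over $S^1$ of $\abs{\omega_1}^{\nu_1+\rho'_1-1+2\alpha'}\abs{\omega_2}^{\lambda_1+\rho_1-\nu_1-\rho'_1-1}\tilde{C}_{2(\alpha-\alpha')}^{(\alpha'+\frac{n-1}{2})}(\omega_2)$ against the normalizing gamma factors.

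Expanding the Gegenbauer polynomial in its explicit power series and using the standard Beta integral
\begin{equation*}
\int_{S^1}\abs{\omega_1}^{2a-1}\abs{\omega_2}^{2b-1}\,d(\omega_1,\omega_2)=2B(a,b)=\frac{2\Gamma(a)\Gamma(b)}{\Gamma(a+b)}
\end{equation*}
term by term converts the integral into a finite sum of ratios of gamma functions. A routine rearrangement, using $(z)_m=\Gamma(z+m)/\Gamma(z)$ and the duplication formula where needed, identifies this sum with the terminating hypergeometric series on the right-hand side of \eqref{eq:spectral_function_1}. The formula \eqref{eq:spectral_function_C} for $C_{\lambda,\nu}$ with $n=2$ follows by the same strategy: parameterize $S^2$ so that $y/\abs{y}$ lies in the $\omega_1$-axis, reduce to an integral of powers of coordinates times a Gegenbauer polynomial, and match coefficients. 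For \eqref{eq:spectral_function_4} I would argue first that $B_{\lambda,\nu}\varphi$ is a constant function, so it must vanish on $\mathcal{I}(\alpha,\alpha')$ whenever $\alpha'>0$; for $\alpha'=0$, the function $\varphi$ is a Gegenbauer polynomial in $\omega_{n+1}$, and the integral $\int_{S^n}\abs{\omega_{n+1}}^{\lambda_1-\rho_1}\varphi([\omega])\,d\omega$ reduces to a one-variable integral whose value is a standard Mellin transform of a Gegenbauer polynomial.

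The hard part will be the final bookkeeping step that identifies the explicit sum of Beta-function terms with the stated ${}_3F_2$; keeping the normalization of $I_{\alpha'\to\alpha}$ and $R_{\alpha,\alpha'}$ consistent with the Gegenbauer normalization, and making sure the Pochhammer indices and the upper parameters $\alpha'-\alpha$, $\tfrac{\lambda_1+\rho_1-\nu_1-\rho'_1}{2}$, $\tfrac{n-1}{2}+\alpha+\alpha'$ appear correctly, is the delicate point. Once this is verified in one case, the analogous statements for $B_{\lambda,\nu}$ and $C_{\lambda,\nu}$ follow from the same mechanism together with the residue relations of Corollary~\ref{corollary:residues_distr} as a sanity check.
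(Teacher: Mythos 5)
Your proposal follows essentially the same route as the paper: reduce to scalar multiples of $R_{\alpha,\alpha'}$ via Schur's lemma and $K'$-equivariance, insert the explicit vector $I_{\alpha'\to\alpha}(\phi')$ into the compact-picture integrals from Theorem~\ref{theorem:compact_operators}, exploit homogeneity of $\phi'$ to factor it out, and thereby reduce to a one-variable integral of a power against a Gegenbauer polynomial (with the $B_{\lambda,\nu}$ case vanishing for $\alpha'>0$ by the same orthogonality observation). The only deviation is that the paper evaluates this Mellin-type Gegenbauer integral by directly citing the tabulated formula \cite[7.319]{Tables_Integrals_Prodicts} (and, for $B$ and $C$, additionally \cite[Theorem 2.2.6]{vilenkine_1968} and \cite[3.631(8)]{Tables_Integrals_Prodicts}), whereas you propose to rederive it by expanding the Gegenbauer polynomial in its power series and summing Beta integrals term by term --- a valid but more laborious route, and precisely the bookkeeping you yourself flag as ``the hard part.''
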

Here ${_3F_2}(a_1,a_2,a_3;b_1,b_2;z)$ denotes the generalized hypergeometric function. 
\begin{proof}
\eqref{eq:spectral_function_1} follows directly from \cite[7.319]{Tables_Integrals_Prodicts}.
For \eqref{eq:spectral_function_4}  let $ \varphi=I_{\alpha'\to\alpha}(\phi), \phi \in \mathcal{J}(\alpha') $. We calculate using the coordinates $(\sqrt{1-r}\omega,\sqrt{r})$
\begin{equation*}
B_{\lambda,\nu}\varphi=\int_{S^{{n-1}}}\phi(\omega)d\omega \cdot \int_{0}^{1} \frac{r^{\frac{\lambda_1-\rho_1-1}{2}}(1-r)^{\frac{{n-2}}{2}+\alpha'}}{2\Gamma(\frac{\lambda_1-\rho_1+1}{2})}C^{\frac{{n-1}}{2}+2\alpha'}_{2(\alpha-\alpha')}(\sqrt{r}) dr,
\end{equation*}
which vanishes if $\alpha'\neq 0$. For $\alpha'=0$, \cite[7.319]{Tables_Integrals_Prodicts} and \cite[Theorem 2.2.6]{vilenkine_1968} imply the formula.
For \eqref{eq:spectral_function_C} we calculate using the same coordinates for $y \in S^1$
\begin{multline*}
C_{\lambda,\nu}\varphi([y])=\int_{0}^{1}\frac{\abs{r}^{\frac{\lambda_1+\rho_1+\nu_1-\rho'_1}{2}-1}(1-r)^{\frac{-\nu_1-\rho'_1}{2}+\alpha'}}{\Gamma(\frac{\lambda_1+\rho_1+\nu_1-\rho'_1}{2})}C^{\frac{1}{2}+2\alpha'}_{2(\alpha-\alpha')}(\sqrt{r})dr \\ \cdot \int_{S^1}\frac{\abs{\omega_2y_1-\omega_1y_2}^{-\nu_1-\rho'_1}}{2\Gamma(\frac{-\nu_1-\rho'_1+1}{2})}\phi(\omega)d\omega.
\end{multline*}
The first integral can again be evaluated with \cite[7.319]{Tables_Integrals_Prodicts} and yields
\begin{equation*}
C^{\frac{1}{2}+2\alpha'}_{2(\alpha-\alpha')}(0)\frac{\Gamma(\frac{-\nu_1+\rho_1}{2}+\alpha')}{\Gamma(\frac{\lambda_1+\rho_1}{2}+\alpha')}
{_3F_2}\left(\alpha'-\alpha,\frac{\lambda_1+\rho_1+\nu_1-\rho'_1}{2},\frac{{1}}{2}+\alpha+\alpha';\frac{\lambda_1+\rho_1}{2}+\alpha',\frac{1}{2};1\right).
\end{equation*}
The right $K'$-invariance of $\phi$ implies that the second integral is equal to
\begin{align*}
\phi(y)\int_{S^1}\frac{\abs{\omega_1}^{-\nu_1-\rho'_1}}{2\Gamma(\frac{-\nu_1-\rho'_1+1}{2})}\phi(\omega)d\omega ,
\end{align*}
which is by \cite[3.631 (8)]{Tables_Integrals_Prodicts} equal to
\begin{equation*}
\frac{(-1)^{\alpha'}2^{\nu_1+\rho'_1}\pi \Gamma(-\nu_1+\rho'_1)}{\Gamma(\frac{-\nu_1+\rho_1}{2}+\alpha')\Gamma(\frac{-\nu_1+\rho'_1}{2}-\alpha')\Gamma(\frac{-\nu_1-\rho'_1+1}{2})(-\nu_1-\rho'_1+1)}=\frac{\sqrt{\pi}(\frac{\nu_1+\rho'_1}{2})_{\alpha'}}{\Gamma(\frac{-\nu_1+\rho_1}{2}+\alpha')}. \qedhere
\end{equation*}
\end{proof}
Together with the residue formulas of Theorem~\ref{theorem:C}, the identity \eqref{eq:spectral_function_1} also yields the explicit action of $A^{(1)}_{\lambda,\nu}$ and $A^{(2)}_{\lambda,\nu}$ on every $K'$-type $\Ical(\alpha,\alpha')$. In particular, the following formulas for the action on the spherical vector follow:
\begin{corollary}
\label{cor:spherical_vectors}
For the spherical vectors $\mathbf{1}_\lambda\in I(K)$ and $\mathbf{1}_{\nu}\in J(K')$ we have
\begin{align*}
&A_{\lambda,\nu}\mathbf{1}_{\lambda} =\frac{1}{\Gamma(\frac{\lambda_1+\rho_1}{2})}\mathbf{1}_{\nu},
&&B_{\lambda,\nu}\mathbf{1}_{\lambda} =\frac{\pi^{\frac{{n}}{2}}}{\Gamma(\frac{\lambda_1+\rho_1}{2})}\mathbf{1}_{\nu},
\end{align*}
and for $(\lambda_1,\nu_1)=(-\rho_1-2i,-\rho'_1-2j) \in L$
\begin{align*}
&A^{(1)}_{\lambda,\nu}\mathbf{1}_{\lambda} =\frac{i!(2j)!}{j!(i-j)!}\mathbf{1}_{\nu}, 
&&A^{(2)}_{\lambda,\nu}\mathbf{1}_{\lambda} 
=\frac{i!(2i-2j)!}{j!(i-j)!}\mathbf{1}_{\nu}.
\end{align*}
For $n=2$ we have
\begin{equation*}
C_{\lambda,\nu}\mathbf{1}_\lambda= \frac{\sqrt{\pi}}{\Gamma(\frac{\lambda_1+\rho_1}{2})}\mathbf{1}_\nu.
\end{equation*}
\end{corollary}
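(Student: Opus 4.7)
The entire corollary is a direct specialization of Lemma~\ref{lemma:spectral_functions} to the lowest $K$-type $\mathcal{I}(0,0)$, together with the residue formulas of Theorem~\ref{theorem:C} for the two families supported on the closed orbits. My plan is to carry this out in two stages.

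First I would observe that the spherical vector $\mathbf{1}_\lambda\in I(K)$ is the unique (up to scalar) constant function, hence lies in $\mathcal{I}(0,0)$, and the $K'$-isomorphism $R_{0,0}:\mathcal{I}(0,0)\to\mathcal{J}(0)$ sends the constant $1$ to the constant $1$, i.e.\ $R_{0,0}(\mathbf{1}_\lambda)=\mathbf{1}_\nu$. Setting $\alpha=\alpha'=0$ in \eqref{eq:spectral_function_1}, \eqref{eq:spectral_function_4} and \eqref{eq:spectral_function_C}, the Pochhammer symbols with subscript $0$ reduce to $1$, the Kronecker delta $\delta_{0,0}=1$ holds automatically, and the generalized hypergeometric functions ${}_3F_2$ trivialize to $1$ because their first upper parameter $\alpha'-\alpha=0$ kills all but the leading term of the defining power series. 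This immediately yields
\[
A_{\lambda,\nu}\mathbf{1}_\lambda=\frac{1}{\Gamma(\tfrac{\lambda_1+\rho_1}{2})}\mathbf{1}_\nu,\qquad B_{\lambda,\nu}\mathbf{1}_\lambda=\frac{\pi^{n/2}}{\Gamma(\tfrac{\lambda_1+\rho_1}{2})}\mathbf{1}_\nu,\qquad C_{\lambda,\nu}\mathbf{1}_\lambda=\frac{\sqrt{\pi}}{\Gamma(\tfrac{\lambda_1+\rho_1}{2})}\mathbf{1}_\nu,
\]
which is three of the five asserted identities.

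For the remaining two identities at $(\lambda_1,\nu_1)=(-\rho_1-2i,-\rho'_1-2j)\in L$, I would invert the residue formulas of Theorem~\ref{theorem:C} to write, for $\nu_1=-\rho'_1-2j$,
\[
A^{(1)}_{\lambda,\nu}=(-1)^j\frac{(2j)!}{j!}\,\Gamma\!\left(\tfrac{\lambda_1+\rho_1-\nu_1-\rho'_1}{2}\right)A_{\lambda,\nu},
\]
and similarly $A^{(2)}_{\lambda,\nu}=(-1)^N\frac{(2N)!}{N!}\Gamma(\tfrac{\nu_1+\rho'_1}{2})A_{\lambda,\nu}$ for $\lambda_1+\rho_1-\nu_1-\rho'_1=-2N$. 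Applying these to $\mathbf{1}_\lambda$ and using the formula for $A_{\lambda,\nu}\mathbf{1}_\lambda$ just obtained produces, at the singular point $(\lambda_1,\nu_1)\in L$, the indeterminate quotient
\[
\frac{\Gamma(j-i)}{\Gamma(-i)}\qquad\text{respectively}\qquad\frac{\Gamma(-j)}{\Gamma(-i)},
\]
each of which is really a ratio of residues and must be evaluated as a limit.

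The main (and only genuine) obstacle is thus the careful evaluation of these Gamma-quotients at the non-positive integers $j\leq i$. I would use the Laurent expansion $\Gamma(-m+\varepsilon)=\frac{(-1)^m}{m!\,\varepsilon}+O(1)$ for $m\in\mathbb{Z}_{\geq0}$, which gives
\[
\lim_{\varepsilon\to0}\frac{\Gamma(j-i+\varepsilon)}{\Gamma(-i+\varepsilon)}=\frac{(-1)^{i-j}\,i!}{(i-j)!\,(-1)^i}=\frac{(-1)^j\,i!}{(i-j)!},\qquad \lim_{\varepsilon\to0}\frac{\Gamma(-j+\varepsilon)}{\Gamma(-i+\varepsilon)}=\frac{(-1)^{j-i}\,i!}{j!}.
\]
Inserting these into the two expressions above, the signs $(-1)^j$ and $(-1)^{i-j}\cdot(-1)^{j-i}$ cancel, and I arrive at
\[
A^{(1)}_{\lambda,\nu}\mathbf{1}_\lambda=\frac{i!\,(2j)!}{j!\,(i-j)!}\mathbf{1}_\nu,\qquad A^{(2)}_{\lambda,\nu}\mathbf{1}_\lambda=\frac{i!\,(2i-2j)!}{j!\,(i-j)!}\mathbf{1}_\nu,
\]
completing the proof. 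No separate computation is needed for $A^{(1)}$ and $A^{(2)}$ from their defining differential formulas, since the holomorphic dependence on $(\lambda,\nu)$ combined with the residue identifications forces these values.
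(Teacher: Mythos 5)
Your proposal is correct and follows exactly the route the paper indicates in the sentence immediately preceding the corollary: specialize the spectral functions of Lemma~\ref{lemma:spectral_functions} to $\alpha=\alpha'=0$ (where the ${}_3F_2$ trivializes because the parameter $\alpha'-\alpha=0$ makes the series terminate at its leading term) and then use the residue formulas of Theorem~\ref{theorem:C} to pass to $A^{(1)}_{\lambda,\nu}$ and $A^{(2)}_{\lambda,\nu}$. The only nontrivial step, the evaluation of the Gamma ratios $\Gamma(j-i)/\Gamma(-i)$ and $\Gamma(-j)/\Gamma(-i)$ at the point $(\lambda_1,\nu_1)\in L$, is handled correctly: along each residue hyperplane the two Gamma arguments share the same offset $\varepsilon$, so the Laurent expansion $\Gamma(-m+\varepsilon)=\tfrac{(-1)^m}{m!\varepsilon}+O(1)$ gives well-defined limits, and your sign bookkeeping matches the stated result.
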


\section{Symmetry breaking operators between Harish-Chandra modules}\label{sec:SBOsHCmodules}

We return to the general setting as described in Section~\ref{sec:general_smooth}. In particular, we assume the multiplicity-freeness properties \eqref{eq:MultFreeProperties}.

Denote the center of $G$ by $Z$, then $A=A_\ss A_\subz$ with $A_\ss=A\cap \exp([\g,\g])$ and $A_{\rm z}=A\cap Z$. We write $\zfrak$, $\mathfrak{a}_\ss$ and $\mathfrak{a}_\subz$ for the Lie algebras of $Z$, $A_\ss$ and $A_\subz$. The analogous notation is used for $G'$ which explains the use of $Z'$, $A'_{\ss}$, $A'_\subz$, $\zfrak'$, $\mathfrak{a}'_\ss$ and $\mathfrak{a}'_\subz$.

Write $\lambda=\lambda_\ss+\lambda_\subz \in \mathfrak{a}_{\C}^*$ such that $\lambda_\ss$ vanishes on $\mathfrak{a}_\subz$ and $\lambda_\subz$ vanishes on $\mathfrak{a}_\ss$. Similarly, we write $\nu=\nu_\ss+\nu_\subz$ such that $\nu_\ss$ vanishes on $\mathfrak{a}'_\subz$ and $\nu_\subz$ vanishes on $\mathfrak{a}'_\ss$. In the following we view $\lambda_\subz$ and $\nu_\subz$ as linear forms on the whole Lie algebras $\g=\zfrak \oplus \g_\ss$ resp. $\g'=\zfrak' \oplus \g'_\ss$ vanishing on $\g_\ss$ and $\mathfrak{k}$ resp. $\g'_\ss$ and $\mathfrak{k}'$, where $\g_\ss=[\g,\g]$, $\g'_\ss=[\g',\g']$.

\subsection{The spectrum generating operator}
Assume that the parabolic subgroup $P\subseteq G$ is maximal, then $\mathfrak{a}_\ss$ is one-dimensional. Following \cite[9.4]{Humph} we can choose a non-trivial element $H \in \mathfrak{a}_\ss$ such that the positive $(\g,\mathfrak{a})$-roots form an unbroken string $\varepsilon, 2 \varepsilon, \ldots, q\varepsilon$ with $q \in \N$ and $\varepsilon(H)=1$. 
We have
\begin{equation*}
\rho=\frac{1}{2}\left(\sum_{j=1}^{q}j\dim \g_{j\nu}\right)\varepsilon,
\end{equation*}
where $\g_{j\varepsilon}$ is the $j\varepsilon$-eigenspace of $\operatorname{ad}(H)$. We can choose an invariant non-degenerate symmetric bilinear form $B$ on $\g$ with $B(H,H)=1$ and 
consider Casimir elements relative to $B$:
Let $\{X_i\}$ be a basis of a reductive subalgebra $\mathfrak{h} \subseteq \g$ with the property $|B(X_i,X_k)|=\delta_{i,k}$. Then
\begin{equation*}
\mathrm{Cas}_\mathfrak{h}=\sum_{i} B(X_i,X_i)X_i^2
\end{equation*}
is a central second order element of the universal enveloping algebra of $\mathfrak{h}$.
In the same way we can consider Casimir elements corresponding to subspaces $\mathfrak{k}_j=\mathfrak{k}\cap (\g_{j\varepsilon}\oplus \g_{-j\varepsilon})$: Let $\{X_{i}^j\}$ be a basis of $\mathfrak{k}_j$ with $B(X_i^j,X_k^j)=-\delta_{i,k}$. Then we define
\begin{equation*}
\mathrm{Cas}_j=-\sum_{i}(X_i^j)^2,
\end{equation*}
which is a second order element of the universal enveloping algebra of $\mathfrak{k}$. Note that by \cite[Remark 2.4]{BOO_1996} the elements $\mathrm{Cas}_j$ can be written as rational linear combinations of Casimir elements of subalgebras of $\mathfrak{k}$, even though not all $\mathfrak{k}_j$ might be subalgebras. The spectrum generating operator $\Pcal$ is
\begin{equation*}
\mathcal{P}=\sum_{j=1}^{q}j^{-1}\mathrm{Cas}_j.
\end{equation*}
Since the right-regular action of $\Pcal$ commutes with the left-regular action of $K$ we have that $\mathcal{R}_{\Pcal}$ is a linear transformation $\sigma_{\alpha}$ on the isotypic component $\Ical(\alpha)$. Since $\Ical(\alpha)$ is irreducible, this transformation is scalar.

In the same way we can choose $H' \in \mathfrak{a}'_\ss$ and $\varepsilon' \in \mathfrak{a}'_{\C}$ such that the $(\g',\mathfrak{a}')$-roots form an unbroken string and an invariant non-degenerate bilinear form $B'$ on $\g'$ with $\varepsilon'(H')=B'(H',H')=1$. We denote by $\Pcal'$ the spectrum generating operator for $G'$ whose right action on the isotypic components $\Jcal(\alpha')$ is given by scalars $\sigma_{\alpha'}'$.

\subsection{Reduction to the cocycle}
For $\lambda\in\mathfrak{a}_\C^*$ we write $\lambda_\ss=\lambda(H)\in\C$, then $\lambda=\lambda_\ss\varepsilon+\lambda_\subz$ with $\lambda_\subz\in\mathfrak{z}_\C^*$. The analogous notation is used for $\nu=\nu_\ss\varepsilon'+\nu_\subz\in\mathfrak{a}_\C^*$.
We can extend $B$ to a non-degenerate symmetric $\C$-bilinear form on the complexification $\g_{\C}$. We define for $X \in \g_{\C}$
\begin{equation*}
\omega(X):K\to\C, \quad \omega(X)(k)=B(\mathrm{Ad}(k^{-1})X,H) \qquad (k\in K).
\end{equation*}
Since $\omega(\mathrm{Ad}(h)X)(k)=\omega(X)(h^{-1}k)$, this defines a $K$-equivariant map $\omega$ from $\g_{\C}$ to $I_\mathbf{1}(K)=C^\infty(K/(K\cap M))$, where $\mathbf{1}$ is the trivial $(K \cap M)$-representation. The map $\omega$ is called a \textit{cocycle}. Since $\mathfrak{s}_{\C}$ and $\mathfrak{k}_{\C}$ are $B$-orthogonal and $H \in \mathfrak{s}_{\C}$, the map $\omega$ vanishes on $\mathfrak{k}_{\C}$ and since roots vanish on the center, the map $\omega$ vanishes on $\zfrak_{\C}$. By $m(\omega(X))$ we denote the multiplication operator $\Ical \rightarrow \Ical$ with $\omega(X)$, and we write $\proj_{\Ical(\alpha)}:\Ical\to\Ical(\alpha)$ for the projection onto the direct summand $\Ical(\alpha)$. For $K$-types $\alpha, \beta$ with $\Ical(\alpha),\Ical(\beta)\neq 0$ we define for all $X \in \g_{\C}$
\begin{equation*}
\omega_{\alpha}^\beta(X)=\proj_{\Ical(\beta)}\circ m(\omega(X))|_{\Ical(\alpha)}.
\end{equation*}
The reduction to the cocycle is stated in the following theorem (see \cite[Corollary 2.6]{BOO_1996}). It lets us express the derived representation $\pi_{\xi,\lambda}$ in terms of the cocycle, the linear transformations $\sigma_{\alpha}$ and $\lambda_\subz$.
\begin{theorem}
\label{Thm:cocycle}
For $X \in \mathfrak{s}_{\C}$ and $K$-types $\alpha,\beta$ with $\Ical(\alpha),\Ical(\beta)\neq0$ we have
\begin{equation}
\label{EQ:cocycleG}
\proj_{\Ical(\beta)} \circ \pi_{\xi,\lambda}(X)|_{\Ical(\alpha)}
=\frac{1}{2}(\sigma_{\beta} -
\sigma_{\alpha}+2\lambda_\ss)\omega_{\alpha}^\beta(X)+\delta_{\alpha,\beta}\lambda_\subz(X).
\end{equation}
\end{theorem}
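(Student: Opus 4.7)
The plan is to reduce to the semisimple case treated in \cite[Corollary~2.6]{BOO_1996} by splitting off the central contribution. Using $\g = \zfrak \oplus \g_\ss$ together with $\zfrak \cap \mathfrak{s}_\C = \mathfrak{a}_{\subz,\C}$, decompose $X = X_\ss + X_\subz$ with $X_\subz \in \mathfrak{a}_{\subz,\C}$ and $X_\ss \in \mathfrak{s}_\C \cap (\g_\ss)_\C$. By linearity $\pi_{\xi,\lambda}(X) = \pi_{\xi,\lambda}(X_\ss) + \pi_{\xi,\lambda}(X_\subz)$, and the two summands will produce the cocycle term and the $\delta_{\alpha,\beta}\lambda_\subz(X)$ term respectively.

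For the central summand $X_\subz$: since $\exp(tX_\subz) \in A_\subz$ is central in $G$, for $F \in I_{\xi,\lambda}(G)$ the $P$-equivariance rule gives
\begin{equation*}
F(\exp(-tX_\subz)g) = F(g\exp(-tX_\subz)) = \exp(-tX_\subz)^{-\lambda-\rho} F(g),
\end{equation*}
so differentiating at $t=0$ yields $\pi_{\xi,\lambda}(X_\subz) = (\lambda+\rho)(X_\subz)\cdot\mathrm{id}$. Because $\mathfrak{a}_\subz$ centralizes $\mathfrak{n}$, the half-sum $\rho$ vanishes on $\mathfrak{a}_\subz$; by construction $\lambda_\ss$ also vanishes on $\mathfrak{a}_\subz$, and hence $(\lambda+\rho)(X_\subz) = \lambda_\subz(X_\subz) = \lambda_\subz(X)$. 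Projecting to $\Ical(\beta)$ therefore contributes exactly $\delta_{\alpha,\beta}\lambda_\subz(X)$ after restriction to $\Ical(\alpha)$.

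For the semisimple summand $X_\ss$: the element $H$, the bilinear form $B$, the Casimir pieces $\mathrm{Cas}_j$, the spectrum generating operator $\Pcal$ and the cocycle $\omega$ are all built from data entirely inside $\g_\ss$ and $\mathfrak{k}$, and the restriction of $\pi_{\xi,\lambda}$ to $\g_\ss$ is the generalized principal series of $G_\ss$ with $\mathfrak{a}_\ss$-parameter $\lambda_\ss\varepsilon$ induced from $\xi|_{M\cap G_\ss}$. The semisimple formula \cite[Corollary~2.6]{BOO_1996} therefore applies directly and yields
\begin{equation*}
\proj_{\Ical(\beta)} \circ \pi_{\xi,\lambda}(X_\ss)|_{\Ical(\alpha)} = \tfrac{1}{2}(\sigma_\beta - \sigma_\alpha + 2\lambda_\ss)\,\omega_\alpha^\beta(X_\ss).
\end{equation*}
Since $\omega$ vanishes on $\zfrak_\C$ by construction, $\omega_\alpha^\beta(X_\subz) = 0$ and so $\omega_\alpha^\beta(X) = \omega_\alpha^\beta(X_\ss)$; adding the two contributions produces \eqref{EQ:cocycleG}.

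The main technical point to check is that the $K$-isotypic decomposition used on the reductive side is compatible with the $(K\cap G_\ss)$-isotypic decomposition underlying \cite[Corollary~2.6]{BOO_1996}. This is where care is needed because $K$ is typically strictly larger than $K\cap G_\ss$ for reductive $G$; however, $A_\subz$ commutes with $K$ and acts by a single scalar character on $\Ical$, so each $K$-isotypic summand $\Ical(\alpha)$ is automatically stable under $K\cap G_\ss$ and under $\g_\ss$, and the projections $\proj_{\Ical(\beta)}$ agree with those appearing in the semisimple statement. Once this bookkeeping is in place, the argument is a direct concatenation of the two cases.
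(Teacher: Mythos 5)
The paper never proves this statement: it is quoted verbatim as a citation, ``(see \cite[Corollary 2.6]{BOO_1996})'', so there is no in-paper proof to compare against. Your proposal supplies the reduction to the semisimple result, which is certainly what the authors have in mind when citing BOO and simultaneously claiming to generalize from semisimple to reductive groups. The treatment of the central summand $X_\subz$ is correct and complete: $\exp(tX_\subz)$ is central and lies in $A\subseteq P$, the $P$-equivariance rule and the vanishing of $\rho$ and $\lambda_\ss$ on $\mathfrak{a}_\subz$ give $\pi_{\xi,\lambda}(X_\subz) = \lambda_\subz(X)\cdot\mathrm{id}$, and projecting gives the $\delta_{\alpha,\beta}\lambda_\subz(X)$ term.

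Two points in the semisimple step should be tightened. First, the phrase ``each $K$-isotypic summand $\Ical(\alpha)$ is automatically stable under $K\cap G_\ss$ and under $\g_\ss$'' is wrong as written: stability under $K\cap G_\ss$ is trivial, but $\g_\ss$ certainly does \emph{not} preserve $\Ical(\alpha)$ --- the whole content of the cocycle formula is precisely how $\pi_{\xi,\lambda}(X)$ moves between $K$-types. Second, and more substantively, the assertion that ``the projections $\proj_{\Ical(\beta)}$ agree with those appearing in the semisimple statement'' is exactly the point that needs an argument and you leave it as a claim. The correct justification is that $\Pcal$, being built from $\mathfrak{k}_\ss$, a priori gives eigenvalues on $K_\ss$-isotypes; but since $\mathcal{R}_{\Pcal}$ commutes with the left-regular action of all of $K$, it acts by the \emph{single} scalar $\sigma_\alpha$ on each $K$-isotype $\Ical(\alpha)$ even if that $K$-type splits as a $K_\ss$-module. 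Writing $\Ical(\alpha) = \bigoplus_i \Ical_\ss(\alpha_i)$ and $\omega_\alpha^\beta = \sum_{i,j}\omega_{\alpha_i}^{\beta_j}$, the $K_\ss$-wise BOO formula sums to the stated $K$-wise formula precisely because all the $\sigma_{\alpha_i}$ are equal to $\sigma_\alpha$. It would also be worth stating the hypothesis implicitly used throughout, namely that $G/P = G_\ss/P_\ss$ so that the $(\g_\ss, K_\ss)$-module underlying $\pi_{\xi,\lambda}$ really is the Harish-Chandra module of a generalized principal series of $G_\ss$; this holds for $\GL(n+1,\R)$ and is a mild standing assumption in the abstract setting, but it is the bridge that lets you invoke BOO at all.
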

Following \cite[Remark 2.8]{BOO_1996} it is evident that $\omega_{\alpha}^\beta$ is non-trivial if and only if $\omega_{\beta}^\alpha$ is non-trivial. Hence we write $\alpha \leftrightarrow \beta$ for $K$-types $\alpha,\beta$ with $\Ical(\alpha), \Ical(\beta)\neq  0$ and $\omega_{\alpha}^\beta \neq0$ i.e. everytime we can reach the $K$-type $\alpha$ by the multiplication with the cocycle restricted to $\Ical(\beta)$ and the other way around.
Let $\mathcal{U}\subseteq \Ical$ be a $(\g,K)$-submodule of $(\pi_{\xi,\lambda})_{\HC}$. We write $\mathcal{U}(\alpha)$ for the $\alpha$-isotypic component of $\mathcal{U}$. That means $\mathcal{U}(\alpha)=\Ical(\alpha)$ whenever $\mathcal{U}(\alpha)\neq 0$ and $\mathcal{U}(\alpha)=\{0\}$ otherwise.
Then
\begin{equation*}
\proj_{\mathcal{U}(\beta)} \circ \pi_{\xi,\lambda}(X)|_{\mathcal{U}(\alpha)}=\frac{1}{2}(\sigma_{\beta} -
\sigma_{\alpha}+2\lambda_\ss)\omega_{\alpha}^\beta(X)+\delta_{\alpha,\beta}\lambda_\subz(X),
\end{equation*}
for all $X \in \mathfrak{s}_{\C}$ and
whenever $\alpha$ and $\beta$ are $K$-types of $\mathcal{U}$. If $\alpha$ or $\beta$ is not a $K$-type of $\mathcal{U}$ but a $K$-type of $\Ical$ we have $\proj_{\mathcal{U}(\beta)} \circ \pi_{\xi,\lambda}(X)|_{\mathcal{U}(\alpha)}=0$.
Let $\Ical / \mathcal{U}$ be the quotient of $(\pi_{\xi,\lambda})_{\HC}$.
We can identify 
\begin{equation*}
\faktor{\Ical}{\mathcal{U}} \cong_K \bigoplus_{\mathclap{\substack{\alpha \\ \Ical(\alpha) \neq 0,\\ \mathcal{U}(\alpha)=0}}} \; \Ical(\alpha)=\mathcal{U}^\perp.
\end{equation*}
This identification is an isomorphism of $(\g,K)$-modules if $\mathcal{U}^\perp$ is endowed with the $\g$-action given by $\proj_{\mathcal{U}^\perp}\circ\pi_{\xi,\lambda}|_{\mathcal{U}^\perp}$.
Then $\mathcal{U}^\perp$ has isotypic components $\mathcal{U}^\perp(\alpha)=\Ical(\alpha)\neq 0$ whenever $\alpha$ is not a $K$-type of $\mathcal{U}$ and  $\mathcal{U}^\perp(\alpha)=0$ otherwise. Hence for a quotient $\mathcal{U}^\perp$ we have for $X \in \mathfrak{s}_{\C}$
\begin{equation*}
\proj_{\mathcal{U}^\perp(\beta)} \circ \pi_{\xi,\lambda}(X)|_{\mathcal{U}^\perp(\alpha)}=\frac{1}{2}(\sigma_{\beta} -
\sigma_{\alpha}+2\lambda_\ss)\omega_{\alpha}^\beta(X)+\delta_{\alpha,\beta}\lambda_\subz(X),
\end{equation*}
if $\mathcal{U}^\perp(\alpha),\mathcal{U}^\perp(\beta)\neq 0$ and $\proj_{\mathcal{U}^\perp(\beta)} \circ \pi_{\xi,\lambda}(X)|_{\mathcal{U}^\perp(\alpha)}=0$ otherwise. Since this formula is the same for submodules and quotients, we will treat submodules and quotients in a uniform way, identifying quotients with subspaces of $\Ical$. This implies that we can use the cocycle reduction in a modified way when working with a submodule or quotient $\mathcal{U}$:
\begin{lemma}
\label{lemma:cocycle_composition_factors}
For a submodule or quotient $\mathcal{U}$ of $(\pi_{\xi,\lambda})_{\HC}$ we have for $X \in \mathfrak{s}_{\C}$ and $K$-types $\alpha,\beta$ with $\mathcal{U}(\alpha),\mathcal{U}(\beta)\neq 0$
\begin{equation}
\label{EQ:cocycle_sub}
\proj_{\mathcal{U}(\beta)} \circ \pi_{\xi,\lambda}(X)|_{\mathcal{U}(\alpha)}
=
\frac{1}{2}(\sigma_{\beta} -
\sigma_{\alpha}+2\lambda_\ss)\omega_{\alpha}^\beta(X) +\delta_{\alpha,\beta}\lambda_\subz(X) .
\end{equation}
If $\mathcal{U}(\alpha)$ or $\mathcal{U}(\beta)$ is trivial then $\proj_{\mathcal{U}(\beta)} \circ \pi_{\xi,\lambda}(X)|_{\mathcal{U}(\alpha)}=0$.
\end{lemma}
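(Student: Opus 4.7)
The plan is to derive the lemma from Theorem~\ref{Thm:cocycle} by treating the submodule and quotient cases separately and observing that the projections/restrictions behave compatibly in both situations. The bulk of the work is already contained in the cocycle reduction theorem; what remains is careful book-keeping of $K$-isotypic components.

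First I would handle the case where $\mathcal{U}\subseteq\Ical$ is a $(\g,K)$-submodule. By multiplicity-freeness the $K$-isotypic component $\mathcal{U}(\alpha)$ equals $\Ical(\alpha)$ whenever it is nonzero, so on the nonzero components the projection $\proj_{\mathcal{U}(\beta)}$ agrees with $\proj_{\Ical(\beta)}$ restricted to $\mathcal{U}$. Since $\mathcal{U}$ is $\g$-stable, $\pi_{\xi,\lambda}(X)$ maps $\mathcal{U}(\alpha)$ back into $\mathcal{U}$, and the formula for $\proj_{\mathcal{U}(\beta)}\circ\pi_{\xi,\lambda}(X)|_{\mathcal{U}(\alpha)}$ coincides with the one supplied by Theorem~\ref{Thm:cocycle}. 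If $\mathcal{U}(\beta)=0$, the projection is the zero map; and if $\mathcal{U}(\alpha)=0$, the restriction is the zero map from the trivial space. In either case the composition vanishes, as claimed.

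Next I would deal with a quotient $\mathcal{U}^\perp=\Ical/\mathcal{U}$. As noted in the text preceding the lemma, $\mathcal{U}^\perp$ is $K$-equivariantly identified with the direct sum of the $\Ical(\alpha)$ for which $\mathcal{U}(\alpha)=0$, and the $\g$-action transported to this model is exactly $\proj_{\mathcal{U}^\perp}\circ\pi_{\xi,\lambda}|_{\mathcal{U}^\perp}$. Then $\mathcal{U}^\perp(\alpha)=\Ical(\alpha)$ when nonzero, and $\proj_{\mathcal{U}^\perp(\beta)}$ agrees with $\proj_{\Ical(\beta)}$ on its natural domain. Therefore Theorem~\ref{Thm:cocycle}, post-composed with the (idempotent) projection $\proj_{\mathcal{U}^\perp(\beta)}$, yields the displayed formula \eqref{EQ:cocycle_sub}. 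The vanishing statement for $\mathcal{U}^\perp(\alpha)=0$ or $\mathcal{U}^\perp(\beta)=0$ follows exactly as above.

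There is no real obstacle in this proof; the only subtlety is to notice that in both the sub- and the quotient case the relevant isotypic components of $\mathcal{U}$ or $\mathcal{U}^\perp$ coincide with those of $\Ical$, so that the orthogonal projections match up with the projections appearing in Theorem~\ref{Thm:cocycle}. This is what makes the formula identical for submodules and quotients and justifies the uniform treatment used in the subsequent sections.
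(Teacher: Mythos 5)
Your proposal is correct and takes essentially the same route as the paper: the paper does not give a separate proof for this lemma but derives it in the two paragraphs immediately preceding it, handling submodules by observing that the nonzero isotypic components of $\mathcal{U}$ coincide with those of $\Ical$ (so Theorem~\ref{Thm:cocycle} applies verbatim), and handling quotients by identifying $\Ical/\mathcal{U}$ with $\mathcal{U}^\perp$ equipped with the transported action $\proj_{\mathcal{U}^\perp}\circ\pi_{\xi,\lambda}|_{\mathcal{U}^\perp}$, then noting that the outer projection $\proj_{\mathcal{U}^\perp(\beta)}$ absorbs $\proj_{\mathcal{U}^\perp}$. Your write-up is a faithful reconstruction of exactly this argument, including the correct observation that the idempotence of the projection is what collapses the quotient case back onto Theorem~\ref{Thm:cocycle}.
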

Similarly we consider the cocycle $\omega'$ of $G'$ and define
\begin{equation*}
\omega'^{\beta'}_{\alpha'}(X)=\proj_{\Jcal(\beta')}\circ m(\omega'(X))|_{\Jcal(\alpha')} \hspace{1cm} X\in \mathfrak{g}'_{\C}, \text{ } \Jcal(\alpha'),\Jcal(\beta')\neq0,
\end{equation*}
and write $\alpha' \leftrightarrow \beta'$ whenever $\omega'^{\beta'}_{\alpha'}$ is non-trivial.
Theorem~\ref{Thm:cocycle} yields the identity
\begin{equation}
\label{EQ:cocycleG'}
\proj_{\Jcal(\beta')} \circ \tau_{\eta,\nu}(X)|_{\Jcal(\alpha')}
=\frac{1}{2}(\sigma'_{\beta} -
\sigma'_{\alpha'}+2\nu_\ss)\omega'^{\beta'}_{\alpha'}(X)+\delta_{\alpha',\beta'}\nu_\subz(X).
\end{equation}
Then Lemma~\ref{lemma:cocycle_composition_factors} also applies to composition factors of $\tau_{\eta,\nu}$.

\subsection{Symmetry breaking operators for Harish-Chandra modules}
\label{subsec:intertw}
Let $\mathcal{U}$ be a submodule or a quotient of $(\pi_{\xi,\lambda})_{\HC}$ and $\mathcal{U}'$ be a submodule or a quotient of $(\tau_{\eta,\nu})_{\HC}$. Then we have $\mathcal{U} \subseteq \Ical$ and $\mathcal{U}' \subseteq \Jcal$ under the identification above. An intertwining operator for $(\pi_{\xi,\lambda})_{\HC}$ and $(\tau_{\eta,\nu})_{\HC}$ is a linear map $T: \mathcal{U} \rightarrow \mathcal{U}'$ that is intertwining both as $\g'$ and as $K'$-module:
\begin{align}
\label{EQ:g_intertwining}
T\circ\proj_{\mathcal{U}}\circ \pi_{\xi,\lambda}(X)|_\mathcal{U}&=\proj_{\mathcal{U}'}\circ\tau_{\eta,\nu}(X)|_{\mathcal{U}'} \circ T  &&\forall\, X \in \g', \\ \label{EQ:K_intertwining}
T\circ \pi_{\xi,\lambda}(k)|_\mathcal{U}&=\tau_{\eta,\nu}(k)|_{\mathcal{U}'} \circ T  &&\forall\, k \in K'.
\end{align}
The space of these operators is denoted by $\Hom_{(\g',K')}(\mathcal{U}|_{(\g',K')},\mathcal{U}')$. The identity \eqref{EQ:K_intertwining} implies that for all $\alpha,\alpha'$ the map $T|_{\Ical(\alpha,\alpha')}$ is $K'$-equivariant from $\Ical(\alpha,\alpha')$ to $\Jcal(\alpha')$. Recall that both $\Ical(\alpha,\alpha')$ and $\Jcal(\alpha')$ are either trivial or irreducible, and we fixed a $K'$-equivariant isomorphism $R_{\alpha,\alpha'}:\Ical(\alpha,\alpha')\to\Jcal(\alpha')$ whenever $\Ical(\alpha,\alpha'),\Jcal(\alpha')\neq0$ (see \eqref{eq:FixRalphaalpha'}). Then Schur's Lemma implies that $T|_{\Ical(\alpha,\alpha')}=t_{\alpha,\alpha'} \cdot R_{\alpha,\alpha'}$ for numbers $t_{\alpha,\alpha'} \in \C$ whenever $0 \neq \Ical(\alpha,\alpha')\subseteq\mathcal{U}$ and $0 \neq \Jcal(\alpha')\subseteq \mathcal{U} $. If $\mathcal{U}(\alpha,\alpha')=0$ or $\mathcal{U}'(\alpha')=0$ we use the convention $t_{\alpha,\alpha'}=0$.

Restricting the multiplication with the cocycle $\omega$ to $\Ical(\alpha,\alpha')$ and projecting to $\Ical(\beta,\beta')$ yields maps
\begin{align*}
\omega_{\alpha,\alpha'}^{\beta,\beta'}:\mathfrak{s}'_{\C} \otimes \Ical(\alpha,\alpha') \rightarrow \Ical(\beta,\beta') , \hspace{0.2cm}
\omega_{\alpha,\alpha'}^{\beta,\beta'}(X)=\proj_{\Ical(\beta,\beta')} \circ m(\omega(X))|_{\Ical(\alpha,\alpha')},
\end{align*}
where $\omega_{\alpha,\alpha'}^{\beta,\beta'}(X) :\Ical(\alpha,\alpha')\rightarrow \Ical(\beta,\beta')$ is a linear map for all $X \in \mathfrak{s}'_{\C}$.
By \cite[Lemma 3.3]{MolOrs} it is evident that $\omega_{\alpha,\alpha'}^{\beta,\beta'} \neq 0$ if and only if $\omega^{\alpha,\alpha'}_{\beta,\beta'} \neq 0$. Hence we write $(\alpha,\alpha') \leftrightarrow (\beta,\beta')$ whenever $\omega_{\alpha,\alpha'}^{\beta,\beta'}\neq 0$.

We can use the cocycle reduction to prove the following theorem which is a generalization of \cite[Theorem 3.4]{MolOrs}.
\begin{theorem}
\label{Thm:formula}
Let $\mathcal{U}\subseteq \Ical$ be a submodule or quotient of $(\pi_{\xi,\lambda})_{\HC}$ and $\mathcal{U}' \subseteq \Jcal$ be a submodule or a quotient of $(\tau_{\eta,\nu})_{\HC}$.
Then a linear map $T:\mathcal{U} \rightarrow \mathcal{U}'$ is intertwining for $(\pi_{\xi,\lambda})_{\HC}$ and $(\tau_{\eta,\nu})_{\HC}$ if and only if
\begin{align*}
T|_{\mathcal{U}(\alpha,\alpha')}= \begin{cases}
t_{\alpha,\alpha'} \cdot R_{\alpha,\alpha'} & \text{if } \mathcal{U}(\alpha,\alpha') \neq 0 \text{ and } \mathcal{U}'(\alpha')\neq 0, \\
0 & \text{otherwise.}
\end{cases}
\end{align*}
with numbers $t_{\alpha,\alpha'} \in \C$ satisfying
\begin{multline}
\label{EQ:omega_formula}
\; \; \sum_{\mathclap{\substack{\beta\\(\alpha,\alpha')\leftrightarrow(\beta,\beta')\\ \mathcal{U}(\beta,\beta')\neq 0}}}\;(\sigma_\beta-\sigma_\alpha+2\lambda_\ss)t_{\beta,\beta'}\cdot \left( R_{\beta,\beta'}\circ \omega_{\alpha,\alpha'}^{\beta,\beta'}(X)\right) + 2\delta_{\alpha',\beta'}\lambda_\subz(X)t_{\alpha,\alpha'} \cdot R_{\alpha,\alpha'} \\
=\begin{cases}
( \sigma'_{\beta'}-\sigma'_{\alpha'}+2\nu_\ss)t_{\alpha,\alpha'}\left(
\omega'^{\beta'}_{\alpha'}(X)\circ R_{\alpha,\alpha'}\right)+2\delta_{\alpha',\beta'}\nu_\subz(X)t_{\alpha,\alpha'}\cdot R_{\alpha,\alpha'}
 & \text{if } \mathcal{U}(\alpha')\neq 0 ,\\
0 & \text{otherwise,}
\end{cases}
\end{multline}
for all $X\in\mathfrak{s}'_\C$ and $\mathcal{U}(\alpha,\alpha'),\mathcal{U}'(\beta') \neq 0$.
\end{theorem}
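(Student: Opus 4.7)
The plan is to combine the $K'$-equivariance (which forces the scalar structure on $T$) with the cocycle reduction formula of Lemma~\ref{lemma:cocycle_composition_factors}, applied twice: once to the $\g'$-action on $\mathcal{U}$ coming from $\pi_{\xi,\lambda}|_{\g'}$, and once to the $\g'$-action on $\mathcal{U}'$ coming from $\tau_{\eta,\nu}$. Since both sides of the intertwining condition decompose cleanly along isotypic components, the identity $T \circ \pi_{\xi,\lambda}(X) = \tau_{\eta,\nu}(X) \circ T$ becomes a system of equations indexed by pairs $(\alpha,\alpha')$, $(\beta,\beta')$.

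First, I would observe that the characterization of $T$ as a collection of scalars $t_{\alpha,\alpha'}$ times the fixed $K'$-isomorphisms $R_{\alpha,\alpha'}$ is exactly what was derived before the theorem statement: if $T$ is $K'$-equivariant, Schur's Lemma applied to the multiplicity-free decomposition \eqref{eq:MultFreeProperties} gives this immediately, and the convention $t_{\alpha,\alpha'}=0$ when $\mathcal{U}(\alpha,\alpha')$ or $\mathcal{U}'(\alpha')$ is trivial handles the trivial components. Since $\mathfrak{k}'$ acts by differentiating the $K'$-action, $\mathfrak{k}'$-equivariance of $T$ is automatic from $K'$-equivariance. Therefore it suffices to verify the $\g'$-intertwining property on the subspace $\mathfrak{s}'_\C \subseteq \g'_\C$ where the cocycle formula \eqref{EQ:cocycle_sub} is available.

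Next, I would fix $X \in \mathfrak{s}'_\C$, $v \in \mathcal{U}(\alpha,\alpha')$, and a target $K'$-type $\beta'$, and compute $\proj_{\mathcal{U}'(\beta')} \circ T \circ \pi_{\xi,\lambda}(X)\, v$ in two ways. Expanding $\pi_{\xi,\lambda}(X)\,v = \sum_\beta \proj_{\mathcal{U}(\beta)} \pi_{\xi,\lambda}(X)\,v$ and refining by the $K'$-decomposition on each $\mathcal{U}(\beta)$, Lemma~\ref{lemma:cocycle_composition_factors} (read with the further projection to $\mathcal{U}(\beta,\beta')$) gives
\begin{equation*}
\proj_{\mathcal{U}(\beta,\beta')} \pi_{\xi,\lambda}(X)\, v = \tfrac{1}{2}(\sigma_\beta - \sigma_\alpha + 2\lambda_\ss)\, \omega_{\alpha,\alpha'}^{\beta,\beta'}(X)\, v + \delta_{\alpha,\beta}\delta_{\alpha',\beta'}\lambda_\subz(X)\, v.
\end{equation*}
Applying $T$ and using $T|_{\mathcal{U}(\beta,\beta')} = t_{\beta,\beta'}\, R_{\beta,\beta'}$ (which is zero unless $\mathcal{U}(\beta,\beta')\neq 0$, and also requires the $K'$-target $\mathcal{U}'(\beta')\neq 0$ to be non-trivial for $T$ to land in $\mathcal{U}'$), the only surviving $\beta$'s are those with $(\alpha,\alpha') \leftrightarrow (\beta,\beta')$. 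This yields the left-hand side of \eqref{EQ:omega_formula} (up to the factor of $2$). On the other hand, by the intertwining property $T \pi_{\xi,\lambda}(X) v = \tau_{\eta,\nu}(X) T v$, and $T v = t_{\alpha,\alpha'} R_{\alpha,\alpha'}(v) \in \mathcal{U}'(\alpha')$, so Lemma~\ref{lemma:cocycle_composition_factors} applied to $\tau_{\eta,\nu}$ produces the right-hand side of \eqref{EQ:omega_formula}, provided $\mathcal{U}'(\alpha') \neq 0$; otherwise both $Tv=0$ and the right-hand side vanishes.

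Equating the two computations for every $X$, every $(\alpha,\alpha')$, and every target $\beta'$ gives exactly \eqref{EQ:omega_formula} as a necessary condition; conversely, if a family $(t_{\alpha,\alpha'})$ satisfies these relations, then the induced $K'$-equivariant map $T$ agrees with $\tau_{\eta,\nu}(X) \circ T$ on each $\mathcal{U}(\alpha,\alpha')$ after projecting to every $\mathcal{U}'(\beta')$, hence globally, proving sufficiency. The main obstacle I anticipate is purely bookkeeping: one must carefully track which terms vanish because $\mathcal{U}(\beta,\beta')=0$ versus because $\omega^{\beta,\beta'}_{\alpha,\alpha'}=0$, and ensure that the summation over $\beta$ on the left is correctly truncated to the relation $(\alpha,\alpha') \leftrightarrow (\beta,\beta')$; likewise the conditional on the right distinguishing $\mathcal{U}'(\alpha')\neq 0$ from the trivial case must be handled cleanly so that both sides are zero simultaneously whenever $Tv=0$.
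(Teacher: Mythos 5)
Your proposal is correct and follows essentially the same route as the paper's proof: decompose the intertwining condition \eqref{EQ:g_intertwining} by projecting to isotypic components, apply the cocycle reduction Lemma~\ref{lemma:cocycle_composition_factors} to both the $\pi_{\xi,\lambda}$-side and the $\tau_{\eta,\nu}$-side, and use the scalar structure $T|_{\mathcal{U}(\alpha,\alpha')}=t_{\alpha,\alpha'}R_{\alpha,\alpha'}$ coming from Schur's Lemma and the multiplicity-freeness assumptions. The preliminary observations you flag (that $\mathfrak{k}'$-intertwining is automatic from $K'$-equivariance, that $\lambda_\subz$ acts by scalars and so preserves $K'$-types, and the convention $t_{\alpha,\alpha'}=0$ on trivial components) are exactly the bookkeeping points the paper handles, and your computation of the further projection $\proj_{\mathcal{U}(\beta,\beta')}$ with the resulting $\delta_{\alpha,\beta}\delta_{\alpha',\beta'}$ factor on the central term matches the intended argument.
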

\begin{proof}
We have that \eqref{EQ:g_intertwining} holds if and only if
\begin{equation}
\label{EQ:Prelim_formula1}
\proj_{\mathcal{U}'(\beta')}\circ T\circ\proj_{\mathcal{U}}\circ \pi_{\xi,\lambda}(X)|_{\mathcal{U}(\alpha,\alpha')}=\proj_{\mathcal{U}'(\beta')}\circ\tau_{\eta,\nu}(X)\circ T|_{\mathcal{U}({\alpha,\alpha'})},
\end{equation}
for all $\mathcal{U}(\alpha,\alpha')\neq 0$ and $\mathcal{U}'(\beta') \neq 0$. Since $\lambda_\subz$ acts by scalars it leaves $K'$-types invariant. Applying \eqref{EQ:cocycle_sub} to the right-hand side of \eqref{EQ:Prelim_formula1} yields
\begin{equation*}
\frac{1}{2}( \sigma'_{\beta'}-\sigma'_{\alpha'}+2\nu_\ss)t_{\alpha,\alpha'}\left(
\omega'^{\beta'}_{\alpha'}(X)\circ R_{\alpha,\alpha'}
\right)+ \delta_{\alpha',\beta'}\nu_\subz(X)t_{\alpha,\alpha'} \cdot R_{\alpha,\alpha'} ,
\end{equation*}
where $t_{\alpha,\alpha'}=0$ if $\mathcal{U}(\alpha)=0$.
For the left-hand side we have by \eqref{EQ:cocycle_sub}
\begin{align*}
& \proj_{\mathcal{U}'(\beta')}\circ T\circ\proj_{\mathcal{U}}\circ \pi_{\xi,\lambda}(X)|_{\mathcal{U}(\alpha,\alpha')}
=
\sum_{\mathclap{\substack{\beta\\(\alpha,\alpha')\leftrightarrow(\beta,\beta')}}} \;T \circ \proj_{\mathcal{U}(\beta,\beta')} \circ \pi_{\xi,\lambda}(X)|_{\mathcal{U}(\alpha,\alpha')} \\
& \qquad=\;\frac{1}{2}\;\sum_{\mathclap{\substack{\beta\\(\alpha,\alpha')\leftrightarrow(\beta,\beta')\\ \mathcal{U}(\beta,\beta')\neq 0}}}\;(\sigma_{\beta}-\sigma_\alpha+2\lambda_\ss)\cdot \left( T \circ \omega_{\alpha,\alpha'}^{\beta,\beta'}(X) \right) +\delta_{\alpha',\beta'}\lambda_\subz(X)t_{\alpha,\alpha'} \cdot R_{\alpha,\alpha'}\\
&\qquad=\;\frac{1}{2}\sum_{\mathclap{\substack{\beta\\(\alpha,\alpha')\leftrightarrow(\beta,\beta')\\ \mathcal{U}(\beta,\beta')\neq 0}}}\;(\sigma_\beta-\sigma_\alpha+2\lambda_\ss)t_{\beta,\beta'}\cdot \left( R_{\beta,\beta'} \circ \omega_{\alpha,\alpha'}^{\beta,\beta'}(X) \right)+\delta_{\alpha',\beta'}\lambda_\subz(X)t_{\alpha,\alpha'} \cdot R_{\alpha,\alpha'}.\qedhere
\end{align*}
\end{proof}
This theorem gives a description of intertwining operators between submodules and quotients of $(\pi_{\xi,\lambda})_{\HC}$ and $(\tau_{\eta,\nu})_{\HC}$.

\subsection{Scalar identities}
We denote the $(-1)$-eigenspace of $\theta$ on $\g_\ss$ resp. $\g'_\ss$ by $\mathfrak{s}_\ss$ resp. $\mathfrak{s}'_\ss$ such that $\mathfrak{s}=\mathfrak{a}_\subz \oplus\mathfrak{s}_\ss$ resp. $\mathfrak{s}'=\mathfrak{a}'_\subz \oplus\mathfrak{s}'_\ss$ .
We introduce the notation
\begin{equation*}
\eta_{\alpha,\alpha'}^{\beta,\beta'}=R_{\beta,\beta'}\circ \omega^{\beta,\beta'}_{\alpha,\alpha'}|_{(\mathfrak{s}'_\ss)_{\C}}, \qquad \eta_{\alpha,\alpha'}^{\beta'}= \omega'^{\beta'}_{\alpha'}|_{(\mathfrak{s}'_\ss)_{\C}} \circ R_{\alpha,\alpha'}, 
\end{equation*}
\begin{equation*}
\eta_{\alpha,\alpha'}^{\beta,\subz}=R_{\beta,\alpha'}\circ \omega^{\beta,\alpha'}_{\alpha,\alpha'}|_{(\mathfrak{a}'_\subz)_{\C}},  
\end{equation*}
for the maps $(\mathfrak{s}'_\ss)_{\C}\otimes \Ical(\alpha,\alpha') \rightarrow \Jcal(\beta')$ resp. $(\mathfrak{a}'_\subz)_{\C}\otimes \Ical(\alpha,\alpha') \rightarrow \Jcal(\alpha')$.
Further we assume
\begin{align}
\label{EQ:assumpt}
\dim \Hom_{K'}((\mathfrak{s}'_\ss)_{\C}\otimes \alpha',\beta')\leq 1,
\end{align}
for all $\mathcal{U}(\alpha,\alpha'), \mathcal{U}'(\beta')\neq 0$. Then for all $(\beta,\beta')$ for which
$\eta_{\alpha,\alpha'}^{\beta,\beta'}, \eta_{\alpha,\alpha'}^{\beta'}, \neq 0$, they
must be scalar multiples of each other. We define proportionality constants $\lambda_{\alpha,\alpha'}^{\beta,\beta'}$ by
\begin{align*}
\eta_{\alpha,\alpha'}^{\beta,\beta'}=\lambda_{\alpha,\alpha'}^{\beta,\beta'}\eta_{\alpha,\alpha'}^{\beta'}.
\end{align*}
If we further assume
\begin{equation}
\label{eq:assumpt_center}
\dim(\mathfrak{a}'_\subz)\leq 1
\end{equation}
we can fix $0\neq Z\in\mathfrak{a}_\subz'$, then $\mathfrak{a}_\subz'=\R Z$ and there is a unique character $\nu_\subz^\mathbf{1} \in (\mathfrak{a}'_{\C})^*$ with $\nu_\subz^\mathbf{1}(Z)=1$ that vanishes on $\mathfrak{a}'_\ss$.
Again by \eqref{EQ:assumpt}, the maps $\eta_{\alpha,\alpha'}^{\beta,\subz}$ and $\nu_\subz^\mathbf{1} \otimes R_{\alpha,\alpha'}$ are scalar multiples of each other, whenever they are non-zero. This defines proportionality constants $\lambda_{\alpha,\alpha'}^{\beta,\subz}$ by
\begin{equation}
\eta_{\alpha,\alpha'}^{\beta,\subz}=\lambda_{\alpha,\alpha'}^{\beta,\subz}\nu_\subz^\mathbf{1} \otimes R_{\alpha,\alpha'}.
\end{equation}
Since $\mathfrak{s}'_\ss \subseteq \mathfrak{s}_\ss$ the restriction $\lambda_\subz|_{\mathfrak{s}'}$ does in fact only depend on the restriction to $\mathfrak{a}'_\subz\subseteq\mathfrak{s}'$. For simplicity, we write $\lambda_\subz$ resp. $\nu_\subz$ for $\lambda_\subz(Z)$ resp. $\nu_\subz(Z)$.

Now Theorem~\ref{Thm:formula} simplifies as follows:
\begin{corollary}
\label{Cor:scalaridentity}
Let $\mathcal{U} \subseteq \Ical$ and $\mathcal{U}' \subseteq \Jcal$ be quotients or submodules and \eqref{EQ:assumpt} and 
\eqref{eq:assumpt_center} hold. A linear map $T: \mathcal{U} \rightarrow \mathcal{U}'$ is intertwining for $(\pi_{\xi,\lambda})_{\HC}$ and $(\tau_{\eta,\nu})_{\HC}$ if and only if 
\begin{align*}
T|_{\mathcal{U}(\alpha,\alpha')}= \begin{cases}
t_{\alpha,\alpha'} \cdot R_{\alpha,\alpha'} & \text{if } \mathcal{U}(\alpha,\alpha') \neq 0 \text{ and } \mathcal{U}'(\alpha')\neq 0, \\
0 & \text{otherwise,}
\end{cases}
\end{align*}
with numbers $t_{\alpha,\alpha'}$ satisfying
\begin{align}
\label{EQ:omega_formula_sc}
 &\sum_{\mathclap{\substack{\beta\\(\alpha,\alpha')\leftrightarrow(\beta,\beta')\\ \mathcal{U}(\beta,\beta')\neq 0}}}\;(\sigma_\beta-\sigma_\alpha+2\lambda_\ss)\lambda_{\alpha,\alpha'}^{\beta,\beta'}t_{\beta,\beta'}= \begin{cases}
( \sigma'_{\beta'}-\sigma'_{\alpha'}+2\nu_\ss)t_{\alpha,\alpha'} & \text{if } \mathcal{U}'(\alpha')\neq 0, \\
0 & \text{otherwise,}
\end{cases}
\end{align}
for all $\mathcal{U}(\alpha,\alpha')\neq 0$ and $\mathcal{U}'(\beta') \neq 0$ and
\begin{align}
\label{EQ:omega_formula_Z}
 &\sum_{\mathclap{\substack{\beta\\(\alpha,\alpha')\leftrightarrow(\beta,\alpha')\\ \mathcal{U}(\beta,\alpha')\neq 0}}}\;(\sigma_\beta-\sigma_\alpha+2\lambda_\ss)\lambda_{\alpha,\alpha'}^{\beta,\subz}t_{\beta,\alpha'}=
2(\nu_\subz-\lambda_\subz)t_{\alpha,\alpha'} 
\end{align}
for all $\mathcal{U}(\alpha,\alpha')\neq 0$ and $\mathcal{U}(\alpha') \neq 0$. 
\end{corollary}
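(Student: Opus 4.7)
The plan is to derive Corollary~\ref{Cor:scalaridentity} directly from Theorem~\ref{Thm:formula} by decomposing $X\in\mathfrak{s}'_\C$ along the splitting $\mathfrak{s}'_\C=(\mathfrak{s}'_\ss)_\C\oplus(\mathfrak{a}'_\subz)_\C$ and using the two multiplicity-freeness hypotheses to collapse the operator identity \eqref{EQ:omega_formula} into two scalar identities. The $K$-type decomposition $T|_{\mathcal{U}(\alpha,\alpha')}=t_{\alpha,\alpha'}R_{\alpha,\alpha'}$ is inherited from Theorem~\ref{Thm:formula} by the same Schur-type argument and requires nothing new.

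First I would treat $X\in(\mathfrak{s}'_\ss)_\C$. Since $\lambda_\subz$ and $\nu_\subz$ factor through the projection onto $(\mathfrak{a}'_\subz)_\C$, both Kronecker-delta terms in \eqref{EQ:omega_formula} vanish, leaving an identity in the space $\Hom_{K'}((\mathfrak{s}'_\ss)_\C\otimes\Ical(\alpha,\alpha'),\Jcal(\beta'))$. Transporting via $R_{\alpha,\alpha'}:\Ical(\alpha,\alpha')\xrightarrow{\sim}\Jcal(\alpha')$ and invoking assumption \eqref{EQ:assumpt}, this space has dimension at most one. When it is non-trivial, $\eta^{\beta'}_{\alpha,\alpha'}$ generates it (provided it is non-zero) and every $\eta^{\beta,\beta'}_{\alpha,\alpha'}$ appearing in the sum is by definition the scalar multiple $\lambda^{\beta,\beta'}_{\alpha,\alpha'}\eta^{\beta'}_{\alpha,\alpha'}$; dividing out the common non-zero factor $\eta^{\beta'}_{\alpha,\alpha'}(X)$ produces \eqref{EQ:omega_formula_sc}. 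If $\eta^{\beta'}_{\alpha,\alpha'}$ itself is zero, then by the same one-dimensionality every $\eta^{\beta,\beta'}_{\alpha,\alpha'}$ vanishes too and \eqref{EQ:omega_formula_sc} collapses to $0=0$, so the equivalence is maintained.

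Next I would take $X=Z\in\mathfrak{a}'_\subz$. Because $Z$ is central in $\g'$, the cocycle $\omega'$ vanishes on $Z$ and the first term on the right-hand side of \eqref{EQ:omega_formula} disappears. Moreover $\Ad(K')Z=Z$, so the function $\omega(Z)$ on $K$ is $K'$-invariant, multiplication by $\omega(Z)$ is a $K'$-equivariant endomorphism of $\Ical$, and by Schur's lemma it preserves every $K'$-isotype. This forces $\omega^{\beta,\beta'}_{\alpha,\alpha'}(Z)=0$ whenever $\beta'\neq\alpha'$ and reduces \eqref{EQ:omega_formula} to the diagonal $\beta'=\alpha'$. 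Under assumption \eqref{eq:assumpt_center} the one-dimensionality of $(\mathfrak{a}'_\subz)_\C$ together with the multiplicity-freeness in \eqref{eq:MultFreeProperties} makes $\Hom_{K'}((\mathfrak{a}'_\subz)_\C\otimes\Ical(\alpha,\alpha'),\Jcal(\alpha'))$ one-dimensional, giving $\eta^{\beta,\subz}_{\alpha,\alpha'}=\lambda^{\beta,\subz}_{\alpha,\alpha'}\cdot(\nu_\subz^{\mathbf{1}}\otimes R_{\alpha,\alpha'})$. Evaluating at $Z$ using $\nu_\subz^{\mathbf{1}}(Z)=1$ and cancelling the common factor $R_{\alpha,\alpha'}$ yields \eqref{EQ:omega_formula_Z}.

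The converse direction runs backwards identically: scalars satisfying \eqref{EQ:omega_formula_sc} and \eqref{EQ:omega_formula_Z} give back \eqref{EQ:omega_formula} when read on each of the two summands of $\mathfrak{s}'_\C$, so by Theorem~\ref{Thm:formula} the associated $T$ is intertwining. I do not expect a genuine obstacle; the only subtlety is observing that the centrality of $Z$ cleanly decouples the semisimple and central parts of \eqref{EQ:omega_formula}, so that no cross-terms mix the two scalar identities.
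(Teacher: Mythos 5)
Your proof is correct and matches the route the paper leaves implicit: the paper introduces the proportionality constants $\lambda^{\beta,\beta'}_{\alpha,\alpha'}$ and $\lambda^{\beta,\subz}_{\alpha,\alpha'}$ and then presents the Corollary as a direct simplification of Theorem~\ref{Thm:formula}, without spelling out the verification. Your decomposition of $\mathfrak{s}'_\C$ into $(\mathfrak{s}'_\ss)_\C\oplus(\mathfrak{a}'_\subz)_\C$, the vanishing of $\omega'(Z)$ by centrality, the $K'$-invariance of $\omega(Z)$ forcing $\omega^{\beta,\beta'}_{\alpha,\alpha'}(Z)=0$ for $\beta'\neq\alpha'$, and the use of the one-dimensionality in \eqref{EQ:assumpt} to cancel the generating $\eta$-maps are exactly the intended ingredients, and you handle the degenerate case $\eta^{\beta'}_{\alpha,\alpha'}=0$ correctly.
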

It is not necessary to compute all proportionality constants by explicit calculations with $K'$-finite vectors since they always fulfil the relations stated in the following lemma,
which is a generalization of \cite[Lemma 3.7]{MolOrs} to the reductive case and to the case where $H$ and $H'$ do not coincide.
\begin{lemma}
\label{Lemma:scalaridentities}
Let $\Ical(\alpha,\alpha')\neq 0$ and $\Jcal(\beta')\neq 0$.
Assume $R_{\alpha,\alpha'}$ and $R_{\beta,\beta'}$ to be the restriction from $K$ to $K'$ for all $\beta$ with $(\alpha,\alpha')\leftrightarrow(\beta,\beta')$. Let $\Omega:=B(H',H)$ and let \eqref{EQ:assumpt} and \eqref{eq:assumpt_center} hold for all $\Ical(\alpha,\alpha'),\Jcal(\beta')\neq 0$.
Then we have

\begin{align*}
 &\sum_{\mathclap{\substack{\beta\\(\alpha,\alpha')\leftrightarrow(\beta,\beta')}}}\; \lambda_{\alpha,\alpha'}^{\beta,\beta'} = \Omega, 
 &&\sum_{\mathclap{\substack{\beta\\(\alpha,\alpha')\leftrightarrow(\beta,\beta')}}}\; (\sigma_\beta-\sigma_\alpha)\lambda_{\alpha,\alpha'}^{\beta,\beta'} = \sigma_{\beta'}'-\sigma_{\alpha'}'+2(\Omega \rho-\rho'), 
\\
 &\sum_{\mathclap{\substack{\beta\\(\alpha,\alpha')\leftrightarrow(\beta,\alpha')}}}\; \lambda_{\alpha,\alpha'}^{\beta,\subz} =0, &&\sum_{\mathclap{\substack{\beta\\(\alpha,\alpha')\leftrightarrow(\beta,\alpha')}}}\;(\sigma_\beta-\sigma_\alpha)\lambda_{\alpha,\alpha'}^{\beta,\subz}= 0,
\end{align*}
where we identify $\rho$ and $\rho'$ with the numbers $\rho(H)$ and $\rho'(H')$.
\end{lemma}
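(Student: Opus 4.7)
The plan is to adapt the proof of \cite[Lemma 3.7]{MolOrs} to the reductive setting and to the case $H\neq H'$. With the $R_{\alpha,\alpha'}$ chosen as the restriction map from $K$ to $K'$, the restriction $T:\Ical\to\Jcal$ intertwines multiplication by any function with restriction of that function: $T\circ m(\omega(X))=m(\omega(X)|_{K'})\circ T$. Combined with the explicit formula $\omega(X)(k')=B(\mathrm{Ad}((k')^{-1})X,H)$ for $k'\in K'$, this reduces the four identities to comparisons on the Lie algebra level, using multiplicity-freeness to identify proportionality constants.

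For the identities involving $\lambda^{\beta,\beta'}_{\alpha,\alpha'}$, the key step is to establish that $\omega(X)|_{K'}=\Omega\cdot\omega'(X)$ for all $X\in(\mathfrak{s}'_\ss)_\C$. Both sides are $K'$-equivariant linear maps $(\mathfrak{s}'_\ss)_\C\to J_{\mathbf{1}}(K')$ whose trivial-$K'$-type components vanish (averaging $\mathrm{Ad}((k')^{-1})X$ over $K'$ gives zero since $(\mathfrak{s}'_\ss)^{K'}=\{0\}$ by semisimplicity of $\g'_\ss$); by the multiplicity-freeness \eqref{EQ:assumpt} they are proportional, and evaluating at $X=H'$, $k'=e$ gives $B(H',H)=\Omega$ versus $B'(H',H')=1$, identifying the scalar as $\Omega$. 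Applying this to $m(\omega(X))f$ for $f\in\Ical(\alpha,\alpha')$, restricting to $K'$ and projecting to $\Jcal(\beta')$ yields
\begin{equation*}
\sum_\beta R_{\beta,\beta'}\circ\omega^{\beta,\beta'}_{\alpha,\alpha'}(X) \;=\; \Omega\cdot\omega'^{\beta'}_{\alpha'}(X)\circ R_{\alpha,\alpha'},
\end{equation*}
which after unpacking the definitions of $\eta^{\beta,\beta'}$, $\eta^{\beta'}$ and $\lambda^{\beta,\beta'}$ is exactly $\sum\lambda^{\beta,\beta'}_{\alpha,\alpha'}=\Omega$. The weighted identity is obtained by the analogous comparison with the spectrum-generating operators: applying the cocycle reductions \eqref{EQ:cocycleG} and \eqref{EQ:cocycleG'} to write $\pi_{\xi,\lambda}(X)-\lambda_\ss m(\omega(X))-\lambda_\subz(X)=\tfrac12[\Pcal,m(\omega(X))]$ and similarly for $\tau_{\eta,\nu}(X)$, comparing $T\circ\pi_{\xi,\lambda}(X)$ with $\tau_{\eta,\nu}(X)\circ T$ on $\Ical(\alpha,\alpha')$, and using the first identity to cancel the $\lambda$- and $\nu$-dependent terms produces the weighted sum. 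The additive constant $2(\Omega\rho-\rho')$ emerges from $\rho=\tfrac12\sum_j j\dim\g_{j\varepsilon}$ and its $G'$-counterpart, rescaled by $\Omega$ to account for the discrepancy between $\varepsilon$ and $\varepsilon'$.

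For the identities involving $\lambda^{\beta,\subz}_{\alpha,\alpha'}$, specialize to $Z\in\mathfrak{a}'_\subz$. Since $Z$ is $K'$-fixed, every $\omega^{\beta,\beta'}_{\alpha,\alpha'}(Z)$ is $K'$-equivariant and so by Schur's lemma vanishes unless $\beta'=\alpha'$; in addition, on the $G'$ side $\omega'(Z)(k')=B'(Z,H')=0$ by the $B'$-orthogonality of $\zfrak'$ and $\g'_\ss$. Matching the cocycle formula \eqref{EQ:cocycleG} for $\pi_{\xi,\lambda}(Z)|_{\Ical(\alpha,\alpha')}$ against the $G'$-side formula \eqref{EQ:cocycleG'}, which reduces to $\tau_{\eta,\nu}(Z)|_{\Jcal(\alpha')}=\nu_\subz(Z)\cdot\mathrm{id}$, and requiring compatibility for generic $\lambda_\ss$ forces the $\lambda_\ss$-coefficients to cancel independently, giving both $\sum\lambda^{\beta,\subz}_{\alpha,\alpha'}=0$ and $\sum(\sigma_\beta-\sigma_\alpha)\lambda^{\beta,\subz}_{\alpha,\alpha'}=0$. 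The main obstacle is the bookkeeping: one must carefully separate $\mathfrak{s}'_\ss$ from $\mathfrak{a}'_\subz$, distinguish between what is absorbed into $\lambda_\subz$ versus what is carried by the cocycle, and match the normalizations of $B$, $B'$, $\varepsilon$, $\varepsilon'$, $\rho$ and $\rho'$ throughout; this is precisely where the reductive setup goes beyond the semisimple treatment in \cite{MolOrs}.
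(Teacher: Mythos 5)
Your overall architecture matches the paper's: establish a comparison of the two cocycles on $(\mathfrak{s}'_\ss)_\C$, then exploit the fact that the restriction map $\rest:\Ical\to\Jcal$ is itself an intertwiner (for an appropriate one-parameter family of $\lambda_\ss$, $\nu_\ss$ with $\lambda_\subz|_{\mathfrak{a}'_\subz}=\nu_\subz$) and feed $t_{\alpha,\alpha'}\equiv1$ into the relations \eqref{EQ:omega_formula_sc} and \eqref{EQ:omega_formula_Z}, reading off the remaining identities from the freedom in $\lambda_\ss$. That second part is the paper's argument in slightly different clothing, and it is sound.

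The gap is in your first step. You replace the paper's explicit pointwise comparison by a Schur-type argument: ``both $\omega|_{(\mathfrak{s}'_\ss)_\C}$ restricted to $K'$ and $\omega'$ are $K'$-equivariant maps $(\mathfrak{s}'_\ss)_\C\to J_{\mathbf{1}}(K')$, \ldots\ by the multiplicity-freeness \eqref{EQ:assumpt} they are proportional, and evaluating at $X=H'$, $k'=e$ identifies the scalar as $\Omega$.'' This does not follow from \eqref{EQ:assumpt}. What \eqref{EQ:assumpt} controls is $\dim\Hom_{K'}((\mathfrak{s}'_\ss)_\C\otimes\alpha',\beta')$, which is used in the paper to define the proportionality constants $\lambda^{\beta,\beta'}_{\alpha,\alpha'}$; it does not say that $\Hom_{K'}((\mathfrak{s}'_\ss)_\C,J_\mathbf{1}(K'))$ is one-dimensional. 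In general $(\mathfrak{s}'_\ss)_\C$ decomposes into several pairwise non-isomorphic irreducible $K'$-summands, and a priori the two equivariant maps could have a different ratio on each summand; a single evaluation at $X=H'$, $k'=e$ would then only pin down one ratio. The paper instead proves the needed pointwise identity $B(Y,H)=\Omega\,B'(Y,H')$ for every $Y\in\g'_\ss$ by a concrete argument: it uses the Iwasawa decomposition $\g=\mathfrak{k}\oplus(\mathfrak{a}_{\min}\cap\mathfrak{m})\oplus(\mathfrak{n}_{\min}\cap\mathfrak{m})\oplus\mathfrak{a}_\ss\oplus\mathfrak{a}_\subz\oplus\mathfrak{n}$, the $B$-orthogonality of $\mathfrak{a}_\ss$ to all other summands, and the inclusion $\mathfrak{a}'_\ss\subseteq(\mathfrak{a}_{\min}\cap\mathfrak{m})\oplus\mathfrak{a}_\ss$ (with $B(\cdot,H)$ vanishing on $\mathfrak{a}_{\min}\cap\mathfrak{m}$), to show that $B(\cdot,H)|_{\g'_\ss}$ and $\Omega\,B'(\cdot,H')|_{\g'_\ss}$ are both determined by (the same) projection to $\mathfrak{a}'_\ss$, where they agree by the normalization of $\Omega$. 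Then $\omega(X)(k')=B(\mathrm{Ad}((k')^{-1})X,H)=\Omega\,B'(\mathrm{Ad}((k')^{-1})X,H')=\Omega\,\omega'(X)(k')$ since $\mathrm{Ad}(K')$ preserves $\g'_\ss$. You need this, or an equivalent linear-algebra argument on $\g'_\ss$, before the first two identities can be extracted; the abstract multiplicity-one appeal as written is not enough.

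One further small but welcome difference: for the two $\lambda^{\beta,\subz}$ identities the paper first asserts $\sum_\beta\eta^{\beta,\subz}_{\alpha,\alpha'}=0$ by extending the cocycle comparison to $(\mathfrak{a}'_\subz)_\C$, whereas you derive both directly from the restriction-operator argument and the vanishing of $\omega'$ on $\zfrak'$; that route only uses facts about $\omega'$ and the generic dependence on $\lambda_\ss$, not the behavior of $\omega$ on $\mathfrak{a}'_\subz$, and is arguably cleaner.
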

\begin{proof}
We have $\frac{1}{\Omega} B(H',H)=1$. Therefore $B'(\cdot,H')|_{\mathfrak{a}'_\ss}=\varepsilon'|_{\mathfrak{a}'_\ss}=\frac{1}{\Omega} B(\cdot,H)|_{\mathfrak{a}'_\ss}$. 
Let $\mathfrak{m}_{min}\oplus \mathfrak{a}_{\min} \oplus \mathfrak{n}_{min} \subseteq \mathfrak{m} \oplus\mathfrak{a}\oplus\mathfrak{n}$ be a minimal parabolic subalgebra. By the Iwasawa decomposition we have $\g=\mathfrak{k} \oplus  (\mathfrak{a}_{min} \cap \mathfrak{m}) \oplus (\mathfrak{n}_{min} \cap \mathfrak{m}) \oplus \mathfrak{a}_\ss\oplus \mathfrak{a}_\subz \oplus \mathfrak{n}$ where $\mathfrak{a}_\ss$ is $B$-orthogonal to all other summands. Hence for $Y\in\g$, $B(Y,H)$ is uniquely determined by the projection to $\mathfrak{a}_\ss$. By the same argument we have that for $X\in \g'$, $B'(X,H')$ is uniquely determined by the projection to $\mathfrak{a}'_\ss$. We have $\mathfrak{a}'_\ss \subseteq (\mathfrak{a}_{min} \cap \mathfrak{m}) \oplus \mathfrak{a}_\ss$ where $B(\cdot,H)$ vanishes on $(\mathfrak{a}_{min} \cap \mathfrak{m})$. Hence $B(\cdot,H)$ composed with the projection to $\mathfrak{a}_\ss$ coincides on $\g'_\ss$ with $B(\cdot,H)$ composed with the projection to $\mathfrak{a}'_\ss$ and therefore $B(\cdot,H)|_{\g'_\ss}$ is uniquely determined by the projection to $\mathfrak{a}_\ss'$.
 Hence for all $X\in \mathfrak{s}'_\ss$ and $k\in K'$ we have
\begin{align*}
\omega(X)(k)=B(\mathrm{Ad}(k^{-1})X,H)=\Omega B'(\mathrm{Ad}(k^{-1})X,H') = \Omega \omega'(X)(k).
\end{align*}
Hence for all $X\in \mathfrak{s}'_{\C}$
\begin{align*}
R_{\beta,\beta'} \circ \omega(X) =\Omega \omega'(X) \circ R_{\alpha,\alpha'}.
\end{align*}
Therefore 
\begin{align*}
\Omega \eta_{\alpha,\alpha'}^{\beta'}=\; \sum_{\mathclap{\substack{\beta\\(\alpha,\alpha')\leftrightarrow(\beta,\beta')}}}\;\eta_{\alpha,\alpha'}^{\beta,\beta'}=\;\sum_{\mathclap{\substack{\beta\\(\alpha,\alpha')\leftrightarrow(\beta,\beta')}}}\;\lambda_{\alpha,\alpha'}^{\beta,\beta'} \eta_{\alpha,\alpha'}^{\beta'}, \\
\intertext{and}
0=\sum_{\mathclap{\substack{\beta\\(\alpha,\alpha')\leftrightarrow(\beta,\alpha')}}}\;\eta_{\alpha,\alpha'}^{\beta,\subz}=\;\sum_{\mathclap{\substack{\beta\\(\alpha,\alpha')\leftrightarrow(\beta,\alpha')}}}\;\lambda_{\alpha,\alpha'}^{\beta,\subz} \nu_\subz^{\mathbf{1}}\otimes R_{\alpha,\alpha'}.
\end{align*}
For the second identity we look at the restriction operator  $\rest: \Ical \rightarrow \Jcal$ i.e the operator with $t_{\alpha,\alpha'}=1$ for all $(\alpha,\alpha')$. Let $\proj_\mathfrak{a}$ be the orthogonal projection from $\g$ to $\mathfrak{a}$. The restriction operator is $(\pi_{\xi,\lambda},\tau_{\eta,\nu})$-intertwining if
\begin{equation*}
(\lambda_\ss\varepsilon+\rho)\circ\proj_{\mathfrak{a}_\ss}+\lambda_\subz\circ\proj_{\mathfrak{a}_\subz}=\nu_\ss\varepsilon'+\rho'+\nu_\subz,
\end{equation*}
which is the case if and only if $\lambda_\subz|_{\mathfrak{a}'_\subz}=\nu_\subz$ and
\begin{equation*}
(\lambda_\ss+\rho)B(H',H)=(\nu_\ss+\rho') \hfil \iff \hfil \Omega \lambda_\ss= \nu_\ss-(\Omega \rho -\rho').
\end{equation*}
This together with \eqref{EQ:omega_formula_sc} resp. \eqref{EQ:omega_formula_Z} for $\mathcal{U}=\Ical$, $\mathcal{U}'=\Jcal$ and together with identities already proven implies the missing identities.
\end{proof}
\begin{remark}
In the same way as for the restriction above, the knowledge of every additional intertwiner yields a new formula for the proportionality constants.
\end{remark}

\section{Algebraic symmetry breaking for the general linear group}\label{sec:AlgSBOsGL}
\label{sec:concrete_algebraic}
We return to the setting of $(G,G')=(\GL({n+1},\R),\GL({n},\R))$ with the notation as in Section~\ref{sec:concrete_smooth}.

\subsection{The spectrum generating operator}
We first determine the eigenvalues $\sigma_\alpha$ and $\sigma'_{\alpha'}$ of the spectrum generating operators $\Pcal$ and $\Pcal'$ of $G$ and $G'$. Recall the positive $(\g, \mathfrak{a})$-root $\varepsilon \in \mathfrak{a}_{\C}^*$ with root space $\g_\varepsilon=\mathfrak{n}$ and the positive $(\g', \mathfrak{a}')$ root $\varepsilon'\in (\mathfrak{a}'_{\C})^*$ with root space $\g'_{\varepsilon'}=\mathfrak{n}'$. Then we have $\varepsilon(H)=\varepsilon'(H')=1$ for 
\begin{align*}
& H:=\mathrm{diag} \left(\frac{{n}}{{n+1}}, -\frac{1}{{n+1}}, \ldots, -\frac{1}{{n+1}}\right) \in \mathfrak{a}_\ss,  \\ 
& H':= \mathrm{diag}\left(\frac{{n-1}}{{n}}, -\frac{1}{{n}}, \ldots, -\frac{1}{{n}}, 0\right) \in \mathfrak{a}_\ss'.
\end{align*}
The bilinear forms $B(X, Y):= \frac{{n+1}}{{n}} \tr(XY)$ on $\g$ and $B'(X',Y'):= \frac{{n}}{{n-1}} \tr (X',Y')$ on $\g'$ are ${\rm Ad}$-invariant and satisfy $B(H,H)=B'(H',H')=1$. We further fix the central element $Z=\diag(1,\ldots,1,0)\in\mathfrak{a}_\subz'$, then $\nu_\subz^\mathbf{1}=\frac{\tr}{{n}}\in(\mathfrak{a}'_\C)^*$ satisfies $\nu_\subz^\mathbf{1}(Z)=1$.
\begin{remark}
\label{remark:scalars}
In the notation of \eqref{eq:character_actions} we have
\begin{align*}
&\lambda_1=\frac{{n+1}}{{n}}\lambda_\ss,  &&\lambda_2= \lambda_\subz-\frac{\lambda_\ss}{{n},}\\
&\nu_1=\frac{{n}}{{n-1}}\nu_\ss,  &&\nu_2= \nu_\subz-\frac{\lambda_\ss}{{n-1}}.
\end{align*}
\end{remark}
We extend $B$ and $B'$ $\C$-bilinearly to the complexifications $\g_{\C}$ and $\g'_{\C}$.
The set $ \{ X^\mathfrak{k}_{1,j}:=\sqrt{\frac{{n}}{2(n+1)}}( E_{1,j}-E_{j,1}): j=2,\ldots , {n+1}  \}$ is a basis of $(\g_\varepsilon \oplus \g_{-\varepsilon})\cap \mathfrak{k}$ with $B(X^\mathfrak{k}_{1,i},X^\mathfrak{k}_{1,j})=-\delta_{i,j}$. Hence the spectrum generating operator is defined as
\begin{equation}
\Pcal:=-\sum_{j=2}^{{n+1}}(X^\mathfrak{k}_{1,j})^2.
\end{equation}
Note that $\Pcal$ is equal to the difference of the Casimir element for $\mathfrak{k}$ and the Casimir element for $\mathfrak{k}\cap\mathfrak{m}$. Therefore, the eigenvalues $\sigma_\alpha$ of $\Pcal$, and similarly the eigenvalues $\sigma'_{\alpha'}$ of the spectrum generating operator for $\g'$, are given by a renormalization of the eigenvalues of the Laplacian on $\RP^{{n}}$ resp. $\RP^{n-1}$:
\begin{align}
\label{Lemma:eigenvalues}
&\sigma_\alpha= \frac{{n}}{{n+1}}\alpha(2\alpha+{n-1}), &&\sigma'_{\alpha'}= \frac{{n-1}}{{n}}\alpha'(2\alpha'+{n-2}).
\end{align}

\subsection{Reduction to the cocycle}
Let $X^\mathfrak{s}_{i,j}:= \frac{{n}}{2(n+1)}(E_{i,j}+E_{j,i})$ for all $i\neq j$ and $X^\mathfrak{s}_{i,i}=\frac{{n}}{{n+1}}(E_{i,i}- E_{i+1,i+1})$ for $i\leq {n}$. Then $\{ X^\mathfrak{s}_{i,j} \}$ is a basis of $\mathfrak{s}_\ss$. Let $k \in K$ be an orthogonal matrix with first column $(x_1,\ldots ,x_{n+1})^T\in S^{{n}}$. We can explicitly calculate $\omega(X^\mathfrak{s}_{i,j})(k)=B(k^TX^\mathfrak{s}_{i,j}k,H)$: 
\begin{equation}
\label{EQ:cocycle}
\omega(X^\mathfrak{s}_{i,j})(k)= 
\begin{cases}
x_ix_j, & i \neq j,\\
x_i^2-x_{i+1}^2, & i=j.
\end{cases}
\end{equation}
Therefore $\omega(\{ X^\mathfrak{s}_{i,j} \})$ forms a basis of $\Hcal^2(S^{{n}})$. By restricting the decomposition \eqref{EQ:PolDec} to $S^{{n}}$ and combining with the decomposition \eqref{EQ:K,K'rel} we can directly read off the following:
\begin{corollary}
\label{Krelations}
\begin{enumerate}[label=(\roman{*}), ref=\thetheorem(\roman*)]
\item For the $K$-types $\alpha, \beta \in \Z_{> 0}$ of $G$ we have 
\begin{equation*}
\alpha \leftrightarrow \beta \Leftrightarrow \beta \in \{ \alpha \pm 1, \alpha \}\text{.}
\end{equation*}
\item For $\alpha=0$ we have $\omega_0^0=0$ and $\omega_0^1\neq0$ since $\omega(X)\in \Ical(1)$ for all $X \in \mathfrak{s}_{\C}$.
\item For $\phi \in \Hcal^{2\alpha}(S^{{n}})$ and $i\neq j$ we have
\begin{center}
$\omega_\alpha^{\alpha\pm 1}(X_{i,j}^\mathfrak{s})(\phi)=\phi^\pm_{i,j}$ , \hfil
$\omega_\alpha^{\alpha}(X_{i,j}^\mathfrak{s})(\phi)=\phi^0_{i,j}$,
\end{center}
and for $i=j$ we have
\begin{center}
$\omega_\alpha^{\alpha\pm1}(X_{i,j}^\mathfrak{s})(\phi)=\phi^\pm_{i,i}-\phi^\pm_{i+1,i+1}$ , \hfil
$\omega_\alpha^{\alpha}(X_{i,j}^\mathfrak{s})(\phi)=\phi^0_{i,i}-\phi^0_{i+1,i+1}$,
\end{center}
with $\phi_{i,j}^\pm,\phi_{i,j}^0$ defined as in \eqref{eq:PolDec_} with $|x|=1$.
\item For the $K$-types $\alpha,\beta$ of $\Ical$, $\beta \neq 0$, and $\alpha',\beta'$ of $\Jcal$, $\beta'\neq 0$ we have for $n\geq 3$
\begin{equation*}
(\alpha,\alpha') \leftrightarrow (\beta,\beta') \Leftrightarrow \beta \in \{  \alpha \pm 1, \alpha \} \text{ and } \beta' \in \{  \alpha' \pm 1, \alpha' \} \text{ and } \alpha' \leq \alpha, \beta' \leq \beta.
\end{equation*}
and for $n=2$
\begin{equation*}
(\alpha,\alpha') \leftrightarrow (\beta,\beta') \Leftrightarrow \beta \in \{  \alpha \pm 1, \alpha \} \text{ and } \beta' \in \{  \alpha' \pm 1 \} \text{ and } \alpha' \leq \alpha, \beta' \leq \beta.
\end{equation*}
\item We have $(\alpha,0) \not\leftrightarrow (\beta,0)$ for all $n$ and all $\alpha,\beta$.
\end{enumerate}
\end{corollary}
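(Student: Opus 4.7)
The plan is to deduce all four statements from the explicit cocycle formula \eqref{EQ:cocycle} together with the decomposition rules for products of spherical harmonics that are collected in the appendix (\eqref{EQ:PolDec} and \eqref{eq:PolDec_}), combined with the branching \eqref{EQ:SphHarm1} and the embedding \eqref{EQ:SphHarm3}.

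For (i)--(iii): Formula \eqref{EQ:cocycle} exhibits $\{\omega(X_{i,j}^{\mathfrak{s}})\}$ as the standard basis of $\Hcal^{2}(S^n)$ in the coordinates $x_1,\dots,x_{n+1}$. Hence for $\phi\in\Hcal^{2\alpha}(S^n)$ the product $\omega(X)\phi$ lies in $\Hcal^{2\alpha}(S^n)\cdot\Hcal^2(S^n)$, which by the standard product rule for spherical harmonics equals $\Hcal^{2\alpha-2}(S^n)\oplus\Hcal^{2\alpha}(S^n)\oplus\Hcal^{2\alpha+2}(S^n)$. This gives the implication in (i), and the explicit projections in \eqref{eq:PolDec_} yield both the formulas in (iii) and the non-vanishing of all three components for generic $\phi$, establishing the converse. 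Part (ii) is immediate: $\Ical(0)=\C\cdot 1$ is mapped by $m(\omega(X))$ into $\Hcal^2(S^n)=\Ical(1)$, so $\omega_0^0=0$ and $\omega_0^1\neq 0$ since $\omega$ itself is nonzero on $\mathfrak{s}_{\C}$.

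For (iv)--(v): The core observation is that the restriction $\omega|_{(\mathfrak{s}'_\ss)_{\C}}$ is $K'$-equivariant. Direct inspection of \eqref{EQ:cocycle} shows that for every $X\in(\mathfrak{s}'_\ss)_{\C}$ the function $\omega(X)$ depends only on $x_1,\dots,x_n$ and is traceless of degree two, so $\omega$ realises a $K'$-isomorphism $(\mathfrak{s}'_\ss)_{\C}\xrightarrow{\sim}\Hcal^{2}(\R^n)\subseteq\Hcal^2(\R^{n+1})$, and via \eqref{EQ:SphHarm1} this identifies the image with the $K'$-isotypic component of type $\Jcal(1)$. Consequently, for $\phi\in\Ical(\alpha,\alpha')$, which is of $K'$-type $\Jcal(\alpha')=\Hcal^{2\alpha'}(\R^n)$, the $K'$-types occurring in $\omega(X)\phi$ are precisely those appearing in the $\Orm(n)$-tensor product $\Hcal^{2}(\R^n)\otimes_{K'}\Hcal^{2\alpha'}(\R^n)$. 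For $n\geq 3$ the latter decomposes as $\Hcal^{2\alpha'-2}\oplus\Hcal^{2\alpha'}\oplus\Hcal^{2\alpha'+2}$ (plus $\Orm(n)$-summands of non-spherical-harmonic type that do not appear in $\Jcal$), yielding $\beta'\in\{\alpha'\pm 1,\alpha'\}$. For $n=2$, $\Orm(2)$-representation theory (equivalently, Fourier analysis on $S^1$) gives only $\Hcal^{2\alpha'\pm 2}(\R^2)$ as components that are again of the form $\Jcal(\beta')$, forcing $\beta'\in\{\alpha'\pm 1\}$. Combined with part (i) this produces the first condition of (iv); the constraints $\alpha'\leq\alpha$ and $\beta'\leq\beta$ simply encode the range of $K'$-types in $\Ical(\alpha)$ and $\Ical(\beta)$ coming from \eqref{EQ:SphHarm1}. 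Part (v) is then a special case: for $\alpha'=0$ the function $\phi$ is $K'$-invariant, so $\omega(X)\phi$ has $K'$-type equal to that of $\omega(X)$, namely $\Hcal^{2}(\R^n)=\Jcal(1)$, hence contains no $K'$-invariant vector and its projection to the $K'$-trivial line $\Ical(\beta,0)$ vanishes.

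The main obstacle will be checking non-vanishing of each of the permitted transitions in (iv). For this I would exhibit, for each admissible quadruple $((\alpha,\alpha'),(\beta,\beta'))$, an explicit test vector $\phi\in\Ical(\alpha,\alpha')$ built via the embedding \eqref{EQ:SphHarm3} and a specific $X_{i,j}^{\mathfrak{s}}\in(\mathfrak{s}'_\ss)_{\C}$ so that the explicit formulas \eqref{eq:PolDec_} produce a manifestly nonzero $(\beta,\beta')$-component of $m(\omega(X_{i,j}^{\mathfrak{s}}))\phi$. The multiplicity-freeness assumptions \eqref{eq:MultFreeProperties} of this setting imply that $\omega_{\alpha,\alpha'}^{\beta,\beta'}$, being a $K'$-equivariant map between irreducibles, is determined up to a single scalar for each admissible $(\beta,\beta')$, so the verification reduces to a single non-vanishing calculation in each case. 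Some care is needed for the boundary cases where one of $\alpha'\pm 1$ falls outside the admissible range; these are handled directly by setting the corresponding terms to zero, which is consistent with $\Ical(\beta,\beta')=0$.
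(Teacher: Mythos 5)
Your approach is essentially the same as the paper's — multiply the cocycle by a spherical harmonic, decompose via \eqref{EQ:PolDec} and branch via \eqref{EQ:SphHarm1} — and your reasoning for (i)--(iii) and (v) is sound. The framing you use for (iv)--(v), namely observing that $\omega|_{(\mathfrak{s}'_\ss)_\C}$ realizes a $K'$-isomorphism onto the copy of $\Hcal^2(\R^n)$ inside $\Hcal^2(\R^{n+1})$ and then invoking the $\Orm(n)$-tensor-product rule for $\Hcal^2(\R^n)\otimes\Hcal^{2\alpha'}(\R^n)$, is a slightly more conceptual route to the constraint $\beta'\in\{\alpha'\pm1,\alpha'\}$ (for $n\geq 3$, resp.\ $\{\alpha'\pm1\}$ for $n=2$) than the paper's ``read off from the explicit decomposition,'' and it makes (v) especially transparent. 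One remark on conventions: the definition of $\leftrightarrow$ for pairs in the text literally ranges over $\mathfrak{s}'_\C$, which includes $(\mathfrak{a}'_\subz)_\C$; for (v) to hold one must, as you implicitly do, read it as non-vanishing of the restriction to $(\mathfrak{s}'_\ss)_\C$, consistent with the $\eta$-notation and Lemma~\ref{Lemma:scalaridentities}.

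The genuine gap is the one you identify yourself: the tensor-product analysis only yields the ``only if'' direction of (i) and (iv), and the ``if'' direction (non-vanishing of every admissible $\omega_{\alpha,\alpha'}^{\beta,\beta'}$) is not established — you defer it to an unperformed case-by-case test-vector computation. This is not automatic; e.g.\ $\Hcal^{2\beta'}$ appearing in $\Hcal^2(\R^n)\otimes\Hcal^{2\alpha'}(\R^n)$ does not by itself force the specific bilinear pairing $(X,\phi)\mapsto\proj_{\Ical(\beta,\beta')}(\omega(X)\phi)$ to be nonzero, since the pairing need not be the full tensor projection. In the paper this is ultimately settled by the explicit formulas $\eqref{eq:PolDec_}$ together with the Gegenbauer computation of the proportionality constants in Lemma~\ref{Lemma:GegScalars} (whose non-vanishing on the stated range is visible from the closed-form expressions). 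To complete your proof you would need to actually carry out at least one non-vanishing check per admissible $(\beta,\beta')$ relative to $(\alpha,\alpha')$, including the boundary cases $\alpha'\in\{\alpha,\alpha-1\}$ and $\alpha'\in\{0,1\}$, which is precisely where the Gegenbauer recursions in Appendix~\ref{Gegeq} are used.
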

Now that we know the eigenvalues of the spectrum generating operator and how the multiplication with the cocycle acts we can use \eqref{EQ:cocycleG} to decide for which $\lambda \in \mathfrak{a}_\C^*$ the $(\g, K)$-module $(\pi_{\lambda})_{\mathrm{HC}}$ is irreducible.
\begin{lemma}
\label{Lemma:submodules}
$(\pi_{\lambda})_{\mathrm{HC}}$ is irreducible if and only if $\lambda_1 \neq \pm(\rho_1+2i)$,  $i\in \Z_{\geq 0}$. 
If $\lambda_1 = \pm(\rho_1 + 2i)$, $\alpha \in \Z_{\geq 0}$, $(\pi_{\lambda})_{\mathrm{HC}}$ has a unique irreducible submodule and a unique irreducible quotient.
The submodule is finite dimensional if and only if $\lambda_1 = -\rho_1-2i$, $i \in \Z_{\geq 0}$. Its $K$-types are given by
\begin{equation*}
\mathcal{F}_-(i,\lambda_\subz)=\bigoplus_{\alpha \leq i}\Ical(\alpha).
\end{equation*}
For the quotient we have
\begin{equation*}
\mathcal{T}_-(i,\lambda_\subz)=\faktor{\Ical}{\mathcal{F}_-(i,\lambda_\subz)} \cong_K \bigoplus_{\alpha > i}\Ical(\alpha).
\end{equation*}
The quotient is finite dimensional if and only if $\lambda_1= \rho_1+2i$, $i \in \Z_{\geq 0}$. In this case the $K$-types of the unique submodule are given by
\begin{equation*}
\mathcal{T}_+(i,\lambda_\subz)=\bigoplus_{\alpha > i}\Ical(\alpha).
\end{equation*}
For the quotient we have
\begin{equation*}
\mathcal{F}_+(i,\lambda_\subz)=\faktor{\Ical}{\mathcal{T}_+(i,\lambda_\subz)} \cong_K \bigoplus_{\alpha \leq i}\Ical(\alpha).
\end{equation*}
\end{lemma}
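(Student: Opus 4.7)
The plan is to apply Theorem~\ref{Thm:cocycle} together with Corollary~\ref{Krelations} and the eigenvalues \eqref{Lemma:eigenvalues} to decide exactly when the $\g$-action can pass between consecutive $K$-types. For $\beta=\alpha\pm 1$ and $X\in\mathfrak{s}_\C$ the cocycle formula reads
$$\proj_{\Ical(\beta)}\circ \pi_\lambda(X)|_{\Ical(\alpha)}=\tfrac{1}{2}(\sigma_\beta-\sigma_\alpha+2\lambda_\ss)\,\omega_\alpha^\beta(X),$$
and by Corollary~\ref{Krelations}(iii) the map $\omega_\alpha^{\alpha\pm 1}$ is itself nonzero. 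Since $\Ical(\alpha\pm 1)$ is $K$-irreducible, the image of any nonzero $K$-equivariant map into it is everything, so the scalar factor alone decides whether $\Ical(\alpha)$ is connected to $\Ical(\alpha\pm 1)$ under the $\g$-action. Because $\mathfrak{z}$ and $\mathfrak{k}$ preserve $K$-types while $\mathfrak{s}_\C$ shifts them by at most one, every $(\g,K)$-submodule is a sum of $K$-isotypic pieces $\Ical(\alpha)$, and the structure reduces to a one-dimensional reachability problem along the chain $0,1,2,\dots$.

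Using $\lambda_\ss=\tfrac{n}{n+1}\lambda_1$ from Remark~\ref{remark:scalars} and a short computation from \eqref{Lemma:eigenvalues}, the two relevant scalar factors become
\begin{align*}
\sigma_{\alpha+1}-\sigma_\alpha+2\lambda_\ss &= \tfrac{2n}{n+1}(\lambda_1+\rho_1+2\alpha), \\
\sigma_{\alpha-1}-\sigma_\alpha+2\lambda_\ss &= \tfrac{2n}{n+1}(\lambda_1-\rho_1-2(\alpha-1)).
\end{align*}
Hence the raising map out of $\Ical(\alpha)$ vanishes precisely for $\lambda_1=-\rho_1-2\alpha$, while the lowering map out of $\Ical(\alpha)$ vanishes precisely for $\lambda_1=\rho_1+2(\alpha-1)$. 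Crucially, for any given $\lambda_1$ at most one of these conditions can hold, and then only at a single value of $\alpha$: the action is ``broken'' at exactly one place in the chain.

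The conclusion now follows by case analysis. If $\lambda_1\neq\pm(\rho_1+2i)$ for all $i\in\Z_{\ge 0}$, every neighbouring pair $\Ical(\alpha),\Ical(\alpha\pm 1)$ is connected, and $(\pi_\lambda)_{\HC}$ is irreducible. If $\lambda_1=-\rho_1-2i$, the only broken arrow is the raising map out of $\Ical(i)$, so $\mathcal{F}_-(i,\lambda_\subz):=\bigoplus_{\alpha\le i}\Ical(\alpha)$ is a $(\g,K)$-submodule; inspecting the two scalar formulas shows every other arrow (in particular the descending arrow from $\Ical(i+1)$ to $\Ical(i)$) is nonzero, so both $\mathcal{F}_-(i,\lambda_\subz)$ and the quotient $\mathcal{T}_-(i,\lambda_\subz)$ are irreducible, and a reachability argument shows $\mathcal{F}_-(i,\lambda_\subz)$ is the unique proper submodule. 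The case $\lambda_1=\rho_1+2i$ is symmetric: the only broken arrow is the lowering map out of $\Ical(i+1)$, giving $\mathcal{T}_+(i,\lambda_\subz)=\bigoplus_{\alpha>i}\Ical(\alpha)$ as the unique proper (infinite-dimensional) submodule with finite-dimensional quotient $\mathcal{F}_+(i,\lambda_\subz)$. The main (modest) obstacle is just bookkeeping of the two scalar vanishing conditions; once assembled, irreducibility of the composition factors and uniqueness of the submodule both follow from the fact that only one arrow can break at a time.
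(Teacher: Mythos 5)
Your proposal is correct and follows essentially the same approach as the paper's own proof: apply the cocycle reduction from Theorem~\ref{Thm:cocycle}, compute from \eqref{Lemma:eigenvalues} and Remark~\ref{remark:scalars} exactly when the scalar factors $\sigma_{\alpha\pm1}-\sigma_\alpha+2\lambda_\ss$ vanish, and deduce the submodule structure from the resulting reachability along the chain of $K$-types. Your computations match the paper's (e.g.\ $\frac{2n}{n+1}(\lambda_1+\rho_1+2\alpha)=\frac{n}{n+1}(n+1+4\alpha+2\lambda_1)$), and your observation that at most one arrow can break for a given $\lambda_1$ is the implicit reason behind the paper's uniqueness and irreducibility claims.
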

\begin{proof}
By \eqref{EQ:cocycleG} we have for $X\in \mathfrak{s}_{\C}$
\begin{equation*}
\proj_{\Ical(\beta)}\circ \pi_{\lambda}(X)|_{\Ical(\alpha)}=\frac{1}{2}
(\sigma_\beta - \sigma_\alpha + 2 \lambda_\ss)\omega_{\alpha}^{\beta}(X),
\end{equation*}
where $\omega_{\alpha}^{\beta}=0$ if and only if $\beta \not\in \{ \alpha \pm 1, \alpha \}$ or if $\alpha=\beta=0$. Since $\omega$ vanishes on $\mathfrak{k}_{\C}$, by the action of $\g$ we can at most step from $\Ical(\alpha)$ to $\Ical(\alpha \pm 1)$.
For $\beta=\alpha+1$ we have that $(\sigma_\beta - \sigma_\alpha + 2 \lambda_\ss)=0 \Leftrightarrow ({n+1}+4\alpha + 2\lambda_1)=0$ and for $\beta=\alpha-1$ we have that $(\sigma_\beta - \sigma_\alpha + 2 \lambda_\ss)=0 \Leftrightarrow ({n+1}+4\alpha -4 - 2\lambda_1)=0$. So we cannot step from $\Ical(\alpha)$ to $\Ical(\alpha+1)$ if and only if $2\lambda_1=-(n+1)-4\alpha$ and we cannot step from $\Ical(\alpha+1)$ to $\Ical(\alpha)$ if and only if $2\lambda_1={n+1}+4\alpha$. In the first case we get a unique submodule of $(\pi_{\lambda})_{\mathrm{HC}}$ given by $\bigoplus_{\beta \leq \alpha}\Ical(\beta)$. In the second case we get a unique submodule $\bigoplus_{\beta > \alpha}\Ical(\beta)$.
\end{proof}
\begin{remark}
\label{remark:composition_series_globalization}
The composition factors of the smooth representations $\pi_{\lambda}$, $\tau_{\nu}$ are given by the Casselmann--Wallach globalizations of the composition factors of the underlying Harish-Chandra modules. We write $\mathcal{F}_\pm(i,\lambda_\subz)^\infty$, $\mathcal{T}_\pm(i,\lambda_\subz)^\infty$, $\mathcal{F}'_\pm(j,\nu_\subz)^\infty$ and $\mathcal{T}'_\pm(j,\nu_\subz)^\infty$ for the globalizations. In particular quotients of the smooth representations can be realized in the compact picture on the orthogonal complements of the submodules in $C^{\infty}(\RP^{{n}})$ with respect to the $L^2$-inner product.
\end{remark}
\begin{remark}
\label{remark:isomoprhies}
If $n=2$ we have the following isomorphisms of composition factors:
\begin{align*}
&\mathcal{F}'(j,\nu_\subz):=\mathcal{F}'_+(j,\nu_\subz)\cong\mathcal{F}'_-(j,\nu_\subz), 
&&\mathcal{T}'(j,\nu_\subz):=\mathcal{T}'_+(j,\nu_\subz)\cong\mathcal{T}'_-(j,\nu_\subz),
\end{align*}
and for all $n\geq 2$
\begin{align*}
&\mathcal{F}'_+(0,\nu_\subz)\cong\mathcal{F}'_-(0,\nu_\subz).
\end{align*}
\end{remark}

\subsection{Proportionality constants}
\label{sec:scalaridenteties}
The identities of Lemma~\ref{Lemma:scalaridentities} do not give enough information to calculate the  proportionality constants $\lambda_{\alpha,\alpha'}^{\beta,\beta'}$. Let $\Ical(\alpha,\alpha')\neq0$. By Corollary~\ref{Krelations} we have for each $ \beta' $ with $\alpha' \leftrightarrow \beta'$ up to three such constants. In the following we calculate one of the $\lambda_{\alpha,\alpha'}^{\beta,\beta'}$ for each $\beta'$ by explicitly decomposing $\proj_{\Ical(\beta)} \circ m(\omega(X))|_{\Ical(\alpha,\alpha')}$. Recall from \eqref{EQ:SphHarm2} and \eqref{EQ:SphHarm3} the $K'$-equivariant embedding $I_{\alpha'\to\alpha}:\Jcal(\alpha')\to\Ical(\alpha,\alpha')$. For an element $\tilde{\phi} \in \Ical(\alpha,\alpha')$ given by 
$\tilde{\phi}=I_{\alpha'\rightarrow \alpha}(\phi)|_{S^{{n}}}$ with $\phi \in \Jcal(\alpha')$ and $X\in (\mathfrak{s}'_\ss)_{\C}$, $\beta \leftrightarrow \alpha$ and $\beta > \alpha'$ we have
\begin{equation}
\label{EQ:GegCalc1}
\proj_{\Ical(\beta)}(\omega(X)\tilde{\phi})=
\omega_{\alpha,\alpha'}^{\beta,\alpha'-1}(X)(\tilde{\phi}) + \omega_{\alpha,\alpha'}^{\beta,\alpha'}(X)(\tilde{\phi}) + 
\omega_{\alpha,\alpha'}^{\beta,\alpha'+1}(X)(\tilde{\phi}).
\end{equation}
If we look at the identity \eqref{EQ:GegCalc1} for an element $X_{i,j}^\mathfrak{s}$ ,$ i,j \neq {n+1}, i\neq j$ we get
\begin{multline}
\label{EQ:GegCalc2}
\proj_{\Ical(\beta)}(x_ix_j\tilde{\phi})=
\Lambda_{\alpha,\alpha'}^{\beta,\alpha'-1}I_{\alpha'-1 \rightarrow \beta}(\phi_{i,j}^-)|_{S^{{n}}} + \Lambda_{\alpha,\alpha'}^{\beta,\alpha'}I_{\alpha' \rightarrow \beta}(\phi_{i,j}^0)|_{S^{{n}}}  \\
+\Lambda_{\alpha,\alpha'}^{\beta,\alpha'+1}I_{\alpha'+1 \rightarrow \beta}(\phi_{i,j}^+)|_{S^{{n}}},
\end{multline}
for some constants $\Lambda_{\alpha,\alpha'}^{\beta,\alpha'-1},\Lambda_{\alpha,\alpha'}^{\beta,\alpha'},\Lambda_{\alpha,\alpha'}^{\beta,\alpha'+1}$.
Note that following the proof of Lemma~\ref{Lemma:scalaridentities} we have $\omega(X)|_{K'}=\Omega \omega'(X)\, \forall\, X \in (\mathfrak{s}'_\ss)_{\C}$, with $\Omega=B(H',H)=\frac{{n}^2-1}{{n}^2}$. This gives
\begin{align*}
\eta_{\alpha,\alpha'}^{\beta'}(X_{i,j}^\mathfrak{s})(\tilde{\phi})&=\Omega^{-1}
C_{2(\alpha-\alpha')}^{\frac{{n-1}}{2}+2\alpha'}(0)\phi^{\beta'}_{i,j}, \\
\eta_{\alpha,\alpha'}^{\beta,\beta'}(X_{i,j}^\mathfrak{s})(\tilde{\phi})
&=\Lambda_{\alpha,\alpha'}^{\beta,\beta'}
C_{2(\beta-\beta')}^{\frac{{n-1}}{2}+2\beta'}(0)\phi^{\beta'}_{i,j},
\end{align*}
where $\phi_{i,j}^{\beta'} \in \{ \phi_{i,j}^\pm, \phi_{i,j}^0\}$ for the corresponding $\beta' \in \{ \alpha' \pm 1, \alpha' \}$. Therefore
\begin{equation}
\label{EQ:Lambdalambda}
\lambda_{\alpha,\alpha'}^{\beta,\beta'}= \frac{C_{2(\beta-\beta')}^{\frac{{n-1}}{2}+2\beta'}(0)}{C_{2(\alpha-\alpha')}^{\frac{{n-1}}{2}+2\alpha'}(0)} \Omega\Lambda_{\alpha,\alpha'}^{\beta,\beta'}.
\end{equation}
\begin{lemma}
\label{Lemma:GegScalars}
For $\alpha >\alpha'+1$ we have
\begin{equation*}
\lambda_{\alpha,\alpha'}^{\alpha-1,\alpha'+1}=\Omega \frac{2(\alpha-\alpha')(2(\alpha-\alpha')-2)}{({n}+4\alpha-1)({n}+4\alpha-3)}, \hspace{0.2cm}
\lambda_{\alpha,\alpha'}^{\alpha-1,\alpha'}=\Omega\frac{({n}+2(\alpha-\alpha')-2)2(\alpha-\alpha')}{({n}+4\alpha-1)({n}+4\alpha-3)},
\end{equation*}
\begin{equation*}
\lambda_{\alpha,\alpha'}^{\alpha-1,\alpha'-1}=\Omega \frac{({n}+2(\alpha+\alpha')-4)({n}+2(\alpha+\alpha')-2)}{({n}+4\alpha-1)({n}+4\alpha-3)}.
\end{equation*}
\end{lemma}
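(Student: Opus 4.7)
The plan is to make the expansion \eqref{EQ:GegCalc2} explicit for $\beta=\alpha-1$ and then pass from the $\Lambda$'s to the $\lambda$'s via \eqref{EQ:Lambdalambda}. From the concrete form of the embedding $I_{\alpha'\to\alpha}$ in \eqref{EQ:SphHarm3}, a vector $\tilde{\phi}=I_{\alpha'\to\alpha}(\phi)|_{S^{n}}$ with $\phi\in\Hcal^{2\alpha'}(\R^{n})$ is, on $S^{n}$, simply
\begin{equation*}
\tilde{\phi}(x) = C_{2(\alpha-\alpha')}^{2\alpha'+(n-1)/2}(x_{n+1})\,\phi(x_1,\dots,x_n).
\end{equation*}
Picking $i\neq j$ in $\{1,\dots,n\}$ so that $X_{i,j}^{\mathfrak{s}}\in(\mathfrak{s}'_{\ss})_{\C}$, and using the standard spherical harmonic decomposition $x_ix_j\phi=\phi_{i,j}^{+}+r^{2}\phi_{i,j}^{0}+r^{4}\phi_{i,j}^{-}$ with $r^{2}=x_1^{2}+\cdots+x_{n}^{2}=1-x_{n+1}^{2}$ on $S^{n}$, one obtains
\begin{equation*}
x_i x_j\,\tilde{\phi} = C_{2(\alpha-\alpha')}^{2\alpha'+(n-1)/2}(x_{n+1})\Bigl[\phi_{i,j}^{+} + (1-x_{n+1}^{2})\phi_{i,j}^{0} + (1-x_{n+1}^{2})^{2}\phi_{i,j}^{-}\Bigr].
\end{equation*}

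The three constants $\Lambda_{\alpha,\alpha'}^{\alpha-1,\beta'}$ for $\beta'\in\{\alpha'+1,\alpha',\alpha'-1\}$ are now read off by expanding each of $C_{2(\alpha-\alpha')}^{\nu}(t)$, $(1-t^{2})\,C_{2(\alpha-\alpha')}^{\nu}(t)$ and $(1-t^{2})^{2}\,C_{2(\alpha-\alpha')}^{\nu}(t)$ (with $\nu=2\alpha'+(n-1)/2$) in terms of Gegenbauer polynomials with parameter $\nu+1$, $\nu$, $\nu-1$ respectively, and isolating the coefficient of the unique polynomial of degree $2(\alpha-1-\beta')$; this is precisely the combination that reconstructs $I_{\beta'\to\alpha-1}(\phi_{i,j}^{\pm})|_{S^{n}}$ or $I_{\alpha'\to\alpha-1}(\phi_{i,j}^{0})|_{S^{n}}$. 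The required identities are standard Gegenbauer connection formulas (collected in the appendix): one raises the parameter $\nu$ by one, the other lowers $\nu$ at the cost of a factor $(1-t^{2})$, and iterating the latter twice handles the third line.

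Once the $\Lambda_{\alpha,\alpha'}^{\alpha-1,\beta'}$ are in hand, \eqref{EQ:Lambdalambda} combined with the evaluation $C_{2m}^{\nu}(0)=(-1)^{m}(\nu)_{m}/m!$ converts them into the $\lambda_{\alpha,\alpha'}^{\alpha-1,\beta'}$; a routine cancellation of Pochhammer symbols then produces the three closed forms claimed in the lemma, each with common denominator $(n+4\alpha-1)(n+4\alpha-3)$ and overall prefactor $\Omega=(n^{2}-1)/n^{2}$. The main obstacle is purely bookkeeping: applying the Gegenbauer connection identities consistently, keeping track of the specific index $l=2(\alpha-1-\beta')$ in each expansion, and collapsing the Gamma ratios into the stated common denominator. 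The four scalar identities of Lemma~\ref{Lemma:scalaridentities}, which express sums of the $\lambda$'s over $\beta$, provide a convenient numerical cross-check once the three individual values are computed.
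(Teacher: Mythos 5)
Your route is genuinely different from the paper's. The paper's proof computes the $K$-type projection $\tilde\phi^-_{i,j}=\proj_{\Ical(\alpha-1)}(x_ix_j\tilde\phi)$ first, using the ambient ($(n+1)$-dimensional) version of \eqref{eq:PolDec_}, namely $\tilde\phi^-_{i,j}=\tfrac{1}{(n+4\alpha-3)(n+4\alpha-1)}\partial_i\partial_j\tilde\phi$ (this is where the common denominator comes from), and then decomposes that single spherical harmonic into the three $K'$-types by repeated application of \eqref{eq:G1}, \eqref{eq:G4}, \eqref{eq:G5}, \eqref{eq:G6}. You do the dual: decompose $x_ix_j\tilde\phi$ into $K'$-types first, using the lower-dimensional \eqref{eq:PolDec_}, and then project each $K'$-isotypic piece onto $\Ical(\alpha-1)$ by Gegenbauer connection formulas. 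Both are sound in principle, and either one reduces to the same final conversion via \eqref{EQ:Lambdalambda} and \eqref{EQ:G0}.

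There is, however, a concrete error in your description of the target expansions. Since the embedding \eqref{EQ:SphHarm3} of $\Hcal^{2\beta'}(\R^n)$ into $\Hcal^{2\beta}(\R^{n+1})$ uses Gegenbauer parameter $\tfrac{n-1}{2}+2\beta'$, passing from $K'$-type $\alpha'$ to $K'$-type $\alpha'\pm1$ shifts the parameter by $\pm2$, not $\pm1$. With $\nu=2\alpha'+\tfrac{n-1}{2}$, you must therefore expand $C^{\nu}_{N}(t)$ in $\{C^{\nu+2}_{2m}(t)\}$ (picking out $C^{\nu+2}_{N-4}$), expand $(1-t^2)C^{\nu}_{N}(t)$ in $\{C^{\nu}_{2m}(t)\}$ (picking out $C^{\nu}_{N-2}$), and expand $(1-t^2)^2C^{\nu}_{N}(t)$ in $\{C^{\nu-2}_{2m}(t)\}$ (picking out $C^{\nu-2}_{N}$), where $N=2(\alpha-\alpha')$. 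The shifts $\nu+1$ and $\nu-1$ you wrote will not reproduce $I_{\alpha'\pm1\to\alpha-1}$, and since your entire calculation consists of reading off a single connection coefficient, this slip would propagate directly into the answer. A second, smaller caveat: the identities collected in the appendix (\eqref{eq:G4}--\eqref{eq:G7}) are tailored to the paper's route (they involve $(1-z^2)$ factors and shift $\lambda$ by one step at a time), so you would need to iterate them or invoke the standard two-step Gegenbauer connection formula; it is not quite a look-up. Apart from these two points, your plan does arrive at the stated result.
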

\begin{proof}
Let $\tilde{\phi} \in \Ical(\alpha,\alpha')$,
\begin{equation*}
\tilde{\phi}(x) = I_{\alpha'\to\alpha}(\phi)(x) = |x|^{2(\alpha-\alpha')}\phi(x') C_{2(\alpha-\alpha')}^{\frac{{n-1}}{2}+2\alpha'}\left(\frac{x_{n+1}}{|x|}\right)
\end{equation*}
with $\phi \in \Hcal^{2\alpha'}(\R^{{n}}), x=(x',x_{n+1}) \in \R^{n+1}$. Let further
$\beta=\alpha-1$. In this setting the identity \eqref{EQ:GegCalc2} is 
\begin{align}
\label{EQ:Scalars1}
\tilde{\phi}_{i,j}^-=
\Lambda_{\alpha,\alpha'}^{\alpha-1,\alpha'-1}I_{\alpha'-1 \rightarrow \alpha-1}(\phi_{i,j}^-) + \Lambda_{\alpha,\alpha'}^{\alpha-1,\alpha'}I_{\alpha' \rightarrow \alpha-1}(\phi_{i,j}^0) + 
\Lambda_{\alpha,\alpha'}^{\alpha-1,\alpha'+1}I_{\alpha'+1 \rightarrow \alpha-1}(\phi_{i,j}^+) 
\end{align}
So we need to calculate $\tilde{\phi}_{i,j}^-$ and decompose it into multiples of $\tilde{\phi}_{i,j}^\pm, \tilde{\phi}_{i,j}^0$. To simplify notation we set
\begin{equation}
\label{substitution}
l:=\frac{{n-1}}{2}+2\alpha'\text{, }\quad z:=\frac{x_{n+1}}{|x|} \text{, }\quad N:=2(\alpha-\alpha').
\end{equation}
With this substitution \eqref{EQ:Scalars1} reads as
\begin{multline}
\label{EQ:Scalars2}
\tilde{\phi}_{i,j}^-(x)=
\Lambda_{\alpha,\alpha'}^{\alpha-1,\alpha'-1}\phi_{i,j}^-(x') |x|^{N}C_N^{l-2}(z)+
\Lambda_{\alpha,\alpha'}^{\alpha-1,\alpha'}\phi_{i,j}^0(x')|x|^{N-2} C_{N-2}^{l}(z)  \\
+\Lambda_{\alpha,\alpha'}^{\alpha-1,\alpha'+1}\phi_{i,j}^+(x')|x|^{N-4} C_{N-4}^{l+2}(z).
\end{multline}
Then calculating $\tilde{\phi}^-_{i,j}$ with \eqref{eq:G1} and the product rule immediately yields
\begin{equation*}
\Lambda_{\alpha,\alpha'}^{\alpha-1,\alpha'+1}=\frac{({n}+4\alpha'+1)({n}+4\alpha'-1)}{({n}+4\alpha-3)({n}+4\alpha-1)}.
\end{equation*}
Applying \eqref{eq:G4} to $\tilde{\phi}^-_{i,j}-\Lambda_{\alpha,\alpha'}^{\alpha-1,\alpha'+1}\phi_{i,j}^+(x')|x|^{N-4} C_{N-4}^{l+2}(z)$ yields 
\begin{equation*}
\Lambda_{\alpha,\alpha'}^{\alpha-1,\alpha'}=-\frac{({n}+2(\alpha+\alpha')-2)({n}+2(\alpha+\alpha')-3)}{({n}+4\alpha-3)({n}+4\alpha-1)},
\end{equation*}
and successively applying \eqref{eq:G4}, \eqref{eq:G5}, \eqref{eq:G6} and \eqref{eq:G4} to
\begin{equation*}
\tilde{\phi}^-_{i,j}-\Lambda_{\alpha,\alpha'}^{\alpha-1,\alpha'+1}\phi_{i,j}^+(x')|x|^{N-4} C_{N-4}^{l+2}(z)-\Lambda_{\alpha,\alpha'}^{\alpha-1,\alpha'}\phi_{i,j}^0(x')|x|^{N-2} C_{N-2}^{l}(z)
\end{equation*}
yields
\begin{equation*}
\Lambda_{\alpha,\alpha'}^{\alpha-1,\alpha'}=-\frac{({n}+2(\alpha+\alpha')-2)({n}+2(\alpha+\alpha')-3)}{({n}+4\alpha-3)({n}+4\alpha-1)}.
\end{equation*}
Then \eqref{EQ:Lambdalambda} and \eqref{EQ:G0} yield the proportionality constants.\qedhere
\end{proof}
Now Lemma~\ref{Lemma:scalaridentities} and \eqref{Lemma:eigenvalues} yields all proportionality constanst $\lambda_{\alpha,\alpha'}^{\beta,\beta'}$.

\begin{lemma}
Whenever they are defined, the proportionality constants for $\Ical$ and $\Jcal$ are the following:
\begin{alignat*}{3}
&\boxed{
\beta'= \alpha'-1} \quad
&&\lambda_{\alpha,\alpha'}^{\alpha-1,\alpha'-1} &&=\Omega \frac{({n}+2(\alpha+\alpha')-4)({n}+2(\alpha+\alpha')-2)}{({n}+4\alpha-3)({n}+4\alpha-1)} \\
&&&\lambda_{\alpha,\alpha'}^{\alpha,\alpha'-1} &&=\Omega 
\frac{2(2(\alpha'-\alpha)-1)({n}+2(\alpha+\alpha')-2)}{({n}+4\alpha-3)({n}+4\alpha+1)} \\
&&&\lambda_{\alpha,\alpha'}^{\alpha+1,\alpha'-1} &&=\Omega
\frac{(2(\alpha-\alpha')+3)(2(\alpha-\alpha')+1)}{({n}+4\alpha-1)({n}+4\alpha+1)} \\ 
&\boxed{\beta'= \alpha'} 
&&\lambda_{\alpha,\alpha'}^{\alpha-1,\alpha'} &&=\Omega
\frac{({n}+2(\alpha+\alpha')-2)2(\alpha-\alpha')}{({n}+4\alpha-3)({n}+4\alpha-1)} \\
&&&\lambda_{\alpha,\alpha'}^{\alpha,\alpha'} &&=\Omega
\frac{({n}+4\alpha-3)({n}+4\alpha')+4(\alpha-\alpha')(2(\alpha-\alpha')-1)}{({n}+4\alpha-3)({n}+4\alpha+1)} \\
&&&\lambda_{\alpha,\alpha'}^{\alpha+1,\alpha'} &&=\Omega
\frac{({n}+2(\alpha+\alpha')-1)({n}+2(\alpha-\alpha')+2)}{({n}+4\alpha-1)({n}+4\alpha+1)} \\ 
&\boxed{\beta'= \alpha'+1} \quad
&&\lambda_{\alpha,\alpha'}^{\alpha-1,\alpha'+1} &&=\Omega
\frac{4(\alpha-\alpha')(\alpha-\alpha'-2)}{({n}+4\alpha-3)({n}+4\alpha-1)} \\ 
&&&\lambda_{\alpha,\alpha'}^{\alpha,\alpha'+1} &&=\Omega
\frac{4({n}+2(\alpha-\alpha')-1)(\alpha-\alpha')}{({n}+4\alpha-3)({n}+4\alpha+1)} \\ 
&&&\lambda_{\alpha,\alpha'}^{\alpha+1,\alpha'+1} &&=\Omega
\frac{({n}+2(\alpha+\alpha')-1)({n}+2(\alpha+\alpha')+1)}{({n}+4\alpha-1)({n}+4\alpha+1)}
\end{alignat*}
\end{lemma}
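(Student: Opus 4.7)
The plan is to leverage the summation identities of Lemma~\ref{Lemma:scalaridentities} together with the three explicitly computed proportionality constants of the form $\lambda_{\alpha,\alpha'}^{\alpha-1,\beta'}$ provided by Lemma~\ref{Lemma:GegScalars} to solve for the remaining six entries of the table. By Corollary~\ref{Krelations}, the only $\beta$ contributing to the sums are $\beta\in\{\alpha-1,\alpha,\alpha+1\}$. Hence, fixing $\beta'\in\{\alpha'-1,\alpha',\alpha'+1\}$, the first two identities of Lemma~\ref{Lemma:scalaridentities} give
\begin{align*}
\lambda_{\alpha,\alpha'}^{\alpha-1,\beta'}+\lambda_{\alpha,\alpha'}^{\alpha,\beta'}+\lambda_{\alpha,\alpha'}^{\alpha+1,\beta'}&=\Omega,\\
(\sigma_{\alpha-1}-\sigma_\alpha)\lambda_{\alpha,\alpha'}^{\alpha-1,\beta'}+(\sigma_{\alpha+1}-\sigma_\alpha)\lambda_{\alpha,\alpha'}^{\alpha+1,\beta'}&=\sigma'_{\beta'}-\sigma'_{\alpha'}+2(\Omega\rho-\rho'),
\end{align*}
where the $\lambda_{\alpha,\alpha'}^{\alpha,\beta'}$-term on the second line drops out since $\sigma_\alpha-\sigma_\alpha=0$. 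This is an upper-triangular $2\times2$ linear system in the two unknowns $\lambda_{\alpha,\alpha'}^{\alpha,\beta'}$ and $\lambda_{\alpha,\alpha'}^{\alpha+1,\beta'}$.

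From \eqref{Lemma:eigenvalues} one reads off
$\sigma_{\alpha+1}-\sigma_\alpha=\tfrac{n}{n+1}(4\alpha+n+1)$ and $\sigma_{\alpha-1}-\sigma_\alpha=-\tfrac{n}{n+1}(4\alpha+n-3)$, so the leading coefficient of the triangular system never vanishes. A short computation with the trace form gives $\rho(H)=n/2$, $\rho'(H')=(n-1)/2$, $\Omega=B(H',H)=(n^2-1)/n^2$, so that the inhomogeneity on the right simplifies to $\sigma'_{\beta'}-\sigma'_{\alpha'}+(n-1)/n$. The second equation therefore determines $\lambda_{\alpha,\alpha'}^{\alpha+1,\beta'}$ outright, and then the first equation yields $\lambda_{\alpha,\alpha'}^{\alpha,\beta'}$. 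Performing this procedure for each of the three admissible values of $\beta'$, using the three known values $\lambda_{\alpha,\alpha'}^{\alpha-1,\beta'}$ from Lemma~\ref{Lemma:GegScalars}, produces all six remaining proportionality constants.

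The only obstacle is the algebraic bookkeeping: after substitution, each unknown emerges as a ratio of polynomials in $\alpha,\alpha',n$, which must be simplified and shown to coincide with the closed-form entries listed in the table. This is purely mechanical and involves no further conceptual input. The boundary cases where some of the constants are undefined (e.g.~$\alpha'=0$, forcing $\beta'\neq\alpha'-1$) are automatically handled by the convention that $\lambda_{\alpha,\alpha'}^{\beta,\beta'}=0$ whenever $(\alpha,\alpha')\not\leftrightarrow(\beta,\beta')$, so that the corresponding rows in the linear system simply disappear and the formulas of Lemma~\ref{Lemma:GegScalars} and the table continue to apply, as asserted.
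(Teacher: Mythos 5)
Your proposal is correct and takes essentially the same approach as the paper: the paper's proof is the single remark following Lemma~\ref{Lemma:GegScalars}, "Now Lemma~\ref{Lemma:scalaridentities} and \eqref{Lemma:eigenvalues} yields all proportionality constants," and you have simply made that remark explicit by writing out the upper-triangular $2\times 2$ system, verifying $\Omega=(n^2-1)/n^2$, $\rho(H)=n/2$, $\rho'(H')=(n-1)/2$, and the $\sigma$-differences. Your handling of the boundary cases via the convention $\lambda_{\alpha,\alpha'}^{\beta,\beta'}=0$ when $(\alpha,\alpha')\not\leftrightarrow(\beta,\beta')$ is likewise exactly what the paper implicitly relies on.
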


\begin{corollary}
The proportionality constants $\lambda_{\alpha,\alpha'}^{\beta,\subz}$ are given by
\begin{equation}
\lambda_{\alpha,\alpha'}^{\beta,\subz}=\left( \frac{{n}}{{n-1}}\lambda_{\alpha,\alpha'}^{\beta,\alpha'}-\delta_{\alpha,\beta}\right).
\end{equation}
\end{corollary}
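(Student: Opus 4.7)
The plan is to compute the constants $\lambda^{\beta,\subz}_{\alpha,\alpha'}$ directly by expressing $Z$ via the semisimple generators of $\mathfrak{a}_\ss$ and $\mathfrak{a}'_\ss$. A short matrix computation yields the key algebraic identity
\[
Z \;=\; n(H - H') + \tfrac{n}{n+1}\mathbf{1}_{n+1}
\]
in $\g$, which is verified by inspection of the diagonal entries of $Z = \diag(1,\dots,1,0)$, $H = \diag(\tfrac{n}{n+1}, -\tfrac{1}{n+1}, \dots, -\tfrac{1}{n+1})$, and $H' = \diag(\tfrac{n-1}{n}, -\tfrac{1}{n},\dots, -\tfrac{1}{n}, 0)$. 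Since $\mathbf{1}_{n+1}$ is central in $\g$ and the cocycle vanishes on the center, this gives the identity of functions on $K$,
\[
\omega(Z) \;=\; n\,\omega(H) - n\,\omega(H').
\]
Using the explicit formula $\omega(E_{i,i})(k) = \tfrac{n+1}{n}x_i^2 - \tfrac{1}{n}$ (where $(x_1,\dots,x_{n+1})$ is the first column of $k \in K$), a direct calculation then shows both that $\omega(Z)(k) = \tfrac{1}{n} - \tfrac{n+1}{n}x_{n+1}^2$ and that $\omega(H) = \tfrac{1}{n}\omega(Z) + \omega(H')$. This exposes the $K'$-isotypic decomposition of $\omega(H)$: the $K'$-invariant part is $\tfrac{1}{n}\omega(Z)$ and the $K'$-type-$1$ part coincides with $\omega(H')$.

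For a basis vector $\tilde{\phi}(x) = |x|^{N}\phi(x') C^l_N(x_{n+1}/|x|)$ of $\Ical(\alpha,\alpha')$ with $l = \tfrac{n-1}{2}+2\alpha'$ and $N = 2(\alpha-\alpha')$, the restriction of $\omega(Z)\tilde{\phi}$ to $S^n$ equals $\bigl(\tfrac{1}{n} - \tfrac{n+1}{n}x_{n+1}^2\bigr) \phi(x') C^l_N(x_{n+1})$. I will then apply the Gegenbauer three-term recurrence
\[
x^2 C^l_N(x) \;=\; \tfrac{(N+1)(N+2)}{4(N+l)(N+l+1)} C^l_{N+2}(x) + A_0\, C^l_N(x) + \tfrac{(N+2l-1)(N+2l-2)}{4(N+l)(N+l-1)} C^l_{N-2}(x),
\]
mirroring the proof of Lemma~\ref{Lemma:GegScalars}, to decompose $\omega(Z)\tilde{\phi}$ into its $\Ical(\beta,\alpha')$-components for $\beta \in \{\alpha-1,\alpha,\alpha+1\}$. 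Applying the restriction $R_{\beta,\alpha'}$ then contributes the Gegenbauer ratio $C^l_{2(\beta-\alpha')}(0)/C^l_N(0)$, which by \eqref{EQ:G0} equals a clean ratio of Gamma values. Reading off the scalar coefficient of $R_{\alpha,\alpha'}(\tilde{\phi})$ yields an explicit rational expression for $\lambda^{\beta,\subz}_{\alpha,\alpha'}$.

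Comparing this expression with the table entries for $\lambda^{\beta,\alpha'}_{\alpha,\alpha'}$ (Lemma~\ref{Lemma:GegScalars} and the subsequent list), the claimed identity $\lambda^{\beta,\subz}_{\alpha,\alpha'} = \tfrac{n}{n-1}\lambda^{\beta,\alpha'}_{\alpha,\alpha'} - \delta_{\alpha,\beta}$ will emerge. The factor $\tfrac{n}{n-1}$ reflects the normalization difference $\tfrac{n+1}{n\,\Omega} = \tfrac{n}{n-1}$ between the $B$-cocycle on $\g$ and the $B'$-cocycle on $\g'$, while the Kronecker correction $-\delta_{\alpha,\beta}$ encodes the constant summand $\tfrac{1}{n}$ in $\omega(Z)$, which contributes purely to the diagonal component $\beta=\alpha$. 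The main obstacle will be the careful bookkeeping of Gegenbauer and Gamma factors needed to exhibit this cancellation explicitly, in close parallel to the computation underlying Lemma~\ref{Lemma:GegScalars}.
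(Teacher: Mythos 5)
Your computation of the cocycle of $Z$ is correct: the identity $Z=n(H-H')+\tfrac{n}{n+1}\mathbf{1}_{n+1}$ holds, and $\omega(Z)(k)=\tfrac{1}{n}-\tfrac{n+1}{n}x_{n+1}^2=\tfrac{n+1}{n}|x'|^2-1$, matching what the paper finds by writing $Z=\tfrac{n}{n+1}\mathbf{1}_{n+1}+X_\ss$ and applying \eqref{EQ:cocycle} to $X_\ss$. From there, however, you take a more computational route than the paper: you plan to redo a three-term Gegenbauer decomposition of $x_{n+1}^2\,C^l_N(x_{n+1})$ and then compare coefficient-by-coefficient with the $\lambda^{\beta,\alpha'}_{\alpha,\alpha'}$ table. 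The paper avoids this entirely via the structural observation that, writing $|x'|^2\tilde\phi=\sum_{i=1}^n x_i^2\tilde\phi$ and feeding it through \eqref{EQ:GegCalc2}, the off-diagonal contributions drop out because $\sum_{i=1}^n\phi_{i,i}^-$ is a multiple of $\Delta\phi=0$, $\sum_{i=1}^n\phi_{i,i}^+ =0$, and $\sum_{i=1}^n\phi_{i,i}^0=\phi$ by Euler's identity; hence $\proj_{\Ical(\beta)}(|x'|^2\tilde\phi)=\Lambda^{\beta,\alpha'}_{\alpha,\alpha'}I_{\alpha'\to\beta}(\phi)$ with the \emph{same} $\Lambda$'s already produced for Lemma~\ref{Lemma:GegScalars}, so no new recurrence is needed. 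Combining with $\omega(Z)=\tfrac{n+1}{n}|x'|^2-1$ and the relation \eqref{EQ:Lambdalambda} between $\Lambda$ and $\lambda$ then gives the claim uniformly for all $\beta$, using $\tfrac{n+1}{n\Omega}=\tfrac{n}{n-1}$.

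The real gap in your proposal is that this final verification is not carried out. You leave $A_0$ in your recurrence undetermined, yet $A_0$ is precisely the quantity controlling the case $\beta=\alpha$, which is also where the $-\delta_{\alpha,\beta}$ correction must appear; you also do not check the cases $\beta=\alpha\pm 1$ against the explicit rational expressions for $\lambda^{\alpha\pm1,\alpha'}_{\alpha,\alpha'}$. Asserting that the identity ``will emerge'' is not a proof, and the bookkeeping you defer is exactly the nontrivial content. Either execute that computation in full, or adopt the paper's shortcut: use the decomposition \eqref{EQ:PolDec}, the vanishing of $\sum_i\phi^\pm_{i,i}$, and $\sum_i\phi^0_{i,i}=\phi$ to reduce everything to constants you already have.
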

\begin{proof}
Let $X=\diag(1,\ldots,1,0) \in \zfrak '$. Then $X=\frac{{n}}{{n+1}}\mathbf{1}_{n+1}+ X_\ss \in \zfrak \oplus \mathfrak{s}_\ss$ with $X_\ss=\diag(\frac{1}{{n+1}},\ldots,\frac{1}{{n+1}},-\frac{{n}}{{n+1}})$. Then $\omega(X)=\omega(X_\ss)$ which is by \eqref{EQ:cocycle} equal to $\frac{\abs{x'}^2}{{n}}-x_{n+1}^2=\frac{{n+1}}{{n}}\abs{x'}^2-1\in \Hcal^2(S^{{n}})$. Let $\tilde{\phi}=I_{\alpha'\to\alpha}(\phi)|_{S^{{n}}} \in \Ical(\alpha,\alpha')$, $\phi \in \Jcal(\alpha')$ as before. Then by \eqref{EQ:GegCalc2} we have
\begin{multline}
\label{EQ:GegCalcZ}
\proj_{\Ical(\beta)}(\abs{x'}^2\tilde{\phi})=
\Lambda_{\alpha\alpha'}^{\beta,\alpha'-1}I_{\alpha'-1, \rightarrow \beta}\left(\sum_{i=1}^{{n}}\phi_{i,i}^-\right) \\+ \Lambda_{\alpha\alpha'}^{\beta,\alpha'}I_{\alpha', \rightarrow \beta}\left(\sum_{i=1}^{{n}}\phi_{i,i}^0\right)  
+\Lambda_{\alpha\alpha'}^{\beta,\alpha'+1}I_{\alpha'+1, \rightarrow \beta}\left(\sum_{i=1}^{{n}}\phi_{i,i}^+\right),
\end{multline}
with the scalars $\Lambda_{\alpha,\alpha'}^{\beta,\beta'}$ as before. But since the first and the last summand of \eqref{EQ:GegCalcZ} vanish we have
\begin{equation*}
\proj_{\Ical(\beta)}(\abs{x'}^2\tilde{\phi})=\Lambda_{\alpha,\alpha'}^{\beta,\alpha'}I_{\alpha' \rightarrow \beta}\left(\sum_{i=1}^{{n}}\phi_{i,i}^0\right) =\Lambda_{\alpha,\alpha'}^{\beta,\alpha'}I_{\alpha' \rightarrow \beta}(\phi).
\end{equation*}
Hence
\begin{align*}
\eta_{\alpha,\alpha'}^{\beta,\subz}(X)(\tilde{\phi})&=\left(\frac
{{n+1}}{{n}}\Lambda_{\alpha,\alpha'}^{\beta,\alpha'}-\delta_{\alpha,\beta}\right)C_{2(\alpha-\alpha')}^{\frac{{n-1}}{2}+2\alpha'}(0)\phi, \\
\frac{\tr(X)}{{n}}\cdot R_{\alpha,\alpha'}(\tilde{\phi})&=C_{2(\alpha-\alpha')}^{\frac{{n-1}}{2}+2\alpha'}(0)\phi,
\end{align*}
which implies the corollary.
\end{proof}

Now Corollary~\ref{Cor:scalaridentity} gives a full characterization of intertwining operators. 
\begin{theorem}
\label{Theorem:fromulas}
An operator $T:\Ical \rightarrow \Jcal$ is intertwining for $(\pi_\lambda)_{\HC}$ and $(\tau_{\nu})_{\HC}$ if and only if
\begin{equation*}
T|_{\Ical(\alpha,\alpha')}=
t_{\alpha,\alpha'}\cdot \rest|_{\Ical(\alpha,\alpha')}
\end{equation*}
for scalars $t_{\alpha,\alpha'}$ satisfying
\begin{multline}
\tag{\textbf{R1}}
\label{EQ:formula1}
\lambda_{\alpha,\alpha'}^{\alpha+1,\alpha'-1}({n}+4\alpha+2\lambda_1+1)t_{\alpha+1,\alpha'-1} +  {\lambda}_{\alpha,\alpha'}^{\alpha,\alpha'-1}2\lambda_1 t_{\alpha,\alpha'-1}+\\
{\lambda}_{\alpha,\alpha'}^{\alpha-1,\alpha'-1}(2\lambda_1-n-4\alpha+3)t_{\alpha-1,\alpha'-1}= 
\Omega(2\nu_1-n-4\alpha'+4)t_{\alpha,\alpha'},
\end{multline}
\begin{multline}
\tag{\textbf{R2}}
\label{EQ:formula2}
{\lambda}_{\alpha,\alpha'}^{\alpha+1,\alpha'}({n}+4\alpha+2\lambda_1+1)t_{\alpha+1,\alpha'}+ {\lambda}_{\alpha,\alpha'}^{\alpha,\alpha'}2\lambda_1t_{\alpha,\alpha'}\\+ {\lambda}_{\alpha,\alpha'}^{\alpha-1,\alpha'}(2\lambda_1-n-4\alpha+3)t_{\alpha-1,\alpha'} 
=2\Omega\nu_1 t_{\alpha,\alpha'},
\end{multline}
\begin{multline}
\tag{\textbf{R3}}
\label{EQ:formula3}
{\lambda}_{\alpha,\alpha'}^{\alpha+1,\alpha'+1}({n}+4\alpha+2\lambda_1+1)t_{\alpha+1,\alpha'+1} +  {\lambda}_{\alpha,\alpha'}^{\alpha,\alpha'+1}2\lambda_1 t_{\alpha,\alpha'+1}+\\
{\lambda}_{\alpha,\alpha'}^{\alpha-1,\alpha'+1}(2\lambda_1-n-4\alpha+3)t_{\alpha-1,\alpha'+1}= \Omega
({n}+4\alpha'+2\nu_1)t_{\alpha,\alpha'},
\end{multline}
\begin{multline}
\tag{\textbf{RZ}}
\label{EQ:formulaZ}
{\lambda}_{\alpha,\alpha'}^{\alpha+1,\alpha'}({n}+4\alpha+2\lambda_1+1)t_{\alpha+1,\alpha'} +{\lambda}_{\alpha,\alpha'}^{\alpha,\alpha'}2\lambda_1t_{\alpha,\alpha'}\\+ {\lambda}_{\alpha,\alpha'}^{\alpha-1,\alpha'}(2\lambda_1-n-4\alpha+3)t_{\alpha-1,\alpha'} 
=2\Omega(\nu_\subz-\lambda_\subz+\frac{{n}}{{n+1}}\lambda_1)t_{\alpha,\alpha'}
\end{multline}
where relation \eqref{EQ:formula2} has to be satisfied only if $n\geq 3$ and $\alpha'>0$.
\end{theorem}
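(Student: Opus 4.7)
The plan is to obtain Theorem~\ref{Theorem:fromulas} as a direct specialization of Corollary~\ref{Cor:scalaridentity} applied with $\mathcal{U}=\Ical$, $\mathcal{U}'=\Jcal$, and with $R_{\alpha,\alpha'}$ chosen to be the $K$-to-$K'$ restriction map as in Lemma~\ref{Lemma:scalaridentities}. First I would verify the hypotheses of that corollary: the multiplicity-freeness properties \eqref{eq:MultFreeProperties} and \eqref{EQ:assumpt} follow from the irreducibility of $\Ical(\alpha)\cong\Hcal^{2\alpha}(\R^{n+1})$ and $\Jcal(\alpha')\cong\Hcal^{2\alpha'}(\R^n)$ as $\Orm(n+1)$- resp.\ $\Orm(n)$-modules together with the branching rule \eqref{EQ:SphHarm1} and the analogous tensor-product decomposition of $(\mathfrak{s}'_\ss)_\C\otimes\Hcal^{2\alpha'}(\R^n)$, while \eqref{eq:assumpt_center} holds because $\mathfrak{a}'_\subz=\R Z$ is one-dimensional.

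Next I would translate the abstract equations \eqref{EQ:omega_formula_sc} and \eqref{EQ:omega_formula_Z} into R1--R3 and RZ. By Corollary~\ref{Krelations}(i),(iv) the sum over $\beta$ runs only over $\beta\in\{\alpha-1,\alpha,\alpha+1\}$, and the three possibilities $\beta'\in\{\alpha'-1,\alpha',\alpha'+1\}$ give the three equations R1, R2, R3. Using the eigenvalues from \eqref{Lemma:eigenvalues} together with $\lambda_\ss=\tfrac{n}{n+1}\lambda_1$ and $\nu_\ss=\tfrac{n-1}{n}\nu_1$ from Remark~\ref{remark:scalars}, one computes
\[
\sigma_{\alpha+1}-\sigma_\alpha+2\lambda_\ss=\tfrac{n}{n+1}(n+4\alpha+1+2\lambda_1),\quad
\sigma_{\alpha-1}-\sigma_\alpha+2\lambda_\ss=\tfrac{n}{n+1}(2\lambda_1-n-4\alpha+3),
\]
and $2\lambda_\ss=\tfrac{2n}{n+1}\lambda_1$, with analogous primed identities yielding the factor $\tfrac{n-1}{n}$ on the right-hand side of \eqref{EQ:omega_formula_sc}. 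Clearing the overall LHS-factor $\tfrac{n}{n+1}$ by multiplying through by $\tfrac{n+1}{n}$ then converts $\tfrac{n-1}{n}$ into $\tfrac{(n-1)(n+1)}{n^2}=\Omega$, which is exactly the coefficient appearing in R1, R2, R3.

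For RZ I would start from \eqref{EQ:omega_formula_Z} and substitute the identity $\lambda^{\beta,\subz}_{\alpha,\alpha'}=\tfrac{n}{n-1}\lambda^{\beta,\alpha'}_{\alpha,\alpha'}-\delta_{\alpha,\beta}$ from Section~\ref{sec:scalaridenteties}. The Kronecker-delta contribution transfers the term $-2\lambda_\ss t_{\alpha,\alpha'}=-\tfrac{2n}{n+1}\lambda_1 t_{\alpha,\alpha'}$ to the right-hand side, where after the same $\tfrac{n+1}{n}$-rescaling it produces the extra summand $\tfrac{n}{n+1}\lambda_1$ inside $2\Omega(\nu_\subz-\lambda_\subz+\tfrac{n}{n+1}\lambda_1)$. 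The only remaining caveat is the restriction of R2 to $n\geq 3$ and $\alpha'>0$: in the complementary cases Corollary~\ref{Krelations}(iv)--(v) forces $\omega'^{\alpha'}_{\alpha'}=0$, so both sides of \eqref{EQ:omega_formula_sc} for $\beta'=\alpha'$ vanish identically and impose no condition on the $t_{\alpha,\alpha'}$. The whole argument is thus a substitution; the only real obstacle is careful bookkeeping of the multiplicative factors $\tfrac{n}{n+1}$, $\tfrac{n-1}{n}$ and $\Omega$, which is why I recorded the eigenvalue computation as the first explicit step.
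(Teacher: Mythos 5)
Your plan is correct and matches what the paper does: Theorem~\ref{Theorem:fromulas} is stated as a direct specialization of Corollary~\ref{Cor:scalaridentity}, and your explicit computation of $\sigma_{\alpha\pm1}-\sigma_\alpha+2\lambda_\ss$, the $\tfrac{n+1}{n}$ rescaling producing $\Omega$, and the $\delta_{\alpha,\beta}$-shift converting $\lambda^{\beta,\subz}_{\alpha,\alpha'}$ into RZ are exactly the bookkeeping the paper leaves implicit. One small imprecision: Corollary~\ref{Krelations}(iv)--(v) concern $\omega$ for $G$, not $\omega'$ for $G'$; the correct way to dismiss \eqref{EQ:omega_formula_sc} with $\beta'=\alpha'$ when $n=2$ or $\alpha'=0$ is to observe that both $\eta^{\beta,\alpha'}_{\alpha,\alpha'}$ and $\eta^{\alpha'}_{\alpha,\alpha'}$ lie in $\Hom_{K'}\bigl((\mathfrak{s}'_\ss)_\C\otimes\Jcal(\alpha'),\Jcal(\alpha')\bigr)\cong\Hom_{\Orm(n)}\bigl(\Hcal^2(\R^n)\otimes\Hcal^{2\alpha'}(\R^n),\Hcal^{2\alpha'}(\R^n)\bigr)$, which vanishes in those cases, so the equation is $0=0$.
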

As illustrated in Figure~\ref{fig:ScalarRelations} we can view the relations \eqref{EQ:formula1},\eqref{EQ:formula2},\eqref{EQ:formula2} and \eqref{EQ:formulaZ} as relations between points in the $K'$-type picture which was described in Figure~\ref{fig:K'types}.
\begin{figure}[h]
\vspace{1cm}
\centering
\setlength{\unitlength}{4pt}
\begin{picture}(90,32)
\thicklines
\put(21,32){$(\alpha,\alpha')$}
\put(25,30){\circle*{1}}
\put(0,16){$(\alpha-1,\alpha'-1)$}
\put(10,20){\circle*{1}}
\put(19,16){$(\alpha,\alpha'-1)$}
\put(25,20){\circle*{1}}
\put(33,16){$(\alpha+1,\alpha'-1)$}
\put(40,20){\circle*{1}}
\put(25,29){\line(0,-1){8}}
\put(24,29){\line(-3,-2){13}}
\put(26,29){\line(3,-2){13}}
\put(11,20){\line(1,0){13}}
\put(26,20){\line(1,0){13}}

\put(39,32){$(\alpha-1,\alpha'+1)$}
\put(50,30){\circle*{1}}
\put(58,32){$(\alpha,\alpha'+1)$}
\put(65,30){\circle*{1}}
\put(72,32){$(\alpha+1,\alpha'+1)$}
\put(80,30){\circle*{1}}
\put(65,20){\circle*{1}}
\put(61,16){$(\alpha,\alpha')$}
\put(51,30){\line(1,0){13}}
\put(66,30){\line(1,0){13}}
\put(65,29){\line(0,-1){8}}
\put(51,29){\line(3,-2){13}}
\put(79,29){\line(-3,-2){13}}

\put(30,4){\circle*{1}}
\put(23,0){$(\alpha-1,\alpha')$}
\put(45,4){\circle*{1}}
\put(41,0){$(\alpha,\alpha')$}
\put(60,4){\circle*{1}}
\put(54,0){$(\alpha+1,\alpha')$}
\put(31,4){\line(1,0){13}}
\put(46,4){\line(1,0){13}}

\put(11,26){\eqref{EQ:formula1}}
\put(73,22){\eqref{EQ:formula3}}
\put(23.5,6){\eqref{EQ:formula2},\eqref{EQ:formulaZ}}

\end{picture}
\vspace{0.5cm}
\caption{The relations \eqref{EQ:formula1}, \eqref{EQ:formula2}, \eqref{EQ:formula3} and \eqref{EQ:formulaZ}.}\label{fig:ScalarRelations}
\end{figure}
Our goal will be to define the scalars $t_{\alpha,\alpha'}$ inductively using these four relations.
If $\lambda_1=-\rho_1-2i\in -\rho_1-2\Z_{\geq0}$ we cannot define $t_{i+1,\alpha'}$ in terms of $t_{\alpha,\beta'}$ with $\alpha \leq i$. We can illustrate this fact by a line between the $i$-th and $(i+1)$-th $K$-type column in our $K'$-type picture and in the pictures of our relations, indicating that we cannot step from left to right (see Figure~\ref{fig:Barriers}). Similarly we can draw a horizontal line between the $j$-th and $j+1$-th rows if $\nu_1=-\rho'_1-2j\in-\rho'_1-2\Z_{\geq 0}$ indicating that we cannot step down.
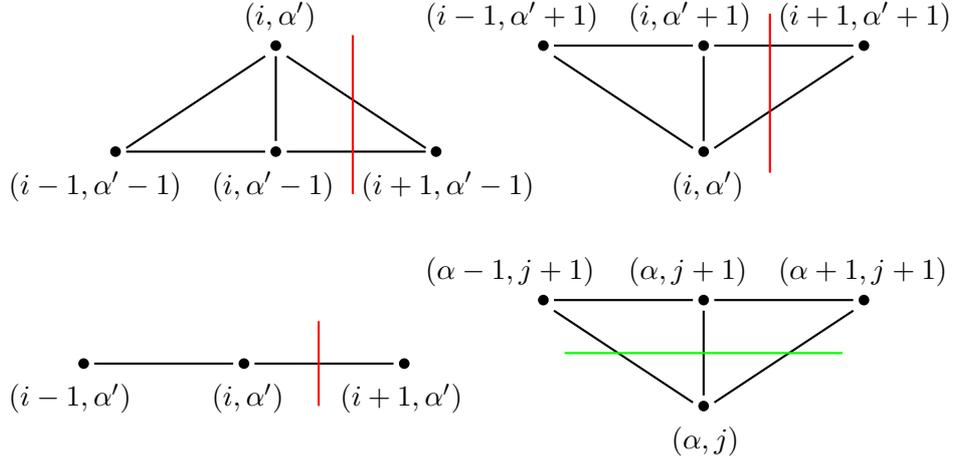
\begin{figure}[h]
\vspace{1cm}
\centering
\setlength{\unitlength}{4pt}
\begin{picture}(90,40)
\thicklines
\put(22,40){$(i,\alpha')$}
\put(25,38){\circle*{1}}
\put(0,24){$(i-1,\alpha'-1)$}
\put(10,28){\circle*{1}}
\put(19,24){$(i,\alpha'-1)$}
\put(25,28){\circle*{1}}
\put(33,24){$(i+1,\alpha'-1)$}
\put(40,28){\circle*{1}}
\put(25,37){\line(0,-1){8}}
\put(24,37){\line(-3,-2){13}}
\put(26,37){\line(3,-2){13}}
\put(11,28){\line(1,0){13}}
\put(26,28){\line(1,0){13}}

\put(39,40){$(i-1,\alpha'+1)$}
\put(50,38){\circle*{1}}
\put(58,40){$(i,\alpha'+1)$}
\put(65,38){\circle*{1}}
\put(72,40){$(i+1,\alpha'+1)$}
\put(80,38){\circle*{1}}
\put(65,28){\circle*{1}}
\put(62,24){$(i,\alpha')$}
\put(51,38){\line(1,0){13}}
\put(66,38){\line(1,0){13}}
\put(65,37){\line(0,-1){8}}
\put(51,37){\line(3,-2){13}}
\put(79,37){\line(-3,-2){13}}

\put(7,8){\circle*{1}}
\put(0,4){$(i-1,\alpha')$}
\put(22,8){\circle*{1}}
\put(19,4){$(i,\alpha')$}
\put(37,8){\circle*{1}}
\put(31,4){$(i+1,\alpha')$}
\put(8,8){\line(1,0){13}}
\put(23,8){\line(1,0){13}}

{\color{red}
\put(32.2,39){\line(0,-1){15}}
\put(71.2,41){\line(0,-1){15}}
\put(29,12){\line(0,-1){8}}}

\put(39,16){$(\alpha-1,j+1)$}
\put(50,14){\circle*{1}}
\put(58,16){$(\alpha,j+1)$}
\put(65,14){\circle*{1}}
\put(72,16){$(\alpha+1,j+1)$}
\put(80,14){\circle*{1}}
\put(65,4){\circle*{1}}
\put(62,00){$(\alpha,j)$}
\put(51,14){\line(1,0){13}}
\put(66,14){\line(1,0){13}}
\put(65,13){\line(0,-1){8}}
\put(51,13){\line(3,-2){13}}
\put(79,13){\line(-3,-2){13}}

\color{green}{
\put(52,9){\line(1,0){26}}
}

\end{picture}
\caption{Barriers for $\lambda_1=-\rho_1-2i$, $i \in \Z_{\geq 0}$ (red) and for $\nu_1=-\rho'_1-2j'$, $j \in  \Z_{\geq 0}$ (green).}\label{fig:Barriers}
\end{figure}
\subsection{Multiplicites between Harish-Chandra modules}
We use Theorem~\ref{Theorem:fromulas} to classify symmetry breaking operators between the Harish-Chandra modules $(\pi_\lambda)_\HC$ and $(\tau_\nu)_\HC$, which concludes the proof of Theorem~\ref{theorem:D}. For the statement recall the definition of the discrete set $L\subseteq\C^2$ from \eqref{eq:DefL}.
\begin{theorem}
\label{theorem:mult_HC}
For the multiplicities between $(\pi_\lambda)_{\HC}$ and $(\tau_\nu)_{\HC}$ we have for $n\geq 3$
\begin{equation*}
m((\pi_\lambda)_{\HC},(\tau_\nu)_{\HC})=
\begin{cases}
1 & \text{if } \lambda_2+\rho_2=\nu_2+\rho'_2, (\lambda_1,\nu_1)\notin L,\\
2 & \text{if } \lambda_2+\rho_2=\nu_2+\rho'_2, (\lambda_1,\nu_1)\in L,\\
1 & \text{if } \lambda_2-\rho_2=\nu_2+\rho'_2, \nu_1=-\rho'_1,\\
0 & \text{otherwise,}
\end{cases}
\end{equation*}
and for $n=2$
\begin{equation*}
m((\pi_\lambda)_{\HC},(\tau_\nu)_{\HC})=
\begin{cases}
1 & \text{if } \lambda_2+\rho_2=\nu_2+\rho'_2, (\lambda_1,\nu_1)\notin L,\\
2 & \text{if } \lambda_2+\rho_2=\nu_2+\rho'_2, (\lambda_1,\nu_1)\in L,\\
1 & \text{if } \lambda_2-\rho_2-\nu_2-\rho'_2=\nu_1+\rho'_1, \\
0 & \text{otherwise.}
\end{cases}
\end{equation*}
\end{theorem}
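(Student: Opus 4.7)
The strategy is to combine the smooth classification with an upper bound coming from the algebraic side. Since Theorem~\ref{theorem:A} (obtained from Theorem~\ref{thm:charts} and Proposition~\ref{prop:classification_distr}) already yields the stated dimensions for $\dim H^\infty(\lambda,\nu)$, and the restriction map of \eqref{eq:mult_smooth_leq_mult_alg} gives $\dim H^\infty(\lambda,\nu)\leq\dim H^\alg(\lambda,\nu)$, it suffices to show the matching upper bound $\dim H^\alg(\lambda,\nu)\leq\dim H^\infty(\lambda,\nu)$. By Theorem~\ref{Theorem:fromulas}, an element of $H^\alg(\lambda,\nu)$ is encoded by a sequence $(t_{\alpha,\alpha'})_{\alpha'\leq\alpha}$ satisfying the four systems \eqref{EQ:formula1}--\eqref{EQ:formulaZ}, and the plan is to show that this sequence is determined by at most two free parameters, with the existence of any non-trivial solution forcing one of the two branches on $(\lambda,\nu)$.

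First, I would extract the necessary conditions on $(\lambda,\nu)$ by evaluating the recursions at the smallest indices. Taking $(\alpha,\alpha')=(0,0)$ in \eqref{EQ:formulaZ} and \eqref{EQ:formula3}, and substituting the explicit proportionality constants from Section~\ref{sec:scalaridenteties}, leads to two scalar relations between $t_{0,0}$, $t_{1,0}$ and $t_{1,1}$. Eliminating $t_{1,0}$ and $t_{1,1}$ yields a single algebraic constraint in $\lambda_\ss,\lambda_\subz,\nu_\ss,\nu_\subz$ which, translated via Remark~\ref{remark:scalars}, factors as a product whose vanishing is precisely $\lambda_2+\rho_2=\nu_2+\rho'_2$ or (for $n=2$) $\lambda_2-\rho_2-\nu_2-\rho'_2=\nu_1+\rho'_1$, together with the auxiliary condition $\nu_1=-\rho'_1$ for $n\geq3$ on the second branch. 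Outside of these loci, $t_{0,0}=0$ and an easy induction on $\alpha+\alpha'$ forces all $t_{\alpha,\alpha'}=0$.

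Second, assuming one of the two compatibility branches holds, I would inductively show that the sequence $(t_{\alpha,\alpha'})$ is determined by $t_{0,0}$ alone (giving multiplicity at most $1$), except in the exceptional locus $(\lambda_1,\nu_1)\in L$ where a second free parameter $t_{i,j}$ appears. Concretely, \eqref{EQ:formula3} expresses $t_{\alpha+1,\alpha'+1}$ in terms of $t_{\alpha\pm1,\alpha'+1}$ and $t_{\alpha,\alpha'+1}$ whenever the coefficient $({n}+4\alpha+2\lambda_1+1)\lambda_{\alpha,\alpha'}^{\alpha+1,\alpha'+1}$ is non-zero, while \eqref{EQ:formulaZ} plays the analogous role along rows of constant $\alpha'$. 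The coefficient vanishes exactly at the barrier $\lambda_1=-\rho_1-2\alpha$ of Figure~\ref{fig:Barriers}, and the corresponding barrier for $\nu_1$ appears via the right-hand sides. For $(\lambda_1,\nu_1)\in L$ the two barriers cross inside the $K'$-type diagram and the single new scalar $t_{i,j}$ becomes independent of $t_{0,0}$, producing the dimension $2$ in the statement.

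The main obstacle will be verifying the global consistency of the heavily overdetermined system: at each node $(\alpha,\alpha')$ the four relations \eqref{EQ:formula1}--\eqref{EQ:formulaZ} give more equations than new unknowns, so one must show that they are not incompatible. I would handle this by using the linear identities of Lemma~\ref{Lemma:scalaridentities} together with the explicit existence of the smooth solutions $A_{\lambda,\nu}$, $A^{(1)}_{\lambda,\nu}$, $A^{(2)}_{\lambda,\nu}$, $B_{\lambda,\nu}$, $C_{\lambda,\nu}$ from Section~\ref{sec:concrete_smooth}, whose $K$-type action was computed in Lemma~\ref{lemma:spectral_functions}: these provide enough genuine solutions of the recursion that any remaining consistency equation is automatically satisfied, and combined with the upper bound on the dimension of the solution space obtained from the inductive analysis they close the gap to the smooth multiplicity computed in Theorem~\ref{theorem:A}.
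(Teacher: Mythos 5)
Your high-level strategy matches the paper exactly: show $\dim H^\alg(\lambda,\nu)\leq\dim H^\infty(\lambda,\nu)$ by encoding an algebraic symmetry breaking operator as a sequence $(t_{\alpha,\alpha'})$ subject to \eqref{EQ:formula1}--\eqref{EQ:formulaZ}, propagate along the $K'$-type diagram, and feed the smooth operators from Lemma~\ref{lemma:spectral_functions} back in as the lower bound. The barrier picture and the observation that the system is overdetermined are also right.

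However, your first step has a concrete gap. At $(\alpha,\alpha')=(0,0)$, relation \eqref{EQ:formulaZ} involves only $t_{1,0}$ and $t_{0,0}$ (the terms with $t_{-1,0}$ and $\omega_0^0$ vanish), and \eqref{EQ:formula3} is precisely the diagonal relation \eqref{EQ:formuladiag} involving only $t_{1,1}$ and $t_{0,0}$. These are two equations in the three unknowns $t_{0,0},t_{1,0},t_{1,1}$, with $t_{1,0}$ and $t_{1,1}$ each appearing in exactly one of them; there is nothing to eliminate and no compatibility condition on $(\lambda,\nu)$ can emerge from them. The constraints on $\lambda_2,\nu_2$ do not come from the bottom node. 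The distinction $\lambda_2+\rho_2=\nu_2+\rho'_2$ versus not falls out of subtracting \eqref{EQ:formula2} from \eqref{EQ:formulaZ} at any node with $\alpha'>0$ (valid only for $n\geq3$), which forces either that equality or $t_{\alpha,\alpha'}=0$ for all $\alpha'\neq 0$ and hence $\nu_1=-\rho'_1$; outside Case 1 the remaining constraints arise from rank computations of an explicit $3\times3$ (respectively $4\times4$ for $n=2$) linear system in the $t_{\alpha,0}$'s, not a two-equation elimination at $(0,0)$. In particular for $n=2$, where \eqref{EQ:formula2} is absent, the necessary condition $\lambda_2-\rho_2-\nu_2-\rho'_2=\nu_1+\rho'_1$ cannot be reached by your proposed route. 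You would need to work out these larger linear systems explicitly; that is the content of the paper's Cases 2 and 3 and constitutes a substantial part of the argument you have skipped. Also note a minor inaccuracy: in the $(\lambda_1,\nu_1)\in L$ locus the extra dimension does not come from an off-diagonal node $t_{i,j}$ becoming free; it arises because the diagonal sequence space becomes two-dimensional (Lemma~\ref{Lemma:diagseq}) once the barriers at $\alpha=i$ and $\alpha'=j$ disconnect it, with the lower-left and upper-right blocks then carried independently by $t_{0,0}$ and $t_{i+1,i+1}$ respectively, and the remaining block $\alpha>i,\alpha'\leq j$ uniquely filled in.
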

First we examine the diagonal sequence $(t_{\alpha,\alpha})_\alpha$. If we take \eqref{EQ:formula3} for $\alpha = \alpha'$ we get a relation only involving $t_{\alpha,\alpha}$ and $t_{\alpha+1,\alpha+1}$. Since  $\lambda_{\alpha,\alpha}^{\alpha+1,\alpha+1}=\Omega$, this is
\begin{equation}
\label{EQ:formuladiag}
({n+1}+4\alpha+2\lambda_1)t_{\alpha+1,\alpha+1}=({n}+4\alpha+2\nu_1)t_{\alpha,\alpha}.
\end{equation}
That implies:
\begin{lemma}
\label{Lemma:diagseq}
The space of diagonal sequences $(t_{\alpha,\alpha})_\alpha$ satisfying \eqref{EQ:formuladiag} is one-dimensional if $(\lambda_1,\nu_1) \in \C \setminus L$ . Every non trivial sequence satisfies
\begin{enumerate}[label=(\roman{*}), ref=\thetheorem(\roman*)]
\item 
\label{Lemma:diagseq:i}
for $\lambda_1 \not\in -\rho_1-2\Z_{\geq 0}$ and $\nu_1 \notin -\rho'_1-2\Z_{\geq 0}$:
\begin{equation*}
t_{\alpha,\alpha} \neq 0 \quad \forall\, \alpha \in \Z_{\geq 0},
\end{equation*}
\item
\label{Lemma:diagseq:ii} for $\lambda_1 =-\rho_1-2i, i \in \Z_{\geq 0}$ and $\nu_1 \notin -\rho'_1-2\Z_{\geq 0}$:
\begin{equation*}
t_{\alpha,\alpha} = 0 \quad \forall\, \alpha \leq i \qquad \text{and} \qquad t_{\alpha,\alpha} \neq 0 \quad \forall\,\alpha >i,
\end{equation*}
\item 
\label{Lemma:diagseq:iii} for $\lambda_1 \not\in -\rho_1-2\Z_{\geq 0}$ and $\nu_1=-\rho'_1-2j, j \in\Z_{\geq 0}$:
\begin{equation*}
t_{\alpha,\alpha} \neq 0 \quad \forall\, \alpha \leq j \qquad \text{and} \qquad t_{\alpha,\alpha} = 0 \quad \forall\,\alpha >j,
\end{equation*}
\item 
\label{Lemma:diagseq:iv} for $\lambda_1 =-\rho_1-2i, i \in \Z_{\geq 0}$ and $\nu_1=-\rho'_1-2j, j \in\Z_{\geq 0}$ and $i<j$:
\begin{equation*}
t_{\alpha,\alpha} \neq 0 \quad \forall\, i< \alpha \leq j \qquad \text{and} \qquad t_{\alpha,\alpha} =0\text{ else.}
\end{equation*}
\end{enumerate} 
If $(\lambda_1,\nu_1)=(-\rho_2-2i,-\rho'_1-2j) \in L$ the space of diagonal sequences $(t_{\alpha,\alpha})_\alpha$ is two-dimensional and has a basis $\{ (t'_{\alpha,\alpha})_\alpha , (t''_{\alpha,\alpha})_\alpha\}$ satisfying:
\begin{alignat*}{2}
& t'_{\alpha,\alpha} \neq 0 \quad \forall\, \alpha \leq j, \qquad \qquad && t'_{\alpha,\alpha}=0 \quad \forall\, \alpha >  j, \\
& t''_{\alpha,\alpha} = 0 \quad \forall\, \alpha \leq i, \qquad \qquad && t''_{\alpha,\alpha}\neq 0 \quad \forall\, \alpha >  i.
\end{alignat*}
\end{lemma}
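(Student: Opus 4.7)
My plan is to solve the first-order linear recurrence \eqref{EQ:formuladiag} directly by tracking where its two coefficients vanish. Writing it as
\[
2(\rho_1+2\alpha+\lambda_1)\,t_{\alpha+1,\alpha+1} = 2(\rho'_1+2\alpha+\nu_1)\,t_{\alpha,\alpha},
\]
the left coefficient vanishes exactly at an index $\alpha=i$ with $\lambda_1=-\rho_1-2i$, which exists iff $\lambda_1\in-\rho_1-2\Z_{\geq 0}$, and the right coefficient vanishes exactly at $\alpha=j$ with $\nu_1=-\rho'_1-2j$, which exists iff $\nu_1\in-\rho'_1-2\Z_{\geq 0}$. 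The case distinction in the lemma is then dictated by whether, and at what positions, these zeros occur. The hard part is essentially just careful bookkeeping; the recurrence itself is elementary.

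In the generic case (i) neither coefficient ever vanishes, so the recurrence uniquely propagates $t_{0,0}$ to all $t_{\alpha,\alpha}$, yielding a one-dimensional space whose non-trivial elements are everywhere non-zero. In case (ii) the relation at $\alpha=i$ becomes $0=(\rho'_1+2i+\nu_1)t_{i,i}$; the factor on the right is non-zero by assumption on $\nu_1$, forcing $t_{i,i}=0$, and backward propagation through indices $\alpha<i$ (where both coefficients are non-zero) yields $t_{\alpha,\alpha}=0$ for $\alpha\leq i$. The relation at $\alpha=i$ then imposes no constraint on $t_{i+1,i+1}$, which becomes the unique free parameter; forward propagation through $\alpha>i$ (again with non-vanishing coefficients) gives $t_{\alpha,\alpha}\neq 0$ throughout. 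Case (iii) is the mirror image: the zero coefficient at $\alpha=j$ on the right, combined with a non-zero left factor, forces $t_{j+1,j+1}=0$, and forward propagation kills $t_{\alpha,\alpha}$ for $\alpha>j$, while the part $\alpha\leq j$ is the one-parameter family generated by $t_{0,0}$.

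In case (iv) both zeros occur, at $\alpha=i$ and $\alpha=j$ with $i<j$. The argument of case (ii) applied at $\alpha=i$ forces $t_{\alpha,\alpha}=0$ for all $\alpha\leq i$, using that $\nu_1\neq-\rho'_1-2i$ since $i\neq j$. The new free parameter $t_{i+1,i+1}$ is propagated forward through $i<\alpha<j$, where both coefficients are non-zero, until one reaches $\alpha=j$: there the argument of case (iii) applies and forces $t_{j+1,j+1}=0$, hence $t_{\alpha,\alpha}=0$ for all $\alpha>j$. What remains is a one-parameter family supported exactly on $i<\alpha\leq j$, all of whose values are non-zero.

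Finally, for $(\lambda_1,\nu_1)=(-\rho_1-2i,-\rho'_1-2j)\in L$, so $j\leq i$, I will distinguish the subcases $j<i$ and $j=i$. If $j<i$, the relation at $\alpha=j$ has zero on the right and non-zero left factor (equal to $2(j-i)$), forcing $t_{j+1,j+1}=0$ and then $t_{\alpha,\alpha}=0$ for $j<\alpha\leq i$ by forward propagation with non-vanishing coefficients; the relation at $\alpha=i$ is $0=0$, so $t_{i+1,i+1}$ is a new free parameter independent of the initial one, and subsequent $t_{\alpha,\alpha}$ are non-zero for $\alpha>i$. If $j=i$, both coefficients vanish simultaneously at $\alpha=i$, so the relation there is trivially satisfied and the same two independent parameters $t_{0,0}$ and $t_{i+1,i+1}$ appear. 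In either subcase the solution space decomposes as a direct sum of the one-dimensional subspace supported on $\{0,\dots,j\}$ (basis $(t'_{\alpha,\alpha})$, obtained by setting $t_{i+1,i+1}=0$) and the one supported on $\{i+1,i+2,\dots\}$ (basis $(t''_{\alpha,\alpha})$, obtained by setting $t_{0,0}=0$), so the space is two-dimensional with the described structure.
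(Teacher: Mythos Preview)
Your argument is correct and is exactly the elementary analysis of the two-term recurrence that the paper intends: the lemma is stated immediately after \eqref{EQ:formuladiag} with the words ``That implies:'' and no further proof, so you have simply written out the bookkeeping the authors leave to the reader. The only slip is cosmetic: in the $L$-case with $j<i$ the left factor at $\alpha=j$ equals $4(j-i)$, not $2(j-i)$, but this does not affect the conclusion.
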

\begin{proof}[Proof of Theorem~\ref{theorem:mult_HC}]
By \eqref{eq:mult_smooth_leq_mult_alg} we just need to prove that the multiplicities between the Harish-Chandra modules are less than or equal to the ones given in Theorem~\ref{theorem:mult_HC}.
This is done the same way as in the proof of \cite[Lemma 4.5]{MolOrs} and will therefore only be sketched. Details can be found in \cite{Weiske}.
We need to distinguish between three cases which split into several subcases each.

\begin{description}
\item[Case 1]
First if we assume 
\begin{equation*}
\nu_1=c'-c+\frac{{n}}{{n+1}}\lambda_1 \qquad \iff \qquad \lambda_2+\rho_2=\nu_2+\rho'_2
\end{equation*}
the formulas \eqref{EQ:formulaZ} and \eqref{EQ:formula2} are equivalent, whence we can handle this case for all $n \geq 2$ simultaneously.
We need to distinguish between three cases to give upper bounds for the multiplicities.
\begin{description}
\item[Case 1.1] Let $\lambda_1 \not\in -\rho_1-2\Z_{\geq 0}$, then $({n+1}+4\alpha+2\lambda_1)$ never vanishes.  If $\alpha=\alpha'$ relation \eqref{EQ:formula2} resp. \eqref{EQ:formulaZ} reduces to a relation between $t_{\alpha,\alpha}$ and $t_{\alpha+1,\alpha}$ for all $\alpha \geq0$. Hence given a diagonal sequence $(t_{\alpha,\alpha})_\alpha$ we can use \eqref{EQ:formula2} and \eqref{EQ:formulaZ} to uniquely define the second diagonal series $(t_{\alpha+1,\alpha})_{\alpha\geq0}$. Now we can define all missing diagonal sequences by moving further in $\alpha$-direction using \eqref{EQ:formula2} or \eqref{EQ:formulaZ}. Hence the dimension of the space satisfying all relations \eqref{EQ:formula1}, \eqref{EQ:formula2}, \eqref{EQ:formula3} and \eqref{EQ:formulaZ} can at most be one-dimensional.

\item[Case 1.2] For all $(\lambda_1,\nu_1)\notin L$ we can expand  a given diagonal sequence to the right using \eqref{EQ:formula2} or \eqref{EQ:formulaZ}. If $\lambda_1=-\rho_1-2i$, $i\in \Z_{\geq 0}$ the scalars $t_{\alpha,\alpha'}$ with $\alpha > i$ and $\alpha' \leq i$ cannot be defined that way since $({n+1}+4i+2\lambda_1)$ vanishes. But we can define these scalars uniquely with \eqref{EQ:formula3} in terms of already defined scalars. This yields multiplicity less than or equal to one for all $(\lambda_1,\nu_1) \notin L$.

\item[Case 1.3]
Let $(\lambda_1,\nu_1)=(-\rho_1-2i,-\rho'_1-2j)\in L$, $i,j \in \Z_{\geq 0}$, $i\geq j$. For a diagonal sequence $(t_{\alpha,\alpha})_\alpha$ satisfying \eqref{EQ:formula3} we have $t_{\alpha,\alpha}=0$ for $j<\alpha\leq i$. In the same way as in the second case we can define all $t_{\alpha,\alpha'}$ with $\alpha >j$ using \eqref{EQ:formulaZ} and \eqref{EQ:formula3} and for $\alpha \leq i$ and $\alpha' \leq j$ we can define all $t_{\alpha,\alpha'}$ as in the first case using \eqref{EQ:formula2} resp. \eqref{EQ:formulaZ}, which gives the situation as in Figure~\ref{FIG:3rdcase3}.

The scalars $t_{\alpha,\alpha'}$ with  $\alpha \leq i$ and $\alpha' \leq j$ are independent of those with $\alpha >i$, $\alpha'>j$. The two blocks do only depend on the diagonal sequence which is taken form a two-dimensional space as shown in Lemma~\ref{Lemma:diagseq}. 
It remains to show that the undefined scalars $t_{\alpha,\alpha'}$ with $\alpha >i$, $\alpha' \leq j$ can be uniquely defined in terms of already defined scalars. 
Since \eqref{EQ:formula2} resp. \eqref{EQ:formulaZ} for $(\alpha,\alpha')=(i+1,j)$ and \eqref{EQ:formula1} for $(\alpha,\alpha')=(i+1,j+1)$ are linearly independent we can uniquely define $t_{i+1,j}$ and $t_{i+2,j}$ using these two relation. Then moving to the right with \eqref{EQ:formulaZ} and down with \eqref{EQ:formula3} implies that the multiplicity of sequences satisfying all four equations is at most two-dimensional.

\begin{figure}[htb]
\centering
\setlength{\unitlength}{2.6pt}
\hspace{0.8cm}
\begin{picture}(80,70)
\thicklines
\put(0,0){\vector(1,0){75}}
\put(0,0){\vector(0,1){65}}

\multiput(0,0)(10,10){7}{\circle*{1.3}}
\multiput(40,30)(10,10){4}{\circle*{1.3}}
\multiput(50,30)(10,10){3}{\circle*{1.3}}
\multiput(60,30)(10,10){2}{\circle*{1.3}}
\multiput(70,30)(10,10){1}{\circle*{1.3}}
\multiput(50,0)(10,0){3}{\circle{1.3}}
\multiput(50,10)(10,0){3}{\circle{1.3}}
\multiput(50,20)(10,0){3}{\circle{1.3}}
\multiput(10,0)(10,0){4}{\circle*{1.3}}
\multiput(20,10)(10,0){3}{\circle*{1.3}}
\multiput(30,20)(10,0){2}{\circle*{1.3}}

\put(40,20){\line(1,1){10}}
\put(50,30){\line(1,-1){9.5}}
\put(40,20.3){\line(1,0){9.5}}
\put(50.5,20.3){\line(1,0){9}}
\put(40,19.7){\line(1,0){9.5}}
\put(50.5,19.7){\line(1,0){9}}
\put(50,30){\line(0,-1){9.5}}

\multiput(31,31)(10,0){2}{$0$}
\multiput(41,41)(10,0){1}{$0$}
\put(72,1){$\alpha$}
\put(1.5,62){$\alpha'$}

\put(39.5,-4){$i$}
\put(48,-4){$i+1$}
\put(-3,19.5){$j$}
\put(-10.5,29.5){$j+1$}

\color{red}{
\put(45,-2){\line(0,1){67}}
}
\color{green}{
\put(-3,25){\line(1,0){78}}
}
\end{picture}
\vspace{0.3cm}
\caption{$K'$-types $\Ical(\alpha,\alpha')$ for $(\lambda_1,\nu_1) \in L$, $ \lambda_2+\rho_2=\nu_2+\rho'_2 $ with scalars $ t_{\alpha,\alpha'}$ already defined $\bullet$, and $ t_{\alpha,\alpha'}$ not yet defined $\circ$ with relations \eqref{EQ:formula1} and \eqref{EQ:formula2}.  }
\label{FIG:3rdcase3}
\end{figure}
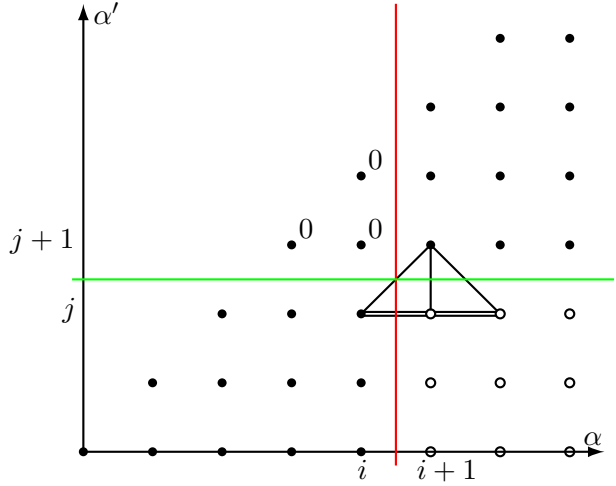
\end{description}
\item[Case 2]
Now we assume 
\begin{align*}
&\lambda_2+\rho_2\neq\nu_2+\rho'_2, && \nu_1=-\rho'_1.
\end{align*}
For $n\geq 3$, subtracting \eqref{EQ:formula2} from \eqref{EQ:formulaZ} implies that a non-trivial operator can only exist in this case if $t_{\alpha,\alpha'}=0$ for all $\alpha'\neq 0$.
By \eqref{EQ:formula3} this can only hold if $\nu_1=-\rho'_1$. Under this assumption we can again handle the cases for $n=2$ and $n \geq 3$ simultaneously since the relations we will use in the following are all defined for $n\geq 2$ and do not depend on $t_{\alpha,\alpha'}$ with $\alpha'>0$. 
\begin{description}
\item[Case 2.1]
For $\lambda_1 \neq -\rho_1-2i$, $i \in \Z_{\geq 0}$ the three relations
\eqref{EQ:formula1} for $(\alpha,\alpha')=(1,1)$ and \eqref{EQ:formulaZ} for $(\alpha,\alpha')=(0,0)$ and $(\alpha,\alpha')=(1,0)$ are a $3 \times 3$ system of linear equations in $t_{0,0}, t_{1,0}$ and $t_{2,0}$. It is easily checked that this system has rank two if and only if $\pm\nu_1=c'-c+\frac{{n}}{{n+1}}\lambda_1$ which holds if and only if $\lambda_2+\rho_2=\nu_2+\rho_2$ or $\lambda_2-\rho_2-\nu_2-\rho_2=\nu_1+\rho'_1$, and rank three otherwise.
The case $\lambda_2+\rho_2=\nu_2+\rho_2$ is already covered and if $\lambda_2-\rho_2-\nu_2-\rho_2=\nu_1+\rho'_1$ we can define all $t_{\alpha,0}$ in terms of $t_{0,0}$ using \eqref{EQ:formulaZ} and get multiplicity less than or equal to one.
\item[Case 2.2]
If $\lambda_1=-\rho_1-2i$ we can look at the two equations \eqref{EQ:formula1} for $(\alpha,\alpha')=(i,1)$ and \eqref{EQ:formulaZ} for $(\alpha,\alpha')=(i,0)$ which only involve $t_{i-1,0}$ and $t_{i,0}$. These two equations are equivalent if and only if $\lambda_2+\rho_2=\nu_2+\rho_2$ such that $t_{\alpha,0}=0$ for all $\alpha \leq i$ in all cases not already covered. Then \eqref{EQ:formula1} for $(\alpha,\alpha')=(i+1,1)$ and \eqref{EQ:formulaZ} for $(\alpha,\alpha')=(i+1,0)$ involve only $t_{i+1,0}$ and $t_{i+2,0}$ and the two equations are linearly dependent if and only if $\lambda_2-\rho_2=\nu_2+\rho'_2$. Hence in this case we can define all $t_{\alpha,0}$ with $\alpha>i$ uniquely in terms of $t_{i+1,0}$ using \eqref{EQ:formulaZ}, which yields also multiplicity less than or equal to one in this case.
\end{description}
\item[Case 3] The remaining case is
\begin{align*}
&\lambda_2+\rho_2\neq\nu_2+\rho'_2, && \nu_1\neq-\rho'_1.
\end{align*}
By the considerations of the case before we can assume $n=2$.
\begin{description}
\item[Case 3.1]
Let $(\lambda_1,\nu_1)\notin L$ and assume that $t_{\alpha,\alpha}=0$ for all $\alpha$. Then expanding to the right with \eqref{EQ:formulaZ} and down with \eqref{EQ:formula3} implies that the whole sequence $(t_{\alpha,\alpha'})_{\alpha'\leq \alpha}$ vanishes. Hence we assume that there exists a $K$-type $\alpha$ such that $t_{\alpha,\alpha} \neq 0$.
Further since $(\lambda_1,\nu_1)\notin L$ we have $\lambda_1 \notin \{ -\rho_1-2\alpha,-\rho_1-2(\alpha+1) \}$. Then the formulas 
\eqref{EQ:formula3} for $(\alpha,\alpha)$, \eqref{EQ:formulaZ} for $(\alpha,\alpha)$, \eqref{EQ:formulaZ} for $(\alpha+1,\alpha)$ and \eqref{EQ:formula1} for $(\alpha+1,\alpha+1)$ yield
a $4\times 4 $ system of linear equations in $t_{\alpha,\alpha}, t_{\alpha+1,\alpha+1}, t_{\alpha+1,\alpha}, t_{\alpha+2,\alpha}$. It is easily checked that this system has rank three if and only if 
\begin{equation*}
\pm \nu_1=c'-c+ \frac{2}{3}\lambda_1.
\end{equation*}
Then $-\nu_1=c'-c+ \frac{2}{3}\lambda_1$ holds if and only if
$\lambda_2-\rho_2-\nu_2-\rho'_2=\nu_1+\rho_1$ and in this case
the same argument as for the case $\nu_1=c'-c+ \frac{2}{3}\lambda_1$ yields multiplicity less than or equal to one for $(\lambda_1,\nu_1) \notin L$.
\item[Case 3.2]
Let $(\lambda_1,\nu_1)=(-\rho_1-2i,-\rho'_1-2j)\in L$ and $i \neq j$. As before expanding to the right with \eqref{EQ:formulaZ} implies that $t_{\alpha,\alpha'}=0$ for all $\alpha \leq i$, $\alpha' >j$. For $i \neq j$ and $j \neq 0$ the two formulas \eqref{EQ:formulaZ} for $(\alpha,\alpha')=(i,j)$ and \eqref{EQ:formula1} for $(\alpha,\alpha')=(i,j+1)$ yield a $2\times 2$ system of linear equations in $t_{i,j}$ and $t_{i-1,j}$ that has a 
non-trivial solution if and only if $\nu_1=c'-c+ \frac{2}{3}\lambda_1$. 
Hence in all other cases, including $-\nu_1=c'-c+ \frac{2}{3}\lambda_1$ we have $t_{\alpha,\alpha'}=0$ for all $\alpha \leq i$.
\item[Case 3.3]
Let $(\lambda_1,\nu_1)=(-\rho_1-2i,-\rho'_1-2j)\in L$ and $i=j$.
The relations \eqref{EQ:formula1} for $(\alpha,\alpha')=(i,i)$, \eqref{EQ:formulaZ} for $(\alpha,\alpha')=(i-1,i)$ and \eqref{EQ:formula3} for $(\alpha,\alpha')=(i-1,i-1)$ yield a $3\times 3$ system of linear equations in $t_{i,i}, t_{i-1,i-1}$ and $t_{i,i-1}$ which is solvable if and only if $\lambda_2+\rho_2=\nu_2+\rho'_2$. Hence for all cases $\lambda_2+\rho_2\neq\nu_2+\rho'_2$ and $(\lambda_1,\nu_1)=(-\rho_1-2i,-\rho'_1-2j) \in L$ we have $t_{\alpha,\alpha'}=0$ for all $\alpha \leq i$. Now assume $t_{\alpha,\alpha}=0$ for all $\alpha$. Then expanding to the right we get $t_{\alpha,\alpha'}=0$ for all $\alpha \leq i $ and all $\alpha' >j$. Then for a choice of $t_{i+1,j}$ we can expand to the right both with \eqref{EQ:formula1} and \eqref{EQ:formulaZ}. But it is easily checked that these two expansions do coincide if and only if $-\nu_1=c'-c+\frac{2}{3}\lambda_1$. Otherwise we can assume that there exists an $\alpha > i$ such that $t_{\alpha,\alpha}\neq 0$. Then in particular $t_{\alpha+1,\alpha+1}\neq 0$ and by the same argument as above we obtain that non-trivial intertwiners can only exist if $\lambda_2-\rho_2-\nu_2-\rho'_2=\nu_1+\rho'_1$. Moreover the space of intertwiners is at most one dimensional since we can uniquely define $t_{i+1,j}$ in terms of $t_{i+1,j+1}$ as in Figure~\ref{FIG:3rdcase3}.
\qedhere
\end{description}
\end{description}
\end{proof}

\subsection{Multiplicities between composition factors}
The same procedure as in the proof of Theorem~\ref{theorem:mult_HC} can be used to compute multiplicities and classify symmetry breaking operators between composition factors.
We abuse notation and write $A_{\lambda,\nu}, A^{(1)}_{\lambda,\nu}, A^{(2)}_{\lambda,\nu}, B_{\lambda,\nu}$ and $C_{\lambda,\nu}$ for the restrictions of the smooth symmetry breaking operators to $\mathcal{I}$. Then $A_{\lambda,\nu}|_{\mathcal{I}(\alpha,\alpha')}=t^A_{\alpha,\alpha'}(\lambda,\nu)\cdot R_{\alpha,\alpha'}$ for numbers $t^A_{\alpha,\alpha'}(\lambda,\nu)\in \C$. Similarly we write $t^{A,1}_{\alpha,\alpha'}(\lambda,\nu), t^{A,2}_{\alpha,\alpha'}(\lambda,\nu), t^B_{\alpha,\alpha'}(\lambda,\nu)$ and $t^C_{\alpha,\alpha'}(\lambda,\nu)$ for the numbers defining the other operators. All these sequences are explicitly given by Lemma~\ref{lemma:spectral_functions}
and they are the only solutions of the relations \eqref{EQ:formula1}, \eqref{EQ:formula2}, \eqref{EQ:formula3} and \eqref{EQ:formulaZ}.
\begin{lemma}
\label{lemma:restriction_subquotients}
\begin{enumerate}[label=(\roman{*}), ref=\thetheorem(\roman*)]
\item If $\mathcal{U}\subseteq (\pi_\lambda)_{\HC}$ is a submodule, then the restriction
\begin{align*}
\Hom_{(\g',K')}((\pi_\lambda)_{\HC}|_{(\g',K')},(\tau_\nu)_{\HC}) &\to\Hom_{(\g',K')}(\mathcal{U}|_{(\g',K')},(\tau_\nu)_{\HC}), \\
T &\mapsto T|_{\mathcal{U}}
\end{align*}
is surjective.
\item If $n\geq 3$, $\mathcal{U}' \neq \mathcal{F}'_+(0,\nu_\subz)$ is a quotient of $(\tau_\nu)_{\HC}$ and $q:(\tau_\nu)_{\HC}\twoheadrightarrow \mathcal{U}'$ is the quotient map, then the composition
\begin{align*}
\Hom_{(\g',K')}((\pi_\lambda)_{\HC}|_{(\g',K')},(\tau_\nu)_{\HC}) &\to\Hom_{(\g',K')}((\pi_\lambda)_{\HC}|_{(\g',K')},\mathcal{U}'), \\
T &\mapsto q \circ T
\end{align*}
is surjective.
\end{enumerate}
\end{lemma}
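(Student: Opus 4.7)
The plan is to reduce both parts to the scalar-sequence characterization of intertwiners given by Theorem~\ref{Theorem:fromulas} and Corollary~\ref{Cor:scalaridentity}, and to run an extension argument in the spirit of the proof of Theorem~\ref{theorem:mult_HC}. The claims are trivial when $\mathcal{U} = (\pi_\lambda)_{\HC}$ in (i) or $\mathcal{U}' = (\tau_\nu)_{\HC}$ in (ii), so by Lemma~\ref{Lemma:submodules} we may restrict to reducibility points: $\lambda_1 = \pm(\rho_1+2i)$ in (i), or $\nu_1 = \pm(\rho'_1+2j)$ in (ii).

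For (i), an intertwiner $T_0 \colon \mathcal{U} \to (\tau_\nu)_{\HC}$ corresponds to scalars $(t^0_{\alpha,\alpha'})$ defined for $\alpha$ in the $K$-types of $\mathcal{U}$ and satisfying the truncated relations from Corollary~\ref{Cor:scalaridentity}. The decisive observation is that at the boundary $K$-type of $\mathcal{U}$, the reducibility condition $\lambda_1 = \mp(\rho_1+2i)$ forces one of the critical coefficients $(n+4\alpha+2\lambda_1+1)$ or $(2\lambda_1-n-4\alpha+3)$ appearing in \eqref{EQ:formula1}--\eqref{EQ:formulaZ} to vanish. Consequently the relations truncated by $\mathcal{U}$ at the boundary coincide with the full relations, so $(t^0_{\alpha,\alpha'})$ satisfies no additional constraint when viewed inside the full system. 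One then extends $(t^0_{\alpha,\alpha'})$ to the complementary $K$-types by the same diagonal-induction used in cases 1.1--3.3 of the proof of Theorem~\ref{theorem:mult_HC}: since those critical coefficients are non-vanishing away from the boundary, the relations \eqref{EQ:formula1}--\eqref{EQ:formulaZ} uniquely determine each successive scalar, and consistency across the three (or four, when $n \geq 3$) relations follows from the identities among the proportionality constants in Lemma~\ref{Lemma:scalaridentities}.

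Part (ii) is treated dually. An intertwiner $S \colon (\pi_\lambda)_{\HC} \to \mathcal{U}'$ corresponds to scalars $(s_{\alpha,\alpha'})$ vanishing for $\alpha'$ outside the $K'$-types of $\mathcal{U}'$, and lifting $S$ through $q$ means defining the missing scalars $t_{\alpha,\alpha'}$ for $\alpha' \in \ker q$ so that all full relations are satisfied. The reducibility condition $\nu_1 = \mp(\rho'_1+2j)$ makes the right-hand coefficient $(2\nu_1-n-4\alpha'+4)$ or $(n+4\alpha'+2\nu_1)$ in the full relations vanish at the boundary $K'$-type of $\mathcal{U}'$, so the full relation at the boundary becomes a homogeneous constraint on $s$ which is automatically satisfied (it is precisely the truncated relation from Corollary~\ref{Cor:scalaridentity} applied to $\mathcal{U}'$). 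The missing scalars are then defined by the remaining relations in the same recursive fashion as in (i).

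The excluded case $\mathcal{U}' = \mathcal{F}'_+(0,\nu_\subz)$ (which forces $\nu_1 = \rho'_1$ and $n \geq 3$) is exactly where this strategy breaks down: $\mathcal{U}'$ consists of only the single $K'$-type $\alpha'=0$, so the induction into $\ker q$ begins at $\alpha'=1$. For $n \geq 3$ the two relations \eqref{EQ:formula2} and \eqref{EQ:formulaZ} become independent at $\alpha' \geq 1$ (they coincide at $\alpha'=0$), so their combination forces, exactly as in case~2 of the proof of Theorem~\ref{theorem:mult_HC}, $t_{\alpha,\alpha'}=0$ for all $\alpha' \geq 1$ generically, while the non-vanishing right-hand side of \eqref{EQ:formula3} at $(\alpha,0)$ would demand a non-trivial $t_{\cdot,1}$. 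This over-determination obstructs the lift in general. For $n=2$ relation \eqref{EQ:formula2} is suppressed by Theorem~\ref{Theorem:fromulas} and no obstruction appears, which is why the exclusion is imposed only for $n \geq 3$. The main technical obstacle for the proof as a whole is the bookkeeping of the recursion across the four relations at each reducibility-point case, but the needed algebraic identities are those already developed in Section~\ref{sec:scalaridenteties}.
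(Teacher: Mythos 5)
Your proposal is correct and follows essentially the same route as the paper's proof: both hinge on the observation that at the boundary $K$-type (resp. $K'$-type) the reducibility condition forces the coefficient $(\sigma_\beta-\sigma_\alpha+2\lambda_\ss)$ (resp. $(\sigma'_{\beta'}-\sigma'_{\alpha'}+2\nu_\ss)$) to vanish, so the truncated relations for $\mathcal{U}$ (resp.\ $\mathcal{U}'$) are a subset of the full relations, and then on the extension argument already built into the proof of Theorem~\ref{theorem:mult_HC}. The only slight imprecision is attributing the consistency of the recursion to Lemma~\ref{Lemma:scalaridentities} alone; in the paper this is absorbed into the dimension count of Theorem~\ref{theorem:mult_HC} (where existence of solutions ultimately comes from the smooth symmetry breaking operators), but since you explicitly reference cases 1.1--3.3 of that proof, the substance of the argument is the same.
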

\begin{proof}
Ad (i): 
By Corollary~\ref{Cor:scalaridentity} an operator $T \in \Hom_{(\g',K')}(\mathcal{U}|_{(\g',K')},(\tau_\nu)_{\HC})$ is given by scalars $t_{\alpha,\alpha'}$ satisfying the identities of Theorem~\ref{Theorem:fromulas} for all $K$-types $\alpha$ that occur in $\mathcal{U}$. But every time $\mathcal{U}(\beta)$ is trivial, and $\mathcal{U}(\alpha)$ with $\alpha \leftrightarrow \beta$ is not, the factor $(\sigma_\beta-\sigma_\alpha+2\lambda_\ss)$ vanishes anyway, so that the relations for $\mathcal{U}$ are just a subset of the old relations, including the diagonal relation \eqref{EQ:formuladiag} and the central relation \eqref{EQ:formulaZ}. 
Hence we get the same restrictions on the parameters $\lambda_2$, $\nu_2$
and if $\lambda_1=-\rho_1-2i$ the scalars $t_{\alpha,\alpha}$ have to satisfy \eqref{EQ:formuladiag} for $(\alpha,\alpha')=(i,i)$ such that in every case the diagonal sequence $(t_{\alpha,\alpha})_{\mathcal{U}(\alpha)\neq0}$ can be expanded to a sequence as in Lemma~\ref{Lemma:diagseq}, which has a unique continuation to all $(\alpha,\alpha')$ by Theorem~\ref{theorem:mult_HC}.

Ad (ii): Let $\nu_1 \neq \rho'_1$. If $\mathcal{U}'(\alpha')$ is trivial and $\mathcal{U}'(\beta')$ with $\alpha' \leftrightarrow \beta'$ is not, then the factor $(\sigma'_{\beta'}-\sigma'_{\alpha'}+2\nu_\ss)$ vanishes anyway, so that the set of relations the scalars $t_{\alpha,\alpha'}$ defining an element $T \in \Hom_{(\g',K')}((\pi_\lambda)_{\HC}|_{(\g',K')},\mathcal{U}')$ have to satisfy is a subset of the relations of Theorem~\ref{Theorem:fromulas}.
In particular since $\mathcal{U}' \neq \mathcal{F}'_-(0,\lambda_\subz)$ there exist $K'$-types for which $\eqref{EQ:formula2}$ and $\eqref{EQ:formulaZ}$ hold. Hence we get the same restrictions on $\lambda_2$, $\nu_2$. As before,
given such an operator, we can expand the diagonal sequence to a sequence as in Lemma~\ref{Lemma:diagseq} using \eqref{EQ:formuladiag}.
\end{proof}
If $\mathcal{U}$ is a quotient of $(\pi_\lambda)_{\HC}$ we have that an intertwiner $T:\mathcal{U} \to (\tau_\nu)_{\HC}$ 
together with the quotient map $q:\mathcal{U}\to (\pi_\lambda)_{\HC}$
induces an intertwiner
\begin{equation*}
\begin{tikzcd}
(\pi_\lambda)_{\HC}  \arrow[two heads]{r}{q} &\mathcal{U} \ar{r}{T} &(\tau_\nu)_{\HC}
\end{tikzcd}
\end{equation*}
that vanishes on the complement of $\mathcal{U}$.
On the other hand if $\mathcal{U}'$ is a submodule of $(\tau_\nu)_{\HC}$, $\iota :\mathcal{U}' \hookrightarrow (\tau_\nu)_{\HC}$ the inclusion and $T:(\pi_\lambda)_{\HC} \to \mathcal{U}'$ intertwining, then
\begin{equation*}
\begin{tikzcd}
(\pi_\lambda)_{\HC}  \arrow{r}{T} &\mathcal{U}' \arrow[hook]{r}{\iota} &(\tau_\nu)_{\HC}
\end{tikzcd}
\end{equation*}
is an intertwiner whose image is contained in $\mathcal{U}'$. Recall that for $n=2$ we have $\mathcal{F}'_+(j,\nu_\subz)\cong \mathcal{F}'_-(j,\nu_\subz)$ and $\mathcal{T}'_+(j,\nu_\subz)\cong \mathcal{T}'_-(j,\nu_\subz)$, and for $n\geq 3$ we have $\mathcal{F}'_-(0,\nu_z)\cong \mathcal{F}'_+(0,\nu_z)$. This implies that every algebraic symmetry breaking operator between composition factors is given by the restriction of a smooth symmetry breaking operator. Hence the considerations above and Lemma~\ref{lemma:restriction_subquotients} reduces
the classification of smooth symmetry breaking operators between composition factors to the analysis of zero-sets of the sequences $t^S_{\alpha,\alpha'}(\lambda,\nu)$, $S=A,(A,1),(A,2),B,C$.
\begin{prop}
\label{prop:zeros}
Let $i,j\in \Z_{\geq 0}$.
Then
\begin{align*}
&(i) &&\left(t^A_{\alpha,\alpha'}(\lambda,\nu)\right)_{\alpha'>j}= 0, && \text{for $\nu_1=-\rho'_1-2j$,} \\
&(ii)&&\left(t^A_{\alpha,\alpha'}(\lambda,\nu)\right)_{\alpha\leq i}= 0, && \text{for $\lambda_1=-\rho_1-2i$,} \\
&(iii)&&\left(t^A_{\alpha,\alpha'}(\lambda,\nu)\right)_{\alpha>i,\alpha'\leq j}= 0, && \text{for $\lambda_1=\rho_1+2i$, $\nu_1=\rho'_1+2j$, $i\geq j$,} \\
&(iv)&&\left(t^{A,1}_{\alpha,\alpha'}(\lambda,\nu)\right)_{\alpha'>j}= 0, && \text{for $\nu_1=-\rho'_1-2j$,} \\
&(v)&&\left(t^{A,1}_{\alpha,\alpha'}(\lambda,\nu)-\frac{(2j)!}{(2i-2j)!}t^{A,2}_{\alpha,\alpha'}(\lambda,\nu)\right)_{\alpha\leq i} = 0, &&\text{for $\lambda_1=-\rho_1-2i$, $\nu_1=-\rho'_1-2j,i\geq j$,}\\
&(vi)&&\left(t^B_{\alpha,\alpha'}(\lambda,\nu)\right)_{\alpha\leq i}= 0, && \text{for $\lambda_1=-\rho_1-2i$,} \\
&(vii)&&\left(t^B_{\alpha,\alpha'}(\lambda,\nu)\right)_{\alpha> i}= 0, && \text{for $\lambda_1=\rho_1+2i$,} \\
& (viii) &&\left(t^C_{\alpha,\alpha'}(\lambda,\nu)\right)_{\alpha'>j}= 0, && \text{for $\nu_1=-\rho'_1-2j$,} \\
& (ix) &&\left( t^C_{\alpha,\alpha'}(\lambda,\nu)\right)_{\alpha \leq i} = 0, && \text{for } \lambda_1=-\rho_1-2i, \\
&(x) &&\left(t^C_{\alpha,\alpha'}(\lambda,\nu)\right)_{\alpha>i,\alpha'\leq j}= 0, && \text{for $\lambda_1=\rho_1+2i$, $\nu_1=-\rho'_1-2j$, $i\geq j$,}\\
&(xi) &&\left(t^C_{\alpha,\alpha'}(\lambda,\nu)\right)_{\alpha>i,\alpha'\leq j}= 0, && \text{for $\lambda_1=-\rho_1-2i$, $\nu_1=\rho'_1+2j$, $i\geq j$.}
\end{align*}
\end{prop}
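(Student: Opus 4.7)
The plan is to prove the eleven assertions separately using the closed-form expressions for $t^S_{\alpha,\alpha'}(\lambda,\nu)$ from Lemma~\ref{lemma:spectral_functions} together with the residue formulas of Theorem~\ref{theorem:C}, supplemented by a direct operator-theoretic argument for statement (iv). Statements (i) and (viii) are immediate from $(\frac{\nu_1+\rho_1'}{2})_{\alpha'}=(-j)_{\alpha'}$ vanishing for $\alpha'>j$ when $\nu_1=-\rho_1'-2j$. Statements (vi) and (vii) follow from the closed form of $t^B_{\alpha,\alpha'}$: the Pochhammer $(\frac{\rho_1-\lambda_1}{2})_\alpha=(-i)_\alpha$ annihilates $\alpha>i$ when $\lambda_1=\rho_1+2i$, and $1/\Gamma(-i+\alpha)$ annihilates $\alpha\leq i$ when $\lambda_1=-\rho_1-2i$. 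For statement (iv) I would use the formula from Theorem~\ref{theorem:compact_operators}: writing $\varphi([ty:1])=|(ty,1)|^{-2\alpha}P(ty,1)$ for a harmonic polynomial $P$ of degree $2\alpha$ representing $\varphi\in\mathcal{I}(\alpha)$ gives $A^{(1)}_{\lambda,\nu}\varphi([y])=(2j)!\,|y|^{-2j}g_{2j}(y)$ with $g_{2j}$ a homogeneous polynomial of degree $2j$ in $y$; as a function on $S^{n-1}$ this decomposes into even spherical harmonics of degree at most $2j$, so the output lies in $\bigoplus_{\alpha'\leq j}\mathcal{J}(\alpha')$.

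For statements (ii) and (ix) I would absorb the denominator $\Gamma(\frac{\lambda_1+\rho_1}{2}+\alpha')$ into the terminating hypergeometric sum via the identity $(d)_k\Gamma(d)=\Gamma(d+k)$: each surviving term then carries a factor $1/\Gamma(-i+\alpha'+k)$, and since $(\alpha'-\alpha)_k$ truncates the sum at $k\leq\alpha-\alpha'$, the condition $\alpha\leq i$ forces $-i+\alpha'+k\leq\alpha-i\leq 0$ to be a non-positive integer, so every term vanishes.

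The main obstacle is the cluster (iii), (x), (xi), which asserts vanishing of the underlying ${_3F_2}$ on a full two-parameter region. Statement (xi) admits the same absorption trick: for $\alpha'\leq j\leq i$ each term with index $k\leq i-\alpha'$ vanishes via $1/\Gamma(-i+\alpha'+k)$, while for $k\geq i-\alpha'+1$ one has $k\geq i-j+1$ so the Pochhammer $(j-i)_k$, which here is exactly $(\frac{\lambda_1+\rho_1+\nu_1-\rho'_1}{2})_k$, contains $0$ and vanishes. Statements (iii) and (x) are harder because no such "double" vanishing is available and one must prove the ${_3F_2}$ itself is zero; my approach splits by $\alpha'$. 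At the boundary $\alpha'=j$ a direct check shows the Saalschützian condition $b_1+b_2=a_1+a_2+a_3+1$ is met, so Pfaff--Saalschütz reduces the ${_3F_2}$ to a Pochhammer quotient in which $(1+i-\alpha)_{\alpha-j}$ contains the integer $0$ for $\alpha>i\geq j$ and therefore vanishes. For $\alpha'<j$ I would apply a Thomae--Whipple three-term transformation, producing a Gamma prefactor $1/\Gamma(1+j-\alpha)$ that vanishes for $\alpha>j$, while the transformed ${_3F_2}$ remains finite since it still terminates at $k\leq\min(\alpha-\alpha',i-j)\leq\alpha-j-1$ before encountering the pole of the lower parameter.

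Finally, for statement (v) I would combine the residue formulas with the spherical values of Corollary~\ref{cor:spherical_vectors}: a direct computation shows that $T:=A^{(1)}_{\lambda,\nu}-\frac{(2j)!}{(2i-2j)!}A^{(2)}_{\lambda,\nu}$ annihilates $\mathbf{1}_\lambda$, so $t^T_{0,0}=0$. Its diagonal sequence obeys \eqref{EQ:formuladiag}, which at $(\lambda_1,\nu_1)\in L$ reduces to $(\alpha-i)\,t^T_{\alpha+1,\alpha+1}=(\alpha-j)\,t^T_{\alpha,\alpha}$; iterating from $\alpha=0$ and using $i\geq j$ propagates the vanishing through all $\alpha\leq i$. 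By Lemma~\ref{Lemma:diagseq} this identifies $T$ up to scalar with the basis vector $t''$ of the $2$-dimensional diagonal-sequence space that vanishes for $\alpha\leq i$, and the off-diagonal propagation from the relations of Theorem~\ref{Theorem:fromulas}---relation \eqref{EQ:formula2} at $\alpha<i$ has nonzero leading coefficient $4(\alpha-i)\neq 0$ and forces $t^T_{\alpha+1,\alpha'}=0$ given $t^T_{\alpha,\alpha'}=t^T_{\alpha-1,\alpha'}=0$---then yields $t^T_{\alpha,\alpha'}=0$ throughout the block $\alpha\leq i$.
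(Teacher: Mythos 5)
Your proposal takes a genuinely different route from the paper for most items. The paper derives all vanishing statements structurally from the recursion relations of Theorem~\ref{Theorem:fromulas} combined with the diagonal characterization of Lemma~\ref{Lemma:diagseq}, invoking the explicit spectral function only for (vii); you instead read off the vanishing directly from the hypergeometric formulas of Lemma~\ref{lemma:spectral_functions}. Items (i), (viii) are correct: the Pochhammer prefactor $(\tfrac{\nu_1+\rho_1'}{2})_{\alpha'}=(-j)_{\alpha'}$ kills $\alpha'>j$ outright. Items (vi), (vii) from the closed form of $t^B$ are also correct. Item (iv) is a nice direct alternative: the coefficient of $t^{2j}$ in $(1+t^2|y|^2)^{-s}P(ty,1)$ is indeed homogeneous of degree $2j$ in $y$, so after restricting to $S^{n-1}$ one lands in $\bigoplus_{\alpha'\le j}\mathcal{J}(\alpha')$. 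The absorption trick $1/(\Gamma(d)(d)_k)=1/\Gamma(d+k)$ in (ii), (ix), (xi) is valid and the term-by-term vanishing checks out; for (xi) you correctly supplement the range $k>i-\alpha'$ with $(\tfrac{\lambda_1+\rho_1+\nu_1-\rho_1'}{2})_k=(j-i)_k=0$. Item (v) essentially mirrors the paper (Corollary~\ref{cor:spherical_vectors} to kill the spherical vector, then \eqref{EQ:formuladiag} and \eqref{EQ:formulaZ} to propagate, noting that the leading coefficient $4(\alpha-i)$ is nonzero for $\alpha<i$).

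The genuine gap is in (iii) and (x). Your Pfaff--Saalsch\"utz evaluation at the boundary $\alpha'=j$ is correct: the Saalsch\"utzian defect $b_1+b_2-(a_1+a_2+a_3+1)$ equals $j-\alpha'$, which vanishes there, and the resulting Pochhammer $(1+i-\alpha)_{\alpha-j}$ contains $0$ whenever $\alpha>i\ge j$. But for $\alpha'<j$ you only gesture at ``a Thomae--Whipple three-term transformation producing a prefactor $1/\Gamma(1+j-\alpha)$'' without specifying which transformation, and there are real pitfalls: the classical Thomae relation introduces $1/\Gamma(a_1)$ with $a_1=\alpha'-\alpha$ a non-positive integer, which makes the transformed side an indeterminate $0\cdot\infty$; the terminating (Whipple-type) transformations change the termination structure in ways that must be re-examined against the new lower parameters; and the assertion that the transformed series ``terminates at $k\le\min(\alpha-\alpha',i-j)\le\alpha-j-1$ before hitting the pole'' is not justified by anything stated. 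Since in this block the ${}_3F_2$ has no cancelling parameters, its vanishing is a genuine identity that needs a complete argument, not a sketch. The paper sidesteps this entirely: it deduces $t_{i+1,j}=0$ from a rank computation with \eqref{EQ:formula2} at $(i+1,j+1)$ and \eqref{EQ:formula3} at $(i+1,j)$, then gets $t_{i+2,j}=0$ from \eqref{EQ:formula1} and propagates with \eqref{EQ:formulaZ} and \eqref{EQ:formula3}. To repair your proof you should either pin down and verify a concrete transformation identity for the $\alpha'<j$ case or fall back to the paper's structural propagation for these two items.
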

\begin{proof}
Since the numbers $t^A_{\alpha,\alpha'}(\lambda,\nu),t^{A,1}_{\alpha,\alpha'}(\lambda,\nu), t^{A,2}_{\alpha,\alpha'}(\lambda,\nu), t^B_{\alpha,\alpha'}(\lambda,\nu)$ and $t^C_{\alpha,\alpha'}(\lambda,\nu)$ satisfy the relations of Theorem~\ref{Theorem:fromulas}, the diagonal sequences satisfy the conditions given in Lemma~\ref{Lemma:diagseq}. Expanding to the right with \eqref{EQ:formulaZ} implies (i), (ii), (iv) and (vi).

Ad (iii): The relations \eqref{EQ:formula2} for $(\alpha,\alpha')=(i+1,j+1)$ and \eqref{EQ:formula3} for $(\alpha,\alpha')=i+1,j$ imply $t_{i+1,j}(\lambda,\nu)=0$. Then \eqref{EQ:formula1} for $(\alpha,\alpha')=(i+1,j+1)$ implies $t_{i+2,j}(\lambda,\nu)=0$. Then the zeros expand to the whole block $\alpha >i$, $\alpha' \leq j$ by \eqref{EQ:formulaZ} and \eqref{EQ:formula3}. 

Ad (v): By Lemma~\ref{Lemma:diagseq} there exist non-trivial linear combinations of $t^{(1)}_{\alpha,\alpha'}(\lambda,\nu)$ and $t^{(2)}_{\alpha,\alpha'}(\lambda,\nu)$ that vanish on the diagonal for all $\alpha \leq i$. Such sequences vanish for $(\alpha,\alpha')$ with $\alpha \leq i$ by \eqref{EQ:formulaZ}. The right linear combinations are given by Corollary~\ref{cor:spherical_vectors}. 

Ad (vii): In \eqref{eq:spectral_function_4} the factor $(\frac{\rho_1-\lambda_1}{2})_\alpha=(-i)_\alpha$ vanishes for all $\alpha >i$.

Ad (viii) and (ix): We have for the diagonal
\begin{equation*}
t^C_{\alpha,\alpha}(\lambda,\nu)=\frac{(\frac{\nu_1+\rho'_1}{2})_{\alpha}}{\Gamma(\frac{\lambda_1+\rho_1}{2}+\alpha)},
\end{equation*} 
which vanishes for $\nu_1=-\rho'_1-2j$ for all $\alpha>j$ and for $\lambda_1=-\rho_1-2i$ for all $\alpha \leq i$. Expanding to the right with \eqref{EQ:formulaZ} yields the desired result.

Ad (x) and (xi): The relations \eqref{EQ:formulaZ} for $(\alpha,\alpha')=(i+1,j+1)$ and \eqref{EQ:formula3} for $(\alpha,\alpha')=i+1,j$ imply $t_{i+1,j}(\lambda,\nu)=0$. Then \eqref{EQ:formulaZ} implies the statement.
\end{proof}
Finally together with Remark~\ref{remark:composition_series_globalization} we get a classification of symmetry breaking operators between composition factors of $\pi_\lambda$ and $\tau_\nu$:
\begin{theorem}
\label{theorem:composition_series_tables}
Let $\mathcal{U}\in \{\mathcal{F}_\pm(i,\lambda_\subz)^\infty,\mathcal{T}_\pm(i,\lambda_\subz)^\infty \}$ and $\mathcal{U}'\in \{\mathcal{F}'_\pm(j,\nu_\subz)^\infty,\mathcal{T}'_\pm(j,\nu_\subz)^\infty \}$.
The spaces $\Hom_{G'}(\mathcal{U}|_{G'},\mathcal{U}')$ are spanned by operators given in the following tables:

\begin{center}
\renewcommand{\arraystretch}{1.5}
\begin{tabular}{ c | c  c  c  c }		
  \diagonal{0.1em}{2cm}{$\mathcal{U}$}{$\mathcal{U}'$} & $\mathcal{F}'_+(j,\nu_\subz)^\infty$  & $\mathcal{F}'_-(j,\nu_\subz)^\infty$& $\mathcal{T}'_+(j,\nu_\subz)^\infty$ & $\mathcal{T}'_-(j,\nu_\subz)^\infty$  \\ \hline
  $\mathcal{F}_+(i,\lambda_\subz)^\infty$ & $A_{\lambda,\nu}$ & $0$ & $0$ & $0$ \\ 
  $\mathcal{F}_-(i,\lambda_\subz)^\infty$ & $0$ & $A^{(1)}_{\lambda,\nu}$ & $0$ & $0$ \\
  $\mathcal{T}_+(i,\lambda_\subz)^\infty$ & $0$ & $A^{(1)}_{\lambda,\nu}$ & $A_{\lambda,\nu}$ & $0$ \\
  $\mathcal{T}_-(i,\lambda_\subz)^\infty$ & $A_{\lambda,\nu}$ & $0$ & $0$ & $A^{(1)}_{\lambda,\nu}-\frac{(2j)!}{(2i-2j)!}A^{(2)}_{\lambda,\nu}$ \\
  \hline  \multicolumn{5}{c}{for $j\leq i$ and $\lambda_2+\rho_2=\nu_2+\rho'_2$,}
\end{tabular}
\end{center}
\begin{center}
\renewcommand{\arraystretch}{1.5}
\begin{tabular}{ c | c  c     }			
  \diagonal{0.1em}{2cm}{$\mathcal{U}$}{$\mathcal{U}'$} & $\mathcal{F}'_+(j,\nu_\subz)^\infty$  & $\mathcal{F}'_-(j,\nu_\subz)^\infty$ \\ \hline
  $\mathcal{T}_+(i,\lambda_\subz)^\infty$ & $A_{\lambda,\nu}$ & $A^{(1)}_{\lambda,\nu}$  \\
  $\mathcal{T}_-(i,\lambda_\subz)^\infty$ & $A_{\lambda,\nu}$ & $A^{(1)}_{\lambda,\nu}$  \\
  \hline  \multicolumn{3}{c}{for $j>i$ and $\lambda_2+\rho_2=\nu_2+\rho'_2$,}
\end{tabular}
\end{center}
All other spaces are either trivial or given by the cases above composed with the isomorphisms from Remark~\ref{remark:isomoprhies}.
\end{theorem}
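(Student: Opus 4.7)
The plan is to reduce, by standard categorical manipulations, the classification of $\Hom_{G'}(\mathcal{U}|_{G'},\mathcal{U}')$ to the already known classification of symmetry breaking operators between the full principal series, and then read off each cell of each table using the $K$-type vanishing data collected in Proposition~\ref{prop:zeros}.

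First, by Theorem~\ref{theorem:D} combined with Remark~\ref{remark:composition_series_globalization}, it suffices to compute the corresponding algebraic intertwining spaces between composition factors of the Harish-Chandra modules $(\pi_\lambda)_\HC$ and $(\tau_\nu)_\HC$. Every such intertwiner is related to an intertwiner $T:(\pi_\lambda)_\HC\to(\tau_\nu)_\HC$ as follows: when $\mathcal{U}$ is a submodule, Lemma~\ref{lemma:restriction_subquotients}(i) provides an extension of a given intertwiner on $\mathcal{U}$ to the full module; when $\mathcal{U}$ is a quotient, the surjection $(\pi_\lambda)_\HC\twoheadrightarrow\mathcal{U}$ allows pre-composition; and symmetrically for $\mathcal{U}'\subseteq(\tau_\nu)_\HC$ via Lemma~\ref{lemma:restriction_subquotients}(ii) or the inclusion $\mathcal{U}'\hookrightarrow(\tau_\nu)_\HC$. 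The excluded case $\mathcal{U}'=\mathcal{F}'_+(0,\nu_\subz)^\infty$ for $n\geq 3$ in Lemma~\ref{lemma:restriction_subquotients}(ii) is handled via the isomorphism $\mathcal{F}'_+(0,\nu_\subz)\cong\mathcal{F}'_-(0,\nu_\subz)$ from Remark~\ref{remark:isomoprhies}. In each configuration the classification reduces to determining the subspace of intertwiners $T:(\pi_\lambda)_\HC\to(\tau_\nu)_\HC$ whose $K$-type scalars $t_{\alpha,\alpha'}$ vanish on prescribed ranges (the complementary submodule of $(\pi_\lambda)_\HC$ when $\mathcal{U}$ is a quotient, and the $K'$-types outside $\mathcal{U}'$ when $\mathcal{U}'$ is a submodule), modulo those operators that vanish on $\mathcal{U}$ entirely.

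Second, by Theorem~\ref{theorem:mult_HC}, every such $T$ is a linear combination of the (algebraic restrictions of the) five explicit families $A_{\lambda,\nu},\,A^{(1)}_{\lambda,\nu},\,A^{(2)}_{\lambda,\nu},\,B_{\lambda,\nu},\,C_{\lambda,\nu}$ in their respective parameter domains. For each of the twenty-four cells I shall identify the corresponding parameter configuration on $(\lambda_1,\nu_1)$ (which fixes the $K$-type range $\alpha\leq i$ or $\alpha>i$ of $\mathcal{U}$, and similarly for $\mathcal{U}'$), and then match the required vanishing against the eleven clauses of Proposition~\ref{prop:zeros}. For instance, the entry $(\mathcal{F}_-,\mathcal{F}'_-)=A^{(1)}_{\lambda,\nu}$ follows because by Proposition~\ref{prop:zeros}(iv) the restriction of $A^{(1)}_{\lambda,\nu}$ to $\mathcal{F}_-$ has image in $\mathcal{F}'_-$, while by Proposition~\ref{prop:zeros}(v) the restriction of $A^{(2)}_{\lambda,\nu}$ is a scalar multiple of that of $A^{(1)}_{\lambda,\nu}$, so no independent operator survives. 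The specific combination $A^{(1)}_{\lambda,\nu}-\frac{(2j)!}{(2i-2j)!}A^{(2)}_{\lambda,\nu}$ appearing in the cell $(\mathcal{T}_-,\mathcal{T}'_-)$ is precisely the element in the kernel of the restriction to $\mathcal{F}_-$, again by Proposition~\ref{prop:zeros}(v), and therefore descends to the quotient $\mathcal{T}_-$.

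The main obstacle is the sheer volume of case-by-case verification rather than any single conceptual difficulty: there are twenty-four cells together with their combinations with the isomorphisms of Remark~\ref{remark:isomoprhies}, each to be paired with an appropriate clause of Proposition~\ref{prop:zeros}. The most delicate cases are those with $(\lambda_1,\nu_1)\in L$, where $\dim H^\infty(\lambda,\nu)=2$; here the identification of the unique linear combination surviving on a given subquotient depends on the explicit relation of Proposition~\ref{prop:zeros}(v). All other entries reduce to a single application of one clause of that proposition, and a table cell is zero precisely when no operator from the five families meets the vanishing requirement on the relevant $K$-type block.
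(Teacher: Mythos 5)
Your proposal follows the same route the paper takes: reduce to algebraic symmetry breaking operators on the Harish-Chandra level via the automatic-continuity result and the globalization remark, lift submodule/quotient intertwiners to intertwiners of the full principal series using Lemma~\ref{lemma:restriction_subquotients} (and the $\mathcal{F}'_+(0)\cong\mathcal{F}'_-(0)$ isomorphism to cover the excluded case of that lemma), and then read off each table entry from the vanishing patterns of Proposition~\ref{prop:zeros}, with the delicate $(\lambda_1,\nu_1)\in L$ entries resolved by the linear relation in Proposition~\ref{prop:zeros}(v). This is precisely how the paper derives Theorem~\ref{theorem:composition_series_tables} in the discussion preceding its statement, so your outline is a faithful reconstruction of the paper's argument.
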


For the relation between the numbers $(\lambda_1,\lambda_2),(\nu_1,\nu_2)$ and $(\lambda_\ss,\lambda_\subz),(\nu_\ss,\nu_\subz)$ see Remark~\ref{remark:scalars}.

\appendix

\section{Homogeneous generalized functions}

Let $n\geq1$. For $\lambda\in\C$ with $\Re\lambda>-n$ the function
\begin{equation*}
u_{\lambda}(x)=\frac{\abs{x}^\lambda}{\Gamma(\frac{\lambda+n}{2})}
\end{equation*}
is locally integrable and hence defines a distribution $u_\lambda\in\mathcal{D}'(\R^n)$ which is homogeneous of degree $\lambda$.

\begin{lemma}[\cite{Gelfand_Shilov_1} Chapter I, 3.5 and 3.9]
\label{lemma:absolute_value_distr_holomorphic}
The family of distributions $u_{\lambda}\in \mathcal{D}'(\R^{n})$ extends holomorphically to an entire function in $\lambda \in \C$. For $\lambda\notin-n-2\Z_{\geq0}$ we have $\operatorname{supp}u_\lambda=\R^n$ and for $\lambda=-n-2N\in-n-2\Z_{\geq 0}$ we have
$$ u_{-n-2N}(x) = (-1)^N\frac{\pi^{\frac{n}{2}}}{2^{2N}\Gamma(\frac{n+2N}{2})}(\Delta_{\R^n}^N\delta)(x). $$
\end{lemma}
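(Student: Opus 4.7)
My plan is to reduce the $n$-dimensional statement to a one-dimensional one via polar coordinates. Writing $x = r\omega$ with $r > 0$, $\omega \in S^{n-1}$, the substitution $t = r^2$ gives, for any test function $\varphi \in C_c^\infty(\R^n)$ and $\Re \lambda > -n$,
\[
\langle u_\lambda,\varphi\rangle = \frac{1}{\Gamma(\mu)}\int_0^\infty \frac{r^{\lambda+n-1}}{1}\Phi(r)\,dr = \frac{1}{2\Gamma(\mu)}\int_0^\infty t^{\mu-1}\Psi(t)\,dt,
\]
where $\mu = \frac{\lambda+n}{2}$, $\Phi(r) = \int_{S^{n-1}}\varphi(r\omega)\,d\omega$ (an even function of $r$), and $\Psi(t) = \Phi(\sqrt t)$ is smooth and compactly supported in $[0,\infty)$. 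So everything comes down to understanding the one-dimensional family $\frac{t_+^{\mu-1}}{\Gamma(\mu)}$, tested against $\Psi$.

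The holomorphic extension is produced by repeated integration by parts: for any $k \in \Z_{\geq 0}$ and $\Re\mu > -k$,
\[
\frac{1}{2\Gamma(\mu)}\int_0^\infty t^{\mu-1}\Psi(t)\,dt = \frac{(-1)^k}{2\Gamma(\mu+k)}\int_0^\infty t^{\mu+k-1}\Psi^{(k)}(t)\,dt,
\]
where the boundary terms vanish because $\Psi$ is compactly supported and $t^{\mu+j}$ vanishes at $0$ in the IBP regime. The right-hand side is holomorphic in $\mu$ on $\Re\mu > -k$, which upon varying $k$ shows that $\mu \mapsto u_\lambda$ extends to an entire family in $\lambda \in \C$.

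To evaluate $u_\lambda$ at the exceptional parameters $\lambda = -n-2N$, i.e.\ $\mu = -N$, I take $k = N$ in the above; the factor $1/\Gamma(\mu+N)$ then forces all $t$-dependence in the integral to collapse to a delta at $t = 0$. Concretely, one more IBP (valid since $\Psi^{(N)}$ is compactly supported) shows $\lim_{\mu \to 0}\frac{1}{\Gamma(\mu)}\int_0^\infty t^{\mu-1}g(t)\,dt = g(0)$, giving
\[
\langle u_{-n-2N},\varphi\rangle = \frac{(-1)^N}{2}\Psi^{(N)}(0).
\]
To reach the stated formula I invoke Pizzetti's identity for spherical means,
\[
\Phi(r) = \frac{2\pi^{n/2}\,\Gamma(n/2)}{\Gamma(n/2)} \sum_{k=0}^\infty \frac{r^{2k}}{2^{2k}k!\,\Gamma(n/2+k)}\Delta^k\varphi(0),
\]
which together with $\Psi(t) = \Phi(\sqrt t)$ yields $\Psi^{(N)}(0) = \frac{2\pi^{n/2}}{2^{2N}\,\Gamma(\frac{n+2N}{2})}\Delta^N\varphi(0)$. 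Substituting gives $u_{-n-2N} = (-1)^N\pi^{n/2}\bigl(2^{2N}\Gamma(\tfrac{n+2N}{2})\bigr)^{-1}\Delta^N\delta$, matching the claim.

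The support statement is the easiest: on $\R^n \setminus \{0\}$, the analytically continued $u_\lambda$ still agrees with the honest smooth function $|x|^\lambda/\Gamma(\tfrac{\lambda+n}{2})$, since restriction to any relatively compact open set disjoint from $0$ yields an integral converging for all $\lambda$. This function is nowhere zero exactly when $1/\Gamma(\tfrac{\lambda+n}{2}) \neq 0$, i.e.\ when $\lambda \notin -n-2\Z_{\geq 0}$, which forces $\supp u_\lambda = \R^n$ in that case. The main obstacle is simply keeping the bookkeeping of gamma factors clean when combining the IBP regularization with Pizzetti; once the $\mu = (\lambda+n)/2$ substitution is installed, everything else is mechanical.
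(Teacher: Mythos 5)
The paper offers no proof of this lemma: it is cited directly from Gelfand--Shilov (Chapter I, 3.5 and 3.9) and used as a black box. Your proof is an independent, self-contained verification, and it is correct. You reduce the $n$-dimensional homogeneous family to the one-dimensional family $t_+^{\mu-1}/\Gamma(\mu)$ via polar coordinates and the substitution $t=r^2$, extend analytically by integration by parts, and then read off the distributional residue at $\mu=-N$ via the Taylor coefficients of the spherical mean $\Phi(r)$ in $r^2$, i.e.\ Pizzetti's expansion; the gamma-factor bookkeeping checks out and reproduces the stated constant $(-1)^N\pi^{n/2}/(2^{2N}\Gamma(\frac{n+2N}{2}))$. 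This is in substance the same route Gelfand--Shilov take, so there is no genuine divergence from the cited source. One small caveat worth tightening: Pizzetti's formula as a convergent series requires real-analyticity, whereas your $\varphi$ is only smooth. What you actually need and use is weaker and always true: the Taylor coefficients of the smooth even function $\Phi(r)$ at $r=0$ in powers of $r^{2k}$ are $\frac{2\pi^{n/2}}{2^{2k}k!\,\Gamma(k+n/2)}\Delta^k\varphi(0)$, so $\Psi^{(N)}(0)$ is determined by the degree-$N$ jet of $\Psi$ alone, independently of convergence of the full series. Phrasing the step in terms of Taylor coefficients (rather than an identity between $\Phi$ and an infinite sum) would make the argument airtight for general $\varphi\in C_c^\infty$.
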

\begin{lemma}
\label{lemma:homogenous_distributions}
Let $u\in \mathcal{D}'(\R^n)$ be homogeneous of degree $\lambda$ and invariant under the action of $\Orm(n)$, then $u$ is a scalar multiple of $u_\lambda$.
\end{lemma}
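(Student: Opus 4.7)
The plan is to pass to polar coordinates on $\R^n \setminus \{0\}$, classify there, and then analyze the possible extensions across the origin.

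First I would pull $u$ back along the diffeomorphism $(0,\infty) \times S^{n-1} \to \R^n \setminus \{0\}$, $(r,\omega) \mapsto r\omega$. Since $\Orm(n)$ acts transitively on $S^{n-1}$, the $\Orm(n)$-invariance of $u$ forces the pullback to have the product form $\tilde u(r) \otimes d\omega$ for some $\tilde u \in \mathcal{D}'((0,\infty))$. The homogeneity condition translates into the distributional ODE $r\partial_r \tilde u = \lambda \tilde u$ on $(0,\infty)$, whose solution space is one-dimensional and spanned by $r \mapsto r^\lambda$. Hence $u|_{\R^n \setminus \{0\}} = c\,\abs{x}^\lambda$ for a unique $c \in \C$.

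For the generic case $\lambda \notin -n-2\Z_{\geq 0}$, the factor $\Gamma(\frac{\lambda+n}{2})$ is finite and nonzero, and $u_\lambda$ agrees with $\Gamma(\frac{\lambda+n}{2})^{-1}\abs{x}^\lambda$ on $\R^n \setminus \{0\}$. Thus the difference $u - c\,\Gamma(\frac{\lambda+n}{2})\, u_\lambda$ is supported at $\{0\}$, $\Orm(n)$-invariant, and homogeneous of degree $\lambda$. Any distribution supported at the origin is a finite sum $\sum c_\alpha \partial^\alpha \delta$; $\Orm(n)$-invariance reduces this to a combination $\sum_k c_k \Delta^k \delta$, and since $\Delta^k \delta$ is homogeneous of degree $-n-2k$, homogeneity of degree $\lambda$ forces $\lambda \in -n-2\Z_{\geq 0}$. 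As this is excluded, the difference vanishes and $u$ is a scalar multiple of $u_\lambda$.

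The resonant case $\lambda = -n-2N$ is the main obstacle: here $u_\lambda$ is itself a nonzero multiple of $\Delta^N \delta$ by Lemma~\ref{lemma:absolute_value_distr_holomorphic}, supported at the origin, so the subtraction above no longer probes the component of $u$ living on $\R^n \setminus \{0\}$. The claim becomes that necessarily $c=0$, whence $u$ is supported at the origin and the classification in the previous paragraph identifies it with a multiple of $\Delta^N\delta$, i.e. of $u_\lambda$. To prove $c=0$ I would analyze the Laurent expansion of the meromorphic family $\mu \mapsto \abs{x}^\mu$ near $\mu_0 = -n-2N$, namely $\abs{x}^\mu = \frac{a}{\mu-\mu_0} + b + O(\mu-\mu_0)$ with residue $a$ a nonzero multiple of $\Delta^N \delta$. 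Substituting into the identity $(E-\mu)\abs{x}^\mu = 0$ for the Euler operator $E = \sum_i x_i \partial_i$ and comparing the coefficient of $(\mu-\mu_0)^0$ yields $(E-\mu_0)b = a$. Any distributional extension of $\abs{x}^{\mu_0}|_{\R^n\setminus\{0\}}$ to $\R^n$ equals $b$ plus a distribution $w = \sum_k c_k \Delta^k \delta$ supported at the origin; using $(E-\mu_0)(\Delta^k\delta) = 2(N-k)\Delta^k\delta$, the image $(E-\mu_0)(b+w)$ has the same (nonzero) $\Delta^N\delta$-component as $a$, so cannot vanish. Hence no homogeneous extension exists unless $c = 0$.
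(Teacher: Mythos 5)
Your proof is correct. The paper's own justification is a one-line citation to Gelfand--Shilov's one-dimensional classification of homogeneous distributions on $\R$; your argument works directly on $\R^n$ and is therefore self-contained and more explicit. The strategy (classify on $\R^n\setminus\{0\}$ via polar coordinates, then control the extension across the origin) is the natural one, and the way you handle the resonant case $\lambda\in-n-2\Z_{\geq 0}$ is the genuinely non-trivial part: using the Laurent expansion $\abs{x}^\mu = \frac{a}{\mu-\mu_0}+b+O(\mu-\mu_0)$ and the Euler identity $(E-\mu_0)b=a$, together with $(E-\mu_0)\Delta^k\delta=2(N-k)\Delta^k\delta$, you show no homogeneous extension of $\abs{x}^{-n-2N}$ can exist, forcing the restriction of $u$ to $\R^n\setminus\{0\}$ to vanish. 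This is exactly the obstruction analysis that Gelfand--Shilov carry out in one variable, so the two approaches are essentially equivalent in content; yours just carries it out in $n$ variables where the rotation invariance collapses the derivatives of $\delta$ to powers of $\Delta$. One small bookkeeping remark: after pulling back along $(r,\omega)\mapsto r\omega$, the Jacobian $r^{n-1}$ shifts the homogeneity degree of the radial factor $\tilde u$ from $\lambda$ to $\lambda+n-1$, so the ODE is $r\partial_r\tilde u=(\lambda+n-1)\tilde u$ rather than $r\partial_r\tilde u=\lambda\tilde u$; this does not affect the conclusion (the solution space is still one-dimensional and one recovers $u|_{\R^n\setminus\{0\}}=c\abs{x}^\lambda$), but you should state the degree consistently.
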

Lemma~\ref{lemma:homogenous_distributions} follows directly from the classification of homogeneous distributions on $\R$ in \cite[Chapter I, 3.11]{Gelfand_Shilov_1}.
\section{Gegenbauer polynomials}
\label{Gegeq}
Following \cite[Chapter 10.9, (18)]{HigherTrans} Gegenbauer polynomials can be defined by
\begin{equation*}
C_n^{\lambda}(z)=\sum_{j=0}^{\lfloor \frac{n}{2} \rfloor}\frac{(-1)^j(\lambda)_{n-j}}{j!(n-2j)!}(2z)^{n-2j}.
\end{equation*}
If $n=2k$ is even the special value at $z=0$ is given by (see \cite[Chapter 10.9, (19)]{HigherTrans})
\begin{equation}
\label{EQ:G0}
C^\lambda_{2k}(0)=\frac{(-1)^k \Gamma(\lambda+k)}{k!\Gamma(\lambda)}.
\end{equation}
The following identities of neighboring polynomials with respect to $n$ and $\lambda$ hold:
\begin{align}
 &\frac{d}{dz}C_n^\lambda = 2\lambda C_{n-1}^{\lambda+1}\label{eq:G1}\\
 & 2\lambda C_n^{\lambda+1} - (n+2\lambda)C_n^\lambda = 2\lambda
zC_{n-1}^{\lambda+1}\label{eq:G6}\\
 & 2\lambda(1-z^2)C_{n-1}^{\lambda+1} = (n+2\lambda)zC_n^\lambda -
(n+1)C_{n+1}^\lambda\label{eq:G7}\\
& 4\lambda(\lambda+1)(1-z^2)C_{n-2}^{\lambda+2} =2\lambda(2\lambda+1)C_n^{\lambda+1} -
(2\lambda+n)(2\lambda+n+1)C_n^\lambda
\label{eq:G4} \\
& 4\lambda(\lambda+1)(1-z^2)C_{n-2}^{\lambda+2} =
2\lambda(2\lambda+1)zC_{n-1}^{\lambda+1} - n(2\lambda+n)C_n^\lambda \label{eq:G5}
\end{align}

The relation
\eqref{eq:G1} is \cite[Chapter 10.9, (23)]{HigherTrans}. 
\eqref{eq:G6} is  \cite[Chapter 10.9, (25)]{HigherTrans} and
\eqref{eq:G1}, \eqref{eq:G7} is \cite[Chapter 10.9, (35)]{HigherTrans}.
\eqref{eq:G4} is a combination of \eqref{eq:G6} and \eqref{eq:G7}. \eqref{eq:G5} is \eqref{eq:G6} applied to \eqref{eq:G4}.

\section{Spherical harmonics}
Let $\Hcal^{\alpha}(\R^n)$ be the space of homogenous harmonic polynomials of degree $\alpha$ on $\R^n$, endowed with the natural action of the orthogonal group $\Orm(n)$. In this way $\Hcal^{\alpha}(\R^n)$ is an irreducible $\Orm(n)$-representation (see \cite[Chapter IX]{stein_weiss_1971} and Remark~\ref{REM:SR}). Restricting this representation to the subgroup $\Orm(n-1)$ realized in $\Orm(n)$ in the upper left $(n-1)\times (n-1)$-block, we get the following decomposition into irreducible $\Orm(n-1)$-representations (see \cite[H5]{kobayashi_mano_2011}):
\begin{equation}
\label{EQ:SphHarm1}
\Hcal^{\alpha}(\R^n)|_{\Orm(n-1)} \cong \bigoplus_{0\leq \alpha' \leq \alpha}\Hcal^{\alpha'}(\R^{n-1})
\end{equation}
Let $x=(x',x_n) \in \R^n$ with $x'\in \R^{n-1}$ and $\phi \in \Hcal^{\alpha'}(\R^{n-1})$. For $0 \leq \alpha' \leq \alpha$ the maps
\begin{align}
& \tilde{I}_{\alpha' \rightarrow \alpha}: \Hcal^{\alpha'}(\R^{n-1}) \rightarrow \Hcal^{\alpha}(\R^n), \nonumber \\
& \tilde{I}_{\alpha' \rightarrow \alpha}\phi(x) = |x|^{\alpha-\alpha'}\phi(x')C^{\frac{n-2}{2}+\alpha'}_{\alpha-\alpha'}\left(\frac{x_n}{|x|}\right), \label{EQ:SphHarm2}
\end{align}
are isomorphisms of $\Orm(n-1)$-representations (see \cite[Fact 7.5.1]{kobayashi_mano_2011}).
For the spaces of even polynomials $\Hcal^{2\alpha}(\R^n)$ and $\Hcal^{2\alpha'}(\R^{n-1})$ we use the notation
\begin{equation}
\label{EQ:SphHarm3}
\tilde{I}_{2\alpha' \rightarrow 2\alpha} = I_{\alpha' \rightarrow \alpha}.
\end{equation}
Following \cite[IV.2]{stein_weiss_1971} every homogenous polynomial in $x \in \R^n$ can be uniquely written as the sum of harmonic homogenous polynomials times even powers of $\abs{x}$. 
\begin{lemma}
For $\phi \in \mathcal{H}^{2\alpha} (\R^n)$ we have 
\begin{equation}
\label{EQ:PolDec}
x_ix_j \phi= \phi_{i,j}^+ + |x|^2 \phi_{i,j}^0 + |x|^4 \phi_{i,j}^-
\end{equation}
with $\phi_{i,j}^\pm \in \mathcal{H}^{2(\alpha \pm 1)}(\R^n)$ and $\phi_{i,j}^0 \in \mathcal{H}^{2\alpha}(\R^n)$ given by ($\alpha \neq 0$)
\begin{align}
	\nonumber&\phi_{i,j}^-= \frac{1}{(n+4\alpha-4)(n+4\alpha-2)}\frac{\partial^2 \phi}{\partial x_i \partial x_j},\\
	\label{eq:PolDec_}&\phi_{i,j}^0= \frac{1}{n+4 \alpha}\left(x_j \frac{\partial 		\phi}{\partial x_i} + x_i \frac{\partial \phi}{\partial x_j}- \frac{2|x|^2}{n+4\alpha-4} \frac{\partial^2 \phi}{\partial x_i \partial x_j} + \delta_{i,j} \phi\right),\\
	\nonumber&\phi_{i,j}^+=x_ix_j \phi- |x|^2\phi_{i,j}^0 - |x|^4\phi_{i,j}^-,
\end{align}
\end{lemma}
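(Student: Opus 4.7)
The plan is to establish existence and uniqueness of the decomposition \eqref{EQ:PolDec} from general theory, and then identify the three harmonic components by applying the Laplacian repeatedly and solving the resulting linear system.

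First I would invoke the classical fact recalled just before the lemma (see \cite[IV.2]{stein_weiss_1971}): every homogeneous polynomial on $\R^n$ admits a unique decomposition as a finite sum $\sum_{k\geq0}|x|^{2k}h_k$ with $h_k$ harmonic and homogeneous. Applied to $x_ix_j\phi$, which is homogeneous of degree $2\alpha+2$ and even in total degree, only the components with $h_k$ of degree $2\alpha+2-2k$ for $k=0,1,2$ can contribute, giving the form \eqref{EQ:PolDec} with $\phi_{i,j}^+\in\Hcal^{2(\alpha+1)}(\R^n)$, $\phi_{i,j}^0\in\Hcal^{2\alpha}(\R^n)$, and $\phi_{i,j}^-\in\Hcal^{2(\alpha-1)}(\R^n)$ uniquely determined.

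Next I would identify the components. Using $\Delta\phi=0$ and the product rule, a short computation yields
\begin{equation*}
\Delta(x_ix_j\phi)=2\delta_{i,j}\phi+2(x_j\partial_i\phi+x_i\partial_j\phi),
\end{equation*}
and one further application of $\Delta$, combined with $\Delta(\partial_i\phi)=\partial_i\Delta\phi=0$, gives $\Delta^2(x_ix_j\phi)=8\,\partial_i\partial_j\phi$. On the right-hand side of \eqref{EQ:PolDec} I would use the standard identity $\Delta(|x|^{2k}h)=2k(n+2d+2k-2)|x|^{2k-2}h$ for $h$ harmonic of degree $d$: with $d=2\alpha-2$, $k=2$ this gives $\Delta(|x|^4\phi_{i,j}^-)=4(n+4\alpha-2)|x|^2\phi_{i,j}^-$ and $\Delta^2(|x|^4\phi_{i,j}^-)=8(n+4\alpha-2)(n+4\alpha-4)\phi_{i,j}^-$, while $\Delta(|x|^2\phi_{i,j}^0)=2(n+4\alpha)\phi_{i,j}^0$ and $\Delta(\phi_{i,j}^+)=0$. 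Matching $\Delta^2$ on both sides immediately yields the claimed formula for $\phi_{i,j}^-$, and then matching $\Delta$ on both sides and solving for $\phi_{i,j}^0$ gives the stated expression. Finally $\phi_{i,j}^+$ is defined by the difference, as in the statement.

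The only mildly delicate point is to keep track of the combinatorial constants in the Laplacian formula and to ensure that the denominators $n+4\alpha$, $n+4\alpha-2$, $n+4\alpha-4$ do not vanish; this is guaranteed since $n\geq1$ and $\alpha\geq1$ (the case $\alpha=0$ being a trivial special case where $\phi_{i,j}^-=0$ and one works only with $\phi_{i,j}^0$ and $\phi_{i,j}^+$). No genuine obstacle arises; the argument is essentially a two-step elimination in the harmonic decomposition.
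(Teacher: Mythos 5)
Your proof is correct, and all the intermediate computations check out: $\Delta(x_ix_j\phi)=2\delta_{i,j}\phi+2(x_j\partial_i\phi+x_i\partial_j\phi)$, $\Delta^2(x_ix_j\phi)=8\,\partial_i\partial_j\phi$, and the standard identity $\Delta(|x|^{2k}h)=2k(n+2d+2k-2)|x|^{2k-2}h$ for $h$ harmonic of degree $d$ all give exactly the coefficients in the statement, and the denominators are manifestly nonzero for $n\geq2$, $\alpha\geq1$.

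The paper itself offers no proof, only the one-line remark ``It is easily checked that all three are indeed harmonic,'' which amounts to verifying directly by the product rule that the displayed formulas define harmonic polynomials (the identity $x_ix_j\phi=\phi^++|x|^2\phi^0+|x|^4\phi^-$ then holds by the definition of $\phi^+$). Your argument goes in the opposite direction: rather than checking the given answers, you invoke the uniqueness of the harmonic decomposition from Stein--Weiss and solve for the components by applying $\Delta$ and $\Delta^2$, which derives the formulas rather than merely verifying them. The two routes are computationally comparable; yours is slightly more illuminating in that it explains where the coefficients come from and immediately extends to $x_ix_j\phi$ replaced by $p\phi$ for any quadratic polynomial $p$, while the paper's route is marginally shorter if one is content to take the formulas on faith.
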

It is easily checked that all three are indeed harmonic.
\bibliographystyle{amsplain}
\bibliography{bibliography}
\end{document}